\selectfont\symbol{60}\fontencoding{\encodingdefault}}
\newcommand{\assign}{:=}
\newcommand{\cdummy}{\cdot}
\newcommand{\comma}{{,}}
\newcommand{\mathD}{\mathrm{D}}
\newcommand{\mathd}{\mathrm{d}}
\newcommand{\mathpi}{\pi}
\newcommand{\nobracket}{}
\newcommand{\nosymbol}{}
\newcommand{\point}{.}
\newcommand{\precprec}{\prec\!\!\!\prec}
\newcommand{\tmaffiliation}[1]{\\ #1}
\newcommand{\tmdummy}{$\mbox{}$}
\newcommand{\tmemail}[1]{\\ \textit{Email:} \texttt{#1}}
\newcommand{\tmop}[1]{\ensuremath{\operatorname{#1}}}
\newcommand{\tmtextit}[1]{{\itshape{#1}}}
\newenvironment{enumeratealpha}{\begin{enumerate}[a{\textup{)}}] }{\end{enumerate}}
\newenvironment{itemizeminus}{\begin{itemize} }{\end{itemize}}
\newenvironment{proof}{\noindent\textbf{Proof\ }}{\hspace*{\fill}$\Box$\medskip}
\newtheorem{axiom}{Assumption}
\newcounter{thm}
\numberwithin{thm}{section}
{\theorembodyfont{\rmfamily}\newtheorem{remark}[thm]{Remark}}
\theoremstyle{break}
\newtheorem{theorem}[thm]{Theorem}
\newtheorem{corollary}[thm]{Corollary}
\newtheorem{lemma}[thm]{Lemma}
\newcommand{\tmkeywords}{\textbf{Keywords:} }
\newcommand{\tthreethreerprime}[1]{#1^{\prime \resizebox{1.5em}{!}{\includegraphics{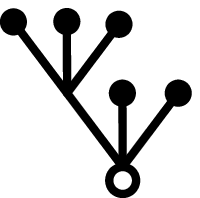}}}}
\newcommand{\tthree}[1]{#1^{\resizebox{1em}{!}{\includegraphics{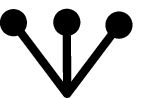}}}}
\newcommand{\tthreeone}[1]{#1^{\resizebox{1em}{!}{\includegraphics{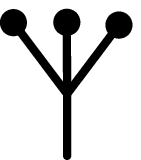}}}}
\newcommand{\ttwo}[1]{#1^{\resizebox{1em}{!}{\includegraphics{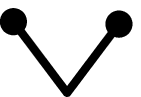}}}}
\newcommand{\tthreethreer}[1]{#1^{\resizebox{1.5em}{!}{\includegraphics{trees/33_tree_res.eps}}}}
\newcommand{\ttwoone}[1]{#1^{\resizebox{1em}{!}{\includegraphics{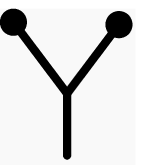}}}}
\newcommand{\ttwothreer}[1]{#1^{\resizebox{1.5em}{!}{\includegraphics{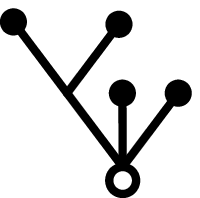}}}}
\newcommand{\tthreetwor}[1]{#1^{\resizebox{1.5em}{!}{\includegraphics{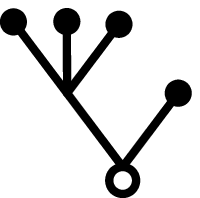}}}}
\newcommand{\tone}[1]{#1^{\resizebox{0.5em}{!}{\includegraphics{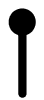}}}}
\newcommand{\tzero}[1]{#1^{\varnothing}}
\newcommand{\CC}{\mathscr{C} \hspace{.1em}}
\newcommand{\CD}{\mathscr{D} \hspace{.1em}}
\newcommand{\CF}{\mathscr{F} \hspace{.1em}}
\newcommand{\DD}{\mathscr{D} \hspace{.1em}}
\newcommand{\LL}{\mathscr{L} \hspace{.2em}}
\numberwithin{equation}{section}
\begin{document}

\title{
  Weak universality for a class of\\
  3d stochastic reaction--diffusion models
}

\author{
  M.~Furlan
  \tmaffiliation{CEREMADE\\
  Universit{\'e} Paris Dauphine, France}
  \tmemail{furlan@ceremade.dauphine.fr}
  \and
  M.~Gubinelli
  \tmaffiliation{IAM \& HCM\\
  Universit{\"a}t Bonn, Germany}
  \tmemail{gubinelli@iam.uni-bonn.de}
}

\maketitle

\begin{abstract}
  We establish the large scale convergence of a class of stochastic weakly
  nonlinear reaction--diffusion models on a three dimensional periodic domain
  to the dynamic $\Phi^4_3$ model within the framework of paracontrolled
  distributions. Our work extends previous results of Hairer and Xu to
  nonlinearities with a finite amount of smoothness (in particular $C^9$ is
  enough). We use the Malliavin calculus to perform a partial chaos expansion
  of the stochastic terms and control their $L^p$ norms in terms of the graphs
  of the standard $\Phi^4_3$ stochastic terms. 
\end{abstract}

\tmkeywords{weak universality, paracontrolled distributions, stochastic
quantisation equation, Malliavin calculus, partial chaos expansion.}

\tableofcontents

\section{Introduction}

Consider a family of stochastic reaction--diffusion equation in a
\tmtextit{weakly nonlinear regime}:
\begin{equation}
  \LL u (t, x) = - \varepsilon^{\alpha} F_{\varepsilon} (u (t, x)) + \eta (t,
  x), \label{e:micro} \qquad (t, x) \in [0, T / \varepsilon^2] \times
  (\mathbbm{T}/ \varepsilon)^3
\end{equation}
with $\varepsilon \in (0, 1]$, $T > 0$, initial condition $\bar{u}_{0,
\varepsilon} : (\mathbbm{T}/ \varepsilon)^3 \rightarrow \mathbbm{R}$,
$F_{\varepsilon} \in C^9 (\mathbbm{R})$ with exponential growth at infinity,
$\alpha > 0$ and $\LL : = (\partial_t - \Delta)$ the heat flow operator and
$\mathbbm{T}=\mathbbm{R}/ (2 \pi \mathbbm{Z})$. Here $\eta$ denotes a family
of centered Gaussian noises on $[0, T / \varepsilon^2] \times (\mathbbm{T}/
\varepsilon)^3$ \ with stationary covariance
\[ \mathbbm{E} (\eta (t, x) \eta (s, y)) = \tilde{C}^{\varepsilon} (t - s, x,
   y) \]
such that $\tilde{C}^{\varepsilon} (t - s, x, y) = \Sigma (t - s, x - y)$ if
$\tmop{dist} (x, y) \leqslant 1$ and $0$ otherwise where $\Sigma : \mathbbm{R}
\times \mathbbm{R}^3 \rightarrow \mathbbm{R}^+$ is a smooth, positive function
compactly supported in $[0, 1] \times B_{\mathbbm{R}^3} (0, 1)$. We assume
also that there exists a compactly supported function $\psi$ such that $\psi
\ast \psi = \Sigma$ (this is true e.g. when $\eta$ is obtained by space-time
convolution of the white noise with $\psi$).

We look for a large scale description of the solution to eq.~(\ref{e:micro})
and we introduce the ``mesoscopic'' scale variable $u_{\varepsilon} (t, x) =
\varepsilon^{- \beta} u (t / \varepsilon^2, x / \varepsilon)$ where $\beta >
0$. Substituting $u_{\varepsilon}$ into (\ref{e:micro}) we get
\begin{equation}
  \LL u_{\varepsilon} (t, x) = - \varepsilon^{\alpha - 2 - \beta}
  F_{\varepsilon} (\varepsilon^{\beta} u_{\varepsilon} (t, x)) +
  \varepsilon^{- 2 - \beta} \eta \left( \frac{t}{\varepsilon^2},
  \frac{x}{\varepsilon} \right) . \label{e:micro-2}
\end{equation}
In order for the term $\varepsilon^{- 2 - \beta} \eta (t / \varepsilon^2, x /
\varepsilon)$ to converge to a non--trivial random limit we need that $\beta =
1 / 2$. Indeed the Gaussian field $\eta_{\varepsilon} (t, x) : =
\varepsilon^{- 5 / 2} \eta (t / \varepsilon^2, x / \varepsilon)$ has
covariance $\varepsilon^{- 5} \tilde{C}^{\varepsilon} (t / \varepsilon^2, x /
\varepsilon)$ and converges in distribution to the space-time white noise on
$\mathbbm{R} \times \mathbbm{T}^3$. For large values of $\alpha$ the
non--linearity will be negligible with respect to the additive noise term.
Heuristically, we can attempt an expansion of the reaction term around the
stationary solution $Y_{\varepsilon}$ to the linear equation
\begin{equation}
  \LL Y_{\varepsilon} = - Y_{\varepsilon} + \eta_{\varepsilon},
  \label{e:Y-stat-eq}
\end{equation}
i.e. $Y_{\varepsilon} (t, x) = \int_{- \infty}^t \check{P} (t - s, x - y)
\eta_{\varepsilon} (s, y) \mathd s \mathd y$ with $\check{P} (t, x) =
\frac{1}{(4 \mathpi t)^{3 / 2}} e^{- \frac{| x |^2}{4 t}} e^{- t} 
\mathbbm{1}_{t \geqslant 0}$.

Let us denote with $C_{\varepsilon}$ the covariance of $Y_{\varepsilon}$. We
approximate the reaction term as
\[ \varepsilon^{\alpha - 5 / 2} F_{\varepsilon} (\varepsilon^{1 / 2}
   u_{\varepsilon} (t, x)) \simeq \varepsilon^{\alpha - 5 / 2} F_{\varepsilon}
   (\varepsilon^{1 / 2} Y_{\varepsilon} (t, x)) . \]
The Gaussian r.v. $\varepsilon^{1 / 2} Y_{\varepsilon} (t, x)$ has variance
$\sigma_{\varepsilon}^2 = \varepsilon \mathbbm{E} [ (Y_{\varepsilon} (t,
x))^2] = \varepsilon \mathbbm{E} [ (Y_{\varepsilon} (0, 0))^2] = \varepsilon
C_{\varepsilon} (0, 0)$ independent of $(t, x)$. Although
$\sigma_{\varepsilon}^2$ depends on $\varepsilon$, it can be bounded from
above and below by two positive constants uniformly on $\varepsilon \in (0,
1]$. We can expand the random variable $F_{\varepsilon} (\varepsilon^{1 / 2}
Y_{\varepsilon} (t, x))$ according to the chaos decomposition relative to
$\varepsilon^{1 / 2} Y_{\varepsilon} (t, x)$ and obtain
\begin{equation}
  F_{\varepsilon} (\varepsilon^{1 / 2} Y_{\varepsilon} (t, x)) = \sum_{n
  \geqslant 0} f_{n, \varepsilon} H_n (\varepsilon^{1 / 2} Y_{\varepsilon} (t,
  x), \sigma_{\varepsilon}^2), \label{e:F-chaos-exp}
\end{equation}
where $H_n (x, \sigma_{\varepsilon}^2)$ are standard Hermite polynomials with
variance $\sigma_{\varepsilon}^2$ and highest-order term normalized to $1$.
Note also that the coefficients $(f_{n, \varepsilon})_{n \geqslant 0}$ do not
depend on $(t, x)$ by stationarity of the law of $\varepsilon^{1 / 2}
Y_{\varepsilon} (t, x)$ since they are given by the formula
\[ f_{n, \varepsilon} = \frac{1}{n! \sigma^{2 n}_{\varepsilon}} \mathbbm{E}
   [F_{\varepsilon} (\varepsilon^{1 / 2} Y_{\varepsilon} (t, x)) H_n
   (\varepsilon^{1 / 2} Y_{\varepsilon} (t, x), \sigma_{\varepsilon}^2)] =
   n!\mathbbm{E} [F_{\varepsilon} (\sigma_{\varepsilon} G) H_n
   (\sigma_{\varepsilon} G, \sigma_{\varepsilon}^2)] \]
where $G$ is a standard Gaussian variable of unit variance.

Let $X$ be the stationary solution to the equation
\[ \LL X = - X + \xi, \]
with $\xi$ the space--time white noise on $\mathbbm{R} \times \mathbbm{T}^3$
and denote by $\llbracket X^N \rrbracket$ the generalized random fields given
by the $N$-th Wick power of $X$, which are well defined as random elements of
$\mathcal{S}' (\mathbbm{R} \times \mathbbm{T}^3)$ as long as $N \leqslant 4$.
Denote with $C_X$ the covariance of $X$. The Gaussian analysis which we set up
in this paper shows in particular that if $\varepsilon^{(n - N) / 2} f_{n,
\varepsilon} \rightarrow g_n$ as $\varepsilon \rightarrow 0$ for every $0
\leqslant n \leqslant N$, $N \leqslant 4$, and
$(F_{\varepsilon})_{\varepsilon} \subseteq C^{N + 1} (\mathbbm{R})$ with
exponential growth, then the family of random fields
\[ \mathbbm{F}^N_{\varepsilon} : (t, x) \mapsto \varepsilon^{- N / 2}
   F_{\varepsilon} (\varepsilon^{1 / 2} Y_{\varepsilon} (t, x)), \qquad (t, x)
   \in \mathbbm{R} \times \mathbbm{T}^3, \]
converges in law in $\mathcal{S}' (\mathbbm{R} \times \mathbbm{T}^3)$ as
$\varepsilon \rightarrow 0$ to $\sum_{n = 0}^N g_n \llbracket X^n \rrbracket$.

Consider the smallest $n$ such that $f_{n, \varepsilon}$ converges to a finite
limit as $\varepsilon \rightarrow 0$. Since $H_n (\varepsilon^{1 / 2}
Y_{\varepsilon}, \sigma_{\varepsilon}^2) = \varepsilon^{n / 2} \llbracket
Y_{\varepsilon}^n \rrbracket$, the $n$-th term in the
expansion~(\ref{e:F-chaos-exp}) is $f_{n, \varepsilon} \varepsilon^{\alpha +
(n - 5) / 2} \llbracket Y_{\varepsilon}^n \rrbracket$. Therefore, the equation
yields a non-trivial limit only if $\alpha = (5 - n) / 2$. We are interested
mainly in the case $n = 3 \Rightarrow \alpha = 1$ and $n = 1 \Rightarrow
\alpha = 2$. The case $\alpha = 2$ gives rise to a Gaussian limit and its
analysis its not very difficult. In the following we will concentrate in the
analysis of the $\alpha = 1$ case where the limiting behaviour of the model is
the most interesting and given by the $\Phi^4_3$ family of singular SPDEs. In
this case we obtain the family of models
\begin{equation}
  \LL u_{\varepsilon} (t, x) = - \varepsilon^{- \frac{3}{2}} F_{\varepsilon}
  (\varepsilon^{\frac{1}{2}} u_{\varepsilon} (t, x)) + \eta_{\varepsilon} (t,
  x) \quad \label{e:univ}
\end{equation}
with initial condition $u_{0, \varepsilon} (\cdummy) \assign \varepsilon^{-
\frac{1}{2}} \bar{u}_{0, \varepsilon} (\varepsilon^{- 1} \cdummy)$ where
$\bar{u}_{0, \varepsilon}$ is the initial condition of the microscopic
model~(\ref{e:micro}).

In order to state our main result, Theorem~\ref{t:maintheorem} below, let us
introduce some notations and specify our assumptions. Let
$\tilde{F}_{\varepsilon}$ be the centering (up to the third Wiener chaos
relative to $\varepsilon^{1 / 2} Y_{\varepsilon} (t, x)$) of the function
$F_{\varepsilon}$, i.e.
\begin{equation}
  \tilde{F}_{\varepsilon}  (x) \assign F_{\varepsilon} (x) - f_{0,
  \varepsilon} - f_{1, \varepsilon} x - f_{2, \varepsilon} H_2 (x,
  \sigma_{\varepsilon}^2) = \sum_{n \geqslant 3} f_{n, \varepsilon} H_n (x,
  \sigma_{\varepsilon}^2) . \label{e:ftilde-def}
\end{equation}
The decomposition of $\tilde{F}_{\varepsilon}$ is obviously the same as
in~(\ref{e:F-chaos-exp}) except for the fact that we have discarded the orders
0,1,2. Let $\tilde{F}_{\varepsilon}^{ (m)}$ be the $m$-th derivative of the
function $\tilde{F}_{\varepsilon}$ for $0 \leqslant m \leqslant 9$ and define
the following $\varepsilon$--dependent constants:
\begin{equation}
  \begin{array}{lll}
    \ttwothreer{d_{\varepsilon}} & \assign & \frac{\varepsilon^{- 2}}{9}
    \int_{s, x} P_s (x) \mathbbm{E} [\tilde{F}_{\varepsilon}^{ (1)}
    (\varepsilon^{1 / 2} Y_{\varepsilon} (s, x)) \tilde{F}_{\varepsilon}^{
    (1)} (\varepsilon^{1 / 2} Y_{\varepsilon} (0, 0))],\\
    \ttwothreer{\bar{d}_{\varepsilon}} & \assign & 2 \varepsilon^{- 1 / 2}
    f_{3, \varepsilon} f_{2, \varepsilon}  \int_{s, x} P_s (x)
    (C_{\varepsilon} (s, x))^2,\\
    \tthreetwor{d_{\varepsilon}} & \assign & \frac{\varepsilon^{- 2}}{6}
    \int_{s, x} P_s (x) \mathbbm{E} [\tilde{F}_{\varepsilon}^{ (0)}
    (\varepsilon^{1 / 2} Y_{\varepsilon} (s, x))_{} \tilde{F}_{\varepsilon}^{
    (2)} (\varepsilon^{1 / 2} Y_{\varepsilon} (0, 0))],\\
    \tthreethreer{d_{\varepsilon}} & \assign & \frac{\varepsilon^{- 5 / 2}}{3}
    \int_{s, x} P_s (x) \mathbbm{E} [\tilde{F}_{\varepsilon}^{ (0)}
    (\varepsilon^{1 / 2} Y_{\varepsilon} (s, x))_{} \tilde{F}_{\varepsilon}^{
    (1)} (\varepsilon^{1 / 2} Y_{\varepsilon} (0, 0))],
  \end{array} \label{eq:d-constants}
\end{equation}
where $P_s (x)$ is the heat kernel and $\int_{s, x}$ denotes integration on
$\mathbbm{R}^+ \times \mathbbm{T}^3$.

\begin{axiom}
  \label{a:main}All along the paper we enforce the following assumptions:
  \begin{enumeratealpha}
    \item $(u_{0, \varepsilon})_{\varepsilon}$ converges in law to a limit
    $u_0$ in $\CC^{- 1 / 2 - \kappa}$ and is independent of $\eta$;
    
    \item $(\bar{u}_{0, \varepsilon})_{\varepsilon}$ is uniformly bounded in
    $L^{\infty}$ in probability, i.e. $\exists C > 0$ such that $\forall
    \varepsilon \in (0, 1]$ $\| \bar{u}_{0, \varepsilon} \|_{L^{\infty}
    ((\mathbbm{T}/ \varepsilon)^3)} \leqslant C$;
    
    \item $(F_{\varepsilon})_{\varepsilon} \subseteq C^9 (\mathbbm{R})$ and
    there exists constants $c, C > 0$ such that
    \begin{equation}
      \sup_{\varepsilon, x} \sum_{k = 0}^9 | \partial_x^k F_{\varepsilon} (x)
      | \leqslant C e^{c | x |}, \label{e:hyp-F-main}
    \end{equation}
    \item the family of vectors $\lambda_{\varepsilon} = (\lambda_{0,
    \varepsilon}, \lambda_{1, \varepsilon}, \lambda_{2, \varepsilon},
    \lambda_{3, \varepsilon}) \in \mathbbm{R}^4$ given by
    \begin{equation}
      \begin{array}{lllllll}
        \lambda_{3, \varepsilon} & \assign & f_{3, \varepsilon} &  &
        \lambda_{1, \varepsilon} & \assign & \varepsilon^{- 1} f_{1,
        \varepsilon} - 9 \ttwothreer{d_{\varepsilon}} - 6
        \tthreetwor{d_{\varepsilon}}\\
        \lambda_{2, \varepsilon} & \assign & \varepsilon^{- 1 / 2} f_{2,
        \varepsilon} &  & \lambda_{0, \varepsilon} & \assign & \varepsilon^{-
        3 / 2} f_{0, \varepsilon} - \varepsilon^{- 1 / 2} f_{2, \varepsilon} 
        \tthreetwor{d_{\varepsilon}} - 3 \tthreethreer{d_{\varepsilon}} - 3
        \ttwothreer{\bar{d}_{\varepsilon}}
      \end{array} \label{e:def-lambda}
    \end{equation}
    has a finite limit $\lambda = (\lambda_0, \lambda_1, \lambda_2, \lambda_3)
    \in \mathbbm{R}^4$ as $\varepsilon \rightarrow 0$. 
  \end{enumeratealpha}
\end{axiom}

\begin{theorem}[Convergence of the solution]
  \label{t:maintheorem}Under Assumption~\ref{a:main} the family of random
  fields $(u_{\varepsilon})_{\varepsilon}$ given by the solution to
  eq.~(\ref{e:univ}) converges in law and locally in time to a limiting random
  field $u (\lambda)$ in the space $C_T \CC^{- \alpha} (\mathbbm{T}^3)$ for
  every $1 / 2 < \alpha < 2 / 3$. The law of $u (\lambda)$ depends only on the
  value of $\lambda$ and not on the other details of the nonlinearity or on
  the covariance of the noise term. We call this limit the dynamic $\Phi^4_3$
  model with parameter vector $\lambda \in \mathbbm{R}^4$. \ 
\end{theorem}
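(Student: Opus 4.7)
The strategy follows the paracontrolled-distribution architecture and parallels the treatment of the polynomial case by Hairer--Xu: I would split the problem into (i) construction and convergence of an enhanced stochastic data set and (ii) continuity of the associated solution map. For (ii), the local well-posedness of the $\Phi^4_3$ equation as a fixed point in a paracontrolled space is available from the literature; once the enhanced data of \eqref{e:univ} converge to those of $\Phi^4_3$ with parameter vector $\lambda$, continuity of this solution map yields convergence of $u_\varepsilon$ in $C_T\CC^{-\alpha}$ up to the (possibly random) explosion time of the limit. The content of the theorem is therefore concentrated in part (i).

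\textbf{Paracontrolled ansatz and Taylor expansion.} I would write $u_\varepsilon = \lambda_{3,\varepsilon} Y_\varepsilon + (\text{fixed combinations of iterated-integral trees built from } Y_\varepsilon) + v_\varepsilon$, with $v_\varepsilon$ paracontrolled by these trees, substitute into \eqref{e:univ}, and Taylor-expand $\varepsilon^{-3/2}\tilde F_\varepsilon(\varepsilon^{1/2} u_\varepsilon)$ around $\varepsilon^{1/2} Y_\varepsilon$ up to an order high enough that the integral remainder is controllable by the $C^9$ hypothesis \eqref{e:hyp-F-main}. This produces a hierarchy of terms of the form $\varepsilon^{-3/2+k/2}\tilde F_\varepsilon^{(k)}(\varepsilon^{1/2}Y_\varepsilon)\cdot\mathcal{P}_k$, where $\mathcal{P}_k$ is a product of explicit trees and powers of $v_\varepsilon$, plus three lower-order contributions carrying the coefficients $f_{0,\varepsilon}, f_{1,\varepsilon}, f_{2,\varepsilon}$ subtracted off in \eqref{e:ftilde-def}. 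These explicit coefficients, together with resonance counterterms produced by the next step, must reorganise precisely into $\lambda_{0,\varepsilon}, \lambda_{1,\varepsilon}, \lambda_{2,\varepsilon}$ as in \eqref{e:def-lambda}.

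\textbf{Partial chaos expansion and renormalisation.} For each weighted random field $\varepsilon^{-3/2+k/2}\tilde F_\varepsilon^{(k)}(\varepsilon^{1/2}Y_\varepsilon)$ I would apply the chaos decomposition \eqref{e:F-chaos-exp} to isolate the first few homogeneous Wiener chaos contributions and match each of them, via the Gaussian convergence result described in the introduction, with an explicit Wick power $\llbracket X^n\rrbracket$ weighted by $\lambda_{n,\varepsilon}$. The contractions between these low-order pieces and the accompanying trees $\mathcal{P}_k$ reproduce both the genuine convergent trees of $\Phi^4_3$ and, as resonant subgraphs, the diverging constants collected in \eqref{eq:d-constants}; absorbing them via Assumption~\ref{a:main}(d) furnishes the renormalised couplings. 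The non-polynomial tail of the chaos expansion, i.e.\ the part starting at a chaos level high enough that the associated product inherits a positive regularity, must then be controlled uniformly in $\varepsilon$ in a negative-regularity Besov norm.

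\textbf{Malliavin estimates and main obstacle.} The core technical step, and the principal obstacle, is the uniform $L^p$ control of the stochastic products $\tilde F_\varepsilon^{(k)}(\varepsilon^{1/2}Y_\varepsilon)\cdot\mathcal{P}_k$ and of the partial-chaos tails. I would express them through iterated Malliavin derivatives of $\tilde F_\varepsilon^{(k)}$, reducing each $L^p$ bound to a finite sum of integrals of covariance kernels $C_\varepsilon$ against heat kernels that mimic, graph by graph, the Feynman diagrams of the polynomial $\Phi^4_3$ model; Gaussian hypercontractivity then promotes these bounds to all $p$. Once uniform Besov estimates on the full collection of stochastic objects are established, Kolmogorov-type and compactness arguments extract a limit which, by the Gaussian convergence of the low-chaos pieces, is identified with the enhanced data of $\Phi^4_3$ at coupling $\lambda$; step (i) then feeds into (ii) to conclude. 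Unlike the polynomial setting of Hairer--Xu, no exact finite Wiener-chaos decomposition is available for $F_\varepsilon$, so the $C^9$ regularity must be exploited quantitatively to obtain these graph-level bounds uniformly in $\varepsilon$: this is what forces both the Malliavin-calculus approach and the precise form of the renormalisation constants appearing in \eqref{e:def-lambda}.
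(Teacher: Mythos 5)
Your architecture matches the paper's: split into (i) convergence of the enhanced data and (ii) continuity of the paracontrolled fixed-point map; Taylor-expand $\tilde F_\varepsilon$ around $\varepsilon^{1/2}Y_\varepsilon$ to third order; use a partial chaos expansion; correctly identify the infinite-chaos tail as the obstacle and Malliavin calculus as the remedy. However, there are two concrete gaps in the part you yourself flag as the core technical step.

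First, you invoke Gaussian hypercontractivity to promote the graph estimates to all $p$. Section~\ref{s:example} explicitly points out that this is precisely what does \emph{not} work here: the tail $(\mathrm{id}-J_0-\cdots-J_{n-1})\tilde F^{(k)}_\varepsilon(\varepsilon^{1/2}Y_\varepsilon)$ lives in infinitely many Wiener chaoses, and summing the chaos-by-chaos $L^p$ bounds that hypercontractivity produces does not converge. The actual mechanism is different: rewrite the tail as an iterated Skorohod integral $\delta^n\bigl(G^{[n]}_{[1]}\Phi^{[m+n]}_\zeta h^{\otimes n}_\zeta\bigr)$ via Lemma~\ref{l:Fexpansion}; apply the Meyer inequalities $\|\delta^n(\cdot)\|_{L^p}\lesssim\|\cdot\|_{\mathbbm D^{n,p}}$ from Lemma~\ref{l:delta-norm} together with the $L^p$-boundedness of the Green operators $G^{[n]}_{[1]}$ and of $\mathD G_j$ from Corollary~\ref{l:bounded-q}; then split the $L^p(\Omega;H^{\otimes n})$-norm and use H\"older to peel off the scalar factor $\|\varepsilon^{-1/2}\Phi^{[4]}_\zeta\|_{L^p(\Omega)}$, which is uniformly bounded by a Gaussian integral of $|F^{(4)}_\varepsilon|$ thanks to~(\ref{e:hyp-F-main}). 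What remains is a \emph{deterministic} covariance graph of finite order to which the standard $\Phi^4_3$ kernel estimates apply; hypercontractivity appears nowhere.

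Second, for the composite trees coming from resonant products in the paracontrolled decomposition, the remainder is a \emph{product} of two iterated Skorohod integrals and must be cast back into a single $\delta^\ell(\cdot)$ before the Meyer bound applies. The paper's tool for this is the product formula for Skorohod integrals, Lemma~\ref{l:productformula} leading to eq.~(\ref{e:productformula-easy}); it is also what guarantees that only finitely many contractions contribute to the correlations controlling the extra renormalisation constants, cf.\ Lemma~\ref{l:bound-corr}. Your plan has no analogue of this step, and without it the "graph by graph" reduction for the four composite trees does not close: for non-polynomial $F_\varepsilon$ the product of two tails is not a priori a finite sum of graphs, and this is exactly the point at which a naive transposition of the polynomial argument breaks down.
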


Here $C_T \CC^{- \alpha} (\mathbbm{T}^3)$ denotes the space of continuous
functions from $[0, T]$ to the Besov space $\CC^{- \alpha} (\mathbbm{T}^3) =
B^{- \alpha}_{\infty, \infty} (\mathbbm{T}^3)$ (see Appendix~\ref{s:notations}
for the notation on Besov spaces and paraproducts).
Theorem~\ref{t:maintheorem} is actually just a corollary of the more precise
result Theorem~\ref{t:lim-ident}, in which we identify the paracontrolled
equation satisfied by the limit random field $u (\lambda)$.

\begin{remark}
  We are interested only in local-in-time convergence of $u_{\varepsilon}$, as
  a way to show the potential of our method for controlling stochastic terms
  with infinite chaos decomposition (developed in Section~\ref{s:comparing}).
  Nevertheless, we expect it to be possible to obtain global-in-time
  convergence of the solution with more stringent assumptions on
  $F_{\varepsilon}$, although we do not treat this problem here.
\end{remark}

\begin{remark}
  \label{r:usualphi4}As a special case we can take
  \[ \begin{array}{lll}
       F_{\varepsilon} (x) & = & \lambda_3 H_3 (x, \sigma_{\varepsilon}^2) +
       \varepsilon^{1 / 2} \lambda_2 H_2 (x, \sigma_{\varepsilon}^2) +
       \varepsilon (\lambda_1 + \gamma_{1, \varepsilon}) H_1 (x,
       \sigma_{\varepsilon}^2) + \varepsilon^{3 / 2} (\lambda_0 + \gamma_{0,
       \varepsilon})
     \end{array} \]
  so that
  \[ f_{3, \varepsilon} = \lambda_3, \quad \varepsilon^{- 1 / 2} f_{2,
     \varepsilon} = \lambda_2, \quad \varepsilon^{- 1} f_{1, \varepsilon} =
     \lambda_1 + \gamma_{1, \varepsilon}, \quad \varepsilon^{- 3 / 2} f_{0,
     \varepsilon} = \lambda_0 + \gamma_{0, \varepsilon}, \]
  and
  \[ \ttwothreer{d_{\varepsilon}} = (\lambda_3)^2 L_{\varepsilon}, \quad
     \ttwothreer{\bar{d}_{\varepsilon}} = \lambda_3 \lambda_2 L_{\varepsilon},
     \quad \tthreetwor{d_{\varepsilon}} = \tthreethreer{d_{\varepsilon}} = 0,
  \]
  where $L_{\varepsilon} \assign 2 \int_{s, x} P_s (x) (C_{Y, \varepsilon} (s,
  x))^2 .$ Choosing
  \[ \gamma_{1, \varepsilon} \assign 9 \ttwothreer{d_{\varepsilon}} = 9
     (\lambda_3)^2 L_{\varepsilon}, \qquad \gamma_{0, \varepsilon} \assign 3
     \ttwothreer{\bar{d}_{\varepsilon}} = 3 \lambda_3 \lambda_2
     L_{\varepsilon}, \]
  we obtain $\lambda_{\varepsilon} \rightarrow (\lambda_0, \lambda_1,
  \lambda_2, \lambda_3)$. This shows that all the possible limits $\lambda \in
  \mathbbm{R}^4$ are attainable. In this case~(\ref{e:univ}) takes the form
  \begin{equation}
    \LL u_{\varepsilon} = - \lambda_3 u_{\varepsilon}^3 - \lambda_2
    u_{\varepsilon}^2 - [\lambda_1 - 3 \lambda_3 \varepsilon^{- 1}
    \sigma^2_{\varepsilon} + 9 (\lambda_3)^2 L_{\varepsilon}] u_{\varepsilon}
    - \lambda_0 + \lambda_2 \sigma^2_{\varepsilon} - 3 \lambda_3 \lambda_2
    L_{\varepsilon} + \eta_{\varepsilon} . \label{eq:cubic}
  \end{equation}
  The name dynamic $\Phi^4_3$ equation (or stochastic quantisation equation)
  derives from the fact that the simplest class of models which approximate
  the limiting random field $u (\lambda)$ is precisely obtained by choosing a
  cubic polynomial like in~(\ref{eq:cubic}) as non-linear term (which is the
  gradient of a fourth order polinomial playing the role of local potential).
\end{remark}

In two dimensions, this model has been subject of various studies since more
than thirty
years~{\cite{jona-lasinio_stochastic_1985,albeverio_stochastic_1991,da_prato_strong_2003}}.
For the three dimensional case, the kind of convergence results described
above are originally due to
Hairer~{\cite{hairer_theory_2014,hairer_regularity_2015}} and constitute one
of the first groundbreaking applications of his theory of regularity
structures. Similar results were later obtained by Catellier and
Chouk~{\cite{catellier_paracontrolled_2013}} using the paracontrolled approach
of Gubinelli, Imkeller and Perkowski~{\cite{gubinelli_paracontrolled_2012}}.
Kupiainen~{\cite{kupiainen_renormalization_2014}} described a third approach
using renormalization group ideas.

\tmtextit{Weak universality} is the observation that the same limiting object
describes the large scale behaviour of the solutions of more general
equations, in particular that of the many parameters present in a general
model, only a finite number of their combinations survive in the limit to
describe the limiting object. The adjective ``weak'' is related to the fact
that in order to control the large scale limit the non-linearity has to be
very small in the microscopic scale. This sets up a perturbative regime which
is well suited to the analysis via regularity structures or paracontrolled
distributions.

The first result of weak universality for a singular stochastic PDE has been
given by Hairer and Quastel~{\cite{hairer_class_2015}} in the context the
Kardar--Parisi--Zhang equation. Using the machinery developed there Hairer and
Wu~{\cite{hairer_large_2016}} proved a weak universality result for three
dimensional reaction--diffusion equations in the case of Gaussian noise and a
polynomial non--linearity, within the context of regularity structures. Weak
universality for reaction--diffusion equations driven by non Gaussian noise is
analysed in Shen and Wu~{\cite{shen_weak_2016}}. Recently, important results
concerning the stochastic quantisation equation we obtained by Mourrat and
Weber. In particular the convergence to the dynamic $\Phi^4_2$ model for a
class of Markovian dynamics of discrete spin
systems~{\cite{mourrat_convergence_2014}} and also the global wellposedness of
$\Phi^4_2$ in space and time~{\cite{mourrat_global_2015}} and in time for
$\Phi^4_3$~{\cite{mourrat_global_2016}}. The recent
preprint~{\cite{gubinelli_renormalization_2017}} analyzes an hyperbolic
version of the stochastic quantisation equation in two dimensions, including
the associated universality in the small noise regime.

The present work is the first which considers in detail the weak universality
problem in the context of paracontrolled distributions, showing that on the
analytic side the apriori estimates can be obtained via standard arguments and
that the major difficulty is related to showing the convergence of a finite
number of random fields to universal limiting objects. The main novelty of our
work is our use of the Malliavin
calculus~{\cite{nualart_2006,nourdin_peccati_2012}} to perform the analysis of
these stochastic terms without requiring polynomial non--linearity as in the
previous works cited above. In particular we were inspired by the computations
in~{\cite{nourdin_nualart_2007}} and in general by the use of the Malliavin
calculus to establish normal approximations~{\cite{nourdin_peccati_2012}}. The
main technical results of our paper, Theorem~\ref{t:stoch-conv} below, is not
particularly linked to paracontrolled distributions. A similar analysis is
conceivable \ for the stochastic models in regularity structures. Moreover the
same tools can also allow to prove similar non-polynomial weak universality
statements for the KPZ along the lines of the present analysis. This is the
subject of ongoing work.

\begin{acknowledgments*}
  The authors would like to thank the anonymous referee for the detailed and
  constructive critique which contributed to improve the overall exposition of
  the results. Support via SFB CRC 1060 is also gratefully acknowledged. 
\end{acknowledgments*}

\section{\label{s:universal-analytic}Analysis of the mesoscopic model}

The goal of this section is to obtain a paracontrolled structure for
equation~(\ref{e:univ}) analogous to that introduced
in~{\cite{catellier_paracontrolled_2013}} for the cubic polynomial case and
use it to set up the limiting procedure. Convergence of the stochastic terms
and some apriori estimates will be the subject of the following sections.
Definitions and a reminder of the basic results of paradifferential calculus
needed here can be found in Appendix~\ref{a:basics}.

\subsection{\label{s:paracontrolled-struc}Paracontrolled structure}

Let us start our analysis by centering the reaction term $F_{\varepsilon}
(\varepsilon^{1 / 2} u_{\varepsilon})$ in~(\ref{e:univ}) using
decomposition~(\ref{e:ftilde-def}) to obtain:
\begin{eqnarray*}
  \LL u_{\varepsilon} & = & - \varepsilon^{- \frac{3}{2}}
  \tilde{F}_{\varepsilon} (\varepsilon^{\frac{1}{2}} u_{\varepsilon} (t, x)) +
  \eta_{\varepsilon}\\
  &  & - \varepsilon^{- 3 / 2} f_{0, \varepsilon} - \varepsilon^{- 1} f_{1,
  \varepsilon} u_{\varepsilon} - \varepsilon^{- 3 / 2} f_{2, \varepsilon} H_2
  \left( \varepsilon^{\frac{1}{2}} u_{\varepsilon}, \sigma_{\varepsilon}^2
  \right) .
\end{eqnarray*}
We write $u_{\varepsilon} = Y_{\varepsilon} + v_{\varepsilon}$ with
$Y_{\varepsilon}$ as in~(\ref{e:Y-stat-eq}), and perform a Taylor expansion of
$\tilde{F}_{\varepsilon} (\varepsilon^{1 / 2} Y_{\varepsilon} + \varepsilon^{1
/ 2} v_{\varepsilon})$ around $\varepsilon^{1 / 2} Y_{\varepsilon}$ up to the
third order to get
\begin{equation}
  \begin{array}{lll}
    \LL u_{\varepsilon} & = & \eta_{\varepsilon} - \varepsilon^{- \frac{3}{2}}
    \tilde{F}_{\varepsilon} (\varepsilon^{\frac{1}{2}} Y_{\varepsilon}) -
    \varepsilon^{- 1} \tilde{F}^{(1)}_{\varepsilon} (\varepsilon^{\frac{1}{2}}
    Y_{\varepsilon}) v_{\varepsilon} - \frac{1}{2} \varepsilon^{- \frac{1}{2}}
    \tilde{F}^{(2)}_{\varepsilon} (\varepsilon^{\frac{1}{2}} Y_{\varepsilon})
    v_{\varepsilon}^2 - \frac{1}{6} \tilde{F}^{(3)}_{\varepsilon}
    (\varepsilon^{\frac{1}{2}} Y_{\varepsilon}) v_{\varepsilon}^3\\
    &  & - \varepsilon^{- 3 / 2} f_{0, \varepsilon} - \varepsilon^{- 1} f_{1,
    \varepsilon} (Y_{\varepsilon} + v_{\varepsilon}) - \varepsilon^{- 1 / 2}
    f_{2, \varepsilon} (\llbracket Y_{\varepsilon}^2 \rrbracket + 2
    v_{\varepsilon} Y_{\varepsilon} + v_{\varepsilon}^2) - R_{\varepsilon}
    (v_{\varepsilon}) .
  \end{array} \label{eq:first-exp}
\end{equation}
where $R_{\varepsilon} (v_{\varepsilon})$ is the remainder of the Taylor
series and we used the fact that $H_2 (\varepsilon^{1 / 2} Y_{\varepsilon},
\sigma_{\varepsilon}^2) = \varepsilon \llbracket Y_{\varepsilon}^2
\rrbracket$. Notice that we stopped the Taylor expansion at the first term for
which $\varepsilon$ does not appear anymore with a negative exponent (that is
$\tilde{F}^{(3)}_{\varepsilon} (\varepsilon^{\frac{1}{2}} Y_{\varepsilon})$).
One can then expect the remainder $R_{\varepsilon} (v_{\varepsilon})$ to
converge to zero in some sense. On the other hand, all the other terms except
$\tilde{F}^{(3)}_{\varepsilon} (\varepsilon^{\frac{1}{2}} Y_{\varepsilon})$
and $R_{\varepsilon} (v_{\varepsilon})$ appear to diverge in the limit
$\varepsilon \rightarrow 0$, but in analogy with well-known renormalization
methods for random fields, we try to find a combination of them that can be
made to converge in some function space. Define the following random fields:

\begin{equation}
  \begin{array}{lllllll}
    \LL Y_{\varepsilon} & \assign & - Y_{\varepsilon} + \eta_{\varepsilon} &
    \hspace{4em} &  &  & \\
    \ttwo{\bar{Y}_{\varepsilon}} & \assign & \varepsilon^{- 1 / 2} f_{2,
    \varepsilon}  \llbracket Y_{\varepsilon}^2 \rrbracket &  & \LL
    \ttwoone{\bar{Y}_{\varepsilon}} & \assign &
    \ttwo{\bar{Y}_{\varepsilon}},\\
    \tthree{Y_{\varepsilon}} & \assign & \varepsilon^{- \frac{3}{2}}
    \tilde{F}_{\varepsilon} (\varepsilon^{\frac{1}{2}} Y_{\varepsilon}) &  &
    \LL \tthreeone{Y_{\varepsilon}} & \assign & \tthree{Y_{\varepsilon}},\\
    \ttwo{Y_{\varepsilon}} & \assign & \frac{1}{3} \varepsilon^{- 1}
    \tilde{F}^{(1)}_{\varepsilon} (\varepsilon^{\frac{1}{2}} Y_{\varepsilon})
    &  & \LL \ttwoone{Y_{\varepsilon}} & \assign & \ttwo{Y_{\varepsilon}}\\
    \tone{Y_{\varepsilon}} & \assign & \frac{1}{6} \varepsilon^{- \frac{1}{2}}
    \tilde{F}^{(2)}_{\varepsilon} (\varepsilon^{\frac{1}{2}} Y_{\varepsilon})
    &  & \tzero{Y_{\varepsilon}} & \assign & \frac{1}{6}
    \tilde{F}^{(3)}_{\varepsilon} (\varepsilon^{\frac{1}{2}}
    Y_{\varepsilon})\\
    \ttwothreer{\bar{Y}_{\varepsilon}} & \assign &
    \ttwoone{\bar{Y}_{\varepsilon}} \circ \ttwo{Y_{\varepsilon}} -
    \ttwothreer{\bar{d}_{\varepsilon}} &  & \ttwothreer{Y_{\varepsilon}} &
    \assign & \ttwoone{Y_{\varepsilon}} \circ \ttwo{Y_{\varepsilon}} -
    \ttwothreer{d_{\varepsilon}},\\
    \tthreetwor{Y_{\varepsilon}} & \assign & \tthreeone{Y_{\varepsilon}} \circ
    \tone{Y_{\varepsilon}} - \tthreetwor{d_{\varepsilon}}, &  &
    \tthreethreer{Y_{\varepsilon}} & \assign & \tthreeone{Y_{\varepsilon}}
    \circ \ttwo{Y_{\varepsilon}} - \tthreethreerprime{d_{\varepsilon}}
    Y_{\varepsilon} - \tthreethreer{d_{\varepsilon}},
  \end{array} \label{e:trees-def}
\end{equation}
with $Y_{\varepsilon}$ stationary solution, while $Y_{\varepsilon},
\tthreeone{Y_{\varepsilon}}, \ttwoone{Y_{\varepsilon}},
\ttwoone{\bar{Y}_{\varepsilon}}$ have $0$ initial condition in $t = 0$. The
last four trees $\ttwothreer{\bar{Y}_{\varepsilon}}$,
$\ttwothreer{Y_{\varepsilon}}$, $\tthreetwor{Y_{\varepsilon}}$,
$\tthreethreer{Y_{\varepsilon}}$ are obtained from the others via the resonant
Bony's paraproduct $\circ$ recalled in Appendix~\ref{a:basics}, and
$\ttwothreer{\bar{d}_{\varepsilon}}$, $\ttwothreer{d_{\varepsilon}}$,
$\tthreetwor{d_{\varepsilon}}$, $\tthreethreerprime{d_{\varepsilon}}$,
$\tthreethreer{d_{\varepsilon}}$ are just $\varepsilon$-dependent constants
whose exact value will matter only in Section~\ref{s:comparing}. Indeed, in
the scope of this section we only need the following relation to be verified:
\begin{equation}
  \begin{array}{lll}
    \tthreethreerprime{d_{\varepsilon}} & = & 2 \tthreetwor{d_{\varepsilon}} + 3
    \ttwothreer{d_{\varepsilon}} .
  \end{array} \label{e:ren-constraint}
\end{equation}
The notation $\ttwo{\bar{Y}_{\varepsilon}}$ denotes that this tree has finite
chaos expansion and can be treated with the well-known techniques
of~{\cite{catellier_paracontrolled_2013}}
or~{\cite{mourrat_construction_2016}} (we put a bar on
$\ttwothreer{\bar{Y}_{\varepsilon}}$ just because is the only tree obtained
from $\ttwo{\bar{Y}_{\varepsilon}}$)

With the definitions~(\ref{e:trees-def}), equation~(\ref{eq:first-exp}) takes
the form
\begin{equation}
  \begin{array}{lll}
    \LL v_{\varepsilon} & = & Y_{\varepsilon} - \ttwo{\bar{Y}_{\varepsilon}} -
    \tthree{Y_{\varepsilon}} - 3 \ttwo{Y_{\varepsilon}} v_{\varepsilon} - 3
    \tone{Y_{\varepsilon}} v_{\varepsilon}^2 - \tzero{Y_{\varepsilon}}
    v_{\varepsilon}^3\\
    &  & - \varepsilon^{- 3 / 2} f_{0, \varepsilon} - \varepsilon^{- 1} f_{1,
    \varepsilon}  (Y_{\varepsilon} + v_{\varepsilon}) - \varepsilon^{- 1 / 2}
    f_{2, \varepsilon}  (2 Y_{\varepsilon} v_{\varepsilon} +
    v_{\varepsilon}^2) - R_{\varepsilon} (v_{\varepsilon}) .
  \end{array} \label{eq1}
\end{equation}

At this point it is worth noting that the trivial case
$\tilde{F}_{\varepsilon} (x) = H_3 (x, \sigma^2_{\varepsilon})$ yields
$\tthree{Y_{\varepsilon}} = \llbracket Y_{\varepsilon}^3 \rrbracket$,
$\ttwo{Y_{\varepsilon}} = \llbracket Y^2_{\varepsilon} \rrbracket$,
$\tone{Y_{\varepsilon}} = Y_{\varepsilon}$, $\tzero{Y} = 1$. By comparing
these random fields to the ones defined
in~{\cite{catellier_paracontrolled_2013}} we can \tmtextit{guess} that
$\tthreeone{Y_{\varepsilon}}, \ttwo{Y_{\varepsilon}}, \tone{Y_{\varepsilon}},
\tzero{Y}$ can be controlled respectively in $\CC^{1 / 2 - \kappa}, \CC^{- 1 -
\kappa}, \CC^{- 1 / 2 - \kappa}, \CC^{- \kappa}$ \ $\forall \kappa > 0$ for
any $F_{\varepsilon}$ satisfying Assumption~\ref{a:main}, and carry on the
paracontrolled analysis of~(\ref{eq1}) as if it were the case. Clearly, the
paracontrolled structure is robust and does not depend on how the terms
$\tthreeone{Y_{\varepsilon}}, \ttwo{Y_{\varepsilon}}, \tone{Y_{\varepsilon}},
\tzero{Y}$ are defined as long as they have the desired regularity.

From these observations, we do not expect to be able to control the products
$\ttwo{Y_{\varepsilon}} v_{\varepsilon}$, $\tone{Y_{\varepsilon}}
v_{\varepsilon}^2$ and $Y_{\varepsilon} v_{\varepsilon}$ in eq.~(\ref{eq1})
uniformly in $\varepsilon > 0$. In order to proceed with the analysis we make
the Ansatz:
\begin{equation}
  \begin{array}{lll}
    u_{\varepsilon} & = & Y_{\varepsilon} + v_{\varepsilon},\\
    v_{\varepsilon} & = & - \tthreeone{Y_{\varepsilon}} -
    \ttwoone{\bar{Y}_{\varepsilon}} - 3 v_{\varepsilon} \precprec
    \ttwoone{Y_{\varepsilon}} + v^{\natural}_{\varepsilon}
  \end{array} \label{e:ansatz-univ}
\end{equation}
and proceed to decompose the ill-defined products using the paracontrolled
techniques recalled in Appendix~\ref{a:basics}. We start by writing
$v_{\varepsilon} \ttwo{Y_{\varepsilon}} = v_{\varepsilon} \prec
\ttwo{Y_{\varepsilon}} + v_{\varepsilon} \succ \ttwo{Y_{\varepsilon}} +
v_{\varepsilon} \circ \ttwo{Y_{\varepsilon}}$. The resonant term, together
with Ansatz~(\ref{e:ansatz-univ}), yields:
\begin{eqnarray*}
  v_{\varepsilon} \circ \ttwo{Y_{\varepsilon}} & = & -
  \tthreeone{Y_{\varepsilon}} \circ \ttwo{Y_{\varepsilon}} -
  \ttwoone{\bar{Y}_{\varepsilon}} \circ \ttwo{Y_{\varepsilon}} - 3
  v_{\varepsilon}  (\ttwoone{Y_{\varepsilon}} \circ \ttwo{Y_{\varepsilon}})\\
  &  & - 3 \overline{\tmop{com}}_1 (v_{\varepsilon},
  \ttwoone{Y_{\varepsilon}}, \ttwo{Y_{\varepsilon}}) +
  v^{\natural}_{\varepsilon} \circ \ttwo{Y_{\varepsilon}},
\end{eqnarray*}
with the definition and bounds of $\overline{\tmop{com}}_1 (\cdummy, \cdummy,
\cdummy)$ given in Lemma~\ref{lem:all-commutators}. Then we define
\[ \begin{array}{lll}
     \ttwo{Y_{\varepsilon}} \hat{\diamond} v_{\varepsilon} & \assign &
     v_{\varepsilon} \ttwo{Y_{\varepsilon}} - v_{\varepsilon} \prec
     \ttwo{Y_{\varepsilon}} + (3 v_{\varepsilon}  \ttwothreer{d_{\varepsilon}}
     + \tthreethreerprime{d_{\varepsilon}} Y_{\varepsilon} +
     \tthreethreer{d_{\varepsilon}} + \ttwothreer{\bar{d}_{\varepsilon}})\\
     & = & v_{\varepsilon} \succ \ttwo{Y_{\varepsilon}} -
     \ttwothreer{\bar{Y}_{\varepsilon}} - \tthreethreer{Y_{\varepsilon}} - 3
     v_{\varepsilon}  \ttwothreer{Y_{\varepsilon}} +
     v^{\natural}_{\varepsilon} \circ \ttwo{Y_{\varepsilon}} - 3
     \overline{\tmop{com}}_1 (v_{\varepsilon}, \ttwoone{Y_{\varepsilon}},
     \ttwo{Y_{\varepsilon}}) .
   \end{array} \]
Moreover we have for $v_{\varepsilon} Y_{\varepsilon}$:
\[ \begin{array}{lll}
     v_{\varepsilon} Y_{\varepsilon} & = & \varphi_{\varepsilon}
     Y_{\varepsilon} - \tthreeone{Y_{\varepsilon}} \prec Y_{\varepsilon} -
     \tthreeone{Y_{\varepsilon}} \succ Y_{\varepsilon} -
     \tthreeone{Y_{\varepsilon}} \circ Y_{\varepsilon},
   \end{array} \]
where we introduced the shorthand $\varphi_{\varepsilon} \assign
v_{\varepsilon} + \tthreeone{Y_{\varepsilon}}$. So we let
\[ v_{\varepsilon} \diamond Y_{\varepsilon} \assign v_{\varepsilon}
   Y_{\varepsilon} + \tthreetwor{d_{\varepsilon}} = \varphi_{\varepsilon}
   Y_{\varepsilon} - \tthreeone{Y_{\varepsilon}} \prec Y_{\varepsilon} -
   \tthreeone{Y_{\varepsilon}} \succ Y_{\varepsilon} -
   \tthreetwor{Y_{\varepsilon}}, \]
Finally to analyse the product $\tone{Y_{\varepsilon}} v_{\varepsilon}^2$ we
write
\[ \tone{Y_{\varepsilon}} v_{\varepsilon}^2 = \tone{Y_{\varepsilon}}
   (\tthreeone{Y_{\varepsilon}})^2 - 2 \tone{Y_{\varepsilon}}
   \tthreeone{Y_{\varepsilon}} \varphi_{\varepsilon} + \tone{Y_{\varepsilon}}
   \varphi_{\varepsilon}^2, \]
and consider the products involving only $Y^{\tau}$ factors: first
\[ \tone{Y_{\varepsilon}} \tthreeone{Y_{\varepsilon}} = \tone{Y_{\varepsilon}}
   \succ \tthreeone{Y_{\varepsilon}} + \tone{Y_{\varepsilon}} \prec
   \tthreeone{Y_{\varepsilon}} + \tthreetwor{Y_{\varepsilon}} +
   \tthreetwor{d_{\varepsilon}} = : \tone{Y_{\varepsilon}} \diamond
   \tthreeone{Y_{\varepsilon}} + \tthreetwor{d_{\varepsilon}}, \]
and then we define the term $\tone{Y_{\varepsilon}} \diamond
(\tthreeone{Y_{\varepsilon}})^2$ as follows:
\begin{eqnarray*}
  \tone{Y_{\varepsilon}} \diamond (\tthreeone{Y_{\varepsilon}})^2 & \assign &
  \tone{Y_{\varepsilon}} (\tthreeone{Y_{\varepsilon}})^2 - 2
  \tthreetwor{d_{\varepsilon}} \tthreeone{Y_{\varepsilon}}\\
  & = & \tone{Y_{\varepsilon}} \prec (\tthreeone{Y_{\varepsilon}})^2 +
  \tone{Y_{\varepsilon}} \succ (\tthreeone{Y_{\varepsilon}})^2 +
  \tone{Y_{\varepsilon}} \circ (\tthreeone{Y_{\varepsilon}} \circ
  \tthreeone{Y_{\varepsilon}}) + 2 \tmop{com}_1 (\tthreeone{Y_{\varepsilon}},
  \tthreeone{Y_{\varepsilon}}, \tone{Y_{\varepsilon}}) + 2
  \tthreeone{Y_{\varepsilon}} \tthreetwor{Y_{\varepsilon}},
\end{eqnarray*}
so that
\[ \tone{Y_{\varepsilon}} \diamond v_{\varepsilon}^2 \assign
   \tone{Y_{\varepsilon}} v_{\varepsilon}^2 + 2 \tthreetwor{d_{\varepsilon}}
   v_{\varepsilon} = \tone{Y_{\varepsilon}} \diamond
   (\tthreeone{Y_{\varepsilon}})^2 - 2 (\tone{Y_{\varepsilon}} \diamond
   \tthreeone{Y_{\varepsilon}}) \varphi_{\varepsilon} + \tone{Y_{\varepsilon}}
   \varphi_{\varepsilon}^2 . \]
We note also that
\[ \LL v_{\varepsilon} = - \LL \tthreeone{Y_{\varepsilon}} - \LL
   \ttwoone{\bar{Y}_{\varepsilon}} + \LL v^{\natural}_{\varepsilon} - 3
   v_{\varepsilon} \prec \LL \ttwoone{Y_{\varepsilon}} - 3 \tmop{com}_3
   (v_{\varepsilon}, \ttwoone{Y_{\varepsilon}}) - 3 \tmop{com}_2
   (v_{\varepsilon}, \ttwo{Y_{\varepsilon}}), \]
with $\tmop{com}_2 (\cdummy, \cdummy)$ and $\tmop{com}_3 (\cdummy, \cdummy)$
specified in Lemma~\ref{lem:all-commutators}. Substituting
these renormalized products into (\ref{eq1}) and recalling the
definition~(\ref{e:def-lambda}) for $\lambda_{\varepsilon} = (\lambda_{0,
\varepsilon}, \lambda_{1, \varepsilon}, \lambda_{2, \varepsilon}, \lambda_{3,
\varepsilon})$, we obtain the following equation for
$v^{\natural}_{\varepsilon}$ :
\begin{eqnarray*}
  \LL v_{\varepsilon}^{\natural} & = & 3 \tmop{com}_3 (v_{\varepsilon},
  \ttwoone{Y_{\varepsilon}}) + 3 \tmop{com}_2 (v_{\varepsilon},
  \ttwo{Y_{\varepsilon}})\\
  &  & - \tzero{Y_{\varepsilon}} v_{\varepsilon}^3 - 3 \tone{Y_{\varepsilon}}
  \diamond v_{\varepsilon}^2 - 3 \ttwo{Y_{\varepsilon}} \hat{\diamond}
  v_{\varepsilon}\\
  &  & + Y_{\varepsilon} - \lambda_{2, \varepsilon} (2 v_{\varepsilon}
  \diamond Y_{\varepsilon} + v_{\varepsilon}^2)\\
  &  & - \lambda_{1, \varepsilon}  (Y_{\varepsilon} + v_{\varepsilon}) + [9
  \ttwothreer{d_{\varepsilon}} + 6 \tthreetwor{d_{\varepsilon}} - 3
  \tthreethreerprime{d_{\varepsilon}}] v_{\varepsilon} - \lambda_{0, \varepsilon}
  - R_{\varepsilon} (v_{\varepsilon}),
\end{eqnarray*}
where we can use the constraint~(\ref{e:ren-constraint}) to remove the term
proportional to $v_{\varepsilon}$. Summarizing, we obtain the following
equation, together with Ansatz~(\ref{e:ansatz-univ}):
\begin{equation}
  \left\{ \begin{array}{lll}
    v_{\varepsilon} & = & - \tthreeone{Y_{\varepsilon}} -
    \ttwoone{\bar{Y}_{\varepsilon}} - 3 v_{\varepsilon} \precprec
    \ttwoone{Y_{\varepsilon}} + v^{\natural}\\
    \LL v_{\varepsilon}^{\natural} & = & U (\lambda_{\varepsilon},
    \mathbbm{Y}_{\varepsilon} ; v_{\varepsilon}, v_{\varepsilon}^{\natural}) -
    R_{\varepsilon} (v_{\varepsilon})
  \end{array} \right. \label{e:last-phisharp}
\end{equation}
with initial condition $v_{\varepsilon, 0} = u_{0, \varepsilon} -
Y_{\varepsilon} (0)$ and $U$ given by
\begin{equation}
  \begin{array}{lll}
    U (\lambda_{\varepsilon}, \mathbbm{Y}_{\varepsilon} ; v_{\varepsilon},
    v_{\varepsilon}^{\natural}) & : = & 3 \tmop{com}_3 (v_{\varepsilon},
    \ttwoone{Y_{\varepsilon}}) + 3 \tmop{com}_2 (v_{\varepsilon},
    \ttwo{Y_{\varepsilon}}) - \tzero{Y_{\varepsilon}} v_{\varepsilon}^3\\
    &  & - 3 \tone{Y_{\varepsilon}} \diamond v_{\varepsilon}^2 - 3
    \ttwo{Y_{\varepsilon}} \hat{\diamond} v_{\varepsilon} + Y_{\varepsilon} -
    \lambda_{2, \varepsilon} (2 v_{\varepsilon} \diamond Y_{\varepsilon} +
    v_{\varepsilon}^2)\\
    &  & - \lambda_{1, \varepsilon}  (Y_{\varepsilon} + v_{\varepsilon}) -
    \lambda_{0, \varepsilon} - R_{\varepsilon} (v_{\varepsilon}) .
  \end{array} \label{e:def-bigU}
\end{equation}
The enhanced noise vector $\mathbbm{Y}_{\varepsilon}$ is defined by
\begin{eqnarray}
  \mathbbm{Y}_{\varepsilon} & \assign & (\tzero{Y_{\varepsilon}},
  \tone{Y_{\varepsilon}}, \ttwo{Y_{\varepsilon}},
  \ttwo{\bar{Y}_{\varepsilon}}, \tthreeone{Y_{\varepsilon}},
  \tthreetwor{Y_{\varepsilon}}, \ttwothreer{Y_{\varepsilon}},
  \ttwothreer{\bar{Y}_{\varepsilon}}, \tthreethreer{Y_{\varepsilon}})
  \nonumber\\
  &  &  \nonumber\\
  \mathcal{X}_T & \assign & C_T^{} \CC^{- \kappa} \times C_T \CC^{-
  \frac{1}{2} - \kappa} \times \left( C_T \CC^{- 1 - \kappa} \right)^2 \times
  \LL_T^{1 / 2 - \kappa} \times \left( C_T \CC^{- \kappa} \right)^3 \times C_T
  \CC^{- \frac{1}{2} - \kappa}  \label{e:y-space}
\end{eqnarray}
for every $\kappa > 0$, $T > 0$. We use the notation $\|
\mathbbm{Y}_{\varepsilon} \|_{\mathcal{X}_T} = \sum_{\tau} \|
\mathbbm{Y}^{\tau}_{\varepsilon} \|_{\mathcal{X}^{\tau}}$ for the associated
norm where $Y_{\varepsilon}^{\tau}$ is a generic tree in
$\mathbbm{Y}_{\varepsilon}$. The homogeneities $| \tau | \in \mathbbm{R}$ are
given by
\[ \begin{array}{|c|c|c|c|c|c|c|c|c|c|c|}
     \hline
     Y_{\varepsilon}^{\tau} & = & \tzero{Y_{\varepsilon}} &
     \tone{Y_{\varepsilon}} & \ttwo{Y_{\varepsilon}} &
     \ttwo{\bar{Y}_{\varepsilon}} & \tthreeone{Y_{\varepsilon}} &
     \tthreetwor{Y_{\varepsilon}} & \ttwothreer{Y_{\varepsilon}} &
     \ttwothreer{\bar{Y}_{\varepsilon}} & \tthreethreer{Y_{\varepsilon}}\\
     \hline
     | \tau | & = & 0 & - 1 / 2 & - 1 & - 1 & 1 / 2 & 0 & 0 & 0 & - 1 / 2\\
     \hline
   \end{array} \]
Note that for every $\varepsilon > 0$ eq.~(\ref{e:last-phisharp}) is
equivalent to eq.~(\ref{e:univ}).

\begin{remark}
  \label{r:solmap-cont} The paracontrolled structure we developed in this
  section is the same as in the work of Catellier and
  Chouk~{\cite{catellier_paracontrolled_2013}}, plus an additive source term
  (which is $R_{\varepsilon} (v_{\varepsilon})$ in
  equation~(\ref{e:last-phisharp})). Therefore, there exists $T = T \left( \|
  \mathbbm{Y}_{\varepsilon} \|_{\mathcal{X}_T}, \| u_{\varepsilon, 0}
  \|_{\CC^{- 1 / 2 - \kappa}}, | \lambda_{\varepsilon} | \right)$ such that we
  can define for $\alpha \in (1 / 2, 2 / 3)$, $p \in [4, \infty)$, $\gamma >
  \frac{1}{4} + \frac{3}{2} \kappa$ a solution map
  \[ \begin{array}{llll}
       \Gamma : & \CC^{- 1 / 2 - \kappa} \times \mathcal{X}^{\tau} \times
       \mathbbm{R}^4 \times \mathcal{M}^{\gamma, p}_T L^p (\mathbbm{T}^3) &
       \rightarrow & C_T \CC^{- \alpha} (\mathbbm{T}^3)\\
       & (u_{\varepsilon, 0}, \mathbbm{Y}_{\varepsilon},
       \lambda_{\varepsilon}, \mathcal{R}) & \mapsto & u_{\varepsilon}
     \end{array} \]
  so that $u_{\varepsilon} = \Gamma (u_{\varepsilon, 0},
  \mathbbm{Y}_{\varepsilon}, \lambda_{\varepsilon}, \mathcal{R})$ with
  $u_{\varepsilon} = Y_{\varepsilon} + v_{\varepsilon}$ and $v_{\varepsilon}$
  that solves~(\ref{e:last-phisharp}) with the remainder~$R_{\varepsilon}
  (v_{\varepsilon})$ replaced by~$\mathcal{R}$. The space
  $\mathcal{M}^{\gamma}_T L^p (\mathbbm{T}^3)$ is specified in
  Appendix~\ref{s:notations}. Indeed, we can use Lemma~\ref{l:L-interpol} and
  Lemma~\ref{l:lp-integ} to control $I\mathcal{R}$ as
  \[ \|I\mathcal{R}\|_{\LL^{1 / 2, 1 + 2 \kappa}_T} \lesssim T^{\delta} \|
     \mathcal{R} \|_{\mathcal{M}^{\gamma, p}_T L^p} \]
  for $\delta > 0$ small enough, and thanks to this bound it is easy to see
  that the the fixed point procedure of Section~3
  of~{\cite{catellier_paracontrolled_2013}} still holds with a
  \tmtextit{fixed} additive source term~$\mathcal{R}$. In the same way, the
  continuity of the solution map~$\Gamma$ follows easily as in Theorem~1.2
  of~{\cite{catellier_paracontrolled_2013}}. 
\end{remark}

\subsection{Identification of the limit}\label{s:limit-ident}

In order to identify interesting limits for equation~(\ref{e:univ}), we
introduce $\forall \lambda = (\lambda_0, \lambda_1, \lambda_2, \lambda_3) \in
\mathbbm{R}^4$ the enhanced noise $\mathbbm{Y} (\lambda)$ which is constructed
from universal noises $X^{\tau}$ as
\begin{equation}
  \begin{array}{lll}
    \mathbbm{Y} (\lambda) & \assign & (\tzero{Y} (\lambda), \tone{Y}
    (\lambda), \ttwo{Y} (\lambda), \ttwo{\bar{Y}} (\lambda), \tthreeone{Y}
    (\lambda), \tthreetwor{Y} (\lambda), \ttwothreer{Y} (\lambda),
    \ttwothreer{Y} (\lambda), \tthreethreer{Y} (\lambda))\\
    & \assign & (\lambda_3, \lambda_3 X, \lambda_3 \ttwo{X}, \lambda_2
    \ttwo{X}, \lambda_3 \tthreeone{X}, (\lambda_3)^2 \tthreetwor{X},
    (\lambda_3)^2 \ttwothreer{X}, \lambda_3 \lambda_2 \ttwothreer{X},
    (\lambda_3)^2 \tthreethreer{X})
  \end{array} \label{e:def-ylambda}
\end{equation}
where $X$ is the stationary solution to to the linear equation $\LL X = - X +
\xi$ and $\xi$ is the time-space white noise on $\mathbbm{R} \times
\mathbbm{T}^3$. We will sometimes use the shorter notation $\mathbbm{Y}
(\lambda) = (Y^{\tau} (\lambda))_{\tau}$ for~(\ref{e:def-ylambda}).

We define the universal fields $X^{\tau}$ through their Littlewood-Paley
decomposition $\forall (t, \bar{x}) \in \mathbbm{R}^+ \times \mathbbm{T}^3$
as:
\begin{equation}
  \begin{array}{lll}
    \tthree{X} & \assign & \llbracket X^3 \rrbracket, \qquad \LL \tthreeone{X}
    = \tthree{X} \quad \text{with $\tthreeone{X} (t = 0) = 0$},\\
    \ttwo{X} & \assign & \llbracket X^2 \rrbracket,\\
    \Delta_q \ttwothreer{X} (t, \bar{x}) & \assign & \Delta_q (1 - J_0)
    (\ttwoone{X} \circ \ttwo{X}) (t, \bar{x}) = \int_{\zeta_1, \zeta_2}  (1 -
    J_0) (\llbracket X^2 (\zeta_1) \rrbracket \llbracket X^2 (\zeta_2)
    \rrbracket) \mu_{q, \zeta_1, \zeta_2},\\
    \Delta_q \tthreetwor{X} (t, \bar{x}) & \assign & \Delta_q (\tthreeone{X}
    \circ X) (t, \bar{x}) = \int_{\zeta_1, \zeta_2}  \llbracket X^3 (\zeta_1)
    \rrbracket X (\zeta_2) \mu_{q, \zeta_1, \zeta_2},\\
    \Delta_q \tthreethreer{X} (t, \bar{x}) & \assign & \int_{\zeta_1, \zeta_2}
    (1 - J_1) (\llbracket X^3 (\zeta_1) \rrbracket \llbracket X^2 (\zeta_2)
    \rrbracket) \mu_{q, \zeta_1, \zeta_2} \\
    &  & + 6 \int_{s_{}, x} [\Delta_q X (t + s, \bar{x} - x) - \Delta_q X (t,
    \bar{x})] P_s (x) [C (s, x)]^2,
  \end{array} \label{e:stoch-usual}
\end{equation}
where as before $\llbracket \cdummy \rrbracket$ stands for the Wick product,
$\zeta_i = (x_i, s_i) \in \mathbbm{R} \times \mathbbm{T}^3$, $C (\cdummy,
\cdummy)$ is the covariance of $X$ and $\mu_{q, \zeta_1, \zeta_2}$ is the
measure
\[ \mu_{q, \zeta_1, \zeta_2} \assign \delta (t - s_2) \mathd \zeta_1 \mathd
   \zeta_2 \int_{x, y} K_{q, \bar{x}} (x) \sum_{i \sim j} K_{i, x} (y) K_{j,
   x} (x_2) P_{t - s_1} (y - x_1) \]
with the usual heat kernel $P_t (x) = \frac{1}{(4 \mathpi t)^{3 / 2}} e^{-
\frac{| x |^2}{4 t}}  \mathbbm{1}_{t \geqslant 0}$. We commit an abuse of
notation by writing $X (\zeta)$ since $X$ is actually a distribution in space:
the integrals in~(\ref{e:stoch-usual}) should obviously be intended as
functionals.

Standard computations (see e.g.~{\cite{catellier_paracontrolled_2013}}
or~{\cite{mourrat_construction_2016}}) show that, $\forall \lambda \in
\mathbbm{R}^4$ and for any $T > 0$, $0 < \kappa < \kappa'$
\[ \mathbbm{Y} (\lambda) \in C^{\kappa}_T \CC^{- \frac{1}{2} - 2 \kappa'}
   \times \left( C^{\kappa}_T \CC^{- 1 - 2 \kappa'} \right)^2 \times
   C^{\kappa}_T \CC^{\frac{1}{2} - 2 \kappa'} \times \left( C^{\kappa}_T
   \CC^{0 - 2 \kappa'} \right)^2 \times C^{\kappa}_T \CC^{- \frac{1}{2} - 2
   \kappa'}_T, \]
almost surely.

Using the paracontrolled structure we developed in
Section~\ref{s:paracontrolled-struc} we can identify the limiting solution $u
(\lambda)$ introduced in Theorem~\ref{t:maintheorem}.

\begin{theorem}
  \label{t:lim-ident} The family of random fields $u_{\varepsilon}$ given by
  the solutions of eq.~(\ref{e:univ}) converges in law and locally in time to
  a limiting random field $u (\lambda)$ in the space $C_T \CC^{- \alpha}
  (\mathbbm{T}^3)$ for every $1 / 2 < \alpha < 2 / 3$. The limiting random
  field $u (\lambda)$ solves the paracontrolled equation
  \begin{equation}
    \left\{ \begin{array}{lll}
      u (\lambda) & = & X + v (\lambda)\\
      v (\lambda) & = & - \lambda_3 \tthreeone{X} - \lambda_2 \ttwoone{X} - 3
      \lambda_3 v (\lambda) \precprec \ttwoone{X} + v^{\natural} (\lambda)\\
      \LL v^{\natural} (\lambda) & = & U (\lambda, \mathbbm{Y} (\lambda) ; v
      (\lambda), v^{\natural} (\lambda))\\
      v^{\natural} (\lambda) (t = 0) & = & v_0 + \lambda_3 \tthreeone{X} (t =
      0) + \lambda_2 \ttwoone{X} (t = 0) + 3 \lambda_3 v_{\varepsilon, 0}
      \prec \ttwoone{X} (t = 0)
    \end{array} \right. \label{eq:final-system}
  \end{equation}
  with $U$ defined in~(\ref{e:def-bigU}) and $v_0 = u_0 - X (t = 0)$.
\end{theorem}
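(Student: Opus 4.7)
The plan is to recognize $u_\varepsilon$ as the image of $(u_{\varepsilon,0}, \mathbbm{Y}_\varepsilon, \lambda_\varepsilon, R_\varepsilon(v_\varepsilon))$ under the continuous solution map $\Gamma$ of Remark~\ref{r:solmap-cont}, and then to pass to the limit input by input. If I can show that $u_{\varepsilon,0} \to u_0$ in $\CC^{-1/2-\kappa}$, $\mathbbm{Y}_\varepsilon \to \mathbbm{Y}(\lambda)$ in $\mathcal{X}_T$, $\lambda_\varepsilon \to \lambda$ and $R_\varepsilon(v_\varepsilon) \to 0$ in $\mathcal{M}^{\gamma,p}_T L^p(\mathbb{T}^3)$, then continuity of $\Gamma$ together with the equivalence of (\ref{e:univ}) and (\ref{e:last-phisharp}) forces $u_\varepsilon \to \Gamma(u_0, \mathbbm{Y}(\lambda), \lambda, 0)$, and by construction this limit is exactly the random field $u(\lambda)$ satisfying the paracontrolled system (\ref{eq:final-system}).

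The convergences $u_{\varepsilon,0} \to u_0$ and $\lambda_\varepsilon \to \lambda$ are postulated in Assumption~\ref{a:main}. The joint convergence in law of the enhanced noise $\mathbbm{Y}_\varepsilon \to \mathbbm{Y}(\lambda)$ in $\mathcal{X}_T$ is the content of the main stochastic result Theorem~\ref{t:stoch-conv}: in particular each tree $Y_\varepsilon^\tau$ converges to the corresponding $\lambda$-weighted combination of the universal Gaussian trees $X^\tau$ defined in (\ref{e:stoch-usual}), with weights matching (\ref{e:def-ylambda}). Invoking Skorokhod's representation, I may assume all these convergences hold almost surely on a common probability space, and identify the weak limit $u(\lambda)$ with an almost-sure limit.

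The only genuinely new issue is the control of the Taylor remainder $R_\varepsilon(v_\varepsilon)$. Since the expansion in (\ref{eq:first-exp}) was pushed just to the order where the $\varepsilon$-prefactor becomes non-negative, the integral form of the remainder obeys a pointwise bound of the shape
\begin{equation*}
|R_\varepsilon(v_\varepsilon)(t,x)| \;\lesssim\; \varepsilon^{1/2}\, |v_\varepsilon(t,x)|^4 \,\sup_{|y|\leq \varepsilon^{1/2}(|Y_\varepsilon|+|v_\varepsilon|)} |\tilde{F}_\varepsilon^{(4)}(y)|,
\end{equation*}
which thanks to the exponential-growth bound (\ref{e:hyp-F-main}) and Gaussian moments of $Y_\varepsilon$ reduces the problem to a uniform-in-$\varepsilon$ bound on $\|v_\varepsilon\|_{L^p_T L^p}$ for large $p$. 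Such a bound is obtained from the fixed-point estimates supporting Remark~\ref{r:solmap-cont} by viewing the small factor $\varepsilon^{1/2}$ as a smallness parameter: the contribution of $R_\varepsilon(v_\varepsilon)$ is absorbed on the right-hand side on a short interval and the resulting estimate gives $\|R_\varepsilon(v_\varepsilon)\|_{\mathcal{M}^{\gamma,p}_T L^p} \to 0$ in probability.

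The main obstacle is the circularity of this last step: the local existence time provided by Remark~\ref{r:solmap-cont} depends on $\|\mathcal{R}\|_{\mathcal{M}^{\gamma,p}_T L^p}$ with $\mathcal{R}=R_\varepsilon(v_\varepsilon)$, while the estimate on $R_\varepsilon(v_\varepsilon)$ itself requires control of $v_\varepsilon$ on the same interval. I would resolve this with a bootstrap argument of the kind standard in singular SPDE theory: first fix a deterministic short time $T_\star$ depending only on the (uniformly bounded in $\varepsilon$) quantities $\|\mathbbm{Y}_\varepsilon\|_{\mathcal{X}_{T_\star}}$, $\|u_{\varepsilon,0}\|_{\CC^{-1/2-\kappa}}$ and $|\lambda_\varepsilon|$, run the fixed point for the system (\ref{e:last-phisharp}) treating $R_\varepsilon(v_\varepsilon)$ as a perturbation absorbed by the $\varepsilon^{1/2}$ factor, and then use the quantitative continuity of $\Gamma$ on $[0,T_\star]$ to upgrade the input convergences into $u_\varepsilon \to u(\lambda)$ in $C_{T_\star}\CC^{-\alpha}(\mathbb{T}^3)$. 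Removing the Skorokhod coupling returns convergence in law, completing the proof of both Theorem~\ref{t:lim-ident} and, as a direct corollary, Theorem~\ref{t:maintheorem}.
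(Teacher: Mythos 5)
Your proposed strategy matches the paper's: express $u_\varepsilon$ via the continuous solution map $\Gamma$ of Remark~\ref{r:solmap-cont}, feed in the convergences $u_{\varepsilon,0}\to u_0$, $\lambda_\varepsilon\to\lambda$, $\mathbbm{Y}_\varepsilon\to\mathbbm{Y}(\lambda)$ (Theorem~\ref{t:stoch-conv}) and $R_\varepsilon(v_\varepsilon)\to 0$, and close the circularity between local existence time and remainder estimate by a bootstrap with a deterministic $T_\star$ depending only on $\|\mathbbm{Y}_\varepsilon\|_{\mathcal{X}_T}$, $\|u_{\varepsilon,0}\|_{\CC^{-1/2-\kappa}}$, $|\lambda_\varepsilon|$. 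That is exactly what the paper does in Lemma~\ref{l:apriori-determ}/Lemma~\ref{l:apriori-bounds} and the proof of Theorem~\ref{t:lim-ident}. You replace the paper's argument about accumulation points of laws and concentration of $\mu_\varepsilon$ on the diagonal $\{u_\varepsilon^\star=\bar u_\varepsilon^\star\}$ by a Skorokhod coupling; this is a legitimate alternative, but note that the coupling must include $Y_\varepsilon$ itself (not just the enhanced vector $\mathbbm{Y}_\varepsilon$), since the remainder $R_\varepsilon(v_\varepsilon)$ is an explicit functional of $Y_\varepsilon$ and of the fixed-point solution $v_\varepsilon$; otherwise $v_\varepsilon$ is not defined on the new probability space.

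There is one genuine gap, in the quantitative bound on $R_\varepsilon(v_\varepsilon)$. You use the ``$\delta=1$'' form $|R_\varepsilon|\lesssim\varepsilon^{1/2}|v_\varepsilon|^4\sup|\tilde F^{(4)}_\varepsilon|$ and claim this reduces to a uniform bound on $\|v_\varepsilon\|_{L^p_TL^p}$ for large $p$. This does not close the estimate, because the a-priori control on $v_\varepsilon$ is in the weighted space $\mathcal{M}^{1/4+3\kappa/2}_{T_\star}L^\infty$ (cf.~(\ref{e:v-norm}) and Lemma~\ref{l:apriori-bounds}): near $t=0$ the rough initial condition forces $|v_\varepsilon(t)|\lesssim t^{-1/4-3\kappa/2}$, so $|v_\varepsilon|^4\sim t^{-1-6\kappa}$ is not time-integrable. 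The paper avoids this by interpolating the two integral forms of the remainder to obtain $|R_\varepsilon|\lesssim\varepsilon^{\delta/2}|v_\varepsilon|^{3+\delta}e^{c\varepsilon^{1/2}(\cdots)}$ for any $\delta\in[0,1]$ (Lemma~\ref{l:rem-estim}), and then chooses $\delta<1$ small enough that $(3+\delta)(\tfrac14+\tfrac{3\kappa}{2})<1$, so that $\|R_\varepsilon\|_{\mathcal{M}^{\gamma,p}_TL^p}$ with $\gamma>1/4+3\kappa/2$ is controlled (this is the condition ``$\frac{1-\theta}{3+\delta}>\frac14+\frac{3\kappa}{2}$'' in the proof of Lemma~\ref{l:apriori-determ}). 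Also the decomposition $v_\varepsilon=v_\varepsilon^\sharp+v_\varepsilon^\Box$ separating the initial-layer contribution from the rest is needed to make the exponential factor $e^{c\varepsilon^{1/2}|v_\varepsilon|}$ tractable, since the initial layer $v_\varepsilon^\sharp$ has a Gaussian structure amenable to direct moment bounds while $v_\varepsilon^\Box$ starts at zero. Without these two ingredients (the interpolated exponent $3+\delta$ with $\delta<1$, and the $v^\sharp/v^\Box$ split) the step ``$\|R_\varepsilon(v_\varepsilon)\|_{\mathcal{M}^{\gamma,p}_TL^p}\to0$'' would fail on the short-time interval where the norm of $v_\varepsilon$ is singular.
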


\begin{proof}
  Fix $T > 0$. Let $u_{\varepsilon} = Y_{\varepsilon} + v_{\varepsilon}$ be
  the solution of eq.~(\ref{e:univ}) for fixed $\varepsilon > 0$, which is
  seen to be unique in the ($\varepsilon$-dependent) time interval $[0,
  T_{\varepsilon}]$ by a usual fixed-point argument on the original equation
  (without resorting to the paracontrolled decomposition). Let
  $u_{\varepsilon} = \Gamma (u_{\varepsilon, 0}, \mathbbm{Y}_{\varepsilon},
  \lambda_{\varepsilon}, R_{\varepsilon} (v_{\varepsilon}))$ on $[0,
  T_{\varepsilon}]$ with $\Gamma$ defined as in Remark~\ref{r:solmap-cont} and
  $R_{\varepsilon} (v_{\varepsilon})$ seen as an exogenous source term. We
  know from the a-priori estimations of Section~\ref{s:a-priori} that there
  exists a time $T_{\star} = T_{\star} \left( \| \mathbbm{Y}_{\varepsilon}
  \|_{\mathcal{X}_T}, \| u_{\varepsilon, 0} \|_{\CC^{- 1 / 2 - \kappa}}, |
  \lambda_{\varepsilon} | \right)$ and a family of events
  $(\mathcal{E}_{\varepsilon})_{\varepsilon > 0}$ such that $\mathbbm{P}
  (\mathcal{E}_{\varepsilon}) \rightarrow 1$ for $\varepsilon \rightarrow 0$
  and we can control $\| v_{\varepsilon} \|_{\mathcal{M}^{1 / 4 + 3 \kappa /
  2}_{T_{\star}} L^{\infty}}$. Thus, we can control the $L^{\infty}$ norm of
  $v_{\varepsilon} (t)$ in $[T_{\varepsilon} / 2, T_{\varepsilon}]$ and extend
  the solution $v_{\varepsilon}$ on $[0, T_{\star}]$ for every $\varepsilon$.
  Denote by $u_{\varepsilon}^{\star} = \Gamma_{\star} (u_{\varepsilon, 0},
  \mathbbm{Y}_{\varepsilon}, \lambda_{\varepsilon}, R_{\varepsilon}
  (v_{\varepsilon}))$ the process $u_{\varepsilon}$ stopped at time
  $T_{\star}$ and $\Gamma_{\star}$ the corresponding stopped solution map. \
  
  Note that $u (\lambda)$ solves the same equation as
  $u_{\varepsilon}^{\star}$ with $\mathbbm{Y}_{\varepsilon}$ replaced by
  $\mathbbm{Y} (\lambda)$, $u_{\varepsilon, 0}$ replaced by $u_0$,
  $\lambda_{\varepsilon}$ replaced by $\lambda$ and $R_{\varepsilon}
  (v_{\varepsilon}) = 0$. So $u (\lambda) = u^{\star} = \Gamma_{\star} (u_0,
  \mathbbm{Y} (\lambda), \lambda, 0)$ up to time $T_{\star}$. Let us introduce
  the random field $\bar{u}_{\varepsilon}^{\star} = \Gamma_{\star}
  (u_{\varepsilon, 0}, \mathbbm{Y}_{\varepsilon}, \lambda_{\varepsilon}, 0)$
  with $\bar{v}^{\star}_{\varepsilon} = \bar{u}_{\varepsilon}^{\star} -
  Y_{\varepsilon}$ that solves the paracontrolled
  equation~(\ref{e:last-phisharp}) but with remainder $R_{\varepsilon}
  (v_{\varepsilon}) = 0$.
  
  Consider the $n$-tuple of random variables $(u_{\varepsilon, 0},
  \mathbbm{Y}_{\varepsilon}, u_{\varepsilon}^{\star},
  \bar{u}_{\varepsilon}^{\star})$ and let $\mu_{\varepsilon}$ be its law on
  $\mathcal{Z}= \CC^{- \alpha} \times \mathcal{X}_T \times \left( C_T \CC^{-
  \alpha} \right)^2$ conditionally on $\mathcal{E}_{\varepsilon}$. Observe
  that $\Gamma_{\star}$ is continuous as discussed in
  Remark~\ref{r:solmap-cont}, and this gives that $\forall \delta > 0$,
  $\mu_{\varepsilon} \left( \| u_{\varepsilon}^{\star} -
  \bar{u}_{\varepsilon}^{\star} \|_{C_T \CC^{- \alpha}} > \delta \right)
  \rightarrow 0$ as $\varepsilon \rightarrow 0$. Indeed $R_{\varepsilon}
  (v_{\varepsilon}) \rightarrow 0$ in probability in the space
  $\mathcal{M}^{\gamma, p}_{T_{\star}} L^p (\mathbbm{T}^3)$ \ by
  Lemma~\ref{l:remconv-last}. This shows that $\mu_{\varepsilon}$ concentrates
  on $\CC^{- \alpha} \times \mathcal{X}_T \times \left\{ (z, z) \in \left( C_T
  \CC^{- \alpha} \right)^2 \right\}$. Let $\mu$ any accumulation point of
  $(\mu_{\varepsilon})_{\varepsilon}$. Then $\mu \left( \CC^{- \alpha} \times
  \mathcal{X}_T \times \left\{ (z, z) \in \left( C_T \CC^{- \alpha} \right)^2
  \right\} \right) = 1$. The apriori estimations of Section~\ref{s:a-priori}
  yield the tightness of $\mu_{\varepsilon}$ and from the concentration of
  $\mu$ on the diagonal we know that there exists a subsequence such that for
  any test function $\varphi$,
  \begin{equation}
    \int_{\mathcal{Z}} \varphi (x, y, z, t) \mathd \mu_{\varepsilon} (x, y, z,
    t) \rightarrow \int_{\mathcal{Z}} \varphi (x, y, z, t) \mathd \mu_{} (x,
    y, z, t) = \int_{\mathcal{Z}} \varphi (x, y, t, t) \mathd \mu_{} (x, y, z,
    t) . \label{e:conv-law-1}
  \end{equation}
  Moreover, still along subsequences we have that for any bounded continuous
  function $\varphi$
  \[ \mathbbm{E} (\varphi (u_{\varepsilon, 0}, \mathbbm{Y}_{\varepsilon},
     \bar{u}_{\varepsilon}^{\star})) =\mathbbm{E} (\varphi (u_{\varepsilon,
     0}, \mathbbm{Y}_{\varepsilon}, \Gamma_{\star} (u_{\varepsilon, 0},
     \mathbbm{Y}_{\varepsilon}, \lambda_{\varepsilon}, 0))) \rightarrow
     \mathbbm{E} (\varphi (u_0, \mathbbm{Y} (\lambda), \Gamma_{\star} (u_0,
     \mathbbm{Y} (\lambda), \lambda, 0))) \]
  since by Theorem~\ref{t:stoch-conv} the vector $\mathbbm{Y}_{\varepsilon}$
  converges in law to $\mathbbm{Y} (\lambda)$, $u_{\varepsilon, 0}$ to $u_0$,
  and $\Gamma_{\star}$ is a continuous function as discussed in
  Remark~\ref{r:solmap-cont}. This shows that
  \begin{equation}
    \int_{\mathcal{Z}} \varphi (x, y, t, t) \mathd \mu_{\varepsilon} (x, y, z,
    t) \rightarrow \int_{\mathcal{Z}} \varphi (x, y, \Gamma_{\star} (x, y,
    \lambda, 0), \Gamma_{\star} (x, y, \lambda, 0)) \mathd \mu (x, y, z, t)
    \label{e:conv-law-2}
  \end{equation}
  and then by comparing~(\ref{e:conv-law-1}) and~(\ref{e:conv-law-2}) we can
  conclude that there exists a subsequence such that
  \[ \int_{\mathcal{Z}} \varphi (x, y, z, t) \mathd \mu_{\varepsilon} (x, y,
     z, t) \rightarrow \int_{\mathcal{Z}} \varphi (x, y, \Gamma_{\star} (x, y,
     \lambda, 0), \Gamma_{\star} (x, y, \lambda, 0)) \mathd \mu (x, y, z, t) .
  \]
  We can identify the limit distribution $\mu$ by noting that since
  $\mathbbm{P} (\mathcal{E}_{\varepsilon}) \rightarrow 1$ we have
  \[ \mathbbm{E} [\psi (u_{\varepsilon, 0}, \mathbbm{Y}_{\varepsilon})
     |\mathcal{E}_{\varepsilon}] = \frac{\mathbbm{E} [\psi (u_{\varepsilon,
     0}, \mathbbm{Y}_{\varepsilon})
     \mathbbm{I}_{\mathcal{E}_{\varepsilon}}]}{\mathbbm{P}
     (\mathcal{E}_{\varepsilon})} \rightarrow \mathbbm{E} [\psi (u_0,
     \mathbbm{Y} (\lambda))] \]
  for any test function $\psi$. So the first two marginals of $\mu$ have the
  law of $(u_0, \mathbbm{Y} (\lambda))$ and they are independent since
  $(u_{\varepsilon, 0}, \mathbbm{Y}_{\varepsilon})$ are independent for any
  $\varepsilon$. Calling $\nu$ the law of $(u_0, \mathbbm{Y} (\lambda))$ we
  have that
  \[ \int_{\mathcal{Z}} \varphi (x, y, z, t) \mathd \mu (x, y, z, t) =
     \int_{\CC^{- \alpha} \times \mathcal{X}_T} \varphi (x, y, \Gamma_{\star}
     (x, y, \lambda, 0), \Gamma_{\star} (x, y, \lambda, 0)) \mathd \nu_{} (x,
     y) \]
  which implies that $\mu$ is unique and that the whole family
  $(\mu_{\varepsilon})_{\varepsilon}$ converges to $\mu$. We can conclude that
  $u^{\star}_{\varepsilon} \rightarrow u^{\star}$ in law with $u^{\star} = u
  (\lambda)$ up to the time $T_{\star} \left( \| \mathbbm{Y} (\lambda)
  \|_{\mathcal{X}_T}, \| u_0 \|_{\CC^{- 1 / 2 - \kappa}}, | \lambda | \right)$
  since the function $T^{\star}$ is lower semicontinuous (as obtained from the
  a-priori estimates).
\end{proof}

\section{\label{s:comparing}Convergence of the enhanced noise}

This is the central section of the paper, in which we present a new method to
estimate certain random fields that do not have a finite chaos decomposition,
and we apply it to the treatment of the random fields
$\mathbbm{Y}_{\varepsilon}$ of~(\ref{e:trees-def}).

\subsection{An example of convergence}\label{s:example}

We choose to give first a complete example (the convergence of the tree
$\ttwo{Y_{\varepsilon}}$ to $\ttwo{Y} (\lambda)$) in order to put in evidence
the main idea in the proof of Theorem~\ref{t:stoch-conv}. Recall its
definition~(\ref{e:trees-def}):
\[ \ttwo{Y_{\varepsilon}} = \frac{\varepsilon^{- 1}}{3}
   \tilde{F}^{(1)}_{\varepsilon} (\varepsilon^{\frac{1}{2}} Y_{\varepsilon}),
\]
with $\tilde{F}_{\varepsilon}^{(1)}$ being the first derivative of the
centered function $\tilde{F}_{\varepsilon}$ defined in~(\ref{e:ftilde-def}).
Since $\frac{\mathd}{\mathd x} H_n (x, \sigma^2_{\varepsilon}) = n H_{n - 1}
(x, \sigma^2_{\varepsilon})$ the Wiener chaos decomposition of
$\ttwo{Y_{\varepsilon}}$ reads:
\begin{eqnarray}
  \frac{\varepsilon^{- 1}}{3} \tilde{F}^{(1)}_{\varepsilon}
  (\varepsilon^{\frac{1}{2}} Y_{\varepsilon}) & = & \frac{\varepsilon^{-
  1}}{3} \sum_{n \geqslant 3} nf_{n, \varepsilon} H_{n - 1} (\varepsilon^{1 /
  2} Y_{\varepsilon}, \sigma_{\varepsilon}^2) \nonumber\\
  & = & \varepsilon^{- 1} f_{3, \varepsilon} H_2 (\varepsilon^{1 / 2}
  Y_{\varepsilon}, \sigma_{\varepsilon}^2) + \frac{\varepsilon^{- 1}}{3}
  \sum_{n \geqslant 4} nf_{n, \varepsilon} H_{n - 1} (\varepsilon^{1 / 2}
  Y_{\varepsilon}, \sigma_{\varepsilon}^2) \nonumber\\
  & = & f_{3, \varepsilon} \llbracket Y_{\varepsilon}^2 \rrbracket +
  \frac{\varepsilon^{- 1}}{3} \sum_{n \geqslant 4} nf_{n, \varepsilon} H_{n -
  1} (\varepsilon^{1 / 2} Y_{\varepsilon}, \sigma_{\varepsilon}^2), 
  \label{e:partial-decomp-F}
\end{eqnarray}
where $\llbracket \cdummy \rrbracket$ is the Wiener product and we used the
fact that $\varepsilon^{- \frac{n}{2}} H_n (\varepsilon^{1 / 2}
Y_{\varepsilon}, \sigma_{\varepsilon}^2) = \llbracket Y_{\varepsilon}^n
\rrbracket$. Now one can use hypercontractivity (as done
in~{\cite{hairer_class_2015}}, {\cite{hairer_large_2016}}) to control the
$L^p$ norm of each chaos order by its $L^2$ norms. However this strategy does
not give useful bounds for the infinite series in the second term
of~(\ref{e:partial-decomp-F}). Instead, we just observe that
\[ \sum_{n \geqslant 4} nf_{n, \varepsilon} H_{n - 1} (\varepsilon^{1 / 2}
   Y_{\varepsilon}, \sigma_{\varepsilon}^2) = (\tmop{id} - J_0 - \ldots - J_2)
   \tilde{F}^{(1)}_{\varepsilon} (\varepsilon^{\frac{1}{2}} Y_{\varepsilon}),
\]
where $J_i$ is the projection of on the $i$-th chaos, and look for a different
way to write this remainder. One of the main insights of this paper is that we
can write it as:
\[ (\tmop{id} - J_0 - \ldots - J_2) \tilde{F}^{(1)}_{\varepsilon}
   (\varepsilon^{\frac{1}{2}} Y_{\varepsilon}) = \delta^3 G_{[1]}^{[3]}
   \mathD^3  \tilde{F}^{(1)}_{\varepsilon} (\varepsilon^{\frac{1}{2}}
   Y_{\varepsilon}) \]
where $\mathD$, $\delta$ are the Malliavin derivative and divergence
operators, and $G_{[1]}^{[3]} = (1 - L)^{- 1} (2 - L)^{- 1} (3 - L)^{- 1}$
with $L$ the Ornstein-Uhlenbeck operator. This is proven in
Lemma~\ref{l:Fexpansion}.

To compute the Malliavin derivative of $\tilde{F}^{(1)}_{\varepsilon}
(\varepsilon^{\frac{1}{2}} Y_{\varepsilon})$ we observe that for every
$\varepsilon > 0$ $(t, x) \in \mathbbm{R} \times \mathbbm{T}^3$ there exists
$h_{(t, x)} \in L^2 (\mathbbm{R} \times \mathbbm{T}^3)$ such that the Gaussian
random variable $Y_{\varepsilon, (t, x)} \assign Y_{\varepsilon} (t, x)$ can
be written as
\begin{equation}
  Y_{\varepsilon, (t, x)} \overset{\text{law}}{=} \langle \xi, h_{(t, x)}
  \rangle . \label{e:Y-law-equiv}
\end{equation}
Here $\xi$ is the Gaussian white noise on $\mathbbm{R} \times \mathbbm{T}^3$,
which can be seen as a Gaussian Hilbert space $\langle \xi, h \rangle_{h \in
H} = \{ W (h) \}_{h \in H}$ indexed by the Hilbert space $H \assign L^2
(\mathbbm{R} \times \mathbbm{T}^3)$. This is the framework in which we apply
the Malliavin calculus results of Appendix~\ref{a:malliavin}. Notice that by
construction
\begin{equation}
  \langle h_{(t, x)}, h_{(t', x')} \rangle = C_{\varepsilon} (t - t', x - x')
  \assign \mathbbm{E} [Y_{\varepsilon, (t, x)} Y_{\varepsilon, (t', x')}] .
  \label{e:covariance-h}
\end{equation}
The function $h_{(t, x)}$ can actually be written as the space-time
convolution
\[ h_{(t, x)} = \check{P} \ast \psi_{\varepsilon} (t, x), \]
with $\check{P} (t, x) = \frac{1}{(4 \mathpi t)^{3 / 2}} e^{- \frac{| x |^2}{4
t}} e^{- t}  \mathbbm{1}_{t \geqslant 0}$ and $\psi_{\varepsilon}$ such that
$\eta_{\varepsilon} = \psi_{\varepsilon} \ast \xi$, $\psi_{\varepsilon} (t, x)
= \varepsilon^{- 5 / 2} \psi (\varepsilon^{- 2} t, \varepsilon^{- 1} x)$. We
omit the dependence on $\varepsilon$ of $h_{(t, x)}$ not to burden the
notation.

Going back to the calculations, we obtain from~(\ref{e:Y-law-equiv}) that
$\mathD \tilde{F}^{(1)}_{\varepsilon} (\varepsilon^{\frac{1}{2}}
Y_{\varepsilon}) = \varepsilon^{\frac{1}{2}} \tilde{F}^{(2)}_{\varepsilon}
(\varepsilon^{\frac{1}{2}} Y_{\varepsilon}) h .$ Then (noting that
$\tilde{F}^{(4)}_{\varepsilon} = F^{(4)}_{\varepsilon}$):
\begin{equation}
  \begin{array}{lll}
    \ttwo{Y_{\varepsilon}} & = & f_{3, \varepsilon} \llbracket
    Y_{\varepsilon}^2 \rrbracket + \frac{\varepsilon^{1 / 2}}{3} \delta^3
    G_{[1]}^{[3]} F^{(4)}_{\varepsilon} (\varepsilon^{\frac{1}{2}}
    Y_{\varepsilon}) h^{\otimes 3}\\
    & = & f_{3, \varepsilon} \llbracket Y_{\varepsilon}^2 \rrbracket +
    \ttwo{\hat{Y}_{\varepsilon}},
  \end{array} . \label{e:example-decomp}
\end{equation}
It can be easily seen from~(\ref{e:Y-law-equiv}) that $Y_{\varepsilon}$ has
the same law of a time-space mollification of $X$ by convolution (with $X$
defined in Section~\ref{s:limit-ident}). Then the convergence in law of $f_{3,
\varepsilon} \llbracket Y_{\varepsilon}^2 \rrbracket$ to $\ttwo{Y} (\lambda)$
can be easily established by standard techniques (see
{\cite{catellier_paracontrolled_2013}} or~{\cite{mourrat_construction_2016}}).
We are only left to show that $\ttwo{\hat{Y}_{\varepsilon}}$
in~(\ref{e:example-decomp}) converges to zero in $C_T \CC^{- 1 - \kappa}$.

It is well known (see Section~\ref{s:mainproof} for details) that in order to
control the norm of $\ttwo{\hat{Y}_{\varepsilon}} (t, \cdummy)$ for $t \in (0,
T]$ in the Besov space $\CC^{- \alpha - \kappa}$ $\forall \kappa > 0$ and in
probability, it is enough to have suitable estimates for
\[ \sup_{x \in \mathbbm{T}^3}  \left\| \Delta_q \ttwo{\hat{Y}_{\varepsilon}}
   (t, \bar{x}) \right\|_{L^p (\Omega)} = \left\| \Delta_q
   \ttwo{\hat{Y}_{\varepsilon}} (t, \bar{x}) \right\|_{L^p (\Omega)}, \]
for any $\bar{x} \in \mathbbm{T}^3$ since $\ttwo{\hat{Y}_{\varepsilon}}$ is
stationary in space. We then proceed to compute:
\begin{eqnarray*}
  \left\| \Delta_q \ttwo{\hat{Y}_{\varepsilon}} (t, \bar{x}) \right\|_{L^p
  (\Omega)} & = & \frac{\varepsilon^{1 / 2}}{3} \left\| \delta^3 G_{[1]}^{[3]}
  \int K_{q, \bar{x}} (x) F^{(4)}_{\varepsilon} (\varepsilon^{\frac{1}{2}}
  Y_{\varepsilon, (t, x)}) h_{(t, x)}^{\otimes 3} \mathd x \right\|_{L^p
  (\Omega)} 
\end{eqnarray*}
where $K_{q, \bar{x}} (x)$ is the kernel associated to the Littlewood-Paley
block $\Delta_q$. Observe that $\| F^{(4)}_{\varepsilon} (\varepsilon^{1 / 2}
Y_{\varepsilon, (t, x)}) \|^p_{L^p} = \int_{\mathbbm{R}} |
F^{(4)}_{\varepsilon} (z) |^p \gamma (\mathd z)$ where $\gamma (\mathd z)$ is
the density of a centered Gaussian with variance $\sigma^2_{\varepsilon}$.
This norm is then finite by the bound~(\ref{e:hyp-F-main}) of
Assumption~\ref{a:main}.

Another fundamental idea of this work is that we can \ ``estimate out'' the
bounded term $\| F^{(4)}_{\varepsilon} (\varepsilon^{1 / 2} Y_{\varepsilon,
(t, x)}) \|_{L^p}$, which has an infinite chaos decomposition, and obtain a
standard~$\Phi^4_3$ diagram that can be treated with well understood
techniques. Using Lemma~\ref{l:delta-norm} and Corollary~\ref{l:bounded-q},
one has
\begin{eqnarray*}
  &  & \left\| \delta^3 G_{[1]}^{[3]} \int K_{q, \bar{x}} (x)
  F^{(4)}_{\varepsilon} (\varepsilon^{\frac{1}{2}} Y_{\varepsilon, (t, x)})
  h_{(t, x)}^{\otimes 3} \mathd x \right\|_{L^p (\Omega)}\\
  & \lesssim & \sum_{k = 0}^3 \left\| \mathD^k G_{[1]}^{[3]} \int K_{q,
  \bar{x}} (x) F^{(4)}_{\varepsilon} (\varepsilon^{\frac{1}{2}}
  Y_{\varepsilon, (t, x)}) h_{(t, x)}^{\otimes 3} \mathd x \right\|_{L^p
  (\Omega, H^{\otimes 3 + k})}\\
  & \lesssim & \left\| \int K_{q, \bar{x}} (x) F^{(4)}_{\varepsilon}
  (\varepsilon^{\frac{1}{2}} Y_{\varepsilon, (t, x)}) h_{(t, x)}^{\otimes 3}
  \mathd x \right\|_{L^p (\Omega, H^{\otimes 3})} .
\end{eqnarray*}
Then note that we can decompose the norm $\| u \|_{L^p (\Omega, H^{\otimes
3})} = \| \| u \|^2_{H^{\otimes 3}}  \|^{1 / 2}_{L^{p / 2}}$ for $u \in L^p
(\Omega, H^{\otimes 3})$ and since the norm of the Hilbert space $H^{\otimes
3}$ is given by the scalar product $\| h^{\otimes 3} \|^2_{H^{\otimes 3}} =
\langle h, h \rangle_H^3$ we obtain
\begin{eqnarray*}
  \left\| \Delta_q \ttwo{\hat{Y}_{\varepsilon}} (t, \bar{x}) \right\|_{L^p} &
  \lesssim & \varepsilon^{1 / 2} \left\| \left\| \int K_{q, \bar{x}} (x)
  F^{(4)}_{\varepsilon} (\varepsilon^{\frac{1}{2}} Y_{\varepsilon, (t, x)})
  h_{(t, x)}^{\otimes 3} \mathd x \right\|_{H^{\otimes 3}}^2 \right\|_{L^{p /
  2}}^{1 / 2}
\end{eqnarray*}
\[ \lesssim \left[ \varepsilon \int | K_{q, \bar{x}} (x) K_{q, \bar{x}} (x') |
   \left\| F^{(4)}_{\varepsilon} (\varepsilon^{\frac{1}{2}} Y_{\varepsilon,
   (t, x)}) F^{(4)}_{\varepsilon} (\varepsilon^{\frac{1}{2}} Y_{\varepsilon,
   (t, x')}) \right\|_{L^{p / 2}} | \langle h_{(t, x)}, h_{(t, x')} \rangle
   |^3 \mathd x \mathd x' \right]^{\frac{1}{2}} . \]
The norm containing $\tilde{F}^{(4)}_{\varepsilon}$ can then be easily
estimated using H{\"o}lder's inequality as
\[ \left\| F^{(4)}_{\varepsilon} (\varepsilon^{\frac{1}{2}} Y_{\varepsilon,
   (t, x)}) F^{(4)}_{\varepsilon} (\varepsilon^{\frac{1}{2}} Y_{\varepsilon,
   (t, x')}) \right\|_{L^{p / 2}} \leqslant \left\| F^{(4)}_{\varepsilon}
   (\varepsilon^{\frac{1}{2}} Y_{\varepsilon, (t, x)}) \right\|_{L^p} \left\|
   F^{(4)}_{\varepsilon} (\varepsilon^{\frac{1}{2}} Y_{\varepsilon, (t, x')})
   \right\|_{L^p} \lesssim 1. \]
This yields
\[ \left\| \Delta_q \ttwo{\hat{Y}_{\varepsilon}} (t, \bar{x}) \right\|_{L^p
   (\Omega)} \lesssim \left[ \varepsilon \int | K_{q, \bar{x}} (x) K_{q,
   \bar{x}} (x') |  | \langle h_{(t, x)}, h_{(t, x')} \rangle |^3 \mathd x
   \mathd x' \right]^{\frac{1}{2}} \]
which is a standard $\Phi^4_3$~diagram that can be analysed with the
techniques of~{\cite{hairer_theory_2014}} (recalled in
Appendix~\ref{a:ker-est}). We just remark that $\langle h_{(t, x)}, h_{(t,
x')} \rangle = C_{\varepsilon} (0, x - x')$ and the bound $\varepsilon |
C_{\varepsilon} (t, x) | \lesssim 1$ of Lemma~\ref{l:cov-est-eps} yields
$\forall \delta \in (0, 1)$:
\begin{eqnarray*}
  \left\| \Delta_q \ttwo{\hat{Y}_{\varepsilon}} (t, \bar{x}) \right\|_{L^p
  (\Omega)} & \lesssim & \varepsilon^{\frac{\delta}{2}} \left[ \int | K_{q,
  \bar{x}} (x) K_{q, \bar{x}} (x') |  | C_{\varepsilon} (0, x - x') |^{2 +
  \delta} \mathd x \mathd x' \right]^{\frac{1}{2}}\\
  & \lesssim & \varepsilon^{\frac{\delta}{2}} 2^{(1 + \delta / 2) q}
\end{eqnarray*}
and then $\ttwo{\hat{Y}_{\varepsilon}} (t, \cdummy)$ converges to zero in
probability in the space $\CC^{- 1 - \delta / 2}$ $\forall \delta \in (0, 1)$,
as $\varepsilon \rightarrow 0$. The time regularity of
$\ttwo{\hat{Y}_{\varepsilon}}$ needed to obtain the convergence in $C_T \CC^{-
1 - \delta / 2}$ does not need new ideas, and it is done in
Section~\ref{s:first-trees-conv}.

The method shown in this section is valid verbatim for the trees
$\tzero{Y_{\varepsilon}}$, $\tone{Y_{\varepsilon}}$,
$\tthreeone{Y_{\varepsilon}}$, while for the composite trees
in~(\ref{e:trees-def}) (namely $\tthreetwor{Y_{\varepsilon}},
\ttwothreer{Y_{\varepsilon}}, \ttwothreer{\bar{Y}_{\varepsilon}},
\tthreethreer{Y_{\varepsilon}}$) that are obtained via paraproducts of simple
trees, one has to be able to write the remainder
$\hat{Y}_{\varepsilon}^{\tau}$ as an iterated Skorohod integral $\delta^n
(\ldots)$ in order to exploit the boundedness of this operator. Moreover,
these trees require a second renormalization (on top of the Wick ordering)
which is not easy to control for infinite chaos decompositions. We deal with
both these difficulties introducing the product
formula~(\ref{e:productformula-easy}), which allows to write products of
iterated Skorohod integrals as combinations of iterated Skorohod integrals.
The details and calculations for composite trees can be found in
Section~\ref{s:comp-trees-all}.

\subsection{Main theorem and overview of the proof}\label{s:mainproof}

\begin{theorem}
  \label{t:stoch-conv}Under Assumption~\ref{a:main} there exists $C > 0$ such
  that for any $p \in [2, \infty)$ we have $\| \mathbbm{Y}_{\varepsilon}
  \|_{\mathcal{X}_T} < C$ in $L^p (\Omega)$. Moreover,
  $\mathbbm{Y}_{\varepsilon} \rightarrow \mathbbm{Y} (\lambda) \in
  \mathcal{X}_T$ and $Y_{\varepsilon} \rightarrow X \in C_T \CC^{- 1 / 2 -
  \kappa}$ in law.
\end{theorem}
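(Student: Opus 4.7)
The strategy is to treat each enhanced-noise component $Y^\tau_\varepsilon$ by splitting it into a "polynomial" part with finite Wiener chaos and an infinite-chaos remainder $\hat Y^\tau_\varepsilon$, exactly as in the prototype computation for $\ttwo{Y_\varepsilon}$ in Section~\ref{s:example}. For the simple trees $\tzero{Y_\varepsilon},\tone{Y_\varepsilon},\ttwo{Y_\varepsilon},\tthreeone{Y_\varepsilon},\ttwo{\bar{Y}_\varepsilon}$ the polynomial part is a linear combination of Wick powers $\llbracket Y_\varepsilon^n\rrbracket$ whose coefficients are, by Assumption~\ref{a:main}(d), precisely $\lambda_{n,\varepsilon}$-scaled and hence converge to the corresponding Wick powers of $X$ by the now-standard techniques used in~\cite{catellier_paracontrolled_2013,mourrat_construction_2016}. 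So the essential work is to show that the remainders $\hat Y^\tau_\varepsilon$, corresponding to chaos orders $n\ge 4$, vanish in $C_T\CC^{|\tau|-\kappa}$ in probability as $\varepsilon\to 0$.

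For each simple remainder, I would apply Lemma~\ref{l:Fexpansion} to write $(\operatorname{id}-J_0-\cdots-J_k)\tilde F^{(m)}_\varepsilon(\varepsilon^{1/2}Y_\varepsilon)=\delta^{k+1}G^{[k+1]}_{[1]}\mathD^{k+1}\tilde F^{(m)}_\varepsilon(\varepsilon^{1/2}Y_\varepsilon)$ and compute the iterated Malliavin derivative explicitly via the chain rule, producing $F^{(m+k+1)}_\varepsilon(\varepsilon^{1/2}Y_\varepsilon)h^{\otimes(k+1)}$. Then Lemma~\ref{l:delta-norm} together with Corollary~\ref{l:bounded-q} lets me bound the $L^p(\Omega)$ norm of $\Delta_q\hat Y^\tau_\varepsilon$ by the $L^p(\Omega;H^{\otimes(k+1)})$ norm of the integrand; expanding the Hilbert norm yields an integral kernel involving $|\langle h_{(t,x)},h_{(t,x')}\rangle|^{k+1}=|C_\varepsilon(0,x-x')|^{k+1}$. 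The $F^{(m+k+1)}_\varepsilon$ factors are estimated out using Hölder's inequality and Assumption~\ref{a:main}(c), reducing the problem to a standard $\Phi^4_3$-type diagram. Using Lemma~\ref{l:cov-est-eps} to trade one power $|\varepsilon C_\varepsilon|\lesssim 1$ for an extra factor $\varepsilon^\delta$ and then estimating the remaining kernel by the machinery of Appendix~\ref{a:ker-est} produces a bound of the form $\|\Delta_q \hat Y^\tau_\varepsilon(t,\bar x)\|_{L^p}\lesssim \varepsilon^{\delta/2}2^{-(|\tau|-\delta/2)q}$, which by Besov embedding / Kolmogorov gives convergence to zero in the required Hölder-Besov space.

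For the composite trees $\tthreetwor{Y_\varepsilon},\ttwothreer{Y_\varepsilon},\ttwothreer{\bar Y_\varepsilon},\tthreethreer{Y_\varepsilon}$ the argument is the same in spirit but technically heavier. The main new tool is the product formula~(\ref{e:productformula-easy}) which expresses a product of two iterated Skorohod integrals as a sum of iterated Skorohod integrals, after contractions. Applying this to the resonant products of the simple trees produces (i) a finite-chaos part, which again matches the usual Wick polynomial calculus and converges to the corresponding universal $X^\tau$-tree, and (ii) a remainder that is a single iterated Skorohod integral handled exactly as in the simple case. The contractions produced by the product formula are precisely what generates the renormalization constants $\ttwothreer{d_\varepsilon},\ttwothreer{\bar d_\varepsilon},\tthreetwor{d_\varepsilon},\tthreethreer{d_\varepsilon},\tthreethreerprime{d_\varepsilon}$, and a bookkeeping check shows that the combinatorial constraint~(\ref{e:ren-constraint}) is automatically satisfied.

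Finally, to upgrade the pointwise $L^p$ estimates on $\Delta_q$ into $C_T\CC^{|\tau|-\kappa}$ and then into tightness, I would repeat the same Malliavin-plus-diagram estimate on the time increments $Y^\tau_\varepsilon(t,\cdot)-Y^\tau_\varepsilon(s,\cdot)$; stationarity in space and a Kolmogorov-type argument (together with the Besov characterisation via Littlewood–Paley) then produce uniform moments in the space $\mathcal{X}_T$. Combining uniform tightness with the convergence in law of the finite-chaos parts and the vanishing in probability of the remainders yields joint convergence in law of $\mathbbm{Y}_\varepsilon$ to $\mathbbm{Y}(\lambda)$, and the same scheme applied to $Y_\varepsilon$ alone gives $Y_\varepsilon\to X$ in $C_T\CC^{-1/2-\kappa}$. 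The main obstacle throughout is the bookkeeping in the composite-tree step: one must apply the product formula, correctly identify which contractions produce the renormalization constants~(\ref{eq:d-constants}) and which go into the residual Skorohod integral, and verify that the latter is small uniformly in $\varepsilon$—all without ever exploiting hypercontractivity, since the chaos expansion of $F_\varepsilon(\varepsilon^{1/2}Y_\varepsilon)$ is infinite.
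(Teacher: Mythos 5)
Your proposal follows essentially the same route as the paper: the partial chaos expansion $(\operatorname{id}-J_0-\cdots-J_{n-1})=\delta^n G_{[1]}^{[n]}\mathD^n$ of Lemma~\ref{l:Fexpansion}, the boundedness of $\delta^n$ and of $\mathD^k G$ (Lemma~\ref{l:delta-norm}, Corollary~\ref{l:bounded-q}) to "estimate out" the $F^{(m)}_\varepsilon$-factors by H\"older, and the covariance bound $\varepsilon|C_\varepsilon|\lesssim 1$ to extract the factor $\varepsilon^{\delta/2}$, reducing everything to standard $\Phi^4_3$ diagrams; for composite trees, the product formula~(\ref{e:productformula-easy}) to turn products of Skorohod integrals back into sums of iterated Skorohod integrals. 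These are precisely the two key mechanisms the paper uses, and you correctly note that the whole scheme deliberately avoids hypercontractivity. One small imprecision: the renormalization constants~(\ref{eq:d-constants}) are not generated directly by the contractions in the product formula, but rather by the $\mathbbm{E}[\Phi^{[m]}_{\zeta_1}\Phi^{[n]}_{\zeta_2}]$ terms produced when the partial chaos expansion is applied to the \emph{product} $\Phi^{[i]}_{\zeta_1}\Phi^{[j]}_{\zeta_2}$ (cf.~(\ref{e:partial-chaos})); the product formula then serves to bound $\mathbbm{E}[\hat\Phi^{[m]}\hat\Phi^{[n]}]$ (Lemma~\ref{l:bound-corr}) and to prepare the remainder for the $L^p$ graph estimates of Section~\ref{s:second-trees-est}, where the case analysis over $(q,r,i)$ is heavier than "exactly as in the simple case" but carries no new idea.
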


The rest of Section~\ref{s:comparing} is dedicated to the proof of
Theorem~\ref{t:stoch-conv}.

From the definition $\mathbbm{Y} (\lambda) = (Y^{\tau} (\lambda))_{\tau}$
of~(\ref{e:def-ylambda}) it is clear that we need to prove that
$Y^{\tau}_{\varepsilon} \rightarrow Y^{\tau}_{} (\lambda)$ for every
tree~$\tau$. Note that we can write each tree $Y^{\tau} (\lambda)$ as
$Y^{\tau} (\lambda) = f_{\tau} (\lambda) K^{\tau} (X)$ for a the measurable
function $K^{\tau}$ of the Gaussian process $X \in C_T \CC^{- 1 / 2 - \kappa}$
defined~(\ref{e:stoch-usual}), and a suitable deterministic function $f_{\tau}
(\lambda)$ of $\lambda$. For example, we can write $\ttwo{Y} (\lambda) =
\lambda_3 \llbracket X^2 \rrbracket$ with $\ttwo{K} (X) = \ttwo{X} =
\llbracket X^2 \rrbracket$ and
$f_{\resizebox{6pt}{4.5pt}{\includegraphics{trees/2_tree.eps}}} (\lambda) =
\lambda_3$.

We will show (eqs.~(\ref{e:firstorder-after-renorm})
and~(\ref{e:after-renorm})) that every random field $Y^{\tau}_{\varepsilon}$
defined in~(\ref{e:trees-def}) can be decomposed with the same functions
$f_{\tau} (\cdummy)$ and $K^{\tau}$ as
\begin{equation}
  Y^{\tau}_{\varepsilon} = f_{\tau} (\lambda_{\varepsilon}) K^{\tau}
  (Y_{\varepsilon}) + \hat{Y}^{\tau}_{\varepsilon}  \label{eq:decomp-chaos}
\end{equation}
where $\hat{Y}^{\tau}_{\varepsilon}$ are suitable remainder terms. For all $p
\geqslant 2$ it is well-known (see {\cite{catellier_paracontrolled_2013}},
{\cite{hairer_theory_2014}}) that the term $f_{\tau} (\lambda_{\varepsilon})
K^{\tau} (Y_{\varepsilon})$ is uniformly bounded in $L^p (\Omega ;
\mathcal{X}^{\tau})$ (with $\mathcal{X}^{\tau}$ given by~(\ref{e:y-space})).
Thus, we will prove that $\hat{Y}^{\tau}_{\varepsilon}$ converges to zero in
$L^p (\Omega ; \mathcal{X}^{\tau})$. This can be done by showing that, by
Besov embedding, for $p \in [2, \infty)$ and $\forall \alpha < | \tau |$ we
have
\begin{equation}
  \mathbbm{E} (\| \hat{Y}^{\tau}_{\varepsilon} (t) \|_{\CC^{\alpha - 3 /
  p}}^p) \lesssim \mathbbm{E} (\| \hat{Y}_{\varepsilon}^{\tau} (t)
  \|^p_{B^{\alpha}_{p, p}}) \leqslant \sum_q 2^{\alpha pq}
  \int_{\mathbbm{T}^3}  \| \Delta_q \hat{Y}_{\varepsilon}^{\tau} (t, x)
  \|^p_{L^p (\Omega)} \mathd x \rightarrow 0 \label{e:besov-bound}
\end{equation}
thanks to the stationarity of the process $Y (t, x)$. In order to prove the
bound~(\ref{e:besov-bound}) it suffices to show that $\forall t \in [0, T]$
\begin{equation}
  \sum_q 2^{\alpha pq} \sup_x  \| \Delta_q \hat{Y}_{\varepsilon}^{\tau} (t, x)
  \|^p_{L^p (\Omega)} \rightarrow 0 \quad \text{as $\varepsilon \rightarrow
  0$,}  \label{e:check-conv-1}
\end{equation}
which is one of the key estimation of this paper and will be performed in
Sections~\ref{s:first-trees-conv} and~\ref{s:second-trees-est}.

In order to obtain uniform convergence for $t \in [0, T]$ it suffices to show
that $\forall \sigma \in [0, 1 / 2]$, $q \geqslant - 1$:
\begin{equation}
  \sup_x  \| \Delta_q \hat{Y}_{\varepsilon}^{\tau} (t, x) - \Delta_q
  \hat{Y}_{\varepsilon}^{\tau} (s, x) \|^p_{L^p (\Omega)} \leqslant
  C_{\varepsilon} | t - s |^{\sigma p} 2^{- (\alpha - 2 \sigma) pq} \quad
  \text{with $C_{\varepsilon} \rightarrow 0$} . \label{e:check-conv-2}
\end{equation}
Indeed, by the Garsia-Rodemich-Rumsey inequality we obtain for $\delta > 0$
small enough and $p$ large enough
\begin{eqnarray*}
  \sup_{\varepsilon} \mathbbm{E} (\| \hat{Y}_{\varepsilon}^{\tau}
  \|^p_{C_T^{\sigma - 2 / p} B^{\alpha - 2 \sigma - \delta}_{p, p}}) &
  \leqslant & T^2 \sum_q 2^{(\alpha - 2 \sigma - \delta) pq} \sup_{s < t \in
  [0, T]} \sup_x \frac{\| \Delta_q \hat{Y}_{\varepsilon}^{\tau} (t, x) -
  \Delta_q \hat{Y}_{\varepsilon}^{\tau} (s, x) \|_{L^p (\Omega)}^p}{| t - s
  |^{\sigma p}}\\
  & \leqslant & C_{\varepsilon} T^2 \sum_q 2^{- \delta pq}
\end{eqnarray*}
which by Besov embedding yields an estimation on $\mathbbm{E} \left( \|
Y_{\varepsilon}^{\tau} \|_{C_T^{\sigma - \kappa / 2} \CC^{\alpha - 2 \sigma -
\kappa}} \right)$ for $\kappa > 0$ small enough. This gives us the necessary
tightness to claim that $\mathbbm{Y}_{\varepsilon}$ has weak limits along
subsequences.

The only thing left after proving~(\ref{eq:decomp-chaos}),
(\ref{e:check-conv-1}) and~(\ref{e:check-conv-2}) is that for each $\tau$ we
have $K^{\tau} (Y_{\varepsilon}) \rightarrow K^{\tau} (X)$ in law. However
this is clear and already well-known, since by hypothesis we can introduce a
space-time convolution regularisation of $X$ (let's call it $X_{\varepsilon}$)
which has the same law of $Y_{\varepsilon}$ for any $\varepsilon > 0$. This
yields immediately the convergence $Y_{\varepsilon} \rightarrow X$ in law. At
this point an approximation argument gives that $K^{\tau} (Y_{\varepsilon})$
has the same law of $K^{\tau} (X_{\varepsilon})$. Transposing the
regularisation to the kernels of the chaos expansion we can write $K^{\tau}
(X_{\varepsilon}) = K_{\varepsilon}^{\tau} (X)$ and now it is easy to check
that \ $K_{\varepsilon}^{\tau} (X) \rightarrow K^{\tau} (X)$ in probability
(as done systematically in~{\cite{catellier_paracontrolled_2013}},
{\cite{mourrat_construction_2016}}). We can then conclude that $K^{\tau}
(Y_{\varepsilon}) \rightarrow K^{\tau} (X)$ and therefore
$Y^{\tau}_{\varepsilon} \rightarrow Y^{\tau} (\lambda)$ in law for every
$\tau$, since from Assumption~\ref{a:main} we have immediately $f_{\tau}
(\lambda_{\varepsilon}) \rightarrow f_{\tau} (\lambda)$.

Let us give some more details on how to prove the
decomposition~(\ref{eq:decomp-chaos}) and the bounds~(\ref{e:check-conv-1})
and~(\ref{e:check-conv-2}). As seen in Section~\ref{s:example} we have
$Y_{\varepsilon, \zeta} = \langle \xi, h_{\zeta} \rangle$ in law for $\zeta =
(t, x) \in \mathbbm{R} \times \mathbbm{T}^3$ and this gives
\[ \mathD^n \tilde{F}^{(m)}_{\varepsilon} (\varepsilon^{1 / 2} Y_{\varepsilon,
   \zeta}) = \tilde{F}^{(m + n)}_{\varepsilon} (\varepsilon^{1 / 2}
   Y_{\varepsilon, \zeta}) h^{\otimes n}_{\zeta} . \]
We define for $m \in \mathbbm{N}$, $\zeta \in \mathbbm{R} \times
\mathbbm{T}^3$:
\begin{equation}
  \Phi_{\zeta}^{[m]} \assign \varepsilon^{\frac{m - 3}{2}}
  \tilde{F}_{\varepsilon}^{ (m)} (\varepsilon^{1 / 2} Y_{\varepsilon, \zeta}) 
  \label{e:def-phi}
\end{equation}
Note that the term $\Phi^{[m]}$ above is \tmtextit{not} the $m$-th derivative
of some function $\Phi$ (we use the square parenthesis notation to emphasize
this fact). It easy to see from~(\ref{e:def-phi}) that $\mathD^k
\Phi^{[m]}_{\zeta} = \Phi^{[m + k]}_{\zeta} h_{\zeta}^{\otimes k}$. Therefore,
the partial chaos expansion~(\ref{e:Fexpansion}) takes a more explicit form
when applied to $\Phi_{\zeta}^{[m]}$:
\begin{equation}
  \begin{array}{lll}
    \Phi_{\zeta}^{[m]} & = & \sum_{k = 0}^{n - 1} \frac{\mathbbm{E} (\Phi^{[m
    + k]}_{\zeta})}{k!} \llbracket Y^k_{\varepsilon, \zeta} \rrbracket +
    \delta^n (G_{[1]}^{[n]} \Phi^{[m + n]}_{\zeta} h_{\zeta}^{\otimes n})\\
    & = & \sum_{k = 0}^{n - 1} \varepsilon^{(m + k - 3) / 2} \frac{(m + k)
    !}{k!} \tilde{f}_{m + k, \varepsilon} \llbracket Y^k_{\varepsilon, \zeta}
    \rrbracket + \delta^n (G_{[1]}^{[n]} \Phi^{[m + n]}_{\zeta}
    h_{\zeta}^{\otimes n})
  \end{array} \label{e:Fexp-explicit}
\end{equation}
with $G_{[1]}^{[n]}$ defined in~(\ref{e:green-op}). Here we used the fact that
$\delta^n (h_{\zeta}^{\otimes n}) = \llbracket Y^n_{\varepsilon, \zeta}
\rrbracket$ (see Remark~\ref{r:delta-tensor}) and that by the definition of
$\Phi^{[m]}_{\zeta}$ we obtain $\forall \zeta \in \mathbbm{R} \times
\mathbbm{T}^3$:
\[ \mathbbm{E} (\Phi^{[m + k]}_{\zeta}) = \varepsilon^{(m + k - 3) / 2}  (m +
   k) ! \tilde{f}_{m + k, \varepsilon} \]
with $\tilde{f}_{n, \varepsilon}$ the coefficients in the decomposition
$\tilde{F}_{\varepsilon}  (\varepsilon^{\frac{1}{2}} Y_{\varepsilon}) \assign
\sum_{n \geqslant 0} \tilde{f}_{n, \varepsilon} H_n (\varepsilon^{\frac{1}{2}}
Y_{\varepsilon}, \sigma_{\varepsilon}^2)$. Choosing $n = 4 - m$ in
eq.~(\ref{e:Fexp-explicit}) we obtain
\begin{equation}
  \Phi_{\zeta}^{[m]} = \frac{3!}{(3 - m) !} f_{3, \varepsilon}  \llbracket
  Y^{3 - m}_{\varepsilon, \zeta} \rrbracket + \hat{\Phi}_{\zeta}^{[m]}
  \label{e:Fexp-explicit-centered}
\end{equation}
and a remainder with an infinite chaos decomposition strictly greater that $3
- m$:
\begin{equation}
  \hat{\Phi}^{[m]}_{\zeta} = \delta^{4 - m} (G^{[4 - m]}_{[1]}
  \Phi^{[4]}_{\zeta} h_{\zeta}^{\otimes 4 - m}) .
  \label{e:Fexp-explicit-remainder}
\end{equation}

This is a key step in the proof of Theorem~\ref{t:stoch-conv}. Indeed, it
suffices to substitute~(\ref{e:Fexp-explicit-centered}) into
definition~(\ref{e:trees-def}) to identify the remainder
$\hat{Y}^{\tau}_{\varepsilon}$ in decomposition~(\ref{eq:decomp-chaos}) that
has to converge to zero, and see that it always contains the term
$\hat{\Phi}^{[m]}_{\zeta}$. Moreover, the
structure~(\ref{e:Fexp-explicit-remainder}) of $\hat{\Phi}^{[m]}_{\zeta}$
makes it possible to bound its $L^p$ norm and
obtain~(\ref{e:check-conv-1}),~(\ref{e:check-conv-2}) in the same way as done
in Section~\ref{s:example} for $\ttwo{Y_{\varepsilon}}$. We will consider
separately \tmtextit{simple trees} (namely $\tzero{Y_{\varepsilon}},
\tone{Y_{\varepsilon}}, \ttwo{Y_{\varepsilon}}, \ttwo{\bar{Y}_{\varepsilon}},
\tthreeone{Y_{\varepsilon}}$) which are linear functions of $\Phi^{[m]}$ in
Section~\ref{s:first-trees-conv}, and \tmtextit{composite trees} (namely
$\tthreetwor{Y_{\varepsilon}}, \ttwothreer{Y_{\varepsilon}},
\ttwothreer{\bar{Y}_{\varepsilon}}, \tthreethreer{Y_{\varepsilon}}$) which are
quadratic in simple trees and need to be further renormalized in order to
converge to some limit as $\varepsilon \rightarrow 0$. We will show the
decomposition~(\ref{eq:decomp-chaos}) for composite trees in
Section~\ref{s:second-trees-ren} and the
bounds~(\ref{e:check-conv-1}),~(\ref{e:check-conv-2}) in
Section~\ref{s:second-trees-est}.

\begin{remark}
  \label{r:phi-greater-est}We can easily estimate terms of the form
  $\varepsilon^{- (m - 3) / 2} \Phi^{[m]}_{\zeta}$ for $3 \leqslant m
  \leqslant 9$ and every $p \in [2, \infty)$. We have (as already observed for
  $F^{(1)}_{\varepsilon}$):
  \[ \left\| \varepsilon^{- \frac{m - 3}{2}} \Phi^{[m]}_{\zeta}
     \right\|^p_{L^p} = \| F^{(m)}_{\varepsilon} (\varepsilon^{1 / 2}
     Y_{\varepsilon, \zeta}) \|^p_{L^p} = \int_{\mathbbm{R}} |
     F^{(m)}_{\varepsilon} (x) |^p \gamma (\mathd x) \]
  where $\gamma (\mathd x)$ is the density of a centered Gaussian with
  variance $\sigma^2_{\varepsilon}$. The integral is finite by
  Assumption~\ref{a:main}: in particular we only need to assume that the first
  $m$ derivatives of $F_{\varepsilon}$ have exponential growth (actually, it
  is easy to see that one can require even weaker growth conditions).
\end{remark}

\subsection{Analysis of simple trees}\label{s:first-trees-conv}

First of all note that the term $\ttwo{\bar{Y}_{\varepsilon}}$ has no
remainder, and then it can be shown to converge in law to $\lambda^{(2)}
\ttwo{Y}$ by usual techniques (see~{\cite{catellier_paracontrolled_2013}}). In
this section we show the convergence of the trees $\tzero{Y_{\varepsilon}}$,
$\tone{Y_{\varepsilon}}$, $\ttwo{Y_{\varepsilon}}$,
$\tthreeone{Y_{\varepsilon}}$. We obtain easily
from~(\ref{e:Fexp-explicit-centered}):
\begin{eqnarray}
  \Delta_q Y_{\varepsilon}^{\tau} (t, x) & \assign & \frac{(3 - m) !}{3!}
  \int_{\zeta} \Phi^{[m]}_{\zeta} \mu_{q, \zeta} \nonumber\\
  & = & f_{3, \varepsilon} \int_{\zeta} \llbracket Y^{(3 - m)}_{\varepsilon,
  \zeta} \rrbracket \mu_{\zeta} + \frac{(3 - m) !}{3!} \int_{\zeta}
  \hat{\Phi}^{[m]}_{\zeta} \mu_{q, \zeta} \nonumber\\
  & = & f_{\tau} (\lambda_{\varepsilon}) \Delta_q K^{\tau} (Y_{\varepsilon})
  (t, x) + \Delta_q \hat{Y}^{\tau}_{\varepsilon} (t, x), 
  \label{e:firstorder-after-renorm}
\end{eqnarray}
with $\zeta = (s, y)$ and either
\begin{equation}
  \begin{array}{llllll}
    \mu_{q, \zeta} & = & \delta (t - s) K_{q, x} (y) \mathd s \mathd y, &  &
    \text{for} & \tzero{\Delta_q Y_{\varepsilon}}, \Delta_q
    \tone{Y_{\varepsilon}}, \Delta_q \ttwo{Y_{\varepsilon}},\\
    \mu_{q, \zeta} & = & \mathd s \mathd y \int K_{q, x} (z) P_{t - s} (z - y)
    \mathd z, & \quad & \text{for} & \tthreeone{\Delta_q Y_{\varepsilon}},
  \end{array} \label{e:simple-measure-def}
\end{equation}
where $K_{q, x} (y)$ is the kernel associated to the Littlewood-Paley block
$\Delta_q$ and $P_t (x)$ is the heat kernel.

As said before, $f_{3, \varepsilon} \int_{\zeta} \llbracket Y^{(3 -
m)}_{\varepsilon, \zeta} \rrbracket \mu_{q, \zeta}$ converges in law in $L^p$
for every $2 \leqslant p < + \infty$ to $\lambda_3^{} \int_{\zeta} \llbracket
Y^{(3 - m)}_{\zeta} \rrbracket \mu_{q, \zeta}$ since $f_{3, \varepsilon}
\rightarrow \lambda_3$ by Assumption~\ref{a:main}. We can bound the remainder
term $\int_{\zeta} \hat{\Phi}^{[m]}_{\zeta} \mu_{q, \zeta}$ in $L^p (\Omega)$
using Lemma~\ref{l:delta-norm} and the definition of the norm $\| \cdummy
\|_{\mathbbm{D}^{4 - m, p} (H^{\otimes 4 - m})}$ to obtain:
\begin{eqnarray*}
  \left\| \int_{\zeta} \hat{\Phi}^{[m]}_{\zeta} \mu_{q, \zeta} \right\|_{L^p
  (\Omega)} & = & \left\| \delta^{4 - m} G^{[4 - m]}_{[1]} \int_{\zeta}
  \Phi^{[4]}_{\zeta} h^{\otimes 4 - m}_{\zeta} \mu_{q, \zeta} \right\|_{L^p
  (\Omega)}\\
  & \lesssim & \left\| G^{[4 - m]}_{[1]} \int_{\zeta} \Phi^{[4]}_{\zeta}
  h^{\otimes 4 - m}_{\zeta} \mu_{q, \zeta} \right\|_{\mathbbm{D}^{4 - m, p}
  (H^{\otimes 4 - m})}\\
  & \lesssim & \sum_{k = 0}^{4 - m} \left\| \mathD^k G^{[4 - m]}_{[1]}
  \int_{\zeta} \Phi^{[4]}_{\zeta} h^{\otimes 4 - m}_{\zeta} \mu_{q, \zeta}
  \right\|_{L^p (\Omega, H^{\otimes 4 - m + k})} .\\
  &  & 
\end{eqnarray*}
From Corollary~\ref{l:bounded-q} we know that $(j - L)^{- 1}$ and $\mathD (j -
L)^{- 1}$ are bounded in $L^p$ for every $p \in [2, \infty)$ and every $j
\geqslant 1$. Applying repeatedly these estimations we obtain:
\[ \left\| \mathD^k G^{[4 - m]}_{[1]} \int_{\zeta} \Phi^{[4]}_{\zeta}
   h^{\otimes 4 - m}_{\zeta} \mu_{q, \zeta} \right\|_{L^p (\Omega, H^{\otimes
   4 - m + k})} \lesssim \left\| \int_{\zeta} \Phi^{[4]}_{\zeta} h^{\otimes 4
   - m}_{\zeta} \mu_{q, \zeta} \right\|_{L^p (\Omega, H^{\otimes 4 - m})} . \]
Now we can proceed to implement the idea we already described in
Section~\ref{s:example}, i.e. estimating out the term $\left\| \varepsilon^{-
\frac{1}{2}} \Phi^{[4]}_{\zeta} \right\|_{L^p (\Omega)}$ (which is bounded by
Remark~\ref{r:phi-greater-est} but with infinite chaos decomposition) and
considering the finite-chaos term that is left. We do this by decomposing the
$L^p (\Omega, H^{\otimes 4 - m})$ norm as norms on $H^{\otimes 4 - m}$ and
$L^{p / 2} (\Omega)$ as follows: \
\begin{eqnarray*}
  \left\| \int_{\zeta} \Phi^{[4]}_{\zeta} h^{\otimes 4 - m}_{\zeta} \mu_{q,
  \zeta} \right\|_{L^p (\Omega, H^{\otimes 4 - m})} & \lesssim & \left\|
  \left\| \int_{\zeta} \Phi_{\zeta}^{[4]} h_{\zeta}^{\otimes 4 - m} \mu_{q,
  \zeta} \right\|^2_{H^{\otimes 4 - m}} \right\|_{L^{p / 2} (\Omega)}^{1 /
  2}\\
  & \lesssim & \left\| \int_{\zeta} \Phi_{\zeta}^{[4]} \Phi_{\zeta'}^{[4]}
  \langle h_{\zeta}^{\otimes 4 - m}, h_{\zeta'}^{\otimes 4 - m}
  \rangle_{H^{\otimes 4 - m}} \mu_{q, \zeta} \mu_{q, \zeta'}  \right\|_{L^{p /
  2} (\Omega)}^{1 / 2}\\
  & \lesssim & \left[ \int_{\zeta, \zeta'} \| \Phi_{\zeta}^{[4]}
  \Phi_{\zeta'}^{[4]} \|_{L^{p / 2} (\Omega)}  | \langle h_{\zeta}, h_{\zeta'}
  \rangle |^{4 - m}  | \mu_{q, \zeta} \mu_{q, \zeta'} | \right]^{1 / 2} .
\end{eqnarray*}
Finally, putting the estimations together and using H{\"o}lder's inequality,
together with the bound $\varepsilon | \langle h_{\zeta}, h_{\zeta'} \rangle |
= \varepsilon | C_{\varepsilon} (\zeta - \zeta') | \lesssim 1$ of
Lemma~\ref{l:cov-est-eps}, we obtain for every $\delta \in (0, 1]$:
\begin{eqnarray*}
  \left\| \int_{\zeta} \hat{\Phi}^{[m]}_{\zeta} \mu_{q, \zeta} \right\|_{L^p
  (\Omega)} & \lesssim & \left[ \varepsilon \int_{\zeta, \zeta'} \left\|
  \varepsilon^{- \frac{1}{2}} \Phi^{[4]}_{\zeta} \right\|_{L^p (\Omega)}
  \left\| \varepsilon^{- \frac{1}{2}} \Phi^{[4]}_{\zeta'} \right\|_{L^p
  (\Omega)} | \langle h_{\zeta}, h_{\zeta'} \rangle |^{4 - m} | \mu_{q, \zeta}
  \mu_{q, \zeta'}  | \right]^{\frac{1}{2}}\\
  & \lesssim & \left[ \varepsilon^{\delta} \int_{\zeta, \zeta'} \left\|
  \varepsilon^{- \frac{1}{2}} \Phi^{[4]}_{\zeta} \right\|_{L^p (\Omega)}
  \left\| \varepsilon^{- \frac{1}{2}} \Phi^{[4]}_{\zeta'} \right\|_{L^p
  (\Omega)} | \langle h_{\zeta}, h_{\zeta'} \rangle |^{3 - m + \delta} |
  \mu_{q, \zeta} \mu_{q, \zeta'}  | \right]^{\frac{1}{2}} .
\end{eqnarray*}
Now using Remark~\ref{r:phi-greater-est} (note that to bound $\varepsilon^{-
\frac{1}{2}} \Phi^{[4]}$ we only need to control the first 4 derivatives of
$F_{\varepsilon}$) and the fact that $\langle h_{\zeta}, h_{\zeta'} \rangle_H
= C_{\varepsilon} (\zeta - \zeta')$ we obtain as a final estimation
\begin{equation}
  \left\| \int_{\zeta} \hat{\Phi}^{(m)}_{\zeta} \mu_{q, \zeta} \right\|_{L^p
  (\Omega)} \lesssim \varepsilon^{\frac{\delta}{2}} \left[ \int |
  C_{\varepsilon} (\zeta - \zeta') |^{3 - m + \delta}  | \mu_{q, \zeta}
  \mu_{q, \zeta'}  | \right]^{1 / 2} . \label{e:firstorder-last-estim}
\end{equation}

From the definition~(\ref{e:simple-measure-def}) of the measure $\mu_{q,
\zeta}$, the l.h.s of~(\ref{e:firstorder-last-estim}) can be estimated in a
standard way using Lemma~\ref{l:integral-firstorder-est} to obtain for every
$x \in \mathbbm{T}^3$, $q > 0$:
\[ \begin{array}{lllllll}
     \left\| \Delta_q \tthreeone{\hat{Y}_{\varepsilon}} (t, x) \right\|_{L^p
     (\Omega)} & \lesssim & \varepsilon^{\frac{\delta}{2}} 2^{- \frac{1 -
     \delta}{2} q} &  & \left\| \Delta_q \tone{\hat{Y}_{\varepsilon}} (t, x)
     \right\|_{L^p (\Omega)} & \lesssim & \varepsilon^{\frac{\delta}{2}}
     2^{\frac{1 + \delta}{2} q},\\
     \left\| \Delta_q \ttwo{\hat{Y}_{\varepsilon}} (t, x) \right\|_{L^p
     (\Omega)} & \lesssim & \varepsilon^{\frac{\delta}{2}} 2^{\frac{2 +
     \delta}{2} q} &  & \left\| \Delta_q \tzero{\hat{Y}_{\varepsilon}} (t, x)
     \right\|_{L^p (\Omega)} & \lesssim & \varepsilon^{\frac{\delta}{2}}
     2^{\frac{\delta}{2} q} .
   \end{array} \]

\paragraph{Time regularity of trees}

We want to show~(\ref{e:check-conv-2}). In order to do that, we compute in the
same way as before:
\begin{eqnarray*}
  \left\| \int_{\zeta} (\hat{\Phi}^{[m]}_{t, x} - \hat{\Phi}^{[m]}_{s, x})
  \mu_{q, \zeta} \right\|_{L^p (\Omega)} & \lesssim & \left\| \delta^{4 - m}
  \int_{\zeta} G^{[4 - m]}_{[1]} (\Phi^{[4]}_{t, x} h^{\otimes 4 - m}_{t, x} -
  \Phi^{[4]}_{s, x} h_{s, x}^{\otimes 4 - m}) \mu_{q, \zeta} \right\|_{L^p
  (\Omega)}\\
  & \lesssim & \left\| \left\| \int_{\zeta} (\Phi^{[4]}_{t, x} -
  \Phi^{[4]}_{s, x}) h_{s, x}^{\otimes 4 - m} \mu_{q, \zeta}
  \right\|^2_{H^{\otimes 4 - m}} \right\|_{L^{p / 2} (\Omega)}^{1 / 2}\\
  &  & + \left\| \left\| \int_{\zeta} \Phi^{[4]}_{s, x} (h_{t, x}^{\otimes 4
  - m} - h_{s, x}^{\otimes 4 - m}) \mu_{q, \zeta} \right\|^2_{H^{\otimes 4 -
  m}} \right\|_{L^{p / 2} (\Omega)}^{1 / 2} .
\end{eqnarray*}
We focus on the first term above to obtain that it is bounded by
\begin{eqnarray*}
  &  & \left\| \int_{\zeta} (\Phi^{[4]}_{t, x} - \Phi^{[4]}_{s, x})
  (\Phi^{[4]}_{t, x'} - \Phi^{[4]}_{s, x'}) \langle h_{s, x}^{\otimes 4 - m},
  h_{s, x'}^{\otimes 4 - m} \rangle_{H^{\otimes 4 - m}} \mu_{q, \zeta} \mu_{q,
  \zeta'}  \right\|_{L^{p / 2} (\Omega)}^{1 / 2}\\
  & \lesssim & \left[ \int_{\zeta, \zeta'} \| (\Phi^{[4]}_{t, x} -
  \Phi^{[4]}_{s, x}) (\Phi^{[4]}_{t, x'} - \Phi^{[4]}_{s, x'}) \|_{L^{p / 2}
  (\Omega)}  | \langle h_{s, x}, h_{s, x'} \rangle |^{4 - m}  | \mu_{q, \zeta}
  \mu_{q, \zeta'} | \right]^{1 / 2}\\
  & \lesssim & \left[ \varepsilon \int_{\zeta, \zeta'} \| \varepsilon^{- 1}
  (\Phi^{[4]}_{t, x} - \Phi^{[4]}_{s, x}) (\Phi^{[4]}_{t, x'} - \Phi^{[4]}_{s,
  x'}) \|_{L^{p / 2} (\Omega)} | \langle h_{s, x}, h_{s, x'} \rangle |^{4 - m}
  | \mu_{q, \zeta} \mu_{q, \zeta'} | \right]^{\frac{1}{2}}\\
  & \lesssim & \left[ \varepsilon^{\delta} \int_{\zeta, \zeta'} \|
  \varepsilon^{- 1} (\Phi^{[4]}_{t, x} - \Phi^{[4]}_{s, x}) (\Phi^{[4]}_{t,
  x'} - \Phi^{[4]}_{s, x'}) \|_{L^{p / 2} (\Omega)} | \langle h_{s, x}, h_{s,
  x'} \rangle |^{3 - m + \delta} | \mu_{q, \zeta} \mu_{q, \zeta'} |
  \right]^{\frac{1}{2}} .
\end{eqnarray*}
Now note that
\begin{eqnarray*}
  \varepsilon^{- \frac{1}{2}} (\Phi^{[4]}_{t, x} - \Phi^{[4]}_{s, x}) & = &
  F^{(4)} (\varepsilon^{\frac{1}{2}} Y_{\varepsilon} (t, x)) - F^{(4)}
  (\varepsilon^{\frac{1}{2}} Y_{\varepsilon} (s, x))\\
  & = & \varepsilon^{\frac{1}{2}} \int_0^1 F^{(5)} [\varepsilon^{\frac{1}{2}}
  Y_{\varepsilon} (s, x) + \tau \varepsilon^{\frac{1}{2}} (Y_{\varepsilon} (t,
  x) - Y_{\varepsilon} (s, x))]  (Y_{\varepsilon} (t, x) - Y_{\varepsilon} (s,
  x)),
\end{eqnarray*}
and we can estimate $\left\| \varepsilon^{- \frac{1}{2}} (\Phi^{[4]}_{t, x} -
\Phi^{[4]}_{s, x}) \right\|_{L^p (\Omega)}$ by hypercontractivity and using
Lemma~\ref{l:covar-difference} as

\begin{eqnarray*}
  & \lesssim_p & \varepsilon^{1 / 2} \left\| \int_0^1 F^{(5)}
  [\varepsilon^{\frac{1}{2}} Y_{\varepsilon} (s, x) + \tau
  \varepsilon^{\frac{1}{2}} (Y_{\varepsilon} (t, x) - Y_{\varepsilon} (s, x))]
  \right\|_{L^{2 p} (\Omega)}  \| Y_{\varepsilon} (t, x) - Y_{\varepsilon} (s,
  x) \|_{L^2 (\Omega)}\\
  & \lesssim & \varepsilon^{1 / 2} [C_{\varepsilon} (0, 0) - C_{\varepsilon}
  (t - s, 0)]^{1 / 2} \\
  & \lesssim & \varepsilon^{- 2 \sigma} | t - s |^{\sigma}
\end{eqnarray*}
for any $\sigma \in [0, 1 / 2]$. The other term can be estimated more easily
by
\begin{eqnarray*}
  &  & \left[ \varepsilon^{\delta} \int_{\zeta, \zeta'} | \langle h_{s, x},
  h_{s, x'} \rangle |^{2 - m + \delta} | \langle h_{t, x} - h_{s, x}, h_{t,
  x'} - h_{s, x'} \rangle | | \mu_{q, \zeta} \mu_{q, \zeta'} |
  \right]^{\frac{1}{2}}\\
  & \lesssim & \varepsilon^{- 2 \kappa} | t - s |^{\sigma} \left[
  \varepsilon^{\delta} \int_{\zeta, \zeta'} | \langle h_{s, x}, h_{s, x'}
  \rangle |^{3 - m + \delta + 2 \sigma} | \mu_{q, \zeta} \mu_{q, \zeta'} |
  \right]^{\frac{1}{2}},
\end{eqnarray*}
and finally obtain
\[ \left\| \int_{\zeta} (\hat{\Phi}^{[m]}_{t, x} - \hat{\Phi}^{[m]}_{s, x})
   \mu_{q, \zeta} \right\|_{L^p (\Omega)} \lesssim \varepsilon^{\delta / 2 - 2
   \kappa} | t - s |^{\sigma} \left[ \int_{\zeta, \zeta'} | \langle h_{s, x},
   h_{s, x'} \rangle |^{3 - m + \delta + 2 \sigma} | \mu_{q, \zeta} \mu_{q,
   \zeta'} | \right]^{\frac{1}{2}} . \]
Which yields estimation~(\ref{e:check-conv-2}) by applying
Lemma~\ref{l:integral-firstorder-est} as before. This concludes the treatment
of simple trees. Notice that in this section we only needed $F_{\varepsilon}
\in C^5 (\mathbbm{R})$ with the first 5 derivatives having exponential growth:
indeed we need to take 4 derivatives to bound $\varepsilon^{1 / 2}
\Phi^{[4]}_{\zeta}$ as of Remark~\ref{r:phi-greater-est}, plus one more
derivative for the time regularity of $\varepsilon^{1 / 2}
\Phi^{[4]}_{\zeta}$.

\subsection{Analysis of composite trees}\label{s:comp-trees-all}

In this section we show the decomposition~(\ref{eq:decomp-chaos}) and the
bound~(\ref{e:check-conv-1}) for the trees $\tthreetwor{Y_{\varepsilon}},
\ttwothreer{Y_{\varepsilon}}, \ttwothreer{\bar{Y}_{\varepsilon}},
\tthreethreer{Y_{\varepsilon}}$. The time regularity~(\ref{e:check-conv-2}) of
$\hat{Y}^{\tau}_{\varepsilon}$ can be obtained with the same technique as in
the previous section, assuming that we can control one more derivative of
$F_{\varepsilon}$ than what is needed to prove the boundedness of
$Y^{\tau}_{\varepsilon}$ (thus we will need $F_{\varepsilon} \in C^9
(\mathbbm{R})$ with exponential growth, as discussed in
Remark~\ref{r:howmanyder}). Looking at the definitions in~(\ref{e:trees-def})
it is clear that we can write the $q$-th Littlewood-Paley blocks of
$\tthreetwor{Y_{\varepsilon}}$, $\ttwothreer{Y_{\varepsilon}}$,
$\ttwothreer{\bar{Y}_{\varepsilon}}$ and $\tthreethreer{Y_{\varepsilon}}$
$\forall \varepsilon > 0$ as:
\begin{equation}
  \begin{array}{lll}
    \Delta_q \tthreetwor{Y_{\varepsilon}} (\bar{\zeta}) & = & \frac{1}{6}
    \int_{\zeta_1, \zeta_2} \Phi^{[0]}_{\zeta_1} \Phi^{[2]}_{\zeta_2} \mu_{q,
    \zeta_1, \zeta_2} - \tthreetwor{d_{\varepsilon}} \Delta_q (1)
    (\bar{\zeta}),\\
    \Delta_q \ttwothreer{Y_{\varepsilon}} (\bar{\zeta}) & = & \frac{1}{9}
    \int_{\zeta_1, \zeta_2} \Phi^{[1]}_{\zeta_1} \Phi^{[1]}_{\zeta_2} \mu_{q,
    \zeta_1, \zeta_2} - \ttwothreer{d_{\varepsilon}} \Delta_q (1)
    (\bar{\zeta}),\\
    \Delta_q \ttwothreer{\bar{Y}_{\varepsilon}} (\bar{\zeta}) & = &
    \frac{1}{3} \int_{\zeta_1, \zeta_2} \bar{\Phi}^{[1]}_{\zeta_1}
    \Phi^{[1]}_{\zeta_2} \mu_{q, \zeta_1, \zeta_2} -
    \ttwothreer{\bar{d}_{\varepsilon}} \Delta_q (1) (\bar{\zeta}),\\
    \Delta_q \tthreethreer{Y_{\varepsilon}} (\bar{\zeta}) & = & \frac{1}{3}
    \int_{\zeta_1, \zeta_2} \Phi^{[0]}_{\zeta_1} \Phi^{[1]}_{\zeta_2} \mu_{q,
    \zeta_1, \zeta_2} - \tthreethreerprime{d_{\varepsilon}} \Delta_q
    Y_{\varepsilon} (\bar{\zeta}) - \tthreethreer{d_{\varepsilon}} \Delta_q
    (1) (\bar{\zeta}),
  \end{array} \label{e:second-trees-blocks}
\end{equation}
for any time-space point $\bar{\zeta} = (t, \bar{x})$ that we keep fixed
throughout this section. In order to keep the notation shorter we defined
\[ \bar{\Phi}^{[1]}_{\zeta_1} \assign \varepsilon^{- 1 / 2} f_{2, \varepsilon}
   \llbracket Y^2_{\varepsilon} (\zeta_1) \rrbracket, \]
which can be thought of as a finite-chaos equivalent of
$\Phi^{[1]}_{\varepsilon}$ (modulo a constant $f_{2, \varepsilon} / f_{3,
\varepsilon}$) in the same way as $\ttwo{\bar{Y}_{\varepsilon}}$ is a
finite-chaos equivalent of $\ttwo{Y_{\varepsilon}}$. The measure $\mu_{q,
\zeta_1, \zeta_2}$on $(\mathbbm{R} \times \mathbbm{T}^3)^2$ is given by
\[ \mu_{q, \zeta_1, \zeta_2} \assign [\int_{x, y} K_{q, \bar{x}} (x) \sum_{i
   \sim j} K_{i, x} (y) K_{j, x} (x_2) P_{t - s_1} (y - x_1)] \delta (t - s_2)
   \mathd \zeta_1 \mathd \zeta_2, \]
with $\zeta_{i \assign} (s_i, x_i)$ for $i = 1, 2$, $K$ being the kernel
associated to the Littlewood-Paley decomposition and $P$ being the heat
kernel. The first step for decomposing~(\ref{e:second-trees-blocks}) is to
expand them using the partial chaos expansion~(\ref{e:Fexpansion}) to obtain
\begin{equation}
  \begin{array}{lll}
    \Phi^{[0]}_{\zeta_1} \Phi^{[2]}_{\zeta_2} & = & \mathbbm{E}
    [\Phi^{[0]}_{\zeta_1} \Phi^{[2]}_{\zeta_2}] + \delta G_1 \mathD
    (\Phi^{[0]}_{\zeta_1} \Phi^{[2]}_{\zeta_2}),\\
    \Phi^{[1]}_{\zeta_1} \Phi^{[1]}_{\zeta_2} & = & \mathbbm{E}
    [\Phi^{[1]}_{\zeta_1} \Phi^{[1]}_{\zeta_2}] + \delta G_1 \mathD
    (\Phi^{[1]}_{\zeta_1} \Phi^{[1]}_{\zeta_2}),\\
    \Phi^{[0]}_{\zeta_1} \Phi_{\zeta_2}^{[1]} & = & \mathbbm{E}
    [\Phi^{[0]}_{\zeta_1} \Phi^{[1]}_{\zeta_2}] + \delta [J_0 \mathD
    (\Phi^{[0]}_{\zeta_1} \Phi^{[1]}_{\zeta_2})] + \delta^2 G_{[1]}^{[2]}
    \mathD^2 (\Phi^{[0]}_{\zeta_1} \Phi^{[1]}_{\zeta_2})\\
    & = & \mathbbm{E} [\Phi^{[0]}_{\zeta_1} \Phi^{[1]}_{\zeta_2}] +
    Y_{\varepsilon} (\zeta_1) \mathbbm{E} [\Phi^{[1]}_{\zeta_1}
    \Phi^{[1]}_{\zeta_2}] + Y_{\varepsilon} (\zeta_2) \mathbbm{E}
    [\Phi^{[0]}_{\zeta_1} \Phi^{[2]}_{\zeta_2}] + \delta^2 G_{[1]}^{[2]}
    \mathD^2 (\Phi^{[0]}_{\zeta_1} \Phi^{[1]}_{\zeta_2}) .
  \end{array} \label{e:partial-chaos}
\end{equation}
Like the trees appearing in the $\Phi^4_3$ model, we expect composite trees to
require a further renormalisation, on top of the Wick ordering. We
developed~(\ref{e:partial-chaos}) to the smallest order that allows us to see
the effect of renormalization.

\subsubsection{Renormalisation of composite trees }\label{s:second-trees-ren}

In this section we show how to renormalize~(\ref{e:second-trees-blocks}) by
estimating terms of the type $\mathbbm{E} [\Phi^{[m]}_{\zeta_1}
\Phi^{[n]}_{\zeta_2}]$ in expansion~(\ref{e:partial-chaos}). This poses an
additional difficulty, as in principle we would need to compute an infinite
number of contractions between $\Phi^{[m]}_{\zeta_1}$ and
$\Phi^{[n]}_{\zeta_2}$. However, we can again decompose $\Phi^{[m]}$ as
in~(\ref{e:Fexp-explicit-centered}), and then the product
formula~(\ref{e:productformula-easy}) ensures that we only need to control a
finite number of contractions. This is another important step in the proof and
will be carried out in Lemma~\ref{l:bound-corr}. First we need some
preparatory results:

\begin{lemma}
  We have
  \[ \int_{\zeta_1, \zeta_2} Y_{\varepsilon} (\zeta_1) \mathbbm{E}
     [\Phi^{[1]}_{\zeta_1} \Phi^{[1]}_{\zeta_2}] \mu_{q, \zeta_1, \zeta_2} =
     \int_{s, x} \Delta_q Y_{\varepsilon} (s, \bar{x} - x) G (t - s, x) . \]
  and
  \[ \int_{\zeta_1, \zeta_2} Y_{\varepsilon} (\zeta_2) \mathbbm{E}
     [\Phi^{[0]}_{\zeta_1} \Phi^{[2]}_{\zeta_2}] \mu_{q, \zeta_1, \zeta_2} =
     \int_x \Delta_q Y_{\varepsilon} (t, \bar{x} - x) H (t, x), \]
  where we introduced the kernels:
  \begin{eqnarray*}
    G (t - s, x) & \assign & \int_{x_1', x_2}  \sum_{i \sim j} K_{i, x} (x_1')
    K_{j, x} (x_2) P_{t - s} (x_1') \mathbbm{E} [\Phi^{[1]}_0 \Phi^{[1]}_{(t -
    s, x_2)}],\\
    H (t, x) & \assign & \int_{s, x_1, x_1'}  \sum_{i \sim j} K_{i, x} (x_1')
    K_{j, x} (0) P_{t - s} (x_1' - x_1) \mathbbm{E} [\Phi^{[0]}_0
    \Phi^{[2]}_{(t - s, - x_1)}] .
  \end{eqnarray*}
\end{lemma}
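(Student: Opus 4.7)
My plan is to prove both identities by a direct change of variables, resting on three ingredients: (i) the factor $\delta(t-s_2)$ in $\mu_{q,\zeta_1,\zeta_2}$, which collapses the $s_2$-integration and fixes $\zeta_2=(t,x_2)$; (ii) the space-time stationarity of the field $\Phi^{[m]}$ inherited from that of $Y_\varepsilon$, which makes $\mathbbm{E}[\Phi^{[m]}_{\zeta_1}\Phi^{[n]}_{\zeta_2}]$ a function of $\zeta_2-\zeta_1$ only; and (iii) the translation invariance of the Littlewood--Paley and heat kernels. No estimates are involved -- these are purely algebraic reorganisations of the defining integrals.

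For the first identity, after setting $s_2=t$ I would use stationarity to rewrite the correlation as $\mathbbm{E}[\Phi^{[1]}_0\Phi^{[1]}_{(t-s_1,\,x_2-x_1)}]$, and then shift the inner integration variables by $x_1$, setting $x'=x-x_1$, $y'=y-x_1$, $x_2'=x_2-x_1$. This transforms the integrand into $K_q((\bar x-x')-x_1)\sum_{i\sim j}K_i(y'-x')K_j(x_2'-x')P_{t-s_1}(y')\mathbbm{E}[\Phi^{[1]}_0\Phi^{[1]}_{(t-s_1,x_2')}]$, with the $x_1$-dependence entirely concentrated in the factor $K_q((\bar x-x')-x_1)$ and in $Y_\varepsilon(s_1,x_1)$. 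Integrating out $x_1$ then produces $\Delta_q Y_\varepsilon(s_1,\bar x-x')$, while the remaining integral over $(x_2',y')$ matches the definition of $G(t-s_1,x')$ term-by-term. Relabelling $s_1\mapsto s$, $x'\mapsto x$ yields the stated identity.

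The second identity follows the same scheme, with the only change that the base point of the translation becomes $x_2$ rather than $x_1$, because now $Y_\varepsilon(\zeta_2)=Y_\varepsilon(t,x_2)$ is the field that has to be extracted. Setting $x'=x-x_2$, $y'=y-x_2$, $x_1'=x_1-x_2$ produces the kernels $K_i(y'-x')$, $K_j(-x')$, $P_{t-s_1}(y'-x_1')$; stationarity yields the correlation $\mathbbm{E}[\Phi^{[0]}_0\Phi^{[2]}_{(t-s_1,-x_1')}]$; and the outer kernel becomes $K_q((\bar x-x')-x_2)$, whose integral against $Y_\varepsilon(t,x_2)$ is $\Delta_q Y_\varepsilon(t,\bar x-x')$. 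The leftover integrand in $(s_1,x_1',y')$ is $H(t,x')$ by definition. The only real subtlety is the bookkeeping of which variable to translate by, dictated by where $Y_\varepsilon$ sits in the correlation; neither symmetry of the Littlewood--Paley kernels nor any estimate on $\Phi^{[m]}$ is required.
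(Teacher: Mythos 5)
Your proof is correct and coincides with the paper's own argument: both rely on the $\delta(t-s_2)$ factor to fix $\zeta_2$, stationarity of $\Phi^{[m]}$, and a translation of the inner kernel variables by $x_1$ (resp.\ $x_2$) so that the only remaining dependence on the outer variable sits in $K_{q,\bar{x}}(\cdot)$, which is then integrated against $Y_\varepsilon$ to produce $\Delta_q Y_\varepsilon$. The only cosmetic difference is your reuse of the symbol $x_1'$ in the second computation, which collides with the name already used in the definition of $H$, but the underlying change of variables is identical.
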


\begin{remark}
  Some caveat on the notation: although we use the same letter for the kernel
  $G (\cdummy, \cdummy)$ and the Green operator $G_{[m]}^{[n]}$, those two are
  not related in any possible way. It is always clear which one the notation
  refers to.
\end{remark}

\begin{proof}
  We have
  \begin{eqnarray*}
    &  & \int_{\zeta_1, \zeta_2} Y_{\varepsilon} (\zeta_1) \mathbbm{E}
    [\Phi^{[1]}_{\zeta_1} \Phi^{[1]}_{\zeta_2}] \mu_{q, \zeta_1, \zeta_2}\\
    & = & \int_{s_1, x_1, x_2, x, x_1'} K_{q, \bar{x}} (x) \sum_{i \sim j}
    K_{i, x} (x_1') K_{j, x} (x_2) P_{t - s_1} (x_1' - x_1) Y_{\varepsilon}
    (s_1, x_1) \mathbbm{E} [\Phi^{[1]}_0 \Phi^{[1]}_{(t - s_1, x_2 - x_1)}]
  \end{eqnarray*}
  and by change of variables, exploiting the translation invariance of the
  problem we obtain:
  \[ = \int_{s_1, x_1, x} K_{q, \bar{x}} (x + x_1) Y (s_1, x_1) \int_{x_1',
     x_2}  \sum_{i \sim j} K_{i, x} (x_1') K_{j, x} (x_2) P_{t - s_1} (x_1')
     \mathbbm{E} [\Phi^{[1]}_0 \Phi^{[1]}_{(t - s_1, x_2)}] . \]
  Using the definition of $K_q$ we have
  \[ = \int_{s_1, x} \Delta_q Y_{\varepsilon} (s_1, \bar{x} - x) \int_{x_1',
     x_2}  \sum_{i \sim j} K_{i, x} (x_1') K_{j, x} (x_2) P_{t - s_1} (x_1')
     \mathbbm{E} [\Phi^{[1]}_0 \Phi^{[1]}_{(t - s_1, x_2)}] . \]
  Finally we can write
  \[ \int_{\zeta_1, \zeta_2} Y_{\varepsilon} (\zeta_1) \mathbbm{E}
     [\Phi^{[1]}_{\zeta_1} \Phi^{[1]}_{\zeta_2}] \mu_{q, \zeta_1, \zeta_2} =
     \int_{s_1, x} \Delta_q Y_{\varepsilon} (s_1, \bar{x} - x) G (t - s_1, x)
     . \]
  Similar computations holds for the other term, indeed
  \begin{eqnarray*}
    &  & \int_{\zeta_1, \zeta_2} Y_{\varepsilon} (\zeta_2) \mathbbm{E}
    [\Phi^{[0]}_{\zeta_1} \Phi^{[2]}_{\zeta_2}] \mu_{q, \zeta_1, \zeta_2}\\
    & = & \int_{s_1, x_1, x_2, x, x_1'} K_{q, \bar{x}} (x) \sum_{i \sim j}
    K_{i, x} (x_1') K_{j, x} (x_2) P_{t - s_1} (x_1' - x_1) Y_{\varepsilon}
    (t, x_2) \mathbbm{E} [\Phi^{[0]}_0 \Phi^{[2]}_{(t - s_1, x_2 - x_1)}]\\
    & = & \int_{x_2} K_{q, \bar{x}} (x + x_2) Y_{\varepsilon} (t, x_2)
    \int_{s_{1,} x_1, x, x_1'}  \sum_{i \sim j} K_{i, x} (x_1') K_{j, x} (0)
    P_{t - s_1} (x_1' - x_1) \mathbbm{E} [\Phi^{[0]}_0 \Phi^{[2]}_{(t - s_1, -
    x_1)}]\\
    & = & \int_x \Delta_q Y_{\varepsilon} (t, \bar{x} - x) \int_{s_{1,} x_1,
    x_1'}  \sum_{i \sim j} K_{i, x} (x_1') K_{j, x} (0) P_{t - s_1} (x_1' -
    x_1) \mathbbm{E} [\Phi^{[0]}_0 \Phi^{[2]}_{(t - s_1, - x_1)}]\\
    & = & \int_x \Delta_q Y_{\varepsilon} (t, \bar{x} - x) H (t, x)
  \end{eqnarray*}
\end{proof}

Substituting the lemma above and~(\ref{e:partial-chaos}) in the
expressions~(\ref{e:second-trees-blocks}), we can write them as:
\begin{eqnarray*}
  \Delta_q \ttwothreer{Y_{\varepsilon}} (\bar{\zeta}) & = & \frac{1}{9}
  \int_{\zeta_1, \zeta_2} \delta G_1 \mathD (\Phi^{[1]}_{\zeta_1}
  \Phi^{[1]}_{\zeta_2}) \mu_{q, \zeta_1, \zeta_2} + \Delta_q (1) (\bar{\zeta})
  \left[ \frac{1}{9} \int_{s, x} G (t - s, x) - \ttwothreer{d_{\varepsilon}}
  \right]\\
  \Delta_q \ttwothreer{\bar{Y}_{\varepsilon}} (\bar{\zeta}) & = & \frac{1}{3}
  \int_{\zeta_1, \zeta_2} \delta G_1 \mathD (\bar{\Phi}^{[1]}_{\zeta_1}
  \Phi^{[1]}_{\zeta_2}) \mu_{q, \zeta_1, \zeta_2} + \Delta_q (1) (\bar{\zeta})
  \left[ \frac{1}{3} \int_{s, x}  \bar{G} (t - s, x) -
  \ttwothreer{\bar{d}_{\varepsilon}} \right]\\
  \Delta_q \tthreetwor{Y_{\varepsilon}} (\bar{\zeta}) & = & \frac{1}{6}
  \int_{\zeta_1, \zeta_2} \delta G_1 \mathD (\Phi^{[0]}_{\zeta_1}
  \Phi^{[2]}_{\zeta_2}) \mu_{q, \zeta_1, \zeta_2} + \Delta_q (1) (\bar{\zeta})
  \left[ \frac{1}{6} \int_x H (t, x) - \tthreetwor{d_{\varepsilon}} \right]\\
  \Delta_q \tthreethreer{Y_{\varepsilon}} (\bar{\zeta}) & = & \frac{1}{3}
  \int_{\zeta_1, \zeta_2} \delta^2 G_1^2 \mathD^2 (\Phi^{[0]}_{\zeta_1}
  \Phi^{[1]}_{\zeta_2}) \mu_{q, \zeta_1, \zeta_2} + \Delta_q (1) (\bar{\zeta})
  \left[ \frac{1}{3} \int_{\zeta_1, \zeta_2} \mathbbm{E} [\Phi^{[0]}_{\zeta_1}
  \Phi^{[1]}_{\zeta_2}] \mu_{q, \zeta_1, \zeta_2} -
  \ttwothreer{d_{\varepsilon}} \right]\\
  &  & + \Delta_q Y_{\varepsilon} (\bar{\zeta}) \left[ \frac{1}{3} \int_{s,
  x} G (t - s, x) + \frac{1}{3} \int_x H (t, x) -
  \tthreethreerprime{d_{\varepsilon}} \right]\\
  &  & + \frac{1}{3} \Delta_q \ttwothreer{R_{\varepsilon}} (\bar{\zeta}) +
  \frac{1}{3} \Delta_q \tthreetwor{R_{\varepsilon}} (\bar{\zeta})
\end{eqnarray*}
with the additional definitions
\begin{eqnarray*}
  \bar{G} (t - s, x) & \assign & \int_{x_1, x_1'}  \sum_{i \sim j} K_{i, x}
  (x_1') K_{j, x} (0) P_{t - s_1} (x_1' - x_1) \mathbbm{E} [\bar{\Phi}^{[1]}_0
  \Phi^{[1]}_{(t - s, - x_1)}],\\
  \Delta_q \ttwothreer{R_{\varepsilon}} (\bar{\zeta}) & \assign & \int_{s_{},
  x} [\Delta_q Y_{\varepsilon} (s, \bar{x} - x) - \Delta_q Y_{\varepsilon} (t,
  \bar{x})] G (t - s_{}, x),\\
  \Delta_q \tthreetwor{R_{\varepsilon}} (\bar{\zeta}) & \assign & \int_x
  [\Delta_q Y_{\varepsilon} (t, \bar{x} - x) - \Delta_q Y_{\varepsilon} (t,
  \bar{x})] H (t, x) .
\end{eqnarray*}
Now we can characterise the local behaviour of $\mathbbm{E}
[\Phi^{[m]}_{\zeta_1} \Phi^{[n]}_{\zeta_2}]$ appearing in the integrals above.
Decomposing separately $\Phi^{[m]}_{\zeta_1}$ and $\Phi^{[n]}_{\zeta_2}$ as
in~(\ref{e:Fexp-explicit-centered}) we obtain:
\begin{eqnarray*}
  \mathbbm{E} [\Phi^{[m]}_{\zeta_1} \Phi^{[n]}_{\zeta_2}] & = & \frac{3!^2}{(3
  - m) ! (3 - n) !} (f_{3, \varepsilon})^2 \mathbbm{E} [\llbracket Y^{3 -
  m}_{\varepsilon, \zeta_1} \rrbracket \llbracket Y^{3 - n}_{\varepsilon,
  \zeta_2} \rrbracket] + \frac{3!}{(3 - m) !} f_{3, \varepsilon} \mathbbm{E}
  [\llbracket Y^{3 - m}_{\varepsilon, \zeta_1} \rrbracket
  \hat{\Phi}_{\zeta_2}^{[n]}]\\
  &  & + \frac{3!}{(3 - n) !} f_{3, \varepsilon} \mathbbm{E} [\llbracket Y^{3
  - n}_{\varepsilon, \zeta_2} \rrbracket \hat{\Phi}_{\zeta_1}^{[m]}]
  +\mathbbm{E} [\hat{\Phi}_{\zeta_1}^{[m]} \hat{\Phi}_{\zeta_2}^{[n]}],
\end{eqnarray*}
where $\mathbbm{E} [\llbracket Y^{3 - m}_{\varepsilon, \zeta_1} \rrbracket
\llbracket Y^{3 - n}_{\varepsilon, \zeta_2} \rrbracket] = (3 - m) ! \delta (3
- m, 3 - n) C_{\varepsilon} (\zeta_1 - \zeta_2)^{3 - n}$ and to bound all
other terms we introduce the following result.

\begin{lemma}
  \label{l:bound-corr}Under Assumption~\ref{a:main} (in particular if $F \in
  C^8 (\mathbbm{R})$ with exponentially growing derivatives) we have, for
  every $0 \leqslant m, n \leqslant 3$ and $m \leqslant n$:
  \[ | \mathbbm{E} [\hat{\Phi}^{[m]}_{\zeta_1} \hat{\Phi}^{[n]}_{\zeta_2}] |
     \lesssim \sum_{i = 0}^{4 - n} \varepsilon^{1 + \frac{n - m}{2} + i} |
     \langle h_{\zeta_1}, h_{\zeta_2} \rangle |^{4 - m + i} \lesssim
     \varepsilon^{\delta} | \langle h_{\zeta_1}, h_{\zeta_2} \rangle |^{3 -
     \frac{m + n}{2} + \delta}, \qquad \forall \delta \in [0, 1] . \]
  Moreover for every $0 \leqslant m, n \leqslant 3$,
  \[ \begin{array}{lllll}
       | \mathbbm{E} [\llbracket Y^m_{\varepsilon, \zeta_1} \rrbracket
       \hat{\Phi}^{[n]}_{\zeta_2}] | & \lesssim & \varepsilon^{\frac{m + n -
       3}{2}} | \langle h_{\zeta_1}, h_{\zeta_2} \rangle |^m & \text{if} & m
       \geqslant 4 - n,\\
       \mathbbm{E} [\llbracket Y^m_{\varepsilon, \zeta_1} \rrbracket
       \hat{\Phi}^{[n]}_{\zeta_2}] & = & 0 & \text{if} & m < 4 - n.
     \end{array} \]
\end{lemma}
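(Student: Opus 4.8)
The plan is to extract everything from the representation~(\ref{e:Fexp-explicit-remainder}), namely $\hat{\Phi}^{[m]}_{\zeta} = \delta^{4-m}(G_{[1]}^{[4-m]}\Phi^{[4]}_{\zeta} h_{\zeta}^{\otimes 4-m})$, combined with three elementary facts used repeatedly: (i) $\hat{\Phi}^{[m]}_{\zeta}$ lives in the sum of the Wiener chaoses of order $\geqslant 4-m$, whereas $\llbracket Y_{\varepsilon,\zeta}^m\rrbracket = \delta^m(h_{\zeta}^{\otimes m})$ lives in the chaos of order exactly $m$ (Remark~\ref{r:delta-tensor}); (ii) $\|\Phi^{[4+j]}_{\zeta}\|_{L^p(\Omega)}\lesssim\varepsilon^{(1+j)/2}$ for $0\leqslant j\leqslant 5$, which is Remark~\ref{r:phi-greater-est} together with $\mathD^k\Phi^{[r]}_{\zeta} = \Phi^{[r+k]}_{\zeta}h_{\zeta}^{\otimes k}$; (iii) the resolvents $(j-L)^{-1}$ and $\mathD(j-L)^{-1}$ are bounded on $L^p(\Omega;H^{\otimes \ell})$ (Corollary~\ref{l:bounded-q}), with the commutation rule $\mathD(j-L)^{-1} = (j+1-L)^{-1}\mathD$, so that any fixed product of such operators built from the $G_{[\cdot]}^{[\cdot]}$'s is $L^p$-bounded and commuting a Malliavin derivative past it merely reindexes it. I also recall $\langle h_{\zeta_1},h_{\zeta_2}\rangle = C_{\varepsilon}(\zeta_1-\zeta_2)$ from~(\ref{e:covariance-h}) and $\varepsilon|C_{\varepsilon}(\zeta_1-\zeta_2)|\lesssim 1$ from Lemma~\ref{l:cov-est-eps}.

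For the mixed correlations: by (i) the factors $\llbracket Y^m_{\varepsilon,\zeta_1}\rrbracket$ and $\hat{\Phi}^{[n]}_{\zeta_2}$ are orthogonal whenever $m < 4-n$, which gives the vanishing statement. When $m\geqslant 4-n$, I would integrate by parts using the duality between $\delta^{4-n}$ and $\mathD^{4-n}$ and the formula $\mathD^{4-n}\llbracket Y^m_{\varepsilon,\zeta_1}\rrbracket = \frac{m!}{(m-4+n)!}\llbracket Y^{m+n-4}_{\varepsilon,\zeta_1}\rrbracket h_{\zeta_1}^{\otimes 4-n}$, obtaining after contracting the $H^{\otimes 4-n}$-slots that $\mathbbm{E}[\llbracket Y^m_{\varepsilon,\zeta_1}\rrbracket\hat{\Phi}^{[n]}_{\zeta_2}] = \frac{m!}{(m-4+n)!}\langle h_{\zeta_1},h_{\zeta_2}\rangle^{4-n}\,\mathbbm{E}[\llbracket Y^{m+n-4}_{\varepsilon,\zeta_1}\rrbracket\, G_{[1]}^{[4-n]}\Phi^{[4]}_{\zeta_2}]$. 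Writing the remaining Wick power as $\delta^{m+n-4}(h_{\zeta_1}^{\otimes m+n-4})$ and integrating by parts once more — commuting $\mathD^{m+n-4}$ past the resolvents, which only reindexes them and turns $\Phi^{[4]}_{\zeta_2}$ into $\Phi^{[m+n]}_{\zeta_2}$ — leaves $\langle h_{\zeta_1},h_{\zeta_2}\rangle^{m}\,\mathbbm{E}[\tilde G\,\Phi^{[m+n]}_{\zeta_2}]$ for a bounded resolvent product $\tilde G$; since $m+n\leqslant 6$, (ii) and (iii) give $|\mathbbm{E}[\tilde G\Phi^{[m+n]}_{\zeta_2}]|\lesssim\|\Phi^{[m+n]}_{\zeta_2}\|_{L^2}\lesssim\varepsilon^{(m+n-3)/2}$, which is exactly the asserted bound.

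For $\mathbbm{E}[\hat{\Phi}^{[m]}_{\zeta_1}\hat{\Phi}^{[n]}_{\zeta_2}]$ with $m\leqslant n$, the idea is to apply the product formula~(\ref{e:productformula-easy}) to the product of the two iterated Skorohod integrals $\delta^{4-m}(\cdots)$ and $\delta^{4-n}(\cdots)$ (equivalently, to iterate the duality of step (ii) above) and then take the expectation: every resulting term that is a Skorohod integral of order $\geqslant 1$ has zero mean, so only the fully contracted terms survive. Since each factor is already ``Wick--reduced'', the surviving contractions all pair an $h_{\zeta_1}$ with an $h_{\zeta_2}$; there are at least $4-m$ of them, and any extra one is necessarily fed by one further Malliavin derivative of each of the coefficients $\Phi^{[4]}_{\zeta_1}$, $\Phi^{[4]}_{\zeta_2}$. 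Hence the surviving part is a finite sum, indexed by $i=0,\dots,4-n$, of terms of the shape $c_i\,\langle h_{\zeta_1},h_{\zeta_2}\rangle^{4-m+i}\,\mathbbm{E}[\tilde G_1\Phi^{[4+i]}_{\zeta_1}\,\tilde G_2\Phi^{[4+i+n-m]}_{\zeta_2}]$, with $c_i$ depending only on $m,n$ and $\tilde G_1,\tilde G_2$ bounded resolvent products (here $4+i+n-m\leqslant 8$, which is where $F\in C^8$ enters). Cauchy--Schwarz together with (ii) bounds such a term by $\varepsilon^{(1+i)/2}\varepsilon^{(1+i+n-m)/2}|\langle h_{\zeta_1},h_{\zeta_2}\rangle|^{4-m+i} = \varepsilon^{1+(n-m)/2+i}|\langle h_{\zeta_1},h_{\zeta_2}\rangle|^{4-m+i}$, which gives the first inequality. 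For the second, note that for every $i$ and every $\delta\in[0,1]$ one has $s:=1+(n-m)/2+i-\delta\geqslant 0$ and $3-(m+n)/2+\delta\geqslant 0$, so using $\varepsilon|\langle h_{\zeta_1},h_{\zeta_2}\rangle|\lesssim 1$,
\[ \varepsilon^{1+(n-m)/2+i}|\langle h_{\zeta_1},h_{\zeta_2}\rangle|^{4-m+i} = \varepsilon^{\delta}\,(\varepsilon|\langle h_{\zeta_1},h_{\zeta_2}\rangle|)^{s}\,|\langle h_{\zeta_1},h_{\zeta_2}\rangle|^{3-(m+n)/2+\delta}\lesssim\varepsilon^{\delta}|\langle h_{\zeta_1},h_{\zeta_2}\rangle|^{3-(m+n)/2+\delta}; \]
summing the finitely many terms concludes.

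The step I expect to be the real work is the product-formula bookkeeping of the third paragraph: one must verify that, after discarding all positive-order Skorohod integrals, the surviving terms genuinely organise into the claimed finite family, and in particular that every contraction beyond the minimal $4-m$ is symmetrically ``paid for'' by raising the order of \emph{both} coefficients $\Phi^{[4]}_{\zeta_1}$ and $\Phi^{[4]}_{\zeta_2}$ by one (hence a factor $\varepsilon^{1/2}$ from each, by (ii)), with a similar count needed to see that the index $i$ runs precisely over $0,\dots,4-n$ and that the maximal derivative order actually used is $8$. Once this combinatorial structure is pinned down, all the analytic estimates are immediate from Remark~\ref{r:phi-greater-est}, Corollary~\ref{l:bounded-q}, Cauchy--Schwarz and Lemma~\ref{l:cov-est-eps}.
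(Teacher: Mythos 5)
Your proposal is correct and follows essentially the same route as the paper: write $\hat{\Phi}^{[m]}_{\zeta}$ as an iterated Skorohod integral, expand the product, keep only fully contracted terms under the expectation, estimate the $\Phi^{[\cdot]}$ factors via Remark~\ref{r:phi-greater-est} and Corollary~\ref{l:bounded-q}, and trade covariances for powers of $\varepsilon$ via Lemma~\ref{l:cov-est-eps}; your treatment of the mixed term by duality, the formula $\mathD\llbracket Y^m\rrbracket=m\llbracket Y^{m-1}\rrbracket h$ and chaos orthogonality is an equivalent variant of the paper's use of $\mathD h^{\otimes m}=0$. The combinatorial bookkeeping you flag as the remaining work is exactly what Lemma~\ref{l:productformula} (formula~(\ref{e:productformula-variant})) supplies, and it yields precisely the family of terms you describe (your index $i$ is the paper's $4-n-i$), so no genuine gap remains.
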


\begin{proof}
  Using formula~(\ref{e:productformula-variant}) we decompose
  \begin{eqnarray*}
    \mathbbm{E} [\hat{\Phi}^{[m]}_{\zeta_1} \hat{\Phi}^{[n]}_{\zeta_2}] & = &
    \mathbbm{E} [\delta^{4 - m} (G_{[1]}^{[4 - m]} \Phi^{[4]}_{\zeta_1}
    h_{\zeta_1}^{\otimes 4 - m}) \delta^{4 - n} (G_{[1]}^{[4 - n]}
    \Phi^{[4]}_{\zeta_2} h_{\zeta_2}^{\otimes 4 - n})]\\
    & = & \sum_{i = 0}^{4 - n} \binom{4 - m}{i} \binom{4 - n}{i}
    i!\mathbbm{E} (G_{[5 - n - i]}^{[8 - m - n - i]} \Phi_{\zeta_1}^{[8 - n -
    i]} G_{[5 - m - i]}^{[8 - m - n - i]} \Phi^{[8 - m - i]}_{\zeta_2})
    \langle h_{\zeta_1}, h_{\zeta_2} \rangle^{8 - m - n - i} .
  \end{eqnarray*}
  \[ = \sum_{i = 0}^{4 - n} \binom{4 - m}{i} \binom{4 - n}{i} i!\mathbbm{E}
     (G_{[5 - n - i]}^{[8 - m - n - i]} \Phi_{\zeta_1}^{[8 - n - i]} G_{[5 - m
     - i]}^{[8 - m - n - i]} \Phi^{[8 - m - i]}_{\zeta_2}) \langle
     h_{\zeta_1}, h_{\zeta_2} \rangle^{8 - m - n - i} \]
  We can bound the term
  \[ \varepsilon^{\frac{m + n}{2} + i - 5} \mathbbm{E} (G_{[5 - n - i]}^{[8 -
     m - n - i]} \Phi_{\zeta_1}^{[8 - n - i]} G_{[5 - m - i]}^{[8 - m - n -
     i]} \Phi^{[8 - m - i]}_{\zeta_2}) \lesssim \left\| \varepsilon^{\frac{n +
     i - 5}{2}} \Phi_{\zeta_1}^{[8 - n - i]} \right\|_{L^2} \left\|
     \varepsilon^{\frac{m + i - 5}{2}} \Phi_{\zeta_2}^{[8 - m - i]}
     \right\|_{L^2} \]
  knowing $8 - n - i \vee 8 - m - i \leqslant 8$ derivatives of
  $F_{\varepsilon}$ (see Remark~\ref{r:phi-greater-est}) and using the bound
  $\varepsilon | C_{\varepsilon} (\zeta_1 - \zeta_2) | \lesssim 1$ of
  Lemma~\ref{l:cov-est-eps} with $| \langle h_{\zeta_1}, h_{\zeta_2} \rangle |
  = | C_{\varepsilon} (\zeta_1 - \zeta_2) |$ we have:
  \[ | \mathbbm{E} [\hat{\Phi}^{[m]}_{\zeta_1} \hat{\Phi}^{[n]}_{\zeta_2}] |
     \lesssim \sum_{i = 0}^{4 - n} \varepsilon^{1 + \frac{n - m}{2} + i} |
     \langle h_{\zeta_1}, h_{\zeta_2} \rangle |^{4 - m + i} \lesssim
     \varepsilon^{\delta} | \langle h_{\zeta_1}, h_{\zeta_2} \rangle |^{3 -
     \frac{m + n}{2} + \delta} . \]
  For the second bound we recall that $\llbracket Y^m_{\varepsilon, \zeta_1}
  \rrbracket = \delta^m (h_{\zeta_1}^{\otimes m})$
  (Remark~\ref{r:delta-tensor}) and compute
  \begin{eqnarray*}
    \mathbbm{E} [\llbracket Y^m_{\varepsilon, \zeta_1} \rrbracket
    \hat{\Phi}^{[n]}_{\zeta_2 \zeta_2}] & = & \mathbbm{E} [\delta^m
    (h_{\zeta_1}^{\otimes m}) \delta^{4 - n} (G_{[1]}^{[4 - n]}
    \Phi^{[4]}_{\zeta_2} h_{\zeta_2}^{\otimes 4 - n})]\\
    & = & \sum_{i = 0}^{m \wedge 4 - n} \binom{m}{i} \binom{4 - n}{i}
    i!\mathbbm{E} (\langle \mathD^{4 - n - i} (h_{\zeta_1}^{\otimes m}), G_{[m
    + 1 - i]}^{[m + 4 - n - i]} \Phi^{[4 + m - i]}_{\zeta_2}
    h_{\zeta_2}^{\otimes m + 4 - n - i} \rangle_{H^{\otimes m + 4 - n - i}}) .
  \end{eqnarray*}
  Since $\mathD h^{\otimes m}_{\zeta_1} = 0$ we obtain $\mathbbm{E}
  [\llbracket Y^m_{\varepsilon, \zeta_1} \rrbracket
  \hat{\Phi}^{[n]}_{\zeta_2}] = 0$ if $m < 4 - n$ and
  \begin{eqnarray*}
    | \mathbbm{E} [\llbracket Y^m_{\varepsilon, \zeta_1} \rrbracket
    \hat{\Phi}^{[n]}_{\zeta_2}] | & \lesssim & \varepsilon^{\frac{m + n -
    3}{2}} \mathbbm{E} [\varepsilon^{- \frac{3 - m - n}{2}} G_{[m + n -
    3]}^{[m]} \Phi^{[m + n]}_{\zeta_2}] | \langle h_{\zeta_1}, h_{\zeta_2}
    \rangle |^m
  \end{eqnarray*}
  if $m \geqslant 4 - n$, with
  \[ \mathbbm{E} [\varepsilon^{- \frac{3 - m - n}{2}} G_{[m + n - 3]}^{[m]}
     \Phi^{[m + n]}_{\zeta_2}] \lesssim 1. \]
\end{proof}

Using Lemma~\ref{l:bound-corr} we obtain
\begin{eqnarray*}
  \mathbbm{E} [\Phi^{[1]}_{\zeta_1} \Phi^{[1]}_{\zeta_2}] & = & 9\mathbbm{E}
  [(f_{3, \varepsilon} \llbracket Y^2_{\varepsilon, \zeta_1} \rrbracket +
  \hat{\Phi}^{[1]}_{\varepsilon, \zeta_1}) (f_{3, \varepsilon} \llbracket
  Y^2_{\varepsilon, \zeta_2} \rrbracket + \hat{\Phi}^{[1]}_{\zeta_2})] = 18
  (f_{3, \varepsilon})^2  [C_{\varepsilon} (\zeta_1 - \zeta_2)]^2 +\mathbbm{E}
  [\hat{\Phi}^{[1]}_{\zeta_1} \hat{\Phi}^{[1]}_{\zeta_2}]
\end{eqnarray*}
and thus $G (t - s, x) = 18 (f_{3, \varepsilon})^2 \int_{x_1', x_2}  \sum_{i
\sim j} K_{i, x} (x_1') K_{j, x} (x_2) P_{t - s} (x_1') [C_{\varepsilon}
(\zeta_1 - \zeta_2)]^2 + \hat{G} (t - s, x)$ with the remainder term defined
by
\[ \hat{G} (t - s, x) \assign \int_{x_1', x_2}  \sum_{i \sim j} K_{i, x}
   (x_1') K_{j, x} (x_2) P_{t - s} (x_1') \mathbbm{E} [\hat{\Phi}^{[1]}_0
   \hat{\Phi}^{[1]}_{(t - s, x_2)}] . \]
We have the estimation
\begin{equation}
  | \mathbbm{E} [\hat{\Phi}^{[1]}_{\zeta_1} \hat{\Phi}^{[1]}_{\zeta_2}] |
  \lesssim \varepsilon^{\delta} C_{\varepsilon} (\zeta_1 - \zeta_2)^{2 +
  \delta} . \label{e:bound-corr-11}
\end{equation}
Similarly
\begin{eqnarray*}
  \mathbbm{E} [\bar{\Phi}^{[1]}_{\zeta_1} \Phi^{[1]}_{\zeta_2}] & = & 3
  \varepsilon^{- 1 / 2} f_{2, \varepsilon} \mathbbm{E} [\llbracket
  Y^2_{\varepsilon, \zeta_1} \rrbracket (f_{3, \varepsilon} \llbracket
  Y^2_{\varepsilon, \zeta_2} \rrbracket + \hat{\Phi}^{[1]}_{\zeta_2})] = 6
  \varepsilon^{- 1 / 2} f_{2, \varepsilon} f_{3, \varepsilon} [C_{\varepsilon}
  (\zeta_1 - \zeta_2)]^2,
\end{eqnarray*}
and
\begin{equation}
  \begin{array}{lll}
    | \mathbbm{E} [\Phi^{[0]}_{\zeta_1} \Phi^{[2]}_{\zeta_2}] | & = & |
    \mathbbm{E} [(f_{3, \varepsilon} \llbracket Y^3_{\varepsilon, \zeta_1}
    \rrbracket + \hat{\Phi}^{[0]}_{\zeta_1}) (6 f_{3, \varepsilon}
    Y_{\varepsilon, \zeta_2} + \hat{\Phi}^{[2]}_{\zeta_2})] | \lesssim | f_{3,
    \varepsilon} | | \mathbbm{E} [\llbracket Y^3_{\varepsilon, \zeta_1}
    \rrbracket \hat{\Phi}^{[2]}_{\zeta_2}] | + | \mathbbm{E}
    [\hat{\Phi}^{[0]}_{\zeta_1} \hat{\Phi}^{[2]}_{\zeta_2}] |\\
    & \lesssim & \varepsilon^{\delta} (| f_{3, \varepsilon} | + 1)
    C_{\varepsilon} (\zeta_1 - \zeta_2)^{2 + \delta},
  \end{array} \label{e:bound-corr-02}
\end{equation}
and
\begin{equation}
  \begin{array}{lll}
    | \mathbbm{E} [\Phi^{[0]}_{\zeta_1} \Phi^{[1]}_{\zeta_2}] | & = & |
    \mathbbm{E} [\Phi^{[0]} (3 f_{3, \varepsilon} \llbracket Y^2_{\varepsilon
    \comma \zeta_2} \rrbracket + \hat{\Phi}^{[1]})] | \lesssim | f_{3,
    \varepsilon} | \mathbbm{E} [\llbracket Y^3_{\varepsilon, \zeta_1}
    \rrbracket \hat{\Phi}^{[1]}_{\zeta_2}] +\mathbbm{E}
    [\hat{\Phi}^{[0]}_{\zeta_1} \hat{\Phi}^{[1]}_{\zeta_2}]\\
    & \lesssim & \varepsilon^{1 / 2} (| f_{3, \varepsilon} | + 1)
    C_{\varepsilon} (\zeta_1 - \zeta_2)^3 .
  \end{array} \label{e:bound-corr-01}
\end{equation}
We have by Lemma~\ref{l:res-product-1} that for all $\delta \in (0, 1)$
$\begin{array}{lll}
  | \hat{G} (t - s, x) | & \lesssim & \varepsilon^{\delta} (| t - s |^{1 / 2}
  + | x |)^{- 5 - \delta}
\end{array} .$ Using estimate~(\ref{e:est-KP}) together with
Lemma~\ref{l:hom-conv}, we have that for all $\delta \in (0, 1)$, $\delta' \in
(0, \delta)$ that $| H (t, x) | \lesssim \varepsilon^{\delta'} (| t - s |^{1 /
2} + | x |)^{- \delta}$. Furthermore, we have
\[ \frac{1}{3} \Delta_q \ttwothreer{R_{\varepsilon}} = 6 (f_{3,
   \varepsilon})^2  \int_{s_{}, x} [\Delta_q Y_{\varepsilon} (t + s, \bar{x} -
   x) - \Delta_q Y_{\varepsilon} (t, \bar{x})] P_s (x) [C_{\varepsilon} (s,
   x)]^2 + \frac{1}{3} \ttwothreer{\Delta_q \hat{R}_{\varepsilon}} \]
with the remainder term $\ttwothreer{\Delta_q \hat{R}_{\varepsilon}}$ given by
\[ \Delta_q \ttwothreer{\hat{R}_{\varepsilon}} = \int_{s_{}, x} [\Delta_q
   Y_{\varepsilon} (t, \bar{x} - x) - \Delta_q Y_{\varepsilon} (t, \bar{x})] 
   \hat{G} (t - s_{}, x) . \]
The term
\[ 6 (f_{3, \varepsilon})^2  \int_{s_{}, x} [\Delta_q Y_{\varepsilon} (t + s,
   \bar{x} - x) - \Delta_q Y_{\varepsilon} (t, \bar{x})] P_s (x)
   [C_{\varepsilon} (s, x)]^2 \]
can be shown to converge in law to
\[ 6 (\lambda_3)^2 \int_{s_{}, x} [\Delta_q Y (t + s, \bar{x} - x) - \Delta_q
   Y (t, \bar{x})] P_s (x) \mathbbm{E} (Y (0, 0) Y (s, x))_{}^2 \]
in $C^{\kappa}_T \CC^{- 1 / 2 - 2 \kappa}$ with the standard techniques used
in the analysis of the $\Phi^4_3$ model. On the other hand, for all $\delta >
0$ sufficiently small we have the bounds
\begin{eqnarray*}
  \left\| \Delta_q \tthreetwor{R_{\varepsilon}} \right\|_{L^{\infty}} +
  \left\| \Delta_q \ttwothreer{\hat{R}_{\varepsilon}} \right\|_{L^{\infty}} &
  \leqslant & \varepsilon^{\delta} \| Y_{\varepsilon} \|_{C^{\kappa}_T \CC^{-
  1 / 2 - 2 \kappa}} 2^{q (1 / 2 + 2 \kappa + 2 \delta)} \int_{s, x} (| x | +
  | t - s |^{1 / 2})^{\delta - 5}\\
  & \lesssim & \varepsilon^{\delta} \| Y_{\varepsilon} \|_{C^{\kappa}_T
  \CC^{- 1 / 2 - 2 \kappa}} 2^{q (1 / 2 + 2 \kappa + 2 \delta)},
\end{eqnarray*}
which shows that these remainders go to zero in $\CC^{- 1 / 2 - 2 \kappa}_{}$
as $\varepsilon \rightarrow 0$, since $\| Y_{\varepsilon} \|_{C^{\kappa}_T
\CC^{- 1 / 2 - 2 \kappa}}$ is bounded in $L^p (\Omega)$. Moreover, it is easy
to see that $\| \Delta_q \ttwothreer{R_{\varepsilon}} - \ttwothreer{\Delta_q
\hat{R}_{\varepsilon}} \|_{L^p (\Omega)} \sim O_{L^{\infty}} (2^{q (1 / 2 + 2
\kappa + 2 \delta)})$. Note that
\[ \int_{s, x} G (t - s, x) = \int_{s, x} P_s (x) \mathbbm{E} [\Phi^{[1]}_0
   \Phi^{[1]}_{(s, x)}] = 18 (f_{3, \varepsilon})^2  \int_{s, x} P_s (x)
   [C_{\varepsilon} (s, x)]^2 + \int_{s, x} P_s (x) \mathbbm{E}
   [\hat{\Phi}^{[1]}_0 \hat{\Phi}^{[1]}_{(s, x)}], \]
\[ \int_x H (t, x) = \int_{s, x} P_s (x) \mathbbm{E} [\Phi^{[0]}_0
   \Phi^{[2]}_{(s, x)}] = \int_{s, x} P_s (x) \mathbbm{E} [\Phi^{[0]}_0
   \hat{\Phi}^{[2]}_{(s, x)}] . \]
Here we used the fact that
\[ \int_x \sum_{i \sim j} K_{i, x} (x_1') K_{j, x} (0) = \int_x \sum_{i, j}
   K_{i, x} (x_1') K_{j, x} (0) = \delta (x_1'), \]
since $\int_x K_{i, x} (x_1') K_{j, x} (0) = 0$, where $| i - j | > 1$. This
is readily seen in Fourier space taking into account the support properties of
the Littlewood-Paley blocks. Now,
\[ \int_{s, x} P_s (x) \mathbbm{E} [\hat{\Phi}^{[1]}_0 \hat{\Phi}^{[1]}_{(s,
   x)}], \qquad \int_{s, x} P_s (x) \mathbbm{E} [\Phi^{[0]}_0
   \hat{\Phi}^{[2]}_{(s, x)}], \]
converge to finite constants due to the bounds~(\ref{e:bound-corr-11})
and~(\ref{e:bound-corr-02}) and by Lemma~\ref{l:PC-integrals} $\int_{s, x} P_s
(x) C_{\varepsilon} (s, x)^2 \lesssim | \log \varepsilon | .$

Finally, from~(\ref{e:bound-corr-01}) we have
\[ \int_{\zeta_1, \zeta_2} \mathbbm{E} [\Phi^{[0]}_{\zeta_1}
   \Phi^{[1]}_{\zeta_2}] \mu_{q, \zeta_1, \zeta_2} = \int_{s, x} P_s (x)
   \mathbbm{E} [\Phi^{[0]}_0 \Phi^{[1]}_{(s, x)}] = O (\varepsilon^{- 1 / 2})
   . \]
Indeed Lemma~\ref{l:PC-integrals} again yields $\varepsilon \int_{s, x} P_s
(x) C_{\varepsilon} (s, x)^3 \lesssim 1$.

Thus $\int_{\zeta_1, \zeta_2} \mathbbm{E} [\Phi^{[0]}_{\zeta_1}
\Phi^{[1]}_{\zeta_2}] \mu_{q, \zeta_1, \zeta_2}$ gives a diverging constant
which depends on all the $(f_{n, \varepsilon})_n$. Making the choice to define
the renormalisation constants $d^{\tau}$ as in eq.~(\ref{eq:d-constants}) we
cancel exactly these contributions which are either
$(F_{\varepsilon})_{\varepsilon}$ dependent and/or diverging. In particular we
verify that we can satisfy the constraint~(\ref{e:ren-constraint}).

Finally, noting that $\delta G_1 \mathD = (1 - J_0)$ and $\delta^2
G_{[1]}^{[2]} \mathD^2 = (1 - J_0 - J_1)$ (as seen in~(\ref{e:Fexpansion})),
we can write the trees of~(\ref{e:second-trees-blocks}) as
\begin{equation}
  \begin{array}{lll}
    \Delta_q \ttwothreer{Y_{\varepsilon}} (\bar{\zeta}) & = & (f_{3,
    \varepsilon})^2 \int_{\zeta_1, \zeta_2}  (1 - J_0) (\llbracket
    Y^2_{\varepsilon, \zeta_1} \rrbracket \llbracket Y_{\varepsilon,
    \zeta_2}^2 \rrbracket) \mu_{q, \zeta_1, \zeta_2} \\
    &  & + \frac{f_{3, \varepsilon}}{3} \int_{\zeta_1, \zeta_2} \delta G_1
    \mathD (\hat{\Phi}^{[1]}_{\zeta_1} \llbracket Y_{\varepsilon, \zeta_2}^2
    \rrbracket + \llbracket Y^2_{\varepsilon, \zeta_1} \rrbracket
    \hat{\Phi}^{[1]}_{\zeta_2}) \mu_{q, \zeta_1, \zeta_2} + \frac{1}{9}
    \int_{\zeta_1, \zeta_2} \delta G_1 \mathD (\hat{\Phi}^{[1]}_{\zeta_1}
    \hat{\Phi}^{[1]}_{\zeta_2}) \mu_{q, \zeta_1, \zeta_2},\\
    \Delta_q \ttwothreer{\bar{Y}_{\varepsilon}} (\bar{\zeta}) & = &
    \varepsilon^{- \frac{1}{2}} f_{2, \varepsilon} f_{3, \varepsilon}
    \int_{\zeta_1, \zeta_2}  (1 - J_0) (\llbracket Y^2_{\varepsilon, \zeta_1}
    \rrbracket \llbracket Y_{\varepsilon, \zeta_2}^2 \rrbracket) \mu_{q,
    \zeta_1, \zeta_2} + \frac{1}{3} \int_{\zeta_1, \zeta_2} \delta G_1 \mathD
    (\bar{\Phi}^{[1]}_{\zeta_1} \hat{\Phi}^{[1]}_{\zeta_2}) \mu_{q, \zeta_1,
    \zeta_2},\\
    \Delta_q \tthreetwor{Y_{\varepsilon}} (\bar{\zeta}) & = & (f_{3,
    \varepsilon})^2 \int_{\zeta_1, \zeta_2} \llbracket Y^3_{\varepsilon,
    \zeta_1} \rrbracket Y_{\zeta_2} \mu_{q, \zeta_1, \zeta_2}\\
    &  & + \frac{f_{3, \varepsilon}}{6} \int_{\zeta_1, \zeta_2} \delta G_1
    \mathD (6 \hat{\Phi}^{[0]}_{\zeta_1} Y_{\varepsilon, \zeta_2} + \llbracket
    Y^3_{\varepsilon, \zeta_1} \rrbracket \hat{\Phi}^{[2]}_{\zeta_2}) \mu_{q,
    \zeta_1, \zeta_2} + \frac{1}{6} \int_{\zeta_1, \zeta_2} \delta G_1 \mathD
    (\hat{\Phi}^{[0]}_{\zeta_1} \hat{\Phi}^{[2]}_{\zeta_2}) \mu_{q, \zeta_1,
    \zeta_2},\\
    \Delta_q \tthreethreer{Y_{\varepsilon}} (\bar{\zeta}) & = & (f_{3,
    \varepsilon})^2 \int_{\zeta_1, \zeta_2}  (1 - J_1) (\llbracket
    Y^3_{\varepsilon, \zeta_1} \rrbracket \llbracket Y_{\varepsilon,
    \zeta_2}^2 \rrbracket) \mu_{q, \zeta_1, \zeta_2} + \frac{1}{3} \Delta_q
    \tthreetwor{R_{\varepsilon}} (\bar{\zeta})\\
    &  & + 6 (f_{3, \varepsilon})^2  \int_{s_{}, x} [\Delta_q Y_{\varepsilon}
    (t + s, \bar{x} - x) - \Delta_q Y_{\varepsilon} (t, \bar{x})] P_s (x)
    [C_{\varepsilon} (s, x)]^2 + \frac{1}{3} \ttwothreer{\Delta_q
    \hat{R}_{\varepsilon}} +\\
    &  & + \frac{1}{3} \int_{\zeta_1, \zeta_2} \delta^2 G_{[1]}^{[2]}
    \mathD^2 (3 \hat{\Phi}^{[0]}_{\zeta_1} \llbracket Y_{\varepsilon,
    \zeta_2}^2 \rrbracket + \llbracket Y^3_{\varepsilon, \zeta_1} \rrbracket
    \hat{\Phi}^{[1]}_{\zeta_2}) \mu_{q, \zeta_1, \zeta_2} + \frac{1}{3}
    \int_{\zeta_1, \zeta_2} \delta^2 G_{[1]}^{[2]} \mathD^2
    (\hat{\Phi}^{[0]}_{\zeta_1} \hat{\Phi}^{[1]}_{\zeta_2}) \mu_{q, \zeta_1,
    \zeta_2} .
  \end{array} \label{e:after-renorm}
\end{equation}
Let us summarize our results so far. We have shown that $\ttwothreer{\Delta_q
\hat{R}_{\varepsilon}} (\bar{\zeta}) + \Delta_q \tthreetwor{R_{\varepsilon}}
(\bar{\zeta}) \sim O_{L^{\infty}} (\varepsilon^{\delta} 2^{q (1 / 2 + 2 \kappa
+ 2 \delta)})$ in $L^p (\Omega)$ and then these terms converge to $0$ in the
right topology as $\varepsilon \rightarrow 0$. As already mentioned, the
convergence in law of
\begin{eqnarray*}
  (f_{3, \varepsilon})^2 \int_{\zeta_1, \zeta_2}  (1 - J_0) (\llbracket
  Y^2_{\varepsilon, \zeta_1} \rrbracket \llbracket Y_{\varepsilon, \zeta_2}^2
  \rrbracket) \mu_{q, \zeta_1, \zeta_2} & \rightarrow & (\lambda_3)^2 \Delta_q
  \ttwothreer{Y} (\bar{\zeta})\\
  \varepsilon^{- \frac{1}{2}} f_{2, \varepsilon} f_{3, \varepsilon}
  \int_{\zeta_1, \zeta_2}  (1 - J_0) (\llbracket Y^2_{\varepsilon, \zeta_1}
  \rrbracket \llbracket Y_{\varepsilon, \zeta_2}^2 \rrbracket) \mu_{q,
  \zeta_1, \zeta_2} & \rightarrow & \lambda_3 \lambda_2 \Delta_q
  \ttwothreer{Y} (\bar{\zeta})\\
  (f_{3, \varepsilon})^2 \int_{\zeta_1, \zeta_2} \llbracket Y^3_{\varepsilon,
  \zeta_1} \rrbracket Y_{\zeta_2} \mu_{q, \zeta_1, \zeta_2} & \rightarrow &
  (\lambda_3)^2 \Delta_q \tthreetwor{Y} (\bar{\zeta})\\
  (f_{3, \varepsilon})^2 \int_{\zeta_1, \zeta_2}  (1 - J_1) (\llbracket
  Y^3_{\varepsilon, \zeta_1} \rrbracket \llbracket Y_{\varepsilon, \zeta_2}^2
  \rrbracket) \mu_{q, \zeta_1, \zeta_2} &  & \\
  + 6 (f_{3, \varepsilon})^2  \int_{s_{}, x} [\Delta_q Y_{\varepsilon} (t + s,
  \bar{x} - x) - \Delta_q Y_{\varepsilon} (t, \bar{x})] P_s (x)
  [C_{\varepsilon} (s, x)]^2 & \rightarrow & (\lambda_3)^2 \Delta_q
  \tthreethreer{Y} (\bar{\zeta})
\end{eqnarray*}
is easy to establish with standard techniques (as done
in~{\cite{catellier_paracontrolled_2013}}, {\cite{mourrat_construction_2016}})
assuming the convergence of $\lambda_{\varepsilon}$ as in~(\ref{e:def-lambda})
to~$\lambda$. Then, comparing~(\ref{e:after-renorm}) with the canonical trees
in~(\ref{e:stoch-usual}) we can identify the remainder terms $\Delta_q
\hat{Y}_{\varepsilon}^{\tau}$ that still need to be bounded, that are
precisely those in which $\hat{\Phi}^{[n]}_{\zeta}$ appears. Estimating these
terms is the content of next section.

\subsubsection{Estimation of renormalised composite trees
}\label{s:second-trees-est}

In this section we prove the bound~(\ref{e:check-conv-1}) for composite trees.
The difficulty we encounter here is that the remainder
$\hat{Y}^{\tau}_{\varepsilon}$ cannot be written as an iterated Skorohod
integral as in Section~\ref{s:example}, but instead as a product of iterated
Skorohod integrals. We will then use the product
formula~(\ref{e:productformula-easy}) to write the remainder in the desired
form. We can write~(\ref{e:after-renorm}) in a much shorter way as:
\[ \begin{array}{lll}
     \Delta_q \ttwothreer{Y_{\varepsilon}} (\bar{\zeta}) & = & \frac{1}{9}
     \delta G_1 \int_{\zeta_1, \zeta_2} \mathD (\Phi^{[1]}_{\zeta_1}
     \Phi^{[1]}_{\zeta_2}) \mu_{q, \zeta_1, \zeta_2},\\
     \Delta_q \ttwothreer{\bar{Y}_{\varepsilon}} (\bar{\zeta}) & = &
     \frac{1}{3} \delta G_1 \int_{\zeta_1, \zeta_2} \mathD
     (\bar{\Phi}^{[1]}_{\zeta_1} \Phi^{[1]}_{\zeta_2}) \mu_{q, \zeta_1,
     \zeta_2},\\
     \Delta_q \tthreetwor{Y_{\varepsilon}} (\bar{\zeta}) & = & \frac{1}{6}
     \delta G_1 \int_{\zeta_1, \zeta_2} \mathD (\Phi^{[0]}_{\zeta_1}
     \Phi^{[2]}_{\zeta_2}) \mu_{q, \zeta_1, \zeta_2},\\
     \Delta_q \tthreethreer{Y_{\varepsilon}} (\bar{\zeta}) & = & \frac{1}{3}
     \delta^2 G_{[1]}^{[2]} \int_{\zeta_1, \zeta_2} \mathD^2
     (\Phi^{[0]}_{\zeta_1} \Phi^{[1]}_{\zeta_2}) \mu_{q, \zeta_1, \zeta_2} +
     \frac{1}{3} \Delta_q \tthreetwor{R_{\varepsilon}} (\bar{\zeta}) +
     \frac{1}{3} \ttwothreer{\Delta_q \hat{R}_{\varepsilon}}\\
     &  & + 6 (f_{3, \varepsilon})^2  \int_{s_{}, x} [\Delta_q
     Y_{\varepsilon} (t + s, \bar{x} - x) - \Delta_q Y_{\varepsilon} (t,
     \bar{x})] P_s (x) [C_{\varepsilon} (s, x)]^2,
   \end{array} \]
just substituting again $\delta G_1 \mathD = (1 - J_0)$ and $\delta^2
G_{[1]}^{[2]} \mathD^2 = (1 - J_0 - J_1)$. In order to treat all trees at the
same time, we can write the first terms in the r.h.s. above (modulo a constant
that we discard) as:
\[ \delta^r G^{[r]}_{[1]} \int_{\zeta_1, \zeta_2} \mathD^r 
   (\Phi^{[i]}_{\zeta_1} \Phi^{[j]}_{\zeta_2}) \mu_{q, \zeta_1, \zeta_2} \quad
   \text{with $r = 1, i + j = 2$ or $r = 2, i + j = 1$.} \]
First notice that by Lemma~\ref{l:delta-norm} we have
\[ \left\| \delta^r G_{[1]}^{[r]} \int_{\zeta_1, \zeta_2} \mathD^r
   (\Phi^{[i]}_{\zeta_1} \Phi^{[j]}_{\zeta_2}) \mu_{q, \zeta_1, \zeta_2}
   \right\|_{L^p (\Omega)} \lesssim \sum_{\theta = 0}^r \left\|
   \mathD^{\theta} G_{[1]}^{[r]} \int_{\zeta_1, \zeta_2} \mathD^r
   (\Phi^{[i]}_{\zeta_1} \Phi^{[j]}_{\zeta_2}) \mu_{q, \zeta_1, \zeta_2}
   \right\|_{L^p (H^{\otimes r + \theta})} \]
and from the boundedness of the operator $\mathD^{\theta} G_{[1]}^{[r]}$ given
by Corollary~\ref{l:bounded-q} we obtain:
\[ \left\| \delta^r G_{[1]}^{[r]} \int_{\zeta_1, \zeta_2} \mathD^r
   (\Phi^{[i]}_{\zeta_1} \Phi^{[j]}_{\zeta_2}) \mu_{q, \zeta_1, \zeta_2}
   \right\|_{L^p (\Omega)} \lesssim \left\| \int_{\zeta_1, \zeta_2} \mathD^r
   (\Phi^{[i]}_{\zeta_1} \Phi^{[j]}_{\zeta_2}) \mu_{q, \zeta_1, \zeta_2}
   \right\|_{L^p (\Omega, H^{\otimes r})} . \]
Computing the $r$-th derivative of the integrand we obtain
\begin{equation}
  \Phi^{[4 - m]}_{\zeta_1} \Phi^{[4 - n]}_{\zeta_2} h^{\otimes k}_{\zeta_1}
  \otimes h^{\otimes \ell}_{\zeta_2} = \left[ \frac{3! f_{3, \varepsilon}}{(3
  - m) !}  \llbracket Y_{\zeta_1}^{m - 1} \rrbracket +
  \widehat{\Phi_{}}_{\zeta_1}^{[4 - m]} \right] \left[ \frac{3! f_{3,
  \varepsilon}}{(3 - n) !}  \llbracket Y_{\zeta_2}^{n - 1} \rrbracket +
  \hat{\Phi}_{\zeta_2}^{[4 - n]} \right] h^{\otimes k}_{\zeta_1} \otimes
  h_{\zeta_2}^{\otimes \ell}  \label{e:other-term-est}
\end{equation}
for $m + n = 5$ and $0 \leqslant k + \ell \leqslant 2$. The constraints on $m,
n, k, \ell$ are related to the number of branches in the graphical notation of
the trees: each tree has $m + k - 1$ leaves with height 2 and $n + \ell - 1$
leaves with height 1, as follows
\begin{equation}
  \begin{array}{lll}
    \ttwothreer{Y_{\varepsilon}} & \leftrightarrow & m + k = 3, \quad n + \ell
    = 3\\
    \ttwothreer{\bar{Y}_{\varepsilon}} & \leftrightarrow & m = 3, k = 0, n =
    2, \ell = 1\\
    \tthreetwor{Y_{\varepsilon}} & \leftrightarrow & m + k = 4, \quad n + \ell
    = 2\\
    \tthreethreer{Y_{\varepsilon}} & \leftrightarrow & m + k = 4, \quad n +
    \ell = 3.
  \end{array} \label{e:legs-constraints}
\end{equation}

In~(\ref{e:other-term-est}), the terms which do not contain
$\hat{\Phi}^{[n]}_{\zeta}$ will generate finite contributions in the limit, as
seen in Section~\ref{s:second-trees-ren} by writing the
decomposition~(\ref{e:after-renorm}). We just consider the terms proportional
to $\hat{\Phi}^{[4 - m]}_{\zeta_1} \hat{\Phi}^{[4 - n]}_{\zeta_2}$, because
all the other similar terms featuring at least one remainder
$\widehat{\Phi_{}}_{\zeta}^{[m]}$ can be estimated with exactly the same
technique, and are easily shown to be vanishing in the appropriate topology.

We can use one of the key observations of this paper, the product
formula~(\ref{e:productformula-easy}), to rewrite products of Skorohod
integrals in the form $\delta^m (u) \delta^n (v)$ as a sum of iterated
Skorohod integrals $\delta^{\ell} (w)^{}$, which are bounded in $L^p$ by
Lemma~\ref{l:delta-norm}. We obtain
\begin{eqnarray*}
  \hat{\Phi}^{[4 - m]}_{\zeta_1} \hat{\Phi}^{[4 - n]}_{\zeta_2} & = & \delta^m
  (G_{[1]}^{[m]} \Phi^{[4]}_{\zeta_1} h^{\otimes m}_{\zeta_1}) \delta^n
  (G_{[1]}^{[n]} \Phi^{[4]}_{\zeta_2} h_{\zeta_2}^{\otimes n})\\
  & = & \sum_{(q, r, i) \in I} C_{_{q, r, i}} \delta^{m + n - q - r} (\langle
  \mathD^{r - i} G_{[1]}^{[m]} \Phi^{[4]}_{\zeta_1} h_{\zeta_1}^{\otimes m},
  \mathD^{q - i} G_{[1]}^{[n]} \Phi^{[4]}_{\zeta_2} h_{\zeta_2}^{\otimes n}
  \rangle_{H^{\otimes q + r - i}})^{}\\
  & = & \sum_{(q, r, i) \in I} C_{_{q, r, i}} \varepsilon^{1 + \frac{r +
  q}{2} - i} \delta^{m + n - q - r} (\langle \Theta^{[m + r - i]}_{[1 + r -
  i]} (\zeta_1) h_{\zeta_1}^{\otimes m + r - i}, \Theta_{[1 + q - i]}^{[n + q
  - i]} (\zeta_2) h^{\otimes n + q - i}_{\zeta_2} \rangle_{H^{\otimes q + r -
  i}})
\end{eqnarray*}
with $I = \{ (q, r, i) \in \mathbbm{N}^3 : 0 \leqslant q \leqslant m, 0
\leqslant r \leqslant n, 0 \leqslant i \leqslant q \wedge r \}$ and the
notation shortcut:
\[ \Theta_{[i]}^{[j]} (\zeta) \assign \varepsilon^{- \frac{i}{2}}
   G^{[j]}_{[i]} \Phi_{\zeta}^{[3 + i]} . \]
By Remark~\ref{r:delta-deterministic}, for every $n, m \geqslant 1$ and $\Psi
\in \tmop{Dom} \delta^n$ we can write $\delta^n (\Psi) h^{\otimes m} =
\delta^n (\Psi \otimes h^{\otimes m})$, and therefore
\[ \int \hat{\Phi}^{[4 - m]}_{\zeta_1} \hat{\Phi}_{\zeta_2}^{[4 - n]}
   h^{\otimes k}_{\zeta_1} \otimes h^{\otimes \ell}_{\zeta_2} \mu_{q, \zeta_1,
   \zeta_2} = \]
\[ \sum_I C_{_{q, r, i}} \varepsilon^{\frac{2 + q + r - 2 i}{2}} \delta^{m + n
   - q - r} \int \Theta^{[m + r - i]}_{[1 + r - i]} (\zeta_1) \Theta_{[1 + q -
   i]}^{[n + q - i]} (\zeta_2) h_{\zeta_1}^{\otimes m - q} \otimes h^{\otimes
   n - r}_{\zeta_2} \otimes h^{\otimes k}_{\zeta_1} \otimes h^{\otimes
   \ell}_{\zeta_2} | \langle h_{\zeta_1}, h_{\zeta_2} \rangle |^{q + r - i}
   \mu_{q, \zeta_1, \zeta_2} \]
With the term to estimate in this form, we can proceed as in
Section~\ref{s:example} to estimate separately the terms $\Theta_{[i]}^{[j]}
(\zeta) = \varepsilon^{- \frac{i}{2}} G^{[j]}_{[i]} \Phi_{\zeta}^{[3 + i]}$ in
$L^p (\Omega)$, which are bounded as discussed in
Remark~\ref{r:phi-greater-est}.

\begin{lemma}
  \label{l:est-binary}Under Assumption~\ref{a:main} (in particular if
  $F_{\varepsilon} \in C^8 (\mathbbm{R})$ and the first 8~derivatives have
  exponential growth) we have the bound:
  \[ \left\| \delta^{m + n - q - r} \int \Theta^{[m + r - i]}_{[1 + r - i]}
     (\zeta_1) \Theta_{[1 + q - i]}^{[n + q - i]} (\zeta_2)
     h_{\zeta_1}^{\otimes m - q} \otimes h^{\otimes n - r}_{\zeta_2} \otimes
     h^{\otimes k}_{\zeta_1} \otimes h^{\otimes \ell}_{\zeta_2} | \langle
     h_{\zeta_1}, h_{\zeta_2} \rangle |^{q + r - i} \mu_{q, \zeta_1, \zeta_2}
     \right\|^2_{L^p (H^{\otimes k + \ell})} \]
  \[ \lesssim \int | \langle h_{\zeta_1}, h_{\zeta_1'} \rangle |^{m + k - q} |
     \langle h_{\zeta_2}, h_{\zeta_2'} \rangle |^{n + \ell - r} | \langle
     h_{\zeta_1}, h_{\zeta_2} \rangle |^{q + r - i} | \langle h_{\zeta'_1},
     h_{\zeta'_2} \rangle |^{q + r - i} | \mu_{q, \zeta_1, \zeta_2} | |
     \mu_{q, \zeta_1', \zeta_2'} | . \]
\end{lemma}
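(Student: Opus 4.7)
The strategy is a direct generalization of the estimation of the simple tree $\ttwo{\hat{Y}_{\varepsilon}}$ performed in Section~\ref{s:example}, adapted to a Skorohod integral that lives in $H^{\otimes k+\ell}$ rather than being scalar-valued. Writing $\Psi$ for the integrand to which $\delta^{m+n-q-r}$ is applied, I would start from Lemma~\ref{l:delta-norm}, which yields
\[
\bigl\|\delta^{m+n-q-r}(\Psi)\bigr\|_{L^p(\Omega,H^{\otimes k+\ell})} \lesssim \sum_{\theta=0}^{m+n-q-r}\bigl\|\mathD^{\theta}\Psi\bigr\|_{L^p(\Omega,H^{\otimes m+n-q-r+k+\ell+\theta})}.
\]
Each Malliavin derivative $\mathD$ hitting a factor $\Theta^{[j]}_{[i]}(\zeta) = \varepsilon^{-i/2} G^{[j]}_{[i]}\Phi^{[3+i]}_\zeta$ produces, via Corollary~\ref{l:bounded-q}, another $\Theta$ with shifted indices paired with one more deterministic factor $h_\zeta$. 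All terms generated by the sum in $\theta$ therefore have the same structure as the $\theta=0$ term, with only a few extra tensor factors of $h_{\zeta_1}$ or $h_{\zeta_2}$, and are controlled by the same argument provided enough derivatives of $F_\varepsilon$ are available.

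For the leading $\theta=0$ term, I would follow the decomposition of Section~\ref{s:example} and write
\[
\|\Psi\|_{L^p(\Omega,H^{\otimes m+n-q-r+k+\ell})}^2 = \bigl\|\,\|\Psi\|_{H^{\otimes m+n-q-r+k+\ell}}^2\,\bigr\|_{L^{p/2}(\Omega)},
\]
then expand the squared Hilbert norm as a double integral against $\mu_{q,\zeta_1,\zeta_2}\mu_{q,\zeta_1',\zeta_2'}$, using the identity $\langle h_\zeta^{\otimes a}\otimes h_{\zeta'}^{\otimes b},\, h_\eta^{\otimes a}\otimes h_{\eta'}^{\otimes b}\rangle = \langle h_\zeta, h_\eta\rangle^a\langle h_{\zeta'}, h_{\eta'}\rangle^b$ for ordered tensor pairings. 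This yields exactly the four kernel factors of the statement: $\langle h_{\zeta_1}, h_{\zeta_1'}\rangle^{m+k-q}$ and $\langle h_{\zeta_2}, h_{\zeta_2'}\rangle^{n+\ell-r}$ coming from the pairing of tensor factors, together with $|\langle h_{\zeta_1}, h_{\zeta_2}\rangle|^{q+r-i}$ and $|\langle h_{\zeta_1'}, h_{\zeta_2'}\rangle|^{q+r-i}$ already present in the integrand and its primed copy.

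The final step is to extract the stochastic factors by H\"older's inequality in $L^{p/2}(\Omega)$: the product of the four $\Theta^{[j]}_{[i]}(\cdot)$'s is uniformly bounded in $L^{p/2}(\Omega)$ since each of them equals $\varepsilon^{-i/2} G^{[j]}_{[i]} \Phi^{[3+i]}_{\zeta}$, with $G^{[j]}_{[i]}$ $L^p$-bounded by Corollary~\ref{l:bounded-q} and $\varepsilon^{-i/2}\Phi^{[3+i]}_\zeta$ bounded uniformly in $\varepsilon$ and $\zeta$ by Remark~\ref{r:phi-greater-est}. After extracting this constant and moving the absolute values inside the integrals, one arrives precisely at the stated bound.

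The main technical obstacle will be the bookkeeping: checking that for every admissible $(m,n,q,r,i,k,\ell)$ compatible with~(\ref{e:legs-constraints}) the maximum $\Phi$-order generated across the Meyer expansion and the extra Malliavin derivatives does not exceed the eighth derivative of $F_\varepsilon$, so that Remark~\ref{r:phi-greater-est} applies uniformly, and that the ordered tensor pairing produces no unwanted cross-terms beyond the four kernel factors in the statement.
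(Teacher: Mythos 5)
Your overall strategy coincides with the paper's: apply Lemma~\ref{l:delta-norm} to reduce the iterated Skorohod integral to a sum of $\|\mathD^{\theta}(\cdot)\|_{L^p(H^{\otimes\cdots})}$, distribute the derivatives over the two $\Theta$ factors by Leibniz, pass to the squared $H^{\otimes}$-norm as a double integral, peel off the $\Theta$ parts by H\"older in $L^{p/2}$, and bound them uniformly by Corollary~\ref{l:bounded-q} together with Remark~\ref{r:phi-greater-est}.

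However, there is a gap in how you describe the $\theta>0$ terms. You claim that each derivative hitting $\Theta^{[j]}_{[i]}(\zeta)$ ``produces another $\Theta$ with shifted indices paired with one more deterministic factor $h_\zeta$'', so that the $\theta>0$ terms ``have the same structure as the $\theta=0$ term, with only a few extra tensor factors of $h_{\zeta_1}$ or $h_{\zeta_2}$''. Taken literally, this changes the final bound: each extra $h_{\zeta_a}$ pairs with the primed copy and contributes an extra factor $\langle h_{\zeta_a},h_{\zeta_a'}\rangle$, so the covariance exponents in your kernel would become $m+k-q+h$ and $n+\ell-r+(\theta-h)$, which depend on the derivative count $\theta$ and do \emph{not} match the $\theta$-independent right-hand side of the lemma. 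To recover the stated bound from this route you must carry along the compensating $\varepsilon^{1/2}$ factor that appears each time $\mathD$ passes through $G^{[\cdot]}_{[\cdot]}\Phi^{[\cdot]}$ (namely $\mathD\Theta^{[j]}_{[i]}=\varepsilon^{1/2}\,\Theta^{[j+1]}_{[i+1]}\,h_\zeta$), and then absorb the excess covariance powers via $\varepsilon\,|C_\varepsilon|\lesssim1$ from Lemma~\ref{l:cov-est-eps}. You do not mention these $\varepsilon$-factors, and citing Corollary~\ref{l:bounded-q} for the commutation step is also the wrong reference (the relevant identity is the commutation~(\ref{e:LDcomm}), not an $L^p$-boundedness statement). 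The paper avoids this altogether by \emph{not} converting $\mathD^h\Theta$ into deterministic $h_\zeta$ tensors: it keeps the scalar quantity $\langle\mathD^h\Theta(\zeta_a),\mathD^h\Theta(\zeta_a')\rangle_{H^{\otimes h}}$ intact, bounds it in $L^p$ by Cauchy--Schwarz and Corollary~\ref{l:bounded-q}, with a further case split $h\le2m$ versus $h>2m$ to keep the required $\Phi$-order within the reach of $C^8$; this way the covariance exponents stay fixed at $m+k-q$ and $n+\ell-r$ for every $\theta,h$. You should either adopt that bookkeeping or make the $\varepsilon$-cancellation explicit.
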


\begin{proof}
  Thanks to Lemma~\ref{l:delta-norm} the integral can be estimated with
  \[ \sum_{j = 0, h \leqslant j}^{m + n - q - r} \left\| \int \mathD^h
     \Theta^{[m + r - i]}_{[1 + r - i]} (\zeta_1) \mathD^{j - h} \Theta_{[1 +
     q - i]}^{[n + q - i]} (\zeta_2) h_{\zeta_1}^{\otimes m - q} \otimes
     h^{\otimes n - r}_{\zeta_2} \otimes h^{\otimes k}_{\zeta_1} \otimes
     h_{\zeta_2}^{\otimes \ell} | \langle h_{\zeta_1}, h_{\zeta_2} \rangle
     |^{q + r - i} \mu_{q, \zeta_1, \zeta_2} \right\|^2_{L^p (V)}, \]
  with $V = H^{\otimes m + k + n + \ell - q - r + j}$. We have that $\|
  \cdummy \|_{L^p (H^{\otimes k + \ell})}^2 = \| \| \cdummy \|^2_{H^{\otimes k
  + \ell}} \|_{L^{p / 2}}^{1 / 2}$ and therefore we can bound each term in the
  sum above as
  \begin{eqnarray*}
    & \lesssim & (\int_{} \| \langle \mathD^h \Theta^{[m + r - i]}_{[1 + r -
    i]} (\zeta_1) \mathD^{j - h} \Theta_{[1 + q - i]}^{[n + q - i]} (\zeta_2),
    \mathD^h \Theta^{[m + r - i]}_{[1 + r - i]} (\zeta'_1) \mathD^{j - h}
    \Theta_{[1 + q - i]}^{[n + q - i]} (\zeta'_2) \rangle_{H^{\otimes j}}
    \|_{L^{p / 2}} \times\\
    &  & \times \nobracket | \langle h_{\zeta_1}, h_{\zeta_1'} \rangle |^{m +
    k - q} | \langle h_{\zeta_2}, h_{\zeta_2'} \rangle |^{n + \ell - r} |
    \langle h_{\zeta_1}, h_{\zeta_2} \rangle |^{q + r - i} | \langle
    h_{\zeta'_1}, h_{\zeta'_2} \rangle |^{q + r - i} | \mu_{q, \zeta_1,
    \zeta_2} | | \mu_{q, \zeta_1', \zeta_2'} |)^{1 / 2}
  \end{eqnarray*}
  Using H{\"o}lder's inequality we get the estimate
  \begin{eqnarray*}
    &  & \| \langle \mathD^h \Theta^{[m + r - i]}_{[1 + r - i]} (\zeta_1)
    \mathD^{j - h} \Theta_{[1 + q - i]}^{[n + q - i]} (\zeta_2), \mathD^h
    \Theta^{[m + r - i]}_{[1 + r - i]} (\zeta'_1) \mathD^{j - h} \Theta_{[1 +
    q - i]}^{[n + q - i]} (\zeta'_2) \rangle_{H^{\otimes j}} \|_{L^{p / 2}}\\
    & \lesssim & \| \langle \mathD^h \Theta^{[m + r - i]}_{[1 + r - i]}
    (\zeta_1), \mathD^h \Theta^{[m + r - i]}_{[1 + r - i]} (\zeta'_1)
    \rangle_{H^{\otimes h}} \|_{L^p} \| \langle \mathD^{j - h} \Theta_{[1 + q
    - i]}^{[n + q - i]} (\zeta_2), \mathD^{j - h} \Theta_{[1 + q - i]}^{[n + q
    - i]} (\zeta'_2) \rangle_{H^{\otimes j - h}} \|_{L^p}
  \end{eqnarray*}

  Now to bound $\| \langle \mathD^h \Theta_{[1 + a]}^{[m + a]} (\zeta),
  \mathD^h \Theta_{[1 + a]}^{[m + a]} (\zeta') \rangle_{H^{\otimes h}}
  \|_{L^p}$ (with $h \leqslant j \leqslant m + n - q - r$ and $a = r - i$) we
  can use the boundedness of the operator $\mathD^h G_{[1 + a]}^{[m + a]}$ for
  $h \leqslant 2 m$ given by Corollary~\ref{l:bounded-q}. Consider the two
  regions $h \leqslant 2 m$ and $h > 2 m$. In the first region we just use
  Corollary~\ref{l:bounded-q} to obtain:
  \begin{eqnarray*}
    \| \langle \mathD^h \Theta_{[1 + a]}^{[m + a]} (\zeta), \mathD^h
    \Theta_{[1 + a]}^{[m + a]} (\zeta') \rangle_{H^{\otimes h}} \|_{L^p} &
    \lesssim & \left\| \mathD^h G_{[1 + a]}^{[m + a]} \varepsilon^{- \frac{1 +
    a}{2}} \Phi^{[4 + a]}_{\zeta_1} \right\|^2_{L^{4 p} (H^{\otimes h})}
    \left\| \mathD^h G_{[1 + a]}^{[m + a]} \varepsilon^{- \frac{1 + a}{2}}
    \Phi^{[4 + a]}_{\zeta'_1} \right\|^2_{L^{4 p} (H^{\otimes h})}\\
    & \lesssim & \left\| \varepsilon^{- \frac{1 + a}{2}} \Phi^{[4 +
    a]}_{\zeta_1} \right\|_{L^{4 p}}^2 \left\| \varepsilon^{- \frac{1 + a}{2}}
    \Phi^{[4 + a]}_{\zeta'_1} \right\|_{L^{4 p}}^2 .
  \end{eqnarray*}
  If $h > 2 m$ we first use the bound $\mathD^{2 m} G_{[1 + a]}^{[m + a]}$ and
  then take the remaining $h - 2 m$ derivatives on $\varepsilon^{- \frac{1 +
  a}{2}} \Phi^{[4 + a]}_{\zeta}$:
  \begin{eqnarray*}
    \left\| \mathD^h G_{[1 + a]}^{[m + a]} \varepsilon^{- \frac{1 + a}{2}}
    \Phi_{\zeta_1}^{[4 + a]} \right\|_{L^{4 p} (H^{\otimes h})}^2 & \lesssim &
    \left\| \mathD^{h - 2 m} \varepsilon^{- \frac{1 + a}{2}}
    \Phi_{\zeta_1}^{[4 + a]} \right\|_{L^{4 p} (H^{\otimes h - 2 m})}^2\\
    & \lesssim & \left\| \varepsilon^{- \frac{1 + a + h - 2 m}{2}}
    \Phi_{\zeta_1}^{[4 + a + h - 2 m]} \right\|_{L^{4 p} (H^{\otimes h - 2
    m})}
  \end{eqnarray*}
  From Remark~\ref{r:phi-greater-est} we see that this last term is bounded by
  a constant if $F \in C^{4 + a + h - 2 m} (\mathbbm{R})$ with the first $4 +
  a + h - 2 m$ derivatives having an exponential growth, with $a = r - i$ and
  $h \leqslant m + n - q - r$.
  
  Applying the same reasoning to $\| \langle \mathD^{j - h} \Theta_{[1 + q -
  i]}^{[n + q - i]} (\zeta_2), \mathD^{j - h} \Theta_{[1 + q - i]}^{[n + q -
  i]} (\zeta'_2) \rangle_{H^{\otimes j - h}} \|_{L^p}$ we conclude that we
  need to control $4 + n \vee 4 + m$ derivatives of $F_{\varepsilon}$ in order
  to perform the estimates of this Lemma. From the
  constraints~(\ref{e:legs-constraints}) we see that $4 + n \vee 4 + m
  \leqslant 8$.
\end{proof}

From Lemma~\ref{l:est-binary} we obtain $\forall \delta \in [0, 1 / 2)$:
$\varepsilon^{\frac{2 + q + r - 2 i}{2}}$
\[ \left\| \varepsilon^{\frac{2 + q + r - 2 i}{2}} \delta^{m + n - q - r} \int
   \Theta^{[m + r - i]}_{[1 + r - i]} (\zeta_1) \Theta_{[1 + q - i]}^{[n + q -
   i]} (\zeta_2) h_{\zeta_1}^{\otimes m - q} \otimes h^{\otimes n -
   r}_{\zeta_2} \otimes h^{\otimes k}_{\zeta_1} \otimes h^{\otimes
   \ell}_{\zeta_2} | \langle h_{\zeta_1}, h_{\zeta_2} \rangle |^{q + r - i}
   \mu_{_{q, \zeta_1, \zeta_2}} \right\|_{L^p (H^{\otimes k + \ell})} \]
\[ \lesssim \varepsilon^{\delta} \left( \varepsilon^{2 + q + r - 2 i - \delta}
   \int | \langle h_{\zeta_1}, h_{\zeta_1'} \rangle |^{m + k - q} | \langle
   h_{\zeta_2}, h_{\zeta_2'} \rangle |^{n + \ell - r} | \langle h_{\zeta_1},
   h_{\zeta_2} \rangle |^{q + r - i} | \langle h_{\zeta'_1}, h_{\zeta'_2}
   \rangle |^{q + r - i} \left| \mu_{_{q, \zeta_1, \zeta_2}} \right| \left|
   \mu_{_{q, \zeta_1', \zeta_2'}} \right| \right)^{\frac{1}{2}} \]
\[ \assign \varepsilon^{\frac{\delta}{2}} (\mathfrak{I})^{\frac{1}{2}} . \]
Our aim now is to estimate the quantity $\mathfrak{I}$. The idea is to use the
bound $\varepsilon | \langle h_{\zeta}, h_{\zeta'} \rangle | = \varepsilon
C_{\varepsilon} (\zeta - \zeta') \lesssim 1$ of Lemma~\ref{l:cov-est-eps} to
cancel strategically some of the covariances $| \langle h_{\zeta}, h_{\zeta'}
\rangle |$. We will consider three regions:

If $q + r \leqslant 2$ we use the bounds
\[ \varepsilon^{q + r - 2 i} | \langle h_{\zeta_1}, h_{\zeta_2} \rangle |^{q +
   r - i} | \langle h_{\zeta'_1}, h_{\zeta'_2} \rangle |^{q + r - i} \lesssim
   \varepsilon^2 | \langle h_{\zeta_1}, h_{\zeta_2} \rangle |^q | \langle
   h_{\zeta'_1}, h_{\zeta'_2} \rangle |^r \]
and then (we suppose $r < 2$)
\[ \varepsilon^{2 - r - \delta} | \langle h_{\zeta_2}, h_{\zeta_2'} \rangle
   |^{n + \ell - r} \lesssim | \langle h_{\zeta_2}, h_{\zeta_2'} \rangle |^{n
   + \ell - 2 + \delta} \]
to obtain
\begin{eqnarray}
  \mathfrak{I} & \lesssim & \varepsilon^{r - \delta} \int | \langle
  h_{\zeta_1}, h_{\zeta_1'} \rangle |^{m + k - q} | \langle h_{\zeta_2},
  h_{\zeta_2'} \rangle |^{n + \ell - 2} | \langle h_{\zeta_1}, h_{\zeta_2}
  \rangle |^q | \langle h_{\zeta'_1}, h_{\zeta'_2} \rangle |^r | \mu_{q,
  \zeta_1, \zeta_2} | | \mu_{q, \zeta_1', \zeta_2'} | \nonumber\\
  & \lesssim & \int | \langle h_{\zeta_1}, h_{\zeta_1'} \rangle |^{m + k - q}
  | \langle h_{\zeta_2}, h_{\zeta_2'} \rangle |^{n + \ell - 2 + \delta} |
  \langle h_{\zeta_1}, h_{\zeta_2} \rangle |^q | \mu_{q, \zeta_1, \zeta_2} | |
  \mu_{q, \zeta_1', \zeta_2'} | .  \label{e:graph-est-1}
\end{eqnarray}
(If vice-versa $q < 2$ it suffices to put $\delta$ on the term $| \langle
h_{\zeta_1}, h_{\zeta_2} \rangle |^{q + \delta}$.) Notice that in this case $m
+ k - q > 0.$

In the case $q + r = 3$ if $m + k - q \geqslant 2$ we estimate like before to
obtain
\begin{eqnarray}
  \mathfrak{I} & \lesssim & \varepsilon^{2 - \delta} \int | \langle
  h_{\zeta_1}, h_{\zeta_1'} \rangle |^{m + k - q} | \langle h_{\zeta_2},
  h_{\zeta_2'} \rangle |^{n + \ell - r} | \langle h_{\zeta_1}, h_{\zeta_2}
  \rangle |^{\frac{q + r}{2}} | \langle h_{\zeta'_1}, h_{\zeta'_2} \rangle
  |^{\frac{q + r}{2}} | \mu_{q, \zeta_1, \zeta_2} | | \mu_{q, \zeta_1',
  \zeta_2'} | \nonumber\\
  & \lesssim & \int | \langle h_{\zeta_1}, h_{\zeta_1'} \rangle |^{m + k - q}
  | \langle h_{\zeta_2}, h_{\zeta_2'} \rangle |^{n + \ell - r} | \langle
  h_{\zeta_1}, h_{\zeta_2} \rangle |^{1 + \delta} | \mu_{q, \zeta_1, \zeta_2}
  | | \mu_{q, \zeta_1', \zeta_2'} | .  \label{e:graph-est-2}
\end{eqnarray}
Note that $m + k - q + \delta - 1 > 0$ and $m + k - q + 2 \delta - 3 > - 1$
here. If $m + k - q = 1$ we bound
\begin{eqnarray}
  \mathfrak{I} & \lesssim & \int | \langle h_{\zeta_1}, h_{\zeta_1'} \rangle
  |^{m + k - q} | \langle h_{\zeta_2}, h_{\zeta_2'} \rangle |^{n + \ell - r -
  2} | \langle h_{\zeta_1}, h_{\zeta_2} \rangle |^{\frac{3 + \delta}{2}} |
  \langle h_{\zeta'_1}, h_{\zeta'_2} \rangle |^{\frac{3 + \delta}{2}} |
  \mu_{q, \zeta_1, \zeta_2} | | \mu_{q, \zeta_1', \zeta_2'} | 
  \label{e:graph-est-3}
\end{eqnarray}
and note that $m + k - q - 1 / 2 + \delta / 2 > 0$, $m + k - q - 1 + \delta >
0$, $n + \ell - r - 2 \geqslant 0$. Finally if $m + k - q = 0$ we can only
have $m + k = 3, q = 3, r = 0, i = 0$ and thus
\begin{eqnarray}
  \mathfrak{I} & \lesssim & \varepsilon^{3 - 2 \delta} \int | \langle
  h_{\zeta_2}, h_{\zeta_2'} \rangle |^{n + \ell} | \langle h_{\zeta_1},
  h_{\zeta_2} \rangle |^{2 - \delta} | \langle h_{\zeta'_1}, h_{\zeta'_2}
  \rangle |^{2 - \delta} | \mu_{q, \zeta_1, \zeta_2} | | \mu_{q, \zeta_1',
  \zeta_2'} | \nonumber\\
  & \lesssim & \int | \langle h_{\zeta_2}, h_{\zeta_2'} \rangle |^{n + \ell +
  m + k - 6} | \langle h_{\zeta_1}, h_{\zeta_2} \rangle |^{2 - \delta} |
  \langle h_{\zeta'_1}, h_{\zeta'_2} \rangle |^{2 - \delta} | \mu_{q, \zeta_1,
  \zeta_2} | | \mu_{q, \zeta_1', \zeta_2'} |  \label{e:graph-est-4}
\end{eqnarray}

If $q + r \geqslant 4$ we bound first
\[ \varepsilon^{2 q + 2 r - 2 i + \delta - 4} | \langle h_{\zeta_1},
   h_{\zeta_2} \rangle |^{q + r - i} | \langle h_{\zeta'_1}, h_{\zeta'_2}
   \rangle |^{q + r - i} \lesssim | \langle h_{\zeta_1}, h_{\zeta_2} \rangle
   |^{2 - \frac{\delta}{2}} | \langle h_{\zeta'_1}, h_{\zeta'_2} \rangle |^{2
   - \frac{\delta}{2}} \]
(note that $2 q + 2 r - 2 i + \delta - 4 \geqslant \delta$) to obtain:
\[ \mathfrak{I} \lesssim \varepsilon^{6 - q - r - \delta} \int | \langle
   h_{\zeta_1}, h_{\zeta_1'} \rangle |^{m + k - q} | \langle h_{\zeta_2},
   h_{\zeta_2'} \rangle |^{n + \ell - r} | \langle h_{\zeta_1}, h_{\zeta_2}
   \rangle |^{2 - \frac{\delta}{2}} | \langle h_{\zeta'_1}, h_{\zeta'_2}
   \rangle |^{2 - \frac{\delta}{2}} | \mu_{q, \zeta_1, \zeta_2} | | \mu_{q,
   \zeta_1', \zeta_2'} | \]
Now in the cases $m + k = 3, n + \ell = 3$ and $m + k = 4, n + \ell = 2$ \ we
can just write $\varepsilon^{6 - q - r - \delta} = \varepsilon^{m + k - q}
\varepsilon^{6 - m - k - r - \delta}$ and cancel the corresponding number of
covariances to obtain
\begin{eqnarray}
  \mathfrak{I} & \lesssim & \int | \langle h_{\zeta_2}, h_{\zeta_2'} \rangle
  |^{\delta} | \langle h_{\zeta_1}, h_{\zeta_2} \rangle |^{2 -
  \frac{\delta}{2}} | \langle h_{\zeta'_1}, h_{\zeta'_2} \rangle |^{2 -
  \frac{\delta}{2}} | \mu_{q, \zeta_1, \zeta_2} | | \mu_{q, \zeta_1',
  \zeta_2'} |  \label{e:graph-est-5}
\end{eqnarray}
while for the case $m + k = 4, n + \ell = 3$ \ we have either $\ell \geqslant
1$ or $k \geqslant 1$ and therefore with one of the following bounds
\begin{eqnarray*}
  \varepsilon^{m + k - 1 - q} \varepsilon^{n + \ell - r - \delta} | \langle
  h_{\zeta_1}, h_{\zeta_1'} \rangle |^{m + k - q} | \langle h_{\zeta_2},
  h_{\zeta_2'} \rangle |^{n + \ell - r} & \lesssim & | \langle h_{\zeta_1},
  h_{\zeta_1'} \rangle | | \langle h_{\zeta_2}, h_{\zeta_2'} \rangle
  |^{\delta}\\
  \varepsilon^{m + k - q} \varepsilon^{n + \ell - 1 - r - \delta} | \langle
  h_{\zeta_1}, h_{\zeta_1'} \rangle |^{m + k - q} | \langle h_{\zeta_2},
  h_{\zeta_2'} \rangle |^{n + \ell - r} & \lesssim & | \langle h_{\zeta_2},
  h_{\zeta_2'} \rangle |^{1 + \delta}
\end{eqnarray*}
we obtain the estimates
\begin{eqnarray}
  \mathfrak{I} & \lesssim & \int | \langle h_{\zeta_2}, h_{\zeta_2'} \rangle
  |^{1 + \delta} | \langle h_{\zeta_1}, h_{\zeta_2} \rangle |^{2 -
  \frac{\delta}{2}} | \langle h_{\zeta'_1}, h_{\zeta'_2} \rangle |^{2 -
  \frac{\delta}{2}} | \mu_{q, \zeta_1, \zeta_2} | | \mu_{q, \zeta_1',
  \zeta_2'} |  \label{e:graph-est-6}\\
  \mathfrak{I} & \lesssim & \int | \langle h_{\zeta_1}, h_{\zeta_1'} \rangle |
  | \langle h_{\zeta_2}, h_{\zeta_2'} \rangle |^{\delta} | \langle
  h_{\zeta_1}, h_{\zeta_2} \rangle |^{2 - \frac{\delta}{2}} | \langle
  h_{\zeta'_1}, h_{\zeta'_2} \rangle |^{2 - \frac{\delta}{2}} | \mu_{q,
  \zeta_1, \zeta_2} | | \mu_{q, \zeta_1', \zeta_2'} | .  \label{e:graph-est-7}
\end{eqnarray}
We can use directly Lemma~\ref{l:integral-est-final} to obtain a final
estimate of (\ref{e:graph-est-1}), (\ref{e:graph-est-2}),
(\ref{e:graph-est-3}), (\ref{e:graph-est-6}). For $(\ref{e:graph-est-4}),
(\ref{e:graph-est-5})$ and (\ref{e:graph-est-7}) notice that the integral over
$\zeta_1, \zeta'_1$ is finite and thus the whole quantity is proportional to
$| \langle h_{\zeta_2}, h_{\zeta_2'} \rangle |^n$. Globally, we have
\[ \mathfrak{I} \lesssim 2^{(m + k + n + \ell - 6) q} \]
as needed to prove~(\ref{e:check-conv-1}).

\begin{remark}
  \label{r:howmanyder}Finally, by controlling one more derivative of
  $F_{\varepsilon}$ as done in Section~\ref{s:first-trees-conv}, we can
  show~(\ref{e:check-conv-2}) for $Y^{\tau} = \tthreetwor{Y_{\varepsilon}},
  \ttwothreer{Y_{\varepsilon}}, \ttwothreer{\bar{Y}_{\varepsilon}},
  \tthreethreer{Y_{\varepsilon}}$, thus proving that $\hat{Y}^{\tau}
  \rightarrow 0$ in $C^{\kappa / 2}_T \CC^{\alpha - \kappa}$ in probability
  $\forall \alpha < | \tau |$. From the proof of Lemma~\ref{l:est-binary}
  together with this observation, we conclude that we need to control the
  derivatives of $F_{\varepsilon}$ up to order $9$ to be able to show the
  convergence for composite trees.
\end{remark}

\section{Convergence of the remainder and a-priori bounds}

In this section we prove the convergence of the remainder
(Lemma~\ref{l:remconv-last}), as well as some technical results on the norm of
the solution, needed in the proof of Theorem~\ref{t:lim-ident}. In order to
prove Lemma~\ref{l:remconv-last} we need first to prove
Lemma~\ref{l:rem-estim}, Lemma~\ref{l:rem-converg} and
Lemma~\ref{l:apriori-bounds} in this order.

\subsection{Boundedness of the remainder}\label{s:bound-rem-again}

We show that the remainder $R_{\varepsilon} (v_{\varepsilon})$ that appears in
equation~(\ref{e:last-phisharp}) can be controlled by a stochastic term
$M_{\varepsilon, \delta}$ that converges to zero in probability, times a
function of the solution $v_{\varepsilon}$. Let.
\begin{equation}
  M_{\varepsilon, \delta} (Y_{\varepsilon}, u_{0, \varepsilon}) \assign
  \varepsilon^{\delta / 2} \|e^{c \varepsilon^{1 / 2}  | Y_{\varepsilon} | + c
  \varepsilon^{1 / 2} | P_{\cdot}  (u_{0, \varepsilon} - Y_{\varepsilon} (0))
  |} \|_{L^p [0, T] L^p (\mathbbm{T}^3)} . \label{eq:def-M}
\end{equation}
for $p \in [1, \infty)$, $\delta \in [0, 1]$ and define
\begin{equation}
  v_{\varepsilon}^{\Box} \assign v_{\varepsilon} - v^{\sharp}_{\varepsilon}
  \quad \text{with} \quad v^{\sharp}_{\varepsilon} : t \mapsto P_t (u_{0,
  \varepsilon} - Y_{\varepsilon} (0)) . \label{e:vsquare-def}
\end{equation}

\begin{lemma}[Boundedness of remainder]
  \label{l:rem-estim} For every $\gamma \in (0, 1)$, $\delta \in [0, 1]$ we
  have
  \[ \|R_{\varepsilon} (v_{\varepsilon}, v^{\flat}_{\varepsilon},
     v^{\sharp}_{\varepsilon}) (t, x) \|_{\mathcal{M}^{\gamma / (3 + \delta),
     p}_T L^p (\mathbbm{T}^3)} \lesssim M_{\varepsilon} (Y_{\varepsilon},
     u_{0, \varepsilon}) \| v_{\varepsilon} \|^{3 +
     \delta}_{\mathcal{M}_T^{\gamma / (3 + \delta)} L^{\infty}
     (\mathbbm{T}^3)} e^{c \varepsilon^{1 / 2} \|v^{\Box}_{\varepsilon}
     \|_{C_T L^{\infty}}} \]
  with $M_{\varepsilon}$ as in~(\ref{eq:def-M}), $v^{\Box}_{\varepsilon}$ as
  in~(\ref{e:vsquare-def}) and $\mathcal{M}^{\gamma / (3 + \delta), p}_T L^p
  (\mathbbm{T}^3)$, $\mathcal{M}_T^{\gamma / (3 + \delta)} L^{\infty}
  (\mathbbm{T}^3)$ defined in~(\ref{e:expl-space-def}).
\end{lemma}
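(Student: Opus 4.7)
The starting observation is that $R_\varepsilon(v_\varepsilon)$ is by construction the integral form of the Taylor remainder of order four for $\varepsilon^{-3/2}\tilde{F}_\varepsilon(\varepsilon^{1/2}(Y_\varepsilon+v_\varepsilon))$ around $\varepsilon^{1/2}Y_\varepsilon$ in eq.~(\ref{eq:first-exp}), namely
\begin{equation*}
R_\varepsilon(v_\varepsilon)(t,x) \;=\; \frac{\varepsilon^{1/2}}{6}\int_0^1 (1-\tau)^3\,\tilde{F}_\varepsilon^{(4)}\bigl(\varepsilon^{1/2}(Y_\varepsilon+\tau v_\varepsilon)(t,x)\bigr)\,v_\varepsilon(t,x)^4\,d\tau.
\end{equation*}
Since centering only subtracts a polynomial up to degree two, the exponential growth hypothesis~(\ref{e:hyp-F-main}) transfers to $\tilde{F}_\varepsilon^{(4)}$ and yields the pointwise bound
\begin{equation*}
|R_\varepsilon(v_\varepsilon)(t,x)| \;\lesssim\; \varepsilon^{1/2}\,e^{c\varepsilon^{1/2}|Y_\varepsilon(t,x)|}\,e^{c\varepsilon^{1/2}|v_\varepsilon(t,x)|}\,|v_\varepsilon(t,x)|^4.
\end{equation*}

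The heart of the argument is then the algebraic identity
\begin{equation*}
\varepsilon^{1/2}|v_\varepsilon|^4 \;=\; \varepsilon^{\delta/2}\,|v_\varepsilon|^{3+\delta}\,\bigl(\varepsilon^{1/2}|v_\varepsilon|\bigr)^{1-\delta}
\end{equation*}
combined with $y^{1-\delta}\lesssim e^{c'y}$ for $y\geq 0$, which trades the extra factor of $v_\varepsilon$ (Taylor gives four powers while the lemma can only afford $3+\delta$) against an additional exponential, while freeing up the decay $\varepsilon^{\delta/2}$ that will appear in $M_{\varepsilon,\delta}$. I then split $v_\varepsilon = v_\varepsilon^\sharp + v_\varepsilon^\Box$ inside the exponential: the harmonic-extension part $v_\varepsilon^\sharp=P_{\cdot}(u_{0,\varepsilon}-Y_\varepsilon(0))$ is $\varepsilon$-dependent data, not uniformly bounded, but can be integrated in $L^p([0,T];L^p(\mathbb{T}^3))$ together with $Y_\varepsilon$ so as to reproduce exactly the quantity $M_{\varepsilon,\delta}(Y_\varepsilon,u_{0,\varepsilon})$ of~(\ref{eq:def-M}); the part $v_\varepsilon^\Box$ will be controlled in $C_T L^\infty$ by the a-priori estimates of Section~\ref{s:a-priori}, and its exponential factors out of every integral as the constant $e^{c\varepsilon^{1/2}\|v_\varepsilon^\Box\|_{C_T L^\infty}}$.

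Taking first the $L^p(\mathbb{T}^3)$ norm in $x$, I pull $\|v_\varepsilon(t,\cdot)\|_{L^\infty}^{3+\delta}$ out of the spatial integral; what remains inside is exactly the integrand defining $M_{\varepsilon,\delta}$. Then taking the weighted norm $\mathcal{M}^{\gamma/(3+\delta),p}_T L^p$ in time and applying Hölder in $t$ to pair it with the unweighted $L^p([0,T])$ hidden inside $M_{\varepsilon,\delta}$, the time weight $t^{\gamma/(3+\delta)}$ distributes across the $3+\delta$ factors of $\|v_\varepsilon(t,\cdot)\|_{L^\infty}$ and is absorbed by the definition $\|v_\varepsilon\|_{\mathcal{M}_T^{\gamma/(3+\delta)} L^\infty}=\sup_{t}t^{\gamma/(3+\delta)}\|v_\varepsilon(t,\cdot)\|_{L^\infty}$, the residual $t$-integral being finite thanks to $\gamma<1$. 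This produces the desired inequality.

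The only delicate step is the exchange $\varepsilon^{1/2}|v_\varepsilon|^4\rightsquigarrow \varepsilon^{\delta/2}|v_\varepsilon|^{3+\delta}e^{c'\varepsilon^{1/2}|v_\varepsilon|}$: it is the reason why the Taylor expansion in~(\ref{eq:first-exp}) was stopped precisely at order three (so that one surplus power of $v_\varepsilon$ remains available for the trade), and it simultaneously produces the $\varepsilon^{\delta/2}$ that makes $M_{\varepsilon,\delta}\to 0$ in probability when used in Lemma~\ref{l:remconv-last}. Everything else is a careful accounting of the Hölder exponents and the exponential gains, using that $\varepsilon^{1/2}v_\varepsilon^\Box$ stays bounded while $\varepsilon^{1/2}v_\varepsilon^\sharp$ and $\varepsilon^{1/2}Y_\varepsilon$ have moments of every order.
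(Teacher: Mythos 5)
Your proof is correct, but it takes a genuinely different route from the paper at one step. The paper writes $R_\varepsilon(v_\varepsilon)$ in \emph{two} integral forms — the third-order Taylor remainder $v_\varepsilon^3\int_0^1 [F^{(3)}_\varepsilon(\varepsilon^{1/2}Y_\varepsilon+\tau\varepsilon^{1/2}v_\varepsilon)-F^{(3)}_\varepsilon(\varepsilon^{1/2}Y_\varepsilon)]\tfrac{(1-\tau)^2}{2}\mathrm{d}\tau$ and the fourth-order one $\varepsilon^{1/2}v_\varepsilon^4\int_0^1 F^{(4)}_\varepsilon(\cdot)\tfrac{(1-\tau)^3}{6}\mathrm{d}\tau$ — and interpolates: writing $|R_\varepsilon|=|R_\varepsilon|^\delta |R_\varepsilon|^{1-\delta}$ gives $\varepsilon^{\delta/2}|v_\varepsilon|^{3+\delta}$ together with the single exponential factor $e^{c\varepsilon^{1/2}|Y_\varepsilon|+c\varepsilon^{1/2}|v_\varepsilon|}$. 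You keep only the fourth-order form and instead convert the excess power $(\varepsilon^{1/2}|v_\varepsilon|)^{1-\delta}$ into an additional exponential via $y^{1-\delta}\lesssim e^{c'y}$. Both routes produce the same pointwise bound up to the value of the generic constant $c$ inside the exponential (which the split $v_\varepsilon=v^\sharp_\varepsilon+v^\Box_\varepsilon$ and the definition of $M_{\varepsilon,\delta}$ accommodate in either case), and the remaining steps — factoring out $e^{c\varepsilon^{1/2}\|v^\Box_\varepsilon\|_{C_T L^\infty}}$, pulling the weighted $L^\infty$ supremum of $v_\varepsilon$ out of the space–time integral, and recognising what is left as $M_{\varepsilon,\delta}$ — are identical. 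Your version has the small advantage of not needing the third-order Taylor formula at all; the paper's interpolation avoids enlarging the exponential constant. One caveat: your last paragraph mentions ``Hölder in $t$'' and a ``residual $t$-integral finite thanks to $\gamma<1$'', neither of which is needed — you simply pull $\sup_t t^{\gamma/(3+\delta)}\|v_\varepsilon(t)\|_{L^\infty}$ out of the product so that the weight $t^\gamma$ on the left distributes as $(3+\delta)$ copies of $t^{\gamma/(3+\delta)}$; the rest of the time integral is absorbed verbatim into $M_{\varepsilon,\delta}$ with no further hypothesis on $\gamma$. (There is a harmless bookkeeping mismatch between $\gamma$ and $\gamma/(3+\delta)$ in the lemma's display which traces to the paper's own statement, not to your argument.)
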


\begin{proof}
  We can write the remainder in two ways:
  \begin{eqnarray*}
    R_{\varepsilon} (v_{\varepsilon}) & = & v_{\varepsilon}^3 \int_0^1
    [F_{\varepsilon}^{(3)} (\varepsilon^{\frac{1}{2}} Y_{\varepsilon} + \tau
    \varepsilon^{\frac{1}{2}} v_{\varepsilon}) - F_{\varepsilon}^{(3)}
    (\varepsilon^{\frac{1}{2}} Y_{\varepsilon})]  \frac{(1 - \tau)^2}{2!}
    \mathd \tau\\
    & = & \varepsilon^{\tfrac{1}{2}} v_{\varepsilon}^4 \int_0^1
    F_{\varepsilon}^{(4)} (\varepsilon^{\frac{1}{2}} Y_{\varepsilon} + \tau
    \varepsilon^{\frac{1}{2}} v_{\varepsilon})  \frac{(1 - \tau)^3}{3!} \mathd
    \tau .
  \end{eqnarray*}
  From assumption~(\ref{e:hyp-F-main}) on $F_{\varepsilon}$ we obtain by
  interpolation of these two expressions, $\forall \delta \in [0, 1]$,
  $\forall t \geqslant 0, x \in \mathbbm{T}^3$,
  \[ | R_{\varepsilon} (v_{\varepsilon}) (t, x) | \lesssim \varepsilon^{\delta
     / 2} | v_{\varepsilon} (t, x) |^{3 + \delta} e^{c
     \varepsilon^{\frac{1}{2}}  | Y_{\varepsilon} (t, x) | + c
     \varepsilon^{\frac{1}{2}} | v^{\sharp}_{\varepsilon} (t, x) | + c |
     \varepsilon^{\frac{1}{2}} v^{\Box}_{\varepsilon} (t, x) |}, \]
  and we estimate, $\forall \gamma \in [0, 1)$,
  \[ \| t \mapsto t^{\gamma} R_{\varepsilon} (v_{\varepsilon}) (t, x) \|_{L^p
     ((0, T], L^p (\mathbbm{T}^3))} \lesssim \|t^{\frac{\gamma}{3 + \delta}}
     v_{\varepsilon} (t) \|_{C_T L^{\infty}}^{3 + \delta} e^{c \varepsilon^{1
     / 2} \|v^{\Box}_{\varepsilon} \|_{C_T L^{\infty}}} \left\|
     \varepsilon^{\frac{\delta}{2}} e^{c \varepsilon^{\frac{1}{2}}  |
     Y_{\varepsilon} (t, x) | + c \varepsilon^{\frac{1}{2}} |
     v^{\sharp}_{\varepsilon} (t, x) |} \right\|_{L^p [0, T] L^p
     (\mathbbm{T}^3)} . \]
\end{proof}

We can also verify that $M_{\varepsilon, \delta} \rightarrow 0$ in probability
for every $\delta > 0$:

\begin{lemma}[Convergence of the stochastic term]
  \label{l:rem-converg}Under Assumption~\ref{a:main} the random variable
  $M_{\varepsilon, \delta} (Y_{\varepsilon}, u_{0, \varepsilon})$ defined
  in~(\ref{eq:def-M}) converges to zero in probability $\forall \delta \in (0,
  1]$. 
\end{lemma}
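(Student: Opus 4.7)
The plan is to bound the $p$-th moment $\mathbbm{E}|M_{\varepsilon,\delta}|^p$ by a constant times $\varepsilon^{p\delta/2}$ and then conclude convergence in probability via Markov's inequality. The point is that the prefactor $\varepsilon^{\delta/2}$ in~\eqref{eq:def-M} is just enough to kill a uniformly bounded $L^pL^p$ norm of the Gaussian exponential that appears there.

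Three ingredients suffice. First, by Assumption~\ref{a:main}(b) and the scaling $u_{0,\varepsilon}=\varepsilon^{-1/2}\bar{u}_{0,\varepsilon}(\varepsilon^{-1}\cdummy)$ we have $\|\varepsilon^{1/2}u_{0,\varepsilon}\|_{L^\infty(\mathbbm{T}^3)}\leqslant C$ almost surely; since $P_t$ is a contraction on $L^\infty$ this gives $\varepsilon^{1/2}|P_tu_{0,\varepsilon}(x)|\leqslant C$ pointwise in $t,x$. Second, $\varepsilon^{1/2}Y_\varepsilon(t,x)$ is, for each $(t,x)$, a centered Gaussian of variance $\sigma_\varepsilon^2=\varepsilon C_\varepsilon(0,0)$, which was noted in the introduction to be bounded uniformly in $\varepsilon\in(0,1]$. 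Third, $\varepsilon^{1/2}P_tY_\varepsilon(0)(x)$ is a centered Gaussian whose variance is bounded by $\sigma_\varepsilon^2$ via $L^2$-contractivity of the heat semigroup. Consequently, for every fixed $q>0$,
\[
\sup_{\varepsilon,t,x}\mathbbm{E}\bigl[e^{q\varepsilon^{1/2}|Y_\varepsilon(t,x)|}\bigr]+\sup_{\varepsilon,t,x}\mathbbm{E}\bigl[e^{q\varepsilon^{1/2}|P_tY_\varepsilon(0)(x)|}\bigr]<\infty.
\]

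Combining these facts, I would split the exponent by the triangle inequality, factor the deterministic $u_{0,\varepsilon}$-contribution out of the expectation, and apply Cauchy--Schwarz in $\Omega$ to decouple the two Gaussian factors. Together with Fubini this gives
\[
\mathbbm{E}|M_{\varepsilon,\delta}|^p=\varepsilon^{p\delta/2}\int_0^T\!\!\int_{\mathbbm{T}^3}\mathbbm{E}\bigl[e^{pc\varepsilon^{1/2}|Y_\varepsilon(t,x)|+pc\varepsilon^{1/2}|P_t(u_{0,\varepsilon}-Y_\varepsilon(0))(x)|}\bigr]\,dx\,dt\lesssim\varepsilon^{p\delta/2}e^{pcC}.
\]
Markov's inequality then yields $\mathbbm{P}(M_{\varepsilon,\delta}>\eta)\lesssim\eta^{-p}\varepsilon^{p\delta/2}\to 0$ for every $\eta>0$ whenever $\delta>0$, which is the desired convergence. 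There is no substantial obstacle here: the role of Assumption~\ref{a:main}(b) is precisely to provide a deterministic pointwise bound on the initial-condition contribution, while all $Y_\varepsilon$-dependent factors are controlled by elementary Gaussian exponential estimates based on the uniform boundedness of $\sigma_\varepsilon^2$.
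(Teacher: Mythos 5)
Your proof is correct and follows essentially the same route as the paper's: you isolate the $\varepsilon^{\delta/2}$ prefactor, split the exponent (the paper invokes Young's inequality, you use the triangle inequality plus Cauchy--Schwarz, which is the same in spirit), bound the initial-condition contribution by the deterministic $L^\infty$ bound from Assumption~\ref{a:main}(b), and control the two Gaussian exponentials by uniform moment bounds in $\varepsilon$. You make explicit one point the paper leaves implicit, namely that $\varepsilon^{1/2}P_tY_\varepsilon(0)(x)$ has variance uniformly bounded by $\sigma_\varepsilon^2$ via the $L^1$-normalization/contractivity of the heat kernel; this is a genuine (if small) clarification. One minor imprecision: $u_{0,\varepsilon}$ is random, not deterministic, but since the a.s.\ $L^\infty$ bound is by a deterministic constant (and $u_{0,\varepsilon}$ is independent of $\eta$), the factoring step is still legitimate.
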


\begin{proof}
  We can use Young's inequality to estimate $M_{\varepsilon, \delta}
  (Y_{\varepsilon}, u_{0, \varepsilon})$ for some $c' > 0$ as
  \begin{eqnarray*}
    M_{\varepsilon, \delta} (Y_{\varepsilon}, u_{0, \varepsilon}) & \lesssim &
    \varepsilon^{\delta / 2} \|e^{c' \varepsilon^{1 / 2}  | Y_{\varepsilon} |}
    \|_{L^p [0, T] L^p (\mathbbm{T}^3)} + \varepsilon^{\delta / 2} \|e^{c'
    \varepsilon^{1 / 2}  | P_{\point} Y_{\varepsilon} (0) |} \|_{L^p [0, T]
    L^p (\mathbbm{T}^3)}\\
    &  & + \varepsilon^{\delta / 2} T^{1 / p} e^{c'  \| \varepsilon^{1 / 2}
    u_{0, \varepsilon} \|_{L^{\infty}}} .
  \end{eqnarray*}
  Under Assumptions~\ref{a:main} the term $\| \varepsilon^{1 / 2} u_{0,
  \varepsilon} \|_{L^{\infty} (\mathbbm{T}^3)}$ is uniformly bounded, so the
  third term above converges to zero in probability. Note that $\varepsilon^{1
  / 2} Y_{\varepsilon} (t, x)$ and $P_t \varepsilon^{1 / 2} Y_{\varepsilon} (t
  = 0)$ are centered Gaussian random variables, and then both $\mathbbm{E}
  \|e^{c' \varepsilon^{1 / 2}  | Y_{\varepsilon} |} \|^p_{L^p [0, T] L^p
  (\mathbbm{T}^3)}$ and $\mathbbm{E} \|e^{c' \varepsilon^{1 / 2}  | P_{\point}
  Y_{\varepsilon} (0) |} \|^p_{L^p [0, T] L^p (\mathbbm{T}^3)}$ are uniformly
  bounded in $\varepsilon > 0$ for every $p \in [1, \infty)$. This yields the
  convergence in probability of $M_{\varepsilon, \delta} (Y_{\varepsilon},
  u_{0, \varepsilon})$.
\end{proof}

In order to show that $\| R_{\varepsilon} (v_{\varepsilon})
\|_{\mathcal{M}^{\gamma', p}_T L^p} \rightarrow 0$ in probability for $\gamma'
> \frac{1}{4} + \frac{3}{2} \kappa$ as needed in the proof of
Theorem~\ref{t:lim-ident}, we still need to control the norms $\|
v_{\varepsilon} \|^{3 + \delta}_{\mathcal{M}_T^{\gamma / (3 + \delta)}
L^{\infty}}$ and $\|v^{\Box}_{\varepsilon} \|_{C_T L^{\infty}}$ that appear in
Lemma~\ref{l:rem-estim}. This is done in next section.

\subsection{Apriori bounds on the solution}\label{s:a-priori}

\begin{lemma}[Apriori bound on the solution]
  \label{l:apriori-bounds}
  
  Fix $T > 0$. There exists $\kappa > 0$, $T_{\star} = T_{\star} \left( \|
  \mathbbm{Y}_{\varepsilon} \|_{\mathcal{X}_T}, \| u_{\varepsilon, 0}
  \|_{\CC^{- 1 / 2 - \kappa}}, | \lambda_{\varepsilon} | \right) \in (0, T]$ a
  lower semicontinuous function depending only on $\left( \|
  \mathbbm{Y}_{\varepsilon} \|_{\mathcal{X}_T}, \| u_{\varepsilon, 0}
  \|_{\CC^{- 1 / 2 - \kappa}}, | \lambda_{\varepsilon} | \right)$ and a
  collection of events $(\mathcal{E}_{\varepsilon})_{\varepsilon > 0}$ such
  that
  \[ \mathbbm{P} (\mathcal{E}_{\varepsilon}) \rightarrow 1 \qquad \text{as
     $\varepsilon \rightarrow 0$} \]
  and conditionally on $\mathcal{E}_{\varepsilon}$ there exists a universal
  constant $C > 0$ such that:
  \[ \| v_{\varepsilon} \|_{C_{T_{\star}} \CC^{- \frac{1}{2} - \kappa}} + \|
     v_{\varepsilon} \|_{\mathcal{M}_{T_{\star}}^{\frac{1}{4} + \frac{3
     \kappa}{2}} L^{\infty}} \leqslant C (1 + | \lambda_{\varepsilon} |)_{} 
     (1 + \| \mathbbm{Y}_{\varepsilon} \|_{\mathcal{X}_T})^3 \left( 1 + \|
     u_{\varepsilon, 0} \|_{\CC^{- 1 / 2 - \kappa}} \right)^3 \]
  for any $v_{\varepsilon}$ that solves equation~(\ref{e:last-phisharp}).
  Moreover, still conditionally on $\mathcal{E}_{\varepsilon}$ we have
  \[ \| v_{\varepsilon}^{\Box} \|_{C_{T_{\star}} L^{\infty}} \leqslant C (1 +
     | \lambda_{\varepsilon} |)_{}  (1 + \| \mathbbm{Y}_{\varepsilon}
     \|_{\mathcal{X}_T})^3 \left( 1 + \| u_{\varepsilon, 0} \|_{\CC^{- 1 / 2 -
     \kappa}} \right)^3 \]
  with $v_{\varepsilon}^{\Box}$ as in~(\ref{e:vsquare-def}).
\end{lemma}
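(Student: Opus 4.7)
The plan is to treat equation~(\ref{e:last-phisharp}) as a perturbation of the paracontrolled $\Phi^4_3$ system analysed in Catellier--Chouk~\cite{catellier_paracontrolled_2013}, where the source term $R_\varepsilon(v_\varepsilon)$ is handled via the estimate of Lemma~\ref{l:rem-estim} together with the smallness of $M_{\varepsilon,\delta}$ from Lemma~\ref{l:rem-converg}. First I would fix constants $K_1,K_2 > 0$ and define
\[
\mathcal{E}_\varepsilon \assign \Bigl\{ \| \mathbbm{Y}_\varepsilon \|_{\mathcal{X}_T} \leq K_1,\ M_{\varepsilon,\delta}(Y_\varepsilon, u_{0,\varepsilon}) \leq K_2 \varepsilon^{\delta/4} \Bigr\}
\]
for some fixed $\delta \in (0,1]$. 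Theorem~\ref{t:stoch-conv} and Lemma~\ref{l:rem-converg} (together with Markov's inequality and Assumption~\ref{a:main}) guarantee that $\mathbbm{P}(\mathcal{E}_\varepsilon) \to 1$ for $K_1$ large and any $K_2 > 0$. On $\mathcal{E}_\varepsilon$ I also have $\| u_{\varepsilon,0}\|_{\CC^{-1/2-\kappa}}$ and $|\lambda_\varepsilon|$ uniformly controlled up to an event of arbitrarily small probability, so these may be absorbed into $\mathcal{E}_\varepsilon$ as well.

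Working on $\mathcal{E}_\varepsilon$, I would run a standard bootstrap argument on the paracontrolled system. Setting $\mathcal{N}(T') \assign \|v_\varepsilon\|_{C_{T'} \CC^{-1/2-\kappa}} + \|v_\varepsilon\|_{\mathcal{M}^{1/4+3\kappa/2}_{T'} L^\infty} + \|v^\natural_\varepsilon\|_{\LL^{1/2,1+2\kappa}_{T'}}$, the fixed-point estimates of~\cite{catellier_paracontrolled_2013} applied to the map $\Gamma_\star$ of Remark~\ref{r:solmap-cont} yield, for $\gamma > \tfrac14+\tfrac{3\kappa}{2}$ and $p$ large,
\[
\mathcal{N}(T') \leq C (1+|\lambda_\varepsilon|)(1+\|\mathbbm{Y}_\varepsilon\|_{\mathcal{X}_T})^3 (1+\|u_{\varepsilon,0}\|_{\CC^{-1/2-\kappa}})^3 + C T'^{\delta'} \| R_\varepsilon(v_\varepsilon)\|_{\mathcal{M}^{\gamma,p}_{T'} L^p}
\]
for some $\delta' > 0$, by the interpolation bound on $I\mathcal{R}$ recalled in Remark~\ref{r:solmap-cont}. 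I then insert Lemma~\ref{l:rem-estim}:
\[
\| R_\varepsilon(v_\varepsilon)\|_{\mathcal{M}^{\gamma,p}_{T'} L^p} \leq M_{\varepsilon,\delta}\, \|v_\varepsilon\|^{3+\delta}_{\mathcal{M}^{\gamma/(3+\delta)}_{T'} L^\infty}\, e^{c \varepsilon^{1/2}\|v^\Box_\varepsilon\|_{C_{T'} L^\infty}}.
\]
The embedding $\mathcal{M}^{1/4+3\kappa/2}_{T'} L^\infty \hookrightarrow \mathcal{M}^{\gamma/(3+\delta)}_{T'}L^\infty$ for $\gamma/(3+\delta) \leq 1/4+3\kappa/2$ (which holds for $\gamma$ close to $1/4+3\kappa/2$ and $\delta$ small) lets us bound the polynomial factor by $\mathcal{N}(T')^{3+\delta}$.

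The main obstacle is the exponential factor in $\|v^\Box_\varepsilon\|_{C_{T'} L^\infty}$. Here I would use the paracontrolled ansatz $v_\varepsilon = -\tthreeone{Y_\varepsilon} - \ttwoone{\bar Y_\varepsilon} - 3 v_\varepsilon \precprec \ttwoone{Y_\varepsilon} + v^\natural_\varepsilon$ combined with the Schauder-type smoothing giving $\tthreeone{Y_\varepsilon},\ttwoone{\bar Y_\varepsilon} \in C_{T'} \CC^{1/2-\kappa}$, together with the fact that $v^\natural_\varepsilon \in C_{T'} \CC^{1+2\kappa}$ away from $t=0$, to deduce
\[
\|v^\Box_\varepsilon\|_{C_{T'}L^\infty} \lesssim \mathcal{N}(T')\,(1+\|\mathbbm{Y}_\varepsilon\|_{\mathcal{X}_T}).
\]
The initial-data contribution is removed because $v^\Box_\varepsilon = v_\varepsilon - P_{\cdot}(u_{0,\varepsilon}-Y_\varepsilon(0))$ precisely cancels the singular free-evolution piece carried by $v^\natural_\varepsilon(0)$ (see~(\ref{eq:final-system})), so this norm can be estimated uniformly on compact time intervals. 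Consequently $\varepsilon^{1/2}\|v^\Box_\varepsilon\|_{C_{T'} L^\infty}$ is at most $C\varepsilon^{1/2}\mathcal{N}(T')(1+K_1)$, and the exponential is of order $\exp(C\varepsilon^{1/2}\mathcal{N}(T'))$.

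To close the bootstrap I introduce
\[
T_\star \assign \sup\Bigl\{ T' \in (0,T] : \mathcal{N}(T') \leq 2 C (1+|\lambda_\varepsilon|)(1+\|\mathbbm{Y}_\varepsilon\|_{\mathcal{X}_T})^3(1+\|u_{\varepsilon,0}\|_{\CC^{-1/2-\kappa}})^3 \Bigr\},
\]
which is lower semicontinuous in its arguments by continuity of $\Gamma$. On $\mathcal{E}_\varepsilon$, using $M_{\varepsilon,\delta} \leq K_2 \varepsilon^{\delta/4}$ and the bound $\varepsilon^{1/2}\mathcal{N}(T_\star)\leq 1$ (which holds for $\varepsilon$ small), the contribution of $R_\varepsilon$ to $\mathcal{N}(T_\star)$ is at most $C K_2 \varepsilon^{\delta/4} \mathcal{N}(T_\star)^{3+\delta}$, which is strictly less than the linear term for $\varepsilon$ small enough, preventing $\mathcal{N}$ from reaching the threshold and thereby proving the claimed bound. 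The corresponding bound on $\|v^\Box_\varepsilon\|_{C_{T_\star}L^\infty}$ follows from the same Schauder estimate used above. The lower semicontinuity of $T_\star$ in $(\|\mathbbm{Y}_\varepsilon\|_{\mathcal{X}_T}, \|u_{\varepsilon,0}\|_{\CC^{-1/2-\kappa}}, |\lambda_\varepsilon|)$ is inherited from the continuity of the paracontrolled solution map $\Gamma_\star$.
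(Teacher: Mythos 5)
Your proposal follows the paper's overall strategy — a paracontrolled bootstrap with the remainder controlled by $M_{\varepsilon,\delta}$ from Lemma~\ref{l:rem-estim}, and an event $\mathcal{E}_\varepsilon$ that restricts to the regime where $M_{\varepsilon,\delta}$ is small. The paper, however, modularizes this into a purely deterministic a-priori estimate (Lemma~\ref{l:apriori-determ}, whose hypothesis is $M_{\varepsilon,\delta}\leq T_\star^{\kappa/2}$) followed by a one-line probabilistic observation that $\mathcal{E}_\varepsilon := \{M_{\varepsilon,\delta}\leq T_\star^{\kappa/2}\}$ has $\mathbbm{P}(\mathcal{E}_\varepsilon)\to 1$; your proof inlines the deterministic work.

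There is a genuine gap in your closure of the bootstrap. You invoke ``$\varepsilon^{1/2}\mathcal{N}(T_\star)\leq 1$ (which holds for $\varepsilon$ small)'' and later ``strictly less than the linear term for $\varepsilon$ small enough.'' But the threshold $\varepsilon_0$ below which these hold depends on $\mathcal{N}(T_\star)$, which in turn depends on $(\|\mathbbm{Y}_\varepsilon\|_{\mathcal{X}_T},\|u_{\varepsilon,0}\|_{\CC^{-1/2-\kappa}},|\lambda_\varepsilon|)$ and hence is a random quantity. To make ``$\varepsilon$ small'' well-posed you must first truncate these norms (as you do with $K_1,K_2,\dots$), but then the bound you obtain is in terms of the truncation levels rather than the actual norms, which is not the statement being proved. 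The paper avoids this circularity entirely: it never needs $\varepsilon$ to be small in the deterministic estimate. Instead, it observes that since $\varepsilon\leq 1$, the exponential factors $e^{c\varepsilon^{1/2}\|v^\Box_\varepsilon\|}$ are bounded by (triple-)exponential functions of the data only, and all of these enter the choice of the \emph{time horizon} $T_\star$, not of $\varepsilon$. The only role of $\varepsilon$-smallness is indirect, via $\mathbbm{P}(M_{\varepsilon,\delta}\leq T_\star^{\kappa/2})\to 1$.

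A second, related issue: your $T_\star := \sup\{T':\mathcal{N}(T')\leq 2C(\dots)\}$ is defined a posteriori in terms of the solution, so the claim that it is a function only of $(\|\mathbbm{Y}_\varepsilon\|_{\mathcal{X}_T},\|u_{\varepsilon,0}\|_{\CC^{-1/2-\kappa}},|\lambda_\varepsilon|)$ and lower semicontinuous requires justification. The paper constructs $T_\star$ explicitly as a solution of an inequality involving only the data (see the two conditions $CT_\star^{\kappa/2}[(5C)^2+e^{c\varepsilon^{1/2}(5C)}(5C)^{2+\delta}]\leq 1/2$ and $CT_\star^{\kappa/2}e^{c\varepsilon^{1/2}(5C)}\leq D$ in the proof of Lemma~\ref{l:apriori-determ}), from which the a-priori character and the lower semicontinuity are immediate; the bootstrap is then closed by a continuity/contradiction argument on the interval $[0,S]$ rather than by letting $\varepsilon\to 0$.
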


\begin{proof}
  We know from Lemma~\ref{l:apriori-determ} that the bounds above on $\|
  v_{\varepsilon} \|_{C_{T_{\star}} \CC^{- \frac{1}{2} - \kappa}} + \|
  v_{\varepsilon} \|_{\mathcal{M}_{T_{\star}}^{\frac{1}{4} + \frac{3
  \kappa}{2}} L^{\infty}}$ and $\| v_{\varepsilon}^{\Box} \|_{C_{T_{\star}}
  L^{\infty}}$ hold whenever $M_{\varepsilon, \delta} \leqslant
  T_{\star}^{\kappa / 2}$. The event $\mathcal{E}_{\varepsilon} = \{
  M_{\varepsilon, \delta} \leqslant T_{\star}^{\kappa / 2} \}$ has
  $\mathbbm{P} (\mathcal{E}_{\varepsilon}) \rightarrow 1$ by
  Lemma~\ref{l:rem-converg} and this proves the result.
\end{proof}

The only thing left to prove is Lemma~\ref{l:apriori-determ}, which just a
standard application of some well-known bounds on paraproducts, that are
recalled in Appendices~\ref{sec:schauder}, and~\ref{sec:bony}.

First observe that for $\varepsilon > 0$ a pair $(v_{\varepsilon},
v^{\natural}_{\varepsilon})$ solves the paracontrolled
equation~(\ref{e:last-phisharp}) if and only if $v^{\natural}_{\varepsilon} =
v^{\flat}_{\varepsilon} + v^{\sharp}_{\varepsilon}$ and $(v_{\varepsilon},
v^{\flat}_{\varepsilon})$ solves:
\begin{equation}
  \left\{ \begin{array}{lll}
    v_{\varepsilon} & = & - \tthreeone{Y_{\varepsilon}} -
    \ttwoone{\bar{Y}_{\varepsilon}} - 3 v_{\varepsilon} \precprec
    \ttwoone{Y_{\varepsilon}} + v^{\flat}_{\varepsilon} +
    v^{\sharp}_{\varepsilon}\\
    \LL v_{\varepsilon}^{\flat} & = & U (\lambda_{\varepsilon},
    \mathbbm{Y}_{\varepsilon} ; v_{\varepsilon}, v^{\flat}_{\varepsilon} +
    v^{\sharp}_{\varepsilon}) - R_{\varepsilon} (v_{\varepsilon})\\
    v^{\flat}_{\varepsilon} (0) & = & \tthreeone{Y_{\varepsilon}} (0) +
    \ttwoone{\bar{Y}_{\varepsilon}} (0) + 3 v_{\varepsilon, 0} \prec
    \ttwoone{Y_{\varepsilon}} (0)
  \end{array} \right. \label{e:fixpoint-final}
\end{equation}
Here $U$ is the same as in~(\ref{e:last-phisharp}). The initial condition of
~(\ref{e:fixpoint-final}) is given by $v_{\varepsilon, 0} \assign u_{0,
\varepsilon} - Y_{\varepsilon} (0)$. The a-priori bounds of
Lemma~\ref{l:apriori-determ} come from being able to find closed estimates
for~(\ref{e:fixpoint-final}).

Let us specify now all the notations we are going to use in the rest of this
section. We consider the spaces
\[ \mathcal{V}_T^{\flat} \assign \LL^{2 \kappa}_T \cap \LL^{1 / 4, 1 / 2 + 2
   \kappa}_T \cap \LL^{1 / 2, 1 + 2 \kappa}_T, \quad \mathcal{V}_T \assign
   \LL^{1 / 2, 1 / 2 - \kappa}_T \cap \LL_T^{1 / 4 + 3 \kappa / 2, 2 \kappa},
\]
with the corresponding norms
\begin{eqnarray}
  \| v^{\flat}_{\varepsilon} \|_{\mathcal{V}_T^{\flat}} & \assign & \|
  v^{\flat}_{\varepsilon} \|_{\LL^{2 \kappa}_T} + \| v^{\flat}_{\varepsilon}
  \|_{\LL^{1 / 4, 1 / 2 + 2 \kappa}_T} + \| v_{\varepsilon}^{\flat} \|_{\LL^{1
  / 2, 1 + 2 \kappa}_T}, \label{e:vb-norm} 
\end{eqnarray}
\begin{eqnarray}
  \| v_{\varepsilon} \|_{\mathcal{V}_T} & \assign & \| v_{\varepsilon}
  \|_{\LL^{1 / 2, 1 / 2 - \kappa}} + \| v_{\varepsilon} \|_{\LL^{1 / 4 + 3
  \kappa / 2, 2 \kappa}} . \label{e:v-norm} 
\end{eqnarray}
We refer to Appendix~\ref{s:notations} for the definition of the parabolic
spaces $\LL^{\gamma, \alpha}_T$. We let
\[ \begin{array}{lllll}
     v^{\Box}_{\varepsilon} & \assign & v_{\varepsilon} - v^{\sharp} & = & -
     \tthreeone{Y_{\varepsilon}} - \ttwoone{\bar{Y}_{\varepsilon}} - 3
     (v^{\Box}_{\varepsilon} + v^{\sharp}_{\varepsilon}) \precprec
     \ttwoone{Y_{\varepsilon}} + v^{\flat}_{\varepsilon},\\
     v_{\varepsilon}^{\boxslash} & \assign & v^{\Box}_{\varepsilon} +
     \tthreeone{Y_{\varepsilon}} & = & - \ttwoone{\bar{Y}_{\varepsilon}} - 3
     (v^{\Box}_{\varepsilon} + v^{\sharp}_{\varepsilon}) \precprec
     \ttwoone{Y_{\varepsilon}} + v^{\flat}_{\varepsilon},
   \end{array} \]
and $v^{\Box}_{\varepsilon} (t = 0) = v_{\varepsilon}^{\boxslash} (t = 0) =
0$. We define also the norm
\begin{eqnarray*}
  \| v^{\Box}_{\varepsilon} \|_{\mathcal{V}_T^{\Box}}^{} & \assign & \|
  v^{\Box} \|_{\LL^{2 \kappa}_T} + \| v^{\Box} \|_{\mathcal{M}^{1 / 4}_T
  \CC^{1 / 2 + 2 \kappa}},
\end{eqnarray*}
with $\mathcal{M}^{1 / 4}_T \CC^{1 / 2 + 2 \kappa}$ given in
Appendix~\ref{s:notations}. In order not to get lost in these definitions the
reader can keep in mind the following:
\begin{itemizeminus}
  \item $v_{\varepsilon}$ is the solution without the linear term;
  
  \item $v^{\sharp}_{\varepsilon}$ is the contribution of the initial
  condition, which give origin to some explosive norm (near the initial time);
  
  \item $v^{\flat}_{\varepsilon}$ is the regular part of the solution;
  
  \item $v^{\Box}_{\varepsilon}, v_{\varepsilon}^{\boxslash}$ enter in the
  estimation of the remainder, they are just convenient shortcuts for certain
  contributions appearing in $v_{\varepsilon}$.
\end{itemizeminus}
\begin{lemma}
  \label{l:apriori-determ}There exists $T_{\star} = T_{\star} \left( \|
  \mathbbm{Y}_{\varepsilon} \|_{\mathcal{X}_T}, \| u_{\varepsilon, 0}
  \|_{\CC^{- 1 / 2 - \kappa}}, | \lambda_{\varepsilon} | \right) \in (0, T]$ a
  lower semicontinuous function depending only on $\|
  \mathbbm{Y}_{\varepsilon} \|_{\mathcal{X}_T}$, $\| u_{\varepsilon, 0}
  \|_{\CC^{- 1 / 2 - \kappa}}$ and $| \lambda_{\varepsilon} |$, a constant
  $M_{\varepsilon, \delta} = M_{\varepsilon, \delta} (Y_{\varepsilon}, u_{0,
  \varepsilon}) > 0$ defined by~(\ref{eq:def-M}), and a universal constant $C
  > 0$ such that, whenever $M_{\varepsilon, \delta} \leqslant
  T_{\star}^{\kappa / 2}$ we have
  \begin{eqnarray*}
    \| v^{\flat}_{\varepsilon} \|_{\mathcal{V}_{T_{\star}}^{\flat}} &
    \leqslant & C (1 + | \lambda_{\varepsilon} |)_{}  (1 + \|
    \mathbbm{Y}_{\varepsilon} \|_{\mathcal{X}_T})^3 \left( 1 + \|
    u_{\varepsilon, 0} \|_{\CC^{- 1 / 2 - \kappa}} \right)^3,\\
    \| v_{\varepsilon} \|_{\mathcal{V}_{T_{\star}}} & \leqslant & C \left( \|
    \mathbbm{Y}_{\varepsilon} \|_{\mathcal{X}_T} + \| u_{\varepsilon, 0}
    \|_{\CC^{- 1 / 2 - \kappa}} + \| v^{\flat}_{\varepsilon}
    \|_{\mathcal{V}_{T_{\star}}^{\flat}} \right) .
  \end{eqnarray*}
\end{lemma}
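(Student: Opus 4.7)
The plan is to set up a closed system of a priori estimates for the pair $(v_\varepsilon,v^{\flat}_\varepsilon)$ solving~\eqref{e:fixpoint-final}, following the template of Catellier--Chouk~\cite{catellier_paracontrolled_2013}, and then absorb all bad terms via a combined smallness condition on $T_{\star}$ and on the stochastic quantity $M_{\varepsilon,\delta}$ of~\eqref{eq:def-M}. We write $A_{\varepsilon}:=(1+|\lambda_{\varepsilon}|)(1+\|\mathbb{Y}_\varepsilon\|_{\mathcal{X}_T})(1+\|u_{\varepsilon,0}\|_{\CC^{-1/2-\kappa}})$ as the reference data size, and we carry out the argument on an arbitrary time horizon $T'\leqslant T$, optimizing at the end.

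\textbf{Step 1: from $v^{\flat}_\varepsilon$ to $v_\varepsilon$.} From the first line of~\eqref{e:fixpoint-final}, paraproduct and Schauder bounds (Appendices~\ref{sec:bony}, \ref{sec:schauder}) together with the definition~\eqref{e:vsquare-def} of $v^{\sharp}_\varepsilon$ and the boundedness of $\tthreeone{Y_\varepsilon}, \ttwoone{\bar Y_\varepsilon}, \ttwoone{Y_\varepsilon}$ in $\mathcal{X}_T$ yield
\[
 \|v_{\varepsilon}\|_{\mathcal{V}_{T'}}\;\lesssim\;\|\mathbb{Y}_{\varepsilon}\|_{\mathcal{X}_T}+\|u_{\varepsilon,0}\|_{\CC^{-1/2-\kappa}}+\|v^{\flat}_{\varepsilon}\|_{\mathcal{V}^{\flat}_{T'}}+(T')^{\delta_1}\|v_{\varepsilon}\|_{\mathcal{V}_{T'}}\|\ttwoone{Y_{\varepsilon}}\|_{\mathcal{X}_T}
\]
for some $\delta_1>0$. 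Choosing $T_{\star}$ small enough that the last term is absorbed into the left-hand side, this gives the second inequality of the lemma. An identical argument yields $\|v^{\Box}_{\varepsilon}\|_{\mathcal{V}^{\Box}_{T'}}\lesssim A_{\varepsilon}+\|v^{\flat}_{\varepsilon}\|_{\mathcal{V}^{\flat}_{T'}}$, and by Besov embedding $\|v^{\Box}_{\varepsilon}\|_{C_{T'}L^{\infty}}\lesssim A_{\varepsilon}+\|v^{\flat}_{\varepsilon}\|_{\mathcal{V}^{\flat}_{T'}}$ uniformly in $\varepsilon$.

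\textbf{Step 2: Schauder bound on $v^{\flat}_\varepsilon$.} Apply Lemma~\ref{l:L-interpol}/\ref{l:lp-integ} to the second line of~\eqref{e:fixpoint-final}. The initial condition is explicit and controlled by $A_{\varepsilon}^2$. The drift $U(\lambda_\varepsilon,\mathbb{Y}_\varepsilon;v_\varepsilon,v^{\flat}_\varepsilon+v^{\sharp}_\varepsilon)$ is estimated term by term using the commutator and paraproduct bounds of Lemma~\ref{lem:all-commutators}, exactly as in Theorem~1.2 of~\cite{catellier_paracontrolled_2013} (this was the content of Remark~\ref{r:solmap-cont}). This delivers a polynomial estimate
\[
 \|U(\lambda_\varepsilon,\mathbb{Y}_\varepsilon;v_\varepsilon,v^{\flat}_\varepsilon+v^{\sharp}_\varepsilon)\|_{\text{source}}\;\lesssim\;(T')^{\delta_2}\,A_{\varepsilon}^3(1+\|v^{\flat}_\varepsilon\|_{\mathcal{V}^{\flat}_{T'}})^3
\]
for some $\delta_2>0$, after using Step~1 to replace $\|v_\varepsilon\|_{\mathcal{V}_{T'}}$.

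\textbf{Step 3: controlling the remainder.} By Lemma~\ref{l:rem-estim} with $\gamma=1/4+3\kappa/2$ (so $\gamma/(3+\delta)$ matches the explosive exponent of $\mathcal{V}_{T'}$) and the comparison $\|IR_{\varepsilon}(v_{\varepsilon})\|_{\LL^{1/2,1+2\kappa}_{T'}}\lesssim (T')^{\delta_3}\|R_{\varepsilon}(v_\varepsilon)\|_{\mathcal{M}^{\gamma/(3+\delta),p}_{T'}L^p}$ used in Remark~\ref{r:solmap-cont}, we obtain
\[
 \|IR_{\varepsilon}(v_{\varepsilon})\|_{\LL^{1/2,1+2\kappa}_{T'}}\;\lesssim\;(T')^{\delta_3}\,M_{\varepsilon,\delta}\,\|v_{\varepsilon}\|_{\mathcal{V}_{T'}}^{3+\delta}\,e^{c\varepsilon^{1/2}\|v^{\Box}_{\varepsilon}\|_{C_{T'}L^{\infty}}}.
\]
Crucially, the $\varepsilon^{1/2}$ prefactor in the exponent kills the potentially large $\|v^{\Box}_\varepsilon\|_{C_{T'}L^{\infty}}$ once we assume $\varepsilon^{1/2}(A_\varepsilon+\|v^{\flat}_\varepsilon\|_{\mathcal{V}^{\flat}_{T'}})$ stays bounded, which is enforced by the continuation argument below.

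\textbf{Step 4: closing the estimate.} Combining Steps~1--3 gives an inequality of the schematic form
\[
 \|v^{\flat}_\varepsilon\|_{\mathcal{V}^{\flat}_{T'}}\;\leqslant\;C\,A_\varepsilon^3+C\bigl((T')^{\delta_0}+M_{\varepsilon,\delta}\bigr)\,\bigl(A_\varepsilon+\|v^{\flat}_\varepsilon\|_{\mathcal{V}^{\flat}_{T'}}\bigr)^{3+\delta}\,e^{c\varepsilon^{1/2}(A_\varepsilon+\|v^{\flat}_\varepsilon\|_{\mathcal{V}^{\flat}_{T'}})}
\]
with $\delta_0=\min(\delta_1,\delta_2,\delta_3)>0$. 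Define
\[
 T_{\star}\;:=\;T\wedge\bigl(c_0\,A_\varepsilon^{-N}\bigr)^{1/\delta_0}
\]
for a small universal $c_0>0$ and a large power $N$, and impose the standing condition $M_{\varepsilon,\delta}\leqslant T_{\star}^{\kappa/2}$. A standard continuity argument (the set of $T'\in[0,T_\star]$ on which $\|v^{\flat}_\varepsilon\|_{\mathcal{V}^{\flat}_{T'}}\leqslant 2CA_\varepsilon^3$ is nonempty, closed, and---via the displayed bootstrap inequality---open) then yields the claimed estimate on $v^{\flat}_\varepsilon$. The bounds on $v_\varepsilon$ and $v^{\Box}_\varepsilon$ follow from Step~1.

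\textbf{Main obstacle.} The delicate point is Step~3: the remainder estimate of Lemma~\ref{l:rem-estim} involves an exponential in $\varepsilon^{1/2}\|v^{\Box}_\varepsilon\|_{C_{T'}L^{\infty}}$, and a priori $\|v^{\Box}_\varepsilon\|$ is controlled only in terms of $\|v^{\flat}_\varepsilon\|_{\mathcal{V}^{\flat}_{T'}}$, which is itself the unknown we are bounding. The lower semicontinuity of $T_\star$ as a function of $(A_\varepsilon,|\lambda_\varepsilon|)$, needed for the proof of Theorem~\ref{t:lim-ident}, follows because $T_\star$ is defined as an explicit continuous decreasing function of $A_\varepsilon$.
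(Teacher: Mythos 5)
Your proposal follows the same skeleton as the paper's proof: (i) close a priori estimates for $v_\varepsilon$, $v^{\Box}_\varepsilon$, $v^{\boxslash}_\varepsilon$ in terms of $\|v^{\flat}_\varepsilon\|_{\mathcal{V}^{\flat}_{T'}}$ via paraproduct and Schauder bounds, (ii) estimate the source $U$ (decomposed as $Q_{-1/2}+Q_0+Q_{\lambda,\mathbb{Y}}$ in the paper) polynomially in the data, (iii) control $IR_\varepsilon(v_\varepsilon)$ via Lemma~\ref{l:rem-estim}, and (iv) close by a continuity/bootstrap argument on the interval $[0,T_{\star}]$. This is essentially what the paper does.

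However, Step~4 contains a genuine gap. You define $T_{\star}:=T\wedge(c_0 A_\varepsilon^{-N})^{1/\delta_0}$ with a \emph{universal} $c_0$ and $N$, i.e.\ $T_{\star}$ decays only polynomially in the data $A_\varepsilon$. But after substituting the bootstrap hypothesis $\|v^{\flat}_\varepsilon\|_{\mathcal{V}^{\flat}_{T'}}\lesssim A_\varepsilon^3$ into the remainder estimate, the exponential factor from Lemma~\ref{l:rem-estim} becomes $e^{c\varepsilon^{1/2}(A_\varepsilon+CA_\varepsilon^3)}$, which for $\varepsilon$ of order $1$ grows exponentially in $A_\varepsilon^3$. (The factor $\varepsilon^{1/2}$ does not help here since $\varepsilon$ ranges over $(0,1]$ and the lemma must hold for each fixed $\varepsilon$.) To close the inequality one therefore needs $T_{\star}^{\kappa/2}$ to beat an exponential in the data, i.e.\ $T_{\star}$ must decay at least like $e^{-cA_\varepsilon^3}$, not polynomially. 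The paper avoids this pitfall by defining $T_{\star}$ implicitly through the conditions $CT_{\star}^{\kappa/2}[(5C)^2+e^{c\varepsilon^{1/2}(5C)}(5C)^{2+\delta}]\leqslant\tfrac12$ and $CT_{\star}^{\kappa/2}e^{c\varepsilon^{1/2}(5C)}\leqslant D$, which explicitly absorb the exponential; this is still an explicit (and lower semicontinuous) function of $(\|\mathbb{Y}_\varepsilon\|_{\mathcal{X}_T},\|u_{\varepsilon,0}\|_{\CC^{-1/2-\kappa}},|\lambda_\varepsilon|)$, just not polynomial. Your conclusion about lower semicontinuity remains valid once $T_{\star}$ is redefined this way, but the polynomial Ansatz as written does not permit the continuity argument to close. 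A secondary, more cosmetic discrepancy: the threshold $2CA_\varepsilon^3=2C(1+|\lambda_\varepsilon|)^3(1+\|\mathbb{Y}_\varepsilon\|)^3(1+\|u_0\|)^3$ is larger than the stated bound $(1+|\lambda_\varepsilon|)(1+\|\mathbb{Y}_\varepsilon\|)^3(1+\|u_0\|)^3$, so you would need to track the degree in $|\lambda_\varepsilon|$ more carefully, as the paper does with its constants $C'$ and $D$.
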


\begin{proof}
  Using the well-known Schauder estimates of Lemma~\ref{lemma:schauder} (and
  the fact that $\| f \|_{\LL_T^{\kappa, \alpha}} \lesssim T^{\kappa} \| f
  \|_{\LL_T^{\alpha}}$) we obtain for $\kappa, \theta > 0$ small enough
  \begin{equation}
    \| I f \|_{\LL^{- \theta + 2 \kappa, 2 \kappa}_T} + \| I f \|_{\LL^{1 / 4
    - \theta + 2 \kappa, 1 / 2 + 2 \kappa}_T} + \| I f \|_{\LL^{1 / 2 - \theta
    + 2 \kappa, 1 + 2 \kappa}_T} \lesssim T^{\frac{\kappa}{2}} \left( \| f
    \|_{\mathcal{M}^{1 - \theta}_T \CC^{- \kappa}} + \| f \|_{\mathcal{M}^{1 /
    2 + 2 \kappa} \CC^{- 1 / 2 - 2 \kappa}} \right) . \label{e:est-1}
  \end{equation}
  We choose $\theta > 2 \kappa$ small enough so that
  \[ \LL^{- \theta + 3 \kappa / 2, 2 \kappa}_T \cap \LL^{1 / 4 - \theta + 3
     \kappa / 2, 1 / 2 + 2 \kappa}_T \cap \LL^{1 / 2 - \theta + 3 \kappa / 2,
     1 + 2 \kappa}_T \subseteq \mathcal{V}_T^{\flat} . \]
  Now
  \begin{eqnarray*}
    \| v_{\varepsilon}^{\Box} \|_{\mathcal{V}_T^{\Box}} & \lesssim & \|
    \tthreeone{Y_{\varepsilon}} + \ttwoone{\bar{Y}_{\varepsilon}}
    \|_{\mathcal{V}_T^{\Box}} + \|v^{\Box}_{\varepsilon} \|_{C_{_T}
    L^{\infty}} (\| \ttwoone{Y_{\varepsilon}} \|_{C_T \CC^{1 - \kappa}} +\|
    \ttwo{Y_{\varepsilon}} \|_{C_T \CC^{- 1 - \kappa}})\\
    &  & + \left( \| v_{\varepsilon}^{\sharp} \|_{C_T \CC^{- 1 / 2 - \kappa}}
    + \| v_{\varepsilon}^{\sharp} \|_{\mathcal{M}_T^{1 / 4} \CC^{- \kappa}}
    \right) (\| \ttwoone{Y_{\varepsilon}} \|_{C_T \CC^{1 - \kappa}} +\|
    \ttwo{Y_{\varepsilon}} \|_{C_T \CC^{- 1 - \kappa}}) + \|
    v_{\varepsilon}^{\flat} \|_{\mathcal{V}_T^{\Box}}\\
    & \lesssim & \| \mathbbm{Y}_{\varepsilon} \|_{\mathcal{X}_T} + T^{\kappa}
    \| v_{\varepsilon}^{\Box} \|_{\mathcal{V}_T^{\Box}} + \| v_{\varepsilon,
    0} \|_{\CC^{- 1 / 2 - \kappa}} + \| v^{\flat}_{\varepsilon}
    \|_{\mathcal{V}_T^{\flat}}
  \end{eqnarray*}
  where we used that $v^{\Box} (0) = 0$ and as a consequence that
  $\|v^{\Box}_{\varepsilon} \|_{C_{_T} L^{\infty}} \leqslant T^{\kappa}
  \|v^{\Box}_{\varepsilon} \|_{C_{_T}^{\kappa} L^{\infty}} \leqslant
  T^{\kappa} \| v^{\Box}_{\varepsilon} \|_{\mathcal{V}_T^{\Box}}^{}$ to gain a
  small power of $T$. So provided $T$ is small enough (depending only on
  $\mathbbm{Y}_{\varepsilon}$) this yields the following a-priori estimation
  on $v_{\varepsilon}^{\Box}$:
  \[ \| v_{\varepsilon}^{\Box} \|_{C_T L^{\infty}} \lesssim \|
     v_{\varepsilon}^{\Box} \|_{\mathcal{V}_T^{\Box}} \lesssim \|
     \mathbbm{Y}_{\varepsilon} \|_{\mathcal{X}_T} + \| v_{\varepsilon, 0}
     \|_{\CC^{- 1 / 2 - \kappa}} + \| v^{\flat}_{\varepsilon}
     \|_{\mathcal{V}_T^{\flat}} . \]
  Therefore we have an estimation on $v_{\varepsilon}$:
  \[ \| v_{\varepsilon} \|_{\mathcal{V}_T} \leqslant \|
     v_{\varepsilon}^{\sharp} \|_{\mathcal{V}_T} + \| v^{\Box}
     \|_{\mathcal{V}_T} \lesssim \| v_{\varepsilon, 0} \|_{\CC^{- 1 / 2 -
     \kappa}} + \| v_{\varepsilon}^{\Box} \|_{\mathcal{V}_T^{\Box}} \lesssim
     \| \mathbbm{Y}_{\varepsilon} \|_{\mathcal{X}_T} + \| v_{\varepsilon, 0}
     \|_{\CC^{- 1 / 2 - \kappa}} + \| v^{\flat}_{\varepsilon}
     \|_{\mathcal{V}_T^{\flat}} . \]
  In order to estimate terms in $U (\lambda_{\varepsilon},
  \mathbbm{Y}_{\varepsilon} ; v_{\varepsilon}, v^{\flat}_{\varepsilon} +
  v^{\sharp}_{\varepsilon})$ we decompose the renormalised products as
  \[ \begin{array}{lll}
       \ttwo{Y_{\varepsilon}} \hat{\diamond} v_{\varepsilon} & = &
       v_{\varepsilon} \succ \ttwo{Y_{\varepsilon}} -
       \ttwothreer{\bar{Y}_{\varepsilon}} - \tthreethreer{Y_{\varepsilon}} - 3
       v_{\varepsilon}  \ttwothreer{Y_{\varepsilon}} + v^{\flat}_{\varepsilon}
       \circ \ttwo{Y_{\varepsilon}} + v^{\sharp}_{\varepsilon} \circ
       \ttwo{Y_{\varepsilon}} - 3 \overline{\tmop{com}}_1 (v_{\varepsilon},
       \ttwoone{Y_{\varepsilon}}, \ttwo{Y_{\varepsilon}})\\
       v_{\varepsilon} \diamond Y_{\varepsilon} & = & -
       \ttwoone{\bar{Y}_{\varepsilon}} Y_{\varepsilon} - 3 (v_{\varepsilon}
       \precprec \ttwoone{Y_{\varepsilon}}) Y_{\varepsilon} + Y_{\varepsilon}
       \preccurlyeq (v^{\flat}_{\varepsilon} + v^{\sharp}_{\varepsilon})_{} +
       Y_{\varepsilon} \succ (v^{\flat}_{\varepsilon} +
       v^{\sharp}_{\varepsilon})_{}\\
       &  & - \tthreeone{Y_{\varepsilon}} \prec Y_{\varepsilon} -
       \tthreeone{Y_{\varepsilon}} \succ Y_{\varepsilon} -
       \tthreetwor{Y_{\varepsilon}}\\
       \tone{Y_{\varepsilon}} \diamond v_{\varepsilon}^2 & = &
       \tone{Y_{\varepsilon}} \diamond (\tthreeone{Y_{\varepsilon}})^2 + 2
       (\tone{Y_{\varepsilon}} \diamond \tthreeone{Y_{\varepsilon}})
       (\ttwoone{\bar{Y}_{\varepsilon}} + 3 v_{\varepsilon} \precprec
       \ttwoone{Y_{\varepsilon}}) - 2 (\tone{Y_{\varepsilon}} \diamond
       \tthreeone{Y_{\varepsilon}}) \preccurlyeq (v^{\flat}_{\varepsilon} +
       v^{\sharp}_{\varepsilon})\\
       &  & + 2 (\tone{Y_{\varepsilon}} \diamond \tthreeone{Y_{\varepsilon}})
       \succ (v^{\flat}_{\varepsilon} + v^{\sharp}_{\varepsilon}) +
       \tone{Y_{\varepsilon}} \preccurlyeq (v^{\boxslash}_{\varepsilon} +
       v^{\sharp})^2 + \tone{Y_{\varepsilon}} \succ
       (v^{\boxslash}_{\varepsilon} + v^{\sharp})^2 .
     \end{array} \]
  We decompose $U (\lambda_{\varepsilon}, \mathbbm{Y}_{\varepsilon} ;
  v_{\varepsilon}, v^{\flat}_{\varepsilon} + v^{\sharp}_{\varepsilon})$ as
  \begin{eqnarray*}
    U (\lambda_{\varepsilon}, \mathbbm{Y}_{\varepsilon} ; v_{\varepsilon},
    v^{\flat}_{\varepsilon} + v^{\sharp}_{\varepsilon}) & = & Q_{- 1 / 2}
    (\lambda_{\varepsilon}, \mathbbm{Y}_{\varepsilon}, v_{0, \varepsilon},
    v_{\varepsilon}, v^{\flat}_{\varepsilon}) + Q_0 (\lambda_{\varepsilon},
    \mathbbm{Y}_{\varepsilon}, v_{0, \varepsilon}, v_{\varepsilon},
    v^{\flat}_{\varepsilon}) + Q_{\lambda_{\varepsilon},
    \mathbbm{Y}_{\varepsilon}}\\
    Q_{- 1 / 2} & \assign & - 3 [v_{\varepsilon} \succ \ttwo{Y_{\varepsilon}}
    - 3 \overline{\tmop{com}}_1 (v_{\varepsilon}, \ttwoone{Y_{\varepsilon}},
    \ttwo{Y_{\varepsilon}}) + \tone{Y_{\varepsilon}} \succ
    (v^{\boxslash}_{\varepsilon} + v^{\sharp})^2]\\
    &  & - 6 [(\tone{Y_{\varepsilon}} \diamond \tthreeone{Y_{\varepsilon}})
    (3 v_{\varepsilon} \precprec \ttwoone{Y_{\varepsilon}}) +
    (\tone{Y_{\varepsilon}} \diamond \tthreeone{Y_{\varepsilon}}) \succ
    (v^{\flat}_{\varepsilon} + v^{\sharp}_{\varepsilon})]\\
    &  & + 2 \lambda_{2, \varepsilon} (3 (v_{\varepsilon} \precprec
    \ttwoone{Y_{\varepsilon}}) Y_{\varepsilon} - Y_{\varepsilon} \succ
    (v^{\flat}_{\varepsilon} + v^{\sharp}_{\varepsilon})_{}) + 3 \tmop{com}_3
    (v_{\varepsilon}, \ttwoone{Y_{\varepsilon}}) + 3 \tmop{com}_2
    (v_{\varepsilon}, \ttwo{Y_{\varepsilon}})\\
    Q_0 & \assign & 3 [3 v_{\varepsilon}  \ttwothreer{Y_{\varepsilon}} -
    v^{\flat}_{\varepsilon} \circ \ttwo{Y_{\varepsilon}} -
    v^{\sharp}_{\varepsilon} \circ \ttwo{Y_{\varepsilon}} + 2
    (\tone{Y_{\varepsilon}} \diamond \tthreeone{Y_{\varepsilon}}) \preccurlyeq
    (v^{\flat}_{\varepsilon} + v^{\sharp}_{\varepsilon}) -
    \tone{Y_{\varepsilon}} \preccurlyeq (v^{\boxslash}_{\varepsilon} +
    v^{\sharp})^2]\\
    &  & - \tzero{Y_{\varepsilon}} v_{\varepsilon}^3 - \lambda_{2,
    \varepsilon} [v_{\varepsilon}^2 + 2 Y_{\varepsilon} \preccurlyeq
    (v^{\flat}_{\varepsilon} + v^{\sharp}_{\varepsilon})]\\
    Q_{\lambda_{\varepsilon}, \mathbbm{Y}_{\varepsilon}} & \assign & (1 -
    \lambda_{1, \varepsilon}) Y_{\varepsilon} - \lambda_{0, \varepsilon} + 3
    [\ttwothreer{\bar{Y}_{\varepsilon}} + \tthreethreer{Y_{\varepsilon}} -
    \tone{Y_{\varepsilon}} \diamond (\tthreeone{Y_{\varepsilon}})^2 - 2
    (\tone{Y_{\varepsilon}} \diamond \tthreeone{Y_{\varepsilon}})
    \ttwoone{\bar{Y}_{\varepsilon}}]\\
    &  & + 2 \lambda_{2, \varepsilon} (\ttwoone{\bar{Y}_{\varepsilon}}
    Y_{\varepsilon} + \tthreeone{Y_{\varepsilon}} \prec Y_{\varepsilon} +
    \tthreeone{Y_{\varepsilon}} \succ Y_{\varepsilon} +
    \tthreetwor{Y_{\varepsilon}}) .
  \end{eqnarray*}
  Here $Q_{\lambda_{\varepsilon}, \mathbbm{Y}_{\varepsilon}}$ does not depend
  from the solution but only on $\lambda_{\varepsilon},
  \mathbbm{Y}_{\varepsilon}$ (as the notation suggests) and we have grouped
  the other terms which we expect to have regularity $\CC^{- 1 / 2 - 2
  \kappa}$ in $Q_{- 1 / 2}$, (and the same for $Q_0$ and regularity $\CC^{-
  k}$). With the same technique we used above for $v^{\Box}_{\varepsilon}$, we
  obtain the following estimate on $v^{\boxslash}_{\varepsilon}$
  \[ \| v^{\boxslash}_{\varepsilon} \|_{\LL_T^{1 / 2 + 3 \kappa / 2, 1 / 2 + 2
     \kappa}} + \| v^{\boxslash}_{\varepsilon} \|_{\LL_T^{1 / 4 + \kappa,
     \kappa}} \lesssim \| \mathbbm{Y}_{\varepsilon} \|_{\mathcal{X}_T} + \|
     v_{\varepsilon, 0} \|_{\CC^{- 1 / 2 - \kappa}} + \|
     v^{\flat}_{\varepsilon} \|_{\mathcal{V}^{\flat}_T} \]
  and this yields
  \[ \| (v^{\boxslash}_{\varepsilon})^2 \|_{\LL_T^{3 / 4 + 5 \kappa / 2, 1 / 2
     + 2 \kappa}} + \| (v^{\boxslash}_{\varepsilon})^2 \|_{\LL_T^{1 / 2 + 2
     \kappa, \kappa}} \lesssim \left( \| \mathbbm{Y}_{\varepsilon}
     \|_{\mathcal{X}_T} + \| v_{\varepsilon, 0} \|_{\CC^{- 1 / 2 - \kappa}} +
     \| v^{\flat}_{\varepsilon} \|_{\mathcal{V}^{\flat}_T} \right)^2 . \]
  Then we are ready to bound $Q_{- 1 / 2}, Q_0, Q_{\lambda_{\varepsilon},
  \mathbbm{Y}_{\varepsilon}}$ using the standard paraproducts estimations
  recalled in Appendix~\ref{sec:bony}:
  \[ \begin{array}{lll}
       \| Q_{- 1 / 2} \|_{\mathcal{M}_T^{1 / 2 + 2 \kappa} \CC^{- 1 / 2 - 2
       \kappa}} + \| Q_0 \|_{\mathcal{M}^{1 - \theta}_T \CC^{- \kappa}} &
       \lesssim & (1 + | \lambda_{\varepsilon} |) (1 + \|
       \mathbbm{Y}_{\varepsilon} \|_{\mathcal{X}_T})^3 \left( 1 + \|
       v_{\varepsilon, 0} \|_{\CC^{- 1 / 2 - \kappa}} + \|
       v^{\flat}_{\varepsilon} \|_{\mathcal{V}^{\flat}_T} \right)^3\\
       \| Q_{\lambda_{\varepsilon}, \mathbbm{Y}_{\varepsilon}} \|_{C_T \CC^{-
       1 / 2 - \kappa}} & \lesssim & (1 + | \lambda_{\varepsilon} |)_{}  (1 +
       \| \mathbbm{Y}_{\varepsilon} \|_{\mathcal{X}_T})^3 .
     \end{array} \]

  In order to conclude the estimation of $\| v^{\flat}_{\varepsilon}
  \|_{\mathcal{V}^{\flat}_T}$ we have to control $\| I R_{\varepsilon}
  (v_{\varepsilon}) \|_{\mathcal{V}^{\flat}_T}$. This is achieved easily by
  the using the results of Section~\ref{s:bound-rem-again}. Thanks to
  Lemma~\ref{l:rem-estim} $\forall \delta \in (0, 1)$, $\forall \theta > 0$
  such that $\frac{1 - \theta}{3 + \delta} > \frac{1}{4} + \frac{3 \kappa}{2}$
  (note that it is possible to choose $\theta > 2 \kappa$ that satisfies this
  property as long as $k$ and $\delta$ are small enough) we have:
  \[ \|R_{\varepsilon} (v_{\varepsilon}) \|_{\mathcal{M}^{1 - \theta, p}_T L^p
     (\mathbbm{T}^3)} \lesssim M_{\varepsilon, \delta} (Y_{\varepsilon}, u_{0,
     \varepsilon}) \| v_{\varepsilon} \|^{3 + \delta}_{\mathcal{V}_T} e^{c
     \varepsilon^{1 / 2} \|v^{\Box}_{\varepsilon} \|_{\mathcal{V}^{\Box}_T}} .
  \]
  By Lemma~\ref{l:lp-integ} together with~(\ref{eq:schauder time reg}) we
  obtain then
  \begin{eqnarray*}
    \| I R_{\varepsilon} (v_{\varepsilon}) \|_{\mathcal{V}^{\flat}_T} &
    \lesssim & M_{\varepsilon, \delta} (Y_{\varepsilon}, u_{0, \varepsilon})
    \| v_{\varepsilon} \|^{3 + \delta}_{\mathcal{V}_T} e^{c \varepsilon^{1 /
    2} \|v^{\Box}_{\varepsilon} \|_{\mathcal{V}_T^{\Box}}} .
  \end{eqnarray*}
  Using that
  \[ \| P_{\cdummy} v^{\flat}_{\varepsilon} (0) \|_{\mathcal{V}_T^{\flat}}
     \lesssim \| v^{\flat}_{\varepsilon} (0) \|_{C_T \CC^{1 / 2 - 2 \kappa}}
     \lesssim \left( 1 + \| v_{\varepsilon, 0} \|_{\CC^{- 1 / 2 - \kappa}}
     \right) \| \mathbbm{Y}_{\varepsilon} \|_{\mathcal{X}_T} \]
  we obtain that $\exists C' > 0$ such that
  \begin{eqnarray*}
    \| v^{\flat}_{\varepsilon_n} \|_{\mathcal{V}_T^{\flat}} & \leqslant & C'
    (1 + | \lambda_{\varepsilon_n} |)_{}  (1 + \| \mathbbm{Y}_{\varepsilon_n}
    \|_{\mathcal{X}_T})^3 \left( 1 + \| v_{\varepsilon, 0} \|_{\CC^{- 1 / 2 -
    \kappa}} \right)^3 + C' T^{\kappa / 2} (1 + | \lambda_{\varepsilon} |) (1
    + \| \mathbbm{Y}_{\varepsilon} \|_{\mathcal{X}_T})^3 \|
    v^{\flat}_{\varepsilon} \|_{\mathcal{V}^{\flat}_T}^3\\
    &  & + C' M_{\varepsilon, \delta} (Y_{\varepsilon}, u_{0, \varepsilon})
    e^{c \varepsilon^{1 / 2} \left( \| \mathbbm{Y}_{\varepsilon}
    \|_{\mathcal{X}_T} + \| v_{\varepsilon, 0} \|_{\CC^{- 1 / 2 - \kappa}}
    \right)} e^{c \varepsilon^{1 / 2} \| v^{\flat}_{\varepsilon}
    \|_{\mathcal{V}_T^{\flat}}} \| v_{\varepsilon} \|^{3 +
    \delta}_{\mathcal{V}_T}\\
    & \leqslant & D + CM_{\varepsilon, \delta} (Y_{\varepsilon}, u_{0,
    \varepsilon}) e^{c \varepsilon^{1 / 2} \| v^{\flat}_{\varepsilon}
    \|_{\mathcal{V}_T^{\flat}}} + CT^{\kappa / 2} \| v^{\flat}_{\varepsilon}
    \|_{\mathcal{V}^{\flat}_T}^3 + CM_{\varepsilon, \delta} (Y_{\varepsilon},
    u_{0, \varepsilon}) e^{c \varepsilon^{1 / 2} \| v^{\flat}_{\varepsilon}
    \|_{\mathcal{V}_T^{\flat}}} \| v^{\flat}_{\varepsilon}
    \|_{\mathcal{V}^{\flat}_T}^{3 + \delta}
  \end{eqnarray*}
  with
  \[ C \assign C' [(1 + | \lambda_{\varepsilon} |) (1 + \|
     \mathbbm{Y}_{\varepsilon} \|_{\mathcal{X}_T})^3 + e^{c \varepsilon^{1 /
     2} \left( \| \mathbbm{Y}_{\varepsilon} \|_{\mathcal{X}_T} + \|
     v_{\varepsilon, 0} \|_{\CC^{- 1 / 2 - \kappa}} \right)} \left( 1 + \left(
     \| \mathbbm{Y}_{\varepsilon} \|_{\mathcal{X}_T} + \| v_{\varepsilon, 0}
     \|_{\CC^{- 1 / 2 - \kappa}} \right)^{3 + \delta} \right)], \]
  and
  \[ D \assign C' (1 + | \lambda_{\varepsilon_n} |)_{}  (1 + \|
     \mathbbm{Y}_{\varepsilon_n} \|_{\mathcal{X}_T})^3 \left( 1 + \|
     v_{\varepsilon, 0} \|_{\CC^{- 1 / 2 - \kappa}} \right)^3 . \]
  Let $T_{\star} \in (0, T]$ such that:
  \[ CT_{\star}^{\kappa / 2} [(5 C)^2 + e^{c \varepsilon^{1 / 2} (5 C)} (5
     C)^{2 + \delta}] \leqslant \frac{1}{2} \text{, \quad and }
     CT_{\star}^{\kappa / 2} e^{c \varepsilon^{1 / 2} (5 C)} \leqslant D. \]
  Assume that $M_{\varepsilon, \delta} \leqslant T_{\star}^{\kappa / 2}$.
  Define a closed interval $[0, S] = \{t \in [0, T_{\star}] : \|
  v^{\flat}_{\varepsilon_n} \|_{\mathcal{V}_t^{\flat}} \leqslant 4 D\}
  \subseteq [0, T_{\star}]$. This interval is well defined and non--empty
  since $t \mapsto \| v^{\flat}_{\varepsilon_n} \|_{\mathcal{V}_t^{\flat}}$ is
  continuous and nondecreasing and $\| v^{\flat}_{\varepsilon_n}
  \|_{\mathcal{V}_0^{\flat}} \leqslant 4 D$. Let us assume that $S <
  T_{\star}$, then we can take $\epsilon > 0$ small enough such that $S +
  \epsilon < T_{\star}$ and by continuity $\| v^{\flat}_{\varepsilon}
  \|_{\mathcal{V}_{S + \epsilon}^{\flat}} \leqslant 5 C$, then
  \begin{eqnarray*}
    \| v^{\flat}_{\varepsilon_n} \|_{\mathcal{V}_{S + \epsilon}^{\flat}} &
    \leqslant & D + CM_{\varepsilon, \delta} (Y_{\varepsilon}, u_{0,
    \varepsilon}) e^{c \varepsilon^{1 / 2} \| v^{\flat}_{\varepsilon}
    \|_{\mathcal{V}_{S + \epsilon}^{\flat}}} + C (S + \epsilon)^{\kappa / 2}
    \| v^{\flat}_{\varepsilon} \|_{\mathcal{V}^{\flat}_{S + \epsilon}}^3 +
    CM_{\varepsilon, \delta} (Y_{\varepsilon}, u_{0, \varepsilon}) e^{c
    \varepsilon^{1 / 2} \| v^{\flat}_{\varepsilon} \|_{\mathcal{V}_{S +
    \epsilon}^{\flat}}} \| v^{\flat}_{\varepsilon} \|_{\mathcal{V}^{\flat}_{S
    + \epsilon}}^{3 + \delta}\\
    & \leqslant & D + CM_{\varepsilon, \delta} (Y_{\varepsilon}, u_{0,
    \varepsilon}) e^{c \varepsilon^{1 / 2} (5 C)} + CT_{\star}^{\kappa / 2} (5
    C)^2 \| v^{\flat}_{\varepsilon} \|_{\mathcal{V}^{\flat}_{S + \epsilon}} +
    CT_{\star}^{\kappa / 2} e^{c \varepsilon^{1 / 2} (5 C)} (5 C)^{2 + \delta}
    \| v^{\flat}_{\varepsilon} \|_{\mathcal{V}^{\flat}_{S + \epsilon}}\\
    & \leqslant & 2 D + \frac{1}{2} \| v^{\flat}_{\varepsilon}
    \|_{\mathcal{V}^{\flat}_{S + \epsilon}}
  \end{eqnarray*}
  which gives $\| v^{\flat}_{\varepsilon_n} \|_{\mathcal{V}_{S +
  \epsilon}^{\flat}} \leqslant 4 D$. This implies $S = T_{\star}$ (by
  contradiction). From the construction of $T_{\star}$ it is easy to see that
  $T_{\star} \left( \| \mathbbm{Y}_{\varepsilon} \|_{\mathcal{X}_T}, \|
  u_{\varepsilon, 0} \|_{\CC^{- 1 / 2 - \kappa}}, | \lambda_{\varepsilon} |
  \right)$ is lower semicontinuous.
\end{proof}

\subsection{\label{s:conv-rem}Convergence of the remainder}

It suffices to put together the results obtained in
Sections~\ref{s:bound-rem-again} and~\ref{s:a-priori} to obtain the
convergence of $R_{\varepsilon} (v_{\varepsilon})$:

\begin{lemma}
  \label{l:remconv-last}The remainder $R_{\varepsilon} (v_{\varepsilon})$ that
  appears in equation~(\ref{e:last-phisharp}) converges in probability to $0$
  as $\varepsilon \rightarrow 0$ in the space $\mathcal{M}^{\gamma,
  p}_{T_{\star}} L^p (\mathbbm{T}^3)$.
\end{lemma}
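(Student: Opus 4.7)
The plan is to combine the three preceding lemmas in a straightforward way. First I would fix the parameter $\gamma$ in the statement along with a small auxiliary $\delta \in (0,1]$ satisfying
$\gamma/(3+\delta) \leq 1/4 + 3\kappa/2$, so that Lemma~\ref{l:rem-estim} applied with this $\delta$ controls $R_{\varepsilon}(v_{\varepsilon})$ in the target norm $\mathcal{M}^{\gamma,p}_{T_\star} L^p$ by
\[
\|R_{\varepsilon}(v_{\varepsilon})\|_{\mathcal{M}^{\gamma,p}_{T_\star} L^p}
\lesssim M_{\varepsilon,\delta}(Y_{\varepsilon}, u_{0,\varepsilon})\,
\|v_{\varepsilon}\|^{3+\delta}_{\mathcal{M}_{T_\star}^{1/4+3\kappa/2} L^{\infty}}\,
e^{c\varepsilon^{1/2}\|v^{\Box}_{\varepsilon}\|_{C_{T_\star} L^{\infty}}},
\]
where the fact that $\mathcal{M}^{1/4+3\kappa/2}_{T_\star} L^\infty \hookrightarrow \mathcal{M}^{\gamma/(3+\delta)}_{T_\star} L^\infty$ is used to rewrite the right-hand side in a form matching Lemma~\ref{l:apriori-bounds}.

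Next, I would work on the event $\mathcal{E}_{\varepsilon}$ provided by Lemma~\ref{l:apriori-bounds}. On this event both $\|v_{\varepsilon}\|_{\mathcal{M}^{1/4+3\kappa/2}_{T_\star} L^\infty}$ and $\|v^{\Box}_{\varepsilon}\|_{C_{T_\star} L^\infty}$ are dominated by the single random quantity
\[
A_{\varepsilon} \assign C(1+|\lambda_{\varepsilon}|)(1+\|\mathbb{Y}_{\varepsilon}\|_{\mathcal{X}_T})^3 (1+\|u_{\varepsilon,0}\|_{\CC^{-1/2-\kappa}})^3,
\]
which is tight in $\varepsilon$ by Assumption~\ref{a:main} and Theorem~\ref{t:stoch-conv}. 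Therefore on $\mathcal{E}_{\varepsilon}$
\[
\|R_{\varepsilon}(v_{\varepsilon})\|_{\mathcal{M}^{\gamma,p}_{T_\star} L^p}
\lesssim M_{\varepsilon,\delta}(Y_{\varepsilon}, u_{0,\varepsilon})\, A_{\varepsilon}^{3+\delta}\, e^{c\varepsilon^{1/2} A_{\varepsilon}}.
\]
The exponential factor $e^{c\varepsilon^{1/2} A_{\varepsilon}}$ converges to $1$ in probability since $\varepsilon^{1/2} A_{\varepsilon}\to 0$ in probability; the polynomial factor $A_{\varepsilon}^{3+\delta}$ is tight; and $M_{\varepsilon,\delta} \to 0$ in probability by Lemma~\ref{l:rem-converg}. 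A product of a quantity vanishing in probability with tight ones still vanishes in probability, so the right-hand side tends to $0$ in probability.

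Finally, to conclude I would observe that $\mathbb{P}(\mathcal{E}_{\varepsilon}) \to 1$ by Lemma~\ref{l:apriori-bounds}, so for any $\eta,\rho>0$
\[
\mathbb{P}\bigl(\|R_{\varepsilon}(v_{\varepsilon})\|_{\mathcal{M}^{\gamma,p}_{T_\star} L^p} > \eta\bigr)
\leq \mathbb{P}(\mathcal{E}_{\varepsilon}^c) + \mathbb{P}\bigl(\{\|R_{\varepsilon}(v_{\varepsilon})\|>\eta\}\cap \mathcal{E}_{\varepsilon}\bigr),
\]
and both terms vanish as $\varepsilon\to 0$. No step here is really an obstacle, as the nontrivial analytic work has been absorbed into the earlier lemmas; the only point that requires a little care is matching the time-weight exponents so that the norm used in Lemma~\ref{l:rem-estim} is exactly the one controlled by Lemma~\ref{l:apriori-bounds}, which is why one fixes $\delta$ small enough and invokes the continuous embedding of explosive-norm spaces.
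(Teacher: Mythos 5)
Your overall strategy is the same as the paper's: combine Lemma~\ref{l:rem-estim}, Lemma~\ref{l:rem-converg} and Lemma~\ref{l:apriori-bounds}, conditioning on the event $\mathcal{E}_{\varepsilon}$ that Lemma~\ref{l:apriori-bounds} supplies. The probabilistic half of your argument is sound: $\varepsilon^{1/2}A_{\varepsilon}\to 0$ in probability because $A_{\varepsilon}$ is tight, so the exponential factor tends to $1$, the polynomial factor is tight, $M_{\varepsilon,\delta}\to 0$ in probability by Lemma~\ref{l:rem-converg}, and a product of a vanishing-in-probability factor with tight factors vanishes in probability; finally one splits $\mathbb{P}(\|R_{\varepsilon}\|>\eta)$ using $\mathbb{P}(\mathcal{E}_{\varepsilon}^c)\to 0$.

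The parameter matching at the start, however, is wrong. You impose $\gamma/(3+\delta)\leq 1/4+3\kappa/2$ and then invoke the embedding $\mathcal{M}^{1/4+3\kappa/2}_{T_{\star}}L^{\infty}\hookrightarrow \mathcal{M}^{\gamma/(3+\delta)}_{T_{\star}}L^{\infty}$. But by definition $\mathcal{M}^{a}_{T}L^{\infty}$ allows blow-up like $t^{-a}$ near the origin, so $\mathcal{M}^{a}_{T}L^{\infty}\hookrightarrow\mathcal{M}^{b}_{T}L^{\infty}$ (with a controlled norm) holds precisely when $a\leq b$. The embedding you need therefore requires $1/4+3\kappa/2\leq\gamma/(3+\delta)$ — the opposite of what you assume; under your assumption the inequality you display is false. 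Moreover, you should not apply Lemma~\ref{l:rem-estim} with the statement's own $\gamma$ as its internal parameter: that $\gamma$ is only slightly larger than $1/4+3\kappa/2$, so $\gamma/(3+\delta)$ falls strictly below the critical exponent and the corrected constraint cannot be met. The right move is to introduce a fresh parameter $\gamma_{0}\in(0,1)$ satisfying $1/4+3\kappa/2\leq\gamma_{0}/(3+\delta)\leq\gamma$ (feasible for $\kappa,\delta$ small enough, for instance $\gamma_{0}=1-\theta$ as in the proof of Lemma~\ref{l:apriori-determ}); Lemma~\ref{l:rem-estim} then bounds $\|R_{\varepsilon}\|_{\mathcal{M}^{\gamma_{0}/(3+\delta),p}_{T_{\star}}L^{p}}$, which in turn controls the target $\mathcal{M}^{\gamma,p}_{T_{\star}}L^{p}$ norm, and the right-hand side now features exactly the $v_{\varepsilon}$-norm that Lemma~\ref{l:apriori-bounds} controls. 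With that repair the rest of your argument goes through.
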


\begin{proof}
  From the estimation on $R_{\varepsilon} (v_{\varepsilon})$ of
  Lemma~\ref{l:rem-estim}, together with the fact that $M_{\varepsilon,
  \delta} \rightarrow 0$ in probability (Lemma~\ref{l:rem-converg}) and the
  bounds on $\| v_{\varepsilon} \|_{\mathcal{M}_{T_{\star}}^{\frac{1}{4} +
  \frac{3 \kappa}{2}} L^{\infty}}$ and $\| v_{\varepsilon}^{\Box} \|_{C_T
  L^{\infty}}$ of Lemma~\ref{l:apriori-bounds} we see immediately that 
  $$
  \|
  R_{\varepsilon} (v_{\varepsilon}) \|_{\mathcal{M}^{\gamma, p}_{T_{\star}}
  L^p} \rightarrow 0 \quad \mbox{ in probability.}
  $$ 
\end{proof}

\begin{appendices}
\addtocontents{toc}{\protect\setcounter{tocdepth}{1}}

\section{Paracontrolled analysis and kernel
estimations}\label{a:basics}

In this section we first recall the the basic results of paracontrolled
calculus first introduced in~{\cite{gubinelli_paracontrolled_2012}}, without
proofs. For more details on Besov spaces, Littlewood--Paley theory, and Bony's
paraproduct the reader can refer to the monograph~{\cite{Bahouri2011}}. We
then proceed to give some results on the convolution of functions with known
singularity and the estimation of finite-chaos Gaussian trees. We refer to
Section~10 of~{\cite{hairer_theory_2014}} and to the nice pedagogic
exposition~{\cite{mourrat_construction_2016}} for further details.

\subsection{Notation}\label{s:notations}

Throughout the paper, we use the notation $a \lesssim b$ if there exists a
constant $c > 0$, independent of the variables under consideration, such that
$a \leqslant c \cdot b$. If we want to emphasize the dependence of $c$ on the
variable $x$, then we write $a (x) \lesssim_x b (x)$. If $f$ is a map from $A
\subset \mathbbm{R}$ to the linear space $Y$, then we write $f_{s, t} = f (t)
- f (s)$. For $f \in L^p (\mathbbm{T}^d)$ we write $\|f (x)\|^p_{L^p_x
(\mathbb{T}^3)} \assign \int_{\mathbbm{T}^3} |f (x) |^p \mathd x$.

Given a Banach space $X$ with norm $\| \cdummy \|_X$ and $T > 0$, we note $C_T
X = C ([0, T], X)$ for the space of continuous maps from $[0, T]$ to $X$,
equipped with the supremum norm $\lVert \cdummy \rVert_{C_T X}$, and we set $C
X = C (\mathbbm{R}_+, X)$. For $\alpha \in (0, 1)$ we also define
$C^{\alpha}_T X$ as the space of $\alpha$-H{\"o}lder continuous functions from
$[0, T]$ to $X$, endowed with the seminorm $\|f\|_{C^{\alpha}_T X} = \sup_{0
\leqslant s < t \leqslant T} \|f (t) - f (s)\|_X / |t - s|^{\alpha}$, and we
write $C^{\alpha}_{\tmop{loc}} X$ for the space of locally $\alpha$-H{\"o}lder
continuous functions from $\mathbbm{R}_+$ to $X$. For $\gamma > 0$, $p \in [1,
\infty)$, we define
\begin{equation}
  \begin{array}{lll}
    \mathcal{M}^{\gamma, p}_T X & = & \{ v : L^p ((0, T], X) : \| v
    \|_{\mathcal{M}^{\gamma, p}_T X} = \| t \mapsto t^{\gamma} v (t) \|_{L^p
    ((0, T], X)} < \infty \},\\
    \mathcal{M}^{\gamma}_T X & = & \{ v : C ((0, T], X) : \| v
    \|_{\mathcal{M}^{\gamma}_T X} = \| t \mapsto t^{\gamma} v (t) \|_{C_T X} <
    \infty \} .
  \end{array} \label{e:expl-space-def}
\end{equation}

The space of distributions on the torus is denoted by $\DD' (\mathbbm{T}^3)$
or $\DD'$. The Fourier transform is defined with the normalization
\[ \CF u (k) = \hat{u} (k) = \int_{\mathbbm{T}^d} e^{- \iota \langle k, x
   \rangle} u (x) \mathd x, \qquad k \in \mathbbm{Z}^3, \]
so that the inverse Fourier transform is given by $\CF^{- 1} v (x) = (2
\pi)^{- 1} \sum_k e^{\iota \langle k, x \rangle} v (k)$. Let $(\chi, \rho)$
denote a dyadic partition of unity such that $\tmop{supp} (\rho (2^{- i}
\cdummy)) \cap \tmop{supp} (\rho (2^{- j} \cdummy)) = \emptyset$ for $|i - j|
> 1$. The family of operators $(\Delta_j)_{j \ge - 1}$ will denote the
Littlewood-Paley projections associated to this partition of unity, that is
$\Delta_{- 1} u = \CF^{- 1} \left( \chi \CF u \right)$ and $\Delta_j = \CF^{-
1} \left( \rho (2^{- j} \nosymbol \cdummy) \CF u \right)$ for $j \ge 0$. Let
$S_j = \sum_{i < j} \Delta_i$, and $K_q$\quad be the kernel of $\Delta_q$ so
that
\[ \Delta_q f (\bar{x}) = \int_{\mathbbm{T}^3} K_{\bar{x}, q} (x) f (x) \mathd
   x. \]
For the precise definition and properties of the Littlewood-Paley
decomposition $f = \sum_{q \geqslant - 1} \Delta_q f$ in $\DD'
(\mathbbm{T}^3)$, see Chapter~2 of~{\cite{Bahouri2011}}. The H{\"o}lder-Besov
space $B^{\alpha}_{p, q} (\mathbbm{T}^3, \mathbbm{R})$ for $\alpha \in
\mathbbm{R}$ ,$p, q \in [1, \infty]$ with $B^{\alpha}_{p, q} (\mathbbm{T}^3,
\mathbbm{R}) = : \CC^a$ is and equipped with the norm
\[ \begin{array}{lll}
     \lVert f \rVert_{\alpha} = \lVert f \rVert_{B^{\alpha}_{\infty, \infty}}
     & = & \sup_{i \geqslant - 1} (2^{i \alpha} \| \Delta_i f \|_{L^{\infty}
     (\mathbbm{T}^3)}),\\
     \lVert f \rVert_{B^{\alpha}_{p, q}} & = & \| 2^{i \alpha} \| \Delta_i f
     \|_{L^p (\mathbbm{T}^3)} \|_{\ell^q} .
   \end{array} \]
If $f$ is in $\CC^{\alpha - \varepsilon}$ for all $\varepsilon > 0$, then we
write $f \in \CC^{\alpha -}$. For $\alpha \in (0, 2)$, we define the space
$\LL_T^{\alpha} = C^{\alpha / 2}_T L^{\infty} \cap C_T \CC^{\alpha}$, equipped
with the norm
\[ \| f \|_{\LL_T^{\alpha}} = \max \left\{ \| f \|_{C^{\alpha / 2}_T
   L^{\infty}}, \| f \|_{C_T \CC^{\alpha}} \right\} . \]
The notation is chosen to be reminiscent of
\[ \LL \assign \partial_t -, \]
by which we will always denote the heat operator with periodic boundary
conditions on $\mathbbm{T}^d$. We also write $\LL^{\alpha} = C^{\alpha /
2}_{\tmop{loc}} L^{\infty} \cap C \CC^{\alpha}$. When working with irregular
initial conditions, we will need to consider explosive spaces of parabolic
type. For $\gamma \geqslant 0$, $\alpha \in (0, 1)$, and $T > 0$ we define the
norm
\[ \| f \|_{\LL^{\gamma, \alpha}_T} = \max \left\{ \| t \mapsto t^{\gamma} f
   (t) \|_{C^{\alpha / 2}_T L^{\infty}}, \| f \|_{\mathcal{M}^{\gamma}_T
   \CC^{\alpha}} \right\}, \]
and the space $\LL^{\gamma, \alpha}_T = \left\{ f : [0, T] \rightarrow
\mathbbm{R}: \| f \|_{\LL^{\gamma, \alpha}_T} < \infty \right\}$. In
particular, we have $\LL^{0, \alpha}_T = \LL^{\alpha}_T$. We introduce the
linear operator $I : C \left( \mathbbm{R}_+, \DD' (\mathbbm{T}) \right)
\rightarrow C \left( \mathbbm{R}_+, \DD' (\mathbbm{T}) \right)$ given by
\[ I f (t) = \int_0^t P_{t - s} f (s) \mathd s, \]
where $(P_t)_{t \geqslant 0}$ is the heat semigroup with kernel $P_t (x) =
\frac{1}{(4 \mathpi t)^{3 / 2}} e^{- \frac{| x |^2}{4 t}} \mathbbm{I}_{t
\geqslant 0}$.

Paraproducts are bilinear operations introduced by Bony~{\cite{Bony1981}} in
order to linearize a class of non-linear PDE problems. They appear naturally
in the analysis of the product of two Besov distributions. In terms of
Littlewood--Paley blocks, the product $fg$ of two distributions $f$ and $g$
can be formally decomposed as
\[ fg = f \prec g + f \succ g + f \circ g, \]
where
\[ f \prec g = g \succ f \assign \sum_{j \geqslant - 1} \sum_{i = - 1}^{j - 2}
   \Delta_i f \Delta_j g \quad \text{and} \quad f \circ g \assign \sum_{|i -
   j| \leqslant 1} \Delta_i f \Delta_j g. \]
This decomposition behaves nicely with respect to Littlewood--Paley theory. We
call $f \prec g$ and $f \succ g$ \tmtextit{paraproducts}, and $f \circ g$ the
\tmtextit{resonant} term. We use the notation $f \preccurlyeq g = f \prec g +
f \circ g$. The basic result about these bilinear operations is given by the
following estimates, essentially due to Bony~{\cite{Bony1981}} and
Meyer~{\cite{Meyer1981}}.

When dealing with paraproducts in the context of parabolic equations it would
be natural to introduce parabolic Besov spaces and related paraproducts. But
to keep a simpler setting, we choose to work with space--time distributions
belonging to the scale of spaces $\left( C_T \CC^{\alpha} \right)_{\alpha \in
\mathbbm{R}}$ for some $T > 0$. To do so efficiently, we will use a modified
paraproduct which introduces some smoothing in the time variable that is tuned
to the parabolic scaling. Let therefore $\varphi \in C^{\infty} (\mathbbm{R},
\mathbbm{R}_+)$ be nonnegative with compact support contained in
$\mathbbm{R}_+$ and with total mass $1$, and define for all $i \geqslant - 1$
the operator
\[ Q_i : C \CC^{\beta} \rightarrow C \CC^{\beta}, \qquad Q_i f (t) =
   \int_0^{\infty} 2^{- 2 i} \varphi (2^{2 i} (t - s)) f (s) \mathd s. \]
We will often apply $Q_i$ and other operators on $C \CC^{\beta}$ to functions
$f \in C_T \CC^{\beta}$ which we then simply extend from $[0, T]$ to
$\mathbbm{R}_+$ by considering $f (\cdummy \wedge T)$. With the help of $Q_i$,
we define a modified paraproduct
\[ f \precprec g \assign \sum_i (Q_i S_{i - 1} f) \Delta_i g \]
for $f, g \in C \left( \mathbbm{R}_+, \CD' (\mathbbm{T}) \right)$. We define
the commutators $\tmop{com}_1, \overline{\tmop{com}}_1, \tmop{com}_2,
\tmop{com}_3$ in Lemma~\ref{lem:all-commutators}. We write $\check{P} (t, x) =
\frac{1}{(4 \mathpi t)^{3 / 2}} e^{- \frac{| x |^2}{4 t}} e^{- t} 
\mathbbm{1}_{t \geqslant 0}$ for a modified heat kernel which has the same
bounds of the usual heat kernel $P (t, x)$. Let $Y_{\varepsilon}$ as
in~(\ref{e:Y-stat-eq}) and recall that $C_{\varepsilon}$ is the covariance of
$Y_{\varepsilon}$ , i.e. $C_{\varepsilon} (t, x) =\mathbbm{E} (Y_{\varepsilon}
(t, x) Y_{\varepsilon} (0, 0))$. We will sometimes write $Y_{\varepsilon,
\zeta} \assign Y_{\varepsilon} (t, x)$ for $\zeta = (t, x) \in \mathbbm{R}
\times \mathbbm{T}^3$. Let $\sigma_{\varepsilon}^2 = \varepsilon \mathbbm{E} [
(Y_{\varepsilon} (0, 0))^2] = \varepsilon C_{\varepsilon} (0, 0)$.

\subsection{Basic paracontrolled calculus
results\label{sec:schauder}{\hspace{3em}} \label{sec:bony} }

First, let us recall some interpolation results on the parabolic time-weighted
spaces $\LL^{\gamma, \alpha}_T$:

\begin{lemma}
  \label{l:L-interpol}For all $\alpha \in (0, 2)$, $\gamma \in [0, 1)$,
  $\varepsilon \in [0, \alpha \wedge 2 \gamma)$, $T > 0$ and $f \in
  \LL^{\gamma, \alpha}_T$ with $f (0) = 0$ we have
  \begin{equation}
    \label{eq:schauder time reg} \| f \|_{\LL^{\gamma - \varepsilon / 2,
    \alpha - \varepsilon}_T} \lesssim \| f \|_{\LL^{\gamma, \alpha}_T} .
  \end{equation}
  Let $\alpha \in (0, 2)$, $\gamma \in (0, 1),$ $T > 0$, and let $f \in
  \LL^{\alpha}_T$. Then for all $\delta \in (0, \alpha]$ we have
  \begin{equation}
    \begin{array}{lll}
      \| f \|_{\LL^{\delta}_T} & \lesssim & \| f (0) \|_{\CC^{\delta}} +
      T^{(\alpha - \delta) / 2} \| f \|_{\LL^{\alpha}_T},\\
      \| f \|_{\LL^{\gamma, \delta}_T} & \lesssim & T^{(\alpha - \delta) / 2}
      \| f \|_{\LL^{\gamma, \alpha}_T} .
    \end{array} \label{eq:schauder-gain-time}
  \end{equation}
\end{lemma}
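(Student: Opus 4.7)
The proof rests on two standard ingredients: Besov interpolation
$$\|g\|_{\CC^{\alpha-\varepsilon}} \lesssim \|g\|_{L^\infty}^{\varepsilon/\alpha}\|g\|_{\CC^\alpha}^{1-\varepsilon/\alpha},$$
and the Hölder interpolation of differences in the time variable, $|t-s|^{\alpha/2} = |t-s|^{\delta/2}\cdot|t-s|^{(\alpha-\delta)/2}$.

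For (\ref{eq:schauder time reg}), the hypothesis $f(0)=0$ is the key input. Specializing the weighted time-Hölder seminorm at $s=0$ gives $t^\gamma\|f(t)\|_{L^\infty} \leq t^{\alpha/2}\|f\|_{\LL^{\gamma,\alpha}_T}$, hence $\|f(t)\|_{L^\infty}\leq t^{\alpha/2-\gamma}\|f\|_{\LL^{\gamma,\alpha}_T}$. Combining this with the direct bound $\|f(t)\|_{\CC^\alpha}\leq t^{-\gamma}\|f\|_{\LL^{\gamma,\alpha}_T}$ and Besov interpolation, the powers of $t$ cancel exactly:
$$t^{\gamma-\varepsilon/2}\|f(t)\|_{\CC^{\alpha-\varepsilon}} \lesssim t^{\gamma-\varepsilon/2}\bigl(t^{\alpha/2-\gamma}\bigr)^{\varepsilon/\alpha}\bigl(t^{-\gamma}\bigr)^{1-\varepsilon/\alpha}\|f\|_{\LL^{\gamma,\alpha}_T} = \|f\|_{\LL^{\gamma,\alpha}_T},$$
which handles the weighted Besov part of $\LL^{\gamma-\varepsilon/2,\alpha-\varepsilon}_T$. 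The weighted time-Hölder part is obtained by the same scheme applied to the increments of $t\mapsto t^{\gamma-\varepsilon/2}f(t)$, expanding as $t^{-\varepsilon/2}\bigl[t^\gamma f(t)-s^\gamma f(s)\bigr]+(t^{-\varepsilon/2}-s^{-\varepsilon/2})\,s^\gamma f(s)$ and interpolating each piece. The constraints $\varepsilon<\alpha$ and $\varepsilon<2\gamma$ are exactly what is needed to keep all exponents admissible.

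For the first inequality of (\ref{eq:schauder-gain-time}), I would decompose $f(t)=f(0)+(f(t)-f(0))$ and control
$$\|f(t)-f(0)\|_{\CC^\delta} \lesssim \|f(t)-f(0)\|_{L^\infty}^{1-\delta/\alpha}\|f(t)-f(0)\|_{\CC^\alpha}^{\delta/\alpha} \lesssim t^{(\alpha-\delta)/2}\|f\|_{\LL^\alpha_T},$$
using $\|f(t)-f(0)\|_{L^\infty}\leq t^{\alpha/2}\|f\|_{C^{\alpha/2}_T L^\infty}$ and $\|f(t)-f(0)\|_{\CC^\alpha}\leq 2\|f\|_{C_T\CC^\alpha}$; the $\|f(0)\|_{\CC^\delta}$ contribution is the constant term. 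The time-Hölder part is immediate from the factorization of $|t-s|^{\alpha/2}$ displayed above. The second inequality (weighted version) is handled by applying the same two ingredients to $g(t):=t^\gamma f(t)$: the Hölder-in-time seminorm of $g$ gains a factor $T^{(\alpha-\delta)/2}$ directly, while for the weighted Besov part one splits $t^\gamma=(t^\gamma)^{1-\delta/\alpha}(t^\gamma)^{\delta/\alpha}$, applies Besov interpolation to $\|f(t)\|_{\CC^\delta}$, and controls $t^\gamma\|f(t)\|_{L^\infty}$ via Besov embedding (which gives $t^\gamma\|f(t)\|_{L^\infty}\lesssim\|f\|_{\mathcal{M}^\gamma_T\CC^\alpha}$) combined with the gain $T^{(\alpha-\delta)/2}$ extracted from the $\alpha/2$-Hölder continuity of $g$ in time.

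The main obstacle is the weighted time-Hölder estimate in (\ref{eq:schauder time reg}): the cross-term produced by differentiating $t^{\gamma-\varepsilon/2}$ has to be absorbed by a second interpolation between the pointwise $L^\infty$ bound (which again uses $f(0)=0$) and the $\CC^\alpha$ bound, and this is precisely where the condition $\varepsilon<2\gamma$ is needed to keep the residual weight non-negative. Once this bookkeeping is done, the remaining computations are routine applications of Besov embeddings on $\mathbbm{T}^3$.
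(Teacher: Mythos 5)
The paper does not give a proof of Lemma~\ref{l:L-interpol} -- it is one of the standard parabolic interpolation estimates stated without proof alongside Lemma~\ref{lemma:schauder} (with a reference to the literature) -- so there is no in-paper argument to compare against. Your interpolation proof is the standard and correct route; here are two small remarks to tighten it.

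First, the role of the constraint $\varepsilon < 2\gamma$ is mainly to make the target space $\LL^{\gamma - \varepsilon/2, \alpha - \varepsilon}_T$ well-defined (the weight exponent $\gamma - \varepsilon/2$ must stay nonnegative), rather than appearing as an exponent constraint inside the estimates: once one writes $g(t) = t^\gamma f(t)$ and observes $g(0)=0$, the calculations for both the weighted-Besov and the weighted-H\"older pieces go through with only $\varepsilon < \alpha$ and $s < t$.

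Second, the ``cross-term'' you flag does work, but it is worth spelling out the case split, since a single bound does not close. Writing $g = t^\gamma f$ with $\|g(t)-g(s)\|_{L^\infty} \le |t-s|^{\alpha/2}\|f\|$ and $\|g(s)\|_{L^\infty} \le s^{\alpha/2}\|f\|$, the term $(t^{-\varepsilon/2}-s^{-\varepsilon/2})g(s)$ is controlled as follows: if $s \le t/2$ then $|t-s|\ge t/2 \ge s$ so one simply bounds $|t^{-\varepsilon/2}-s^{-\varepsilon/2}| \le s^{-\varepsilon/2}$ and uses $s^{(\alpha-\varepsilon)/2}\le |t-s|^{(\alpha-\varepsilon)/2}$; if $s>t/2$ then $|t-s|<s$ and the mean-value theorem gives $s^{-\varepsilon/2}-t^{-\varepsilon/2}\lesssim s^{-\varepsilon/2-1}|t-s|$, hence the term is bounded by $s^{(\alpha-\varepsilon)/2-1}|t-s|\,\|f\|\le |t-s|^{(\alpha-\varepsilon)/2}\|f\|$ since the exponent $(\alpha-\varepsilon)/2-1$ is negative (here $\alpha<2$ is used). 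The other term $t^{-\varepsilon/2}(g(t)-g(s))$ is immediate from $|t-s|\le t$. With that bookkeeping made explicit, the proposal is a complete and correct proof.
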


\paragraph{Schauder estimates}

\begin{lemma}
  \label{lemma:schauder}Let $\alpha \in (0, 2)$ and $\gamma \in [0, 1)$. Then
  \begin{equation}
    \label{eq:schauder-heat} \|I f\|_{\LL^{\gamma, \alpha}_t} \lesssim \| f
    \|_{\mathcal{M}^{\gamma}_t \CC^{\alpha - 2}},
  \end{equation}
  for all $t > 0$. If further $\beta \geqslant - \alpha$, then
  \begin{equation}
    \label{eq:schauder initial contribution} \| s \mapsto P_s u_0
    \|_{\LL^{(\beta + \alpha) / 2, \alpha}_t} \lesssim \| u_0 \|_{\CC^{-
    \beta}} .
  \end{equation}
  For all $\alpha \in \mathbbm{R}$, $\gamma \in [0, 1)$, and $t > 0$ we have
  \begin{equation}
    \label{eq:schauder without time hoelder} \|I f\|_{\mathcal{M}^{\gamma}_t
    \CC^{\alpha}} \lesssim \| f \|_{\mathcal{M}^{\gamma}_t \CC^{\alpha - 2}} .
  \end{equation}
\end{lemma}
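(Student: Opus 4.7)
The proof reduces to the elementary block heat-kernel estimate $\|\Delta_j P_t u\|_{L^\infty} \lesssim e^{-c 2^{2j} t} \|\Delta_j u\|_{L^\infty}$ valid for every $j \geq -1$, $t > 0$. From this, optimising $2^{j\theta} e^{-c 2^{2j} t} \lesssim t^{-\theta/2}$ on every block and summing, one gets two consequences which will be the workhorses:
\begin{itemize}
\item[(a)] $\|P_t u\|_{\CC^\alpha} \lesssim t^{-(\alpha - \beta)/2} \|u\|_{\CC^\beta}$ whenever $\beta \leq \alpha$;
\item[(b)] $\|(P_{t-s} - \mathrm{Id}) u\|_{\CC^{\alpha - 2\sigma}} \lesssim |t-s|^\sigma \|u\|_{\CC^\alpha}$ for all $\sigma \in [0, 1]$.
\end{itemize}
Both are standard; I would record them once at the start of the proof.

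First I would establish \eqref{eq:schauder without time hoelder} together with the spatial component of \eqref{eq:schauder-heat}. Write $\Delta_j I f(t) = \int_0^t \Delta_j P_{t-r} f(r) \mathd r$ and use $\|f(r)\|_{\CC^{\alpha-2}} \leq r^{-\gamma}\|f\|_{\mathcal{M}^\gamma_t \CC^{\alpha-2}}$ to obtain
\[
2^{j\alpha}\|\Delta_j I f(t)\|_{L^\infty} \lesssim 2^{2j}\int_0^t e^{-c 2^{2j}(t-r)} r^{-\gamma}\mathd r \cdot \|f\|_{\mathcal{M}^\gamma_t \CC^{\alpha-2}}.
\]
Splitting the integral at $r = t/2$ handles both regimes: on $[0,t/2]$ the exponential is effectively supported on a window of length $\lesssim 2^{-2j}$ while $r^{-\gamma}$ is integrable because $\gamma < 1$; on $[t/2,t]$ the weight $r^{-\gamma}$ is bounded by $(t/2)^{-\gamma}$ and the exponential integrates to $\lesssim 2^{-2j}$. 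Both contributions yield $\lesssim t^{-\gamma}$ uniformly in $j$, which proves \eqref{eq:schauder without time hoelder}.

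For the parabolic time-H\"older component of \eqref{eq:schauder-heat}, fix $0 < s < t \leq T$ and decompose
\[
I f(t) - I f(s) = \int_s^t P_{t-r} f(r) \mathd r + (P_{t-s} - \mathrm{Id})\, I f(s).
\]
Bound the first summand by (a) applied with target regularity $0$: $\|P_{t-r} f(r)\|_{L^\infty} \lesssim (t-r)^{-(2-\alpha)/2} r^{-\gamma} \|f\|_{\mathcal{M}^\gamma_t \CC^{\alpha-2}}$, whose time-integral over $[s,t]$ produces a factor $s^{-\gamma}(t-s)^{\alpha/2}$. Bound the second by (b) with $\sigma = \alpha/2$ and the spatial estimate from the previous step applied to $I f(s)$, giving $(t-s)^{\alpha/2} s^{-\gamma}\|f\|_{\mathcal{M}^\gamma_t \CC^{\alpha-2}}$. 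It then remains to repackage this into a bound on the weighted increment $t^\gamma I f(t) - s^\gamma I f(s) = t^\gamma (I f(t) - I f(s)) + (t^\gamma - s^\gamma) I f(s)$, using $|t^\gamma - s^\gamma| \lesssim s^{\gamma-1}(t-s) \wedge (t-s)^\gamma$ combined with the spatial bound on $I f(s)$. The bookkeeping around the blow-up of $r^{-\gamma}$ near $0$ is the only slightly delicate point and is precisely where the assumption $\gamma < 1$ is used.

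Finally, \eqref{eq:schauder initial contribution} is a direct consequence of (a) and (b): bound (a) with $\beta \mapsto -\beta$ gives $\|P_s u_0\|_{\CC^\alpha} \lesssim s^{-(\alpha+\beta)/2}\|u_0\|_{\CC^{-\beta}}$, which is the $\mathcal{M}^{(\alpha+\beta)/2}_t \CC^\alpha$ part of the norm; for the time-H\"older piece, write $P_t u_0 - P_s u_0 = (P_{t-s} - \mathrm{Id}) P_s u_0$ and combine (b) with (a). No new ideas are required beyond (a), (b) and careful tracking of powers of $s$ and $t$.
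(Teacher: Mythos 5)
The paper does not actually prove this lemma — it refers to~\cite{gubinelli_kpz_2017} — but your block-wise heat-kernel argument via $\|\Delta_j P_t u\|_{L^\infty}\lesssim e^{-c2^{2j}t}\|\Delta_j u\|_{L^\infty}$, the splitting $If(t)-If(s)=\int_s^t P_{t-r}f(r)\,\mathd r+(P_{t-s}-\mathrm{Id})If(s)$, and the case analysis $t\le 2s$ versus $t>2s$ to control the weighted increment is precisely the standard route used there, and the plan is correct. One small expositional nit: on $[0,t/2]$ the exponential $e^{-c2^{2j}(t-r)}$ is not concentrated on a window of width $2^{-2j}$ (that is what makes $[t/2,t]$ work) but is uniformly bounded by $e^{-c2^{2j}t/2}$, and it is the elementary inequality $2^{2j}t\,e^{-c2^{2j}t/2}\lesssim 1$ combined with $\gamma<1$ that closes that case.
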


Proofs can be found e.g. in~{\cite{gubinelli_kpz_2017}}. We need also some
well known bounds for the solutions of the heat equation with sources in
space--time Lebesgue spaces.

\begin{lemma}
  \label{l:lp-integ}Let $\beta \in \mathbbm{R}$ and $f \in L^p_T B^{\beta}_{p,
  \infty}$, then for every $\kappa \in [0, 1]$ we have $I f \in C_T^{\kappa /
  q} \mathcal{\CC}^{\beta + 2 (1 - \kappa) - (2 - 2 \kappa + d) / p}$ with
  \[ \|I f\|_{C_T^{\kappa / q}  \mathcal{\CC}^{\beta + 2 (1 - \kappa) - (2 - 2
     \kappa + d) / p}} \lesssim_T \|f\|_{L^p_T B^{\beta}_{p, \infty}}, \]
  with $\frac{1}{q} + \frac{1}{p} = 1$. Moreover, for every $\gamma < \gamma'
  < 1 - 1 / p$ and every $0 < \alpha < (2 - 5 / p + \beta) \wedge 2$ \ we have
  \[ \| I f \|_{\LL^{\gamma', \alpha}_T} \lesssim_T \| f
     \|_{\mathcal{M}^{\gamma, p}_T B^{\beta}_{p, \infty}} . \]
\end{lemma}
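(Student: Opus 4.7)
The estimate is a standard Littlewood–Paley style Schauder bound for the heat semigroup with sources in parabolic Lebesgue spaces. My plan is to reduce everything to the pointwise kernel bounds
\[
\|\Delta_j P_t g\|_{L^p}\;\lesssim\; e^{-c t 2^{2j}}\|\Delta_j g\|_{L^p},\qquad
\|\Delta_j g\|_{L^\infty}\;\lesssim\; 2^{j d/p}\|\Delta_j g\|_{L^p},
\]
(Bernstein and semigroup smoothing), and then to combine them with Hölder's inequality in time, extracting an $L^p_T$ norm against a heat--type kernel. Concretely, for fixed $t$ I write
\[
\Delta_j If(t)\;=\;\int_0^t P_{t-s}\Delta_j f(s)\,\mathrm d s,
\]
and bound
\[
\|\Delta_j If(t)\|_{L^\infty}\;\lesssim\; 2^{jd/p}\Bigl(\int_0^t e^{-cq(t-s)2^{2j}}\,\mathrm d s\Bigr)^{1/q}\|\Delta_j f\|_{L^p_T L^p}
\;\lesssim\; 2^{j(d/p-2/q-\beta)}\|f\|_{L^p_T B^\beta_{p,\infty}}.
\]
This yields the case $\kappa=0$ (no time regularity, full space regularity $\beta+2-(2+d)/p$).

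For the case $\kappa=1$ I bound the time increment by the standard decomposition
\[
If(t)-If(s)\;=\;\int_s^t P_{t-\sigma}f(\sigma)\,\mathrm d \sigma + \int_0^s \bigl(P_{t-\sigma}-P_{s-\sigma}\bigr)f(\sigma)\,\mathrm d \sigma,
\]
using $\|\Delta_j(P_{t-\sigma}-P_{s-\sigma})g\|_{L^p}\lesssim \bigl(|t-s|2^{2j}\bigr)\wedge 1 \cdot e^{-c(s-\sigma)2^{2j}}\|\Delta_j g\|_{L^p}$ together with a Hölder inequality in time that produces a $|t-s|^{1/q}$ factor and a spatial regularity of $\beta-d/p$. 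Interpolating between $\kappa=0$ and $\kappa=1$ then delivers the first claim for any $\kappa\in[0,1]$.

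For the weighted version with $f\in \mathcal M^{\gamma,p}_T B^\beta_{p,\infty}$, I write $f(s) = s^{-\gamma}\cdot(s^\gamma f(s))$ and repeat the same Hölder argument, which now leaves me with the integral
\[
A(t,j)\;=\;\int_0^t e^{-cq(t-s)2^{2j}}\,s^{-\gamma q}\,\mathrm d s.
\]
A change of variable $u=s/t$ and the assumption $\gamma q<1$ (equivalently $\gamma<1-1/p$) give
\[
A(t,j)^{1/q}\;\lesssim\; t^{-\gamma}\bigl(t^{1/q}\wedge 2^{-2j/q}\bigr)\;\lesssim\; t^{-\gamma}\,t^{(1-\theta)/q}\,2^{-2j\theta/q}
\]
for any $\theta\in[0,1]$, which produces both the explosive weight and the spatial regularity. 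Choosing $\theta$ slightly less than $1$ (and correspondingly $\alpha$ slightly below $\beta+2-(2+d)/p=\beta+2-5/p$ for $d=3$) yields a strict surplus of $t$--regularity that is spent to absorb the time--Hölder norm in the definition of $\LL^{\gamma',\alpha}_T$; this is exactly what forces the strict inequalities $\gamma<\gamma'<1-1/p$ and $\alpha<(\beta+2-5/p)\wedge 2$. The time Hölder regularity of $t\mapsto t^{\gamma'} If(t)$ is then obtained by applying the same machinery to the decomposition of $If(t)-If(s)$ above and handling the extra term $(t^{\gamma'}-s^{\gamma'})If(t)$ via $|t^{\gamma'}-s^{\gamma'}|\lesssim |t-s|^{\alpha/2}(t^{\gamma'-\alpha/2}+s^{\gamma'-\alpha/2})$.

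The main obstacle, and the only nontrivial bookkeeping, is to verify that the interpolation exponent $\theta$ can indeed be chosen so that the surplus $t$--power is simultaneously large enough to absorb the time--Hölder seminorm and small enough so that the resulting spatial exponent stays below $\beta+2-(2+d)/p$. The strict inequalities in the statement are precisely the room needed to do this; the admissibility of the choice reduces to an elementary check of the linear constraints on $(\theta,\gamma',\alpha)$.
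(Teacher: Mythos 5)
Your proposal is correct and follows essentially the same route as the paper: Littlewood--Paley blocks, H\"older in time against the heat-kernel decay $e^{-c(t-s)2^{2j}}$, Bernstein's inequality giving the factor $2^{jd/p}$, the condition $\gamma q<1$ (i.e.\ $\gamma<1-1/p$) to control $\int_0^t e^{-cq(t-s)2^{2j}}s^{-\gamma q}\,\mathrm d s$, and interpolation between the regimes $2^{2j}|t-s|\leqslant 1$ and $2^{2j}|t-s|>1$ to trade time for space regularity. The only, immaterial, difference is in the time-increment step: you decompose $If(t)-If(s)=\int_s^t P_{t-\sigma}f\,\mathrm d\sigma+\int_0^s(P_{t-\sigma}-P_{s-\sigma})f\,\mathrm d\sigma$ and handle the weight via the elementary bound on $|t^{\gamma'}-s^{\gamma'}|$, whereas the paper integrates the $v$-derivative of $v^{\gamma'}\Delta_j If(v)$ using $\partial_v If=\Delta If+f$, which produces the same three contributions and the same exponents.
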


\begin{proof}
  We only show the second inequality as the first one is easier and obtained
  with similar techniques. Let $u = I f$, we have
  \begin{eqnarray*}
    t^{\gamma} \| \Delta_i u (t)\|_{L^{\infty}} & \leqslant & t^{1 / q} 2^{di
    / p} \left[ \int_0^1 s^{- \gamma q} e^{- cq 2^{2 i} t (1 - s)} \mathd s
    \right]^{1 / q} \left[ \int_0^t s^{\gamma p} \| \Delta_i f (s)\|_{L^p}^p
    \mathd s \right]^{1 / p}\\
    & \lesssim_{\gamma, q} & 2^{id / p} 2^{- 2 i / q}  \left[ \int_0^t
    s^{\gamma p} \| \Delta_i f (s)\|_{L^p}^p \mathd s \right]^{1 / p}
  \end{eqnarray*}
  which allows us to bound $\| I f \|_{\mathcal{M}^{\gamma}_T \CC^{\alpha}}$.
  In order to estimate $\| t \mapsto t^{\gamma'} I f \|_{C^{\alpha / 2}_T
  L^{\infty}}$ we write
  \begin{eqnarray*}
    \|t^{\gamma'} \Delta_i u (t) - s^{\gamma'} \Delta_i u (s)\|_{L^{\infty}} &
    \lesssim & \int_s^t v^{\gamma' - 1} \| \Delta_i u (v) \|_{L^{\infty}}
    \mathd v + | t - s | 2^{i (d + 2) / p} \| \Delta_i f
    \|_{\mathcal{M}^{\gamma, p}_T L^p (\mathbbm{T}^3)}\\
    &  & + \left\| \int_s^t v^{\gamma'} \Delta_i f (v) \mathd v
    \right\|_{L^{\infty}}
  \end{eqnarray*}
  We can estimate the first term as
  \[ \int_s^t v^{\gamma' - 1} \| \Delta_i u (v) \|_{L^{\infty}} \mathd v
     \lesssim 2^{i (d + 2) / p} \| \Delta_i f \|_{\mathcal{M}^{\gamma, p}_T
     L^p (\mathbbm{T}^3)} \int_s^t v^{\gamma' - \gamma - 1} \mathd v. \]
  For the third term we have
  \begin{eqnarray*}
    \left\| \int_s^t v^{\gamma} \Delta_i f (v) \mathd v \right\|_{L^{\infty}}
    & \lesssim & \left[ \int_s^t \mathd v \right]^{1 / q} \left[ \int_s^t
    v^{\gamma p} \| \Delta_i f (s)\|_{L^{\infty}}^p \mathd v \right]^{1 / p}\\
    & \lesssim & 2^{id / p}  |t - s|^{1 / q}  \| \Delta_i f
    \|_{\mathcal{M}^{\gamma, p}_T L^p (\mathbbm{T}^3)}
  \end{eqnarray*}
  We obtain then if $2^{2 i} | t - s | \leqslant 1$
  \[ \|t^{\gamma'} \Delta_i u (t) - s^{\gamma'} \Delta_i u (s)\|_{L^{\infty}}
     \lesssim 2^{id / p} | t - s |^{1 / q} \| \Delta_i f
     \|_{\mathcal{M}^{\gamma, p}_T L^p (\mathbbm{T}^3)} \]
  and if $2^{2 i} | t - s | > 1$ we just use the trivial estimate
  \[ \|t^{\gamma'} \Delta_i u (t) - s^{\gamma'} \Delta_i u (s)\|_{L^{\infty}}
     \lesssim 2^{id / p} 2^{- 2 i / q}  \| \Delta_i f \|_{\mathcal{M}^{\gamma,
     p}_T L^p (\mathbbm{T}^3)} \lesssim 2^{id / p} | t - s |^{1 / q} \|
     \Delta_i f \|_{\mathcal{M}^{\gamma, p}_T L^p (\mathbbm{T}^3)} . \]
  Therefore, for every $\kappa \in [0, 1]$:
  \[ \|t^{\gamma \prime} \Delta_i u (t) - s^{\gamma \prime} \Delta_i u
     (s)\|_{L^{\infty}} \lesssim 2^{(\frac{d + 2}{p} - 2) i} 2^{2 \kappa i /
     q}  |t - s|^{\kappa / q}  \| \Delta_i f \|_{\mathcal{M}^{\gamma, p}_T L^p
     (\mathbbm{T}^3)} . \]
  Choosing $\kappa / q = \alpha / 2$ we obtain the desired estimate.
\end{proof}

\paragraph{Estimates on Bony's paraproducts and commutators}

\begin{lemma}
  \label{thm:paraproduct} For any $\beta \in \mathbb{R}$ we have
  \begin{equation}
    \label{eq:para-1} \|f \prec g\|_{\CC^{\beta}} \lesssim_{\beta}
    \|f\|_{L^{\infty}} \|g\|_{\CC^{\beta}},
  \end{equation}
  and for $\alpha < 0$ furthermore
  \begin{equation}
    \label{eq:para-2} \|f \prec g\|_{\CC^{\alpha + \beta}} \lesssim_{\alpha,
    \beta} \|f\|_{\CC^{\alpha}} \|g\|_{\CC^{\beta}} .
  \end{equation}
  For $\alpha + \beta > 0$ we have
  \begin{equation}
    \label{eq:para-3} \|f \circ g\|_{\CC^{\alpha + \beta}} \lesssim_{\alpha,
    \beta} \|f\|_{\CC^{\alpha}} \|g\|_{\CC^{\beta}} .
  \end{equation}
\end{lemma}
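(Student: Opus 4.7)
The plan is to prove each of the three estimates by inspecting the Littlewood--Paley blocks of the relevant bilinear expressions, exploiting the Fourier support properties that make $f\prec g$ and $f\circ g$ well behaved. The key structural facts I will use are: the Fourier support of $S_{i-1}f\,\Delta_i g$ sits in an annulus of size $\sim 2^i$ (so $\Delta_j(S_{i-1}f\,\Delta_i g)=0$ unless $|i-j|\le N$ for a fixed $N$ coming from the support of $\rho$), while the Fourier support of $\Delta_i f\,\Delta_j g$ with $|i-j|\le 1$ sits in a ball of radius $\sim 2^i$ (so $\Delta_k(\Delta_i f\,\Delta_j g)=0$ unless $i\gtrsim k-N$). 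These are the only non-trivial inputs; the rest is bookkeeping with geometric series.

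For the first estimate I will write $\Delta_j(f\prec g)=\sum_{i:\,|i-j|\le N}\Delta_j(S_{i-1}f\,\Delta_i g)$ and use $\|S_{i-1}f\|_{L^\infty}\lesssim\|f\|_{L^\infty}$ (uniform boundedness of partial sums of the Littlewood--Paley decomposition in $L^\infty$, which follows from Young's inequality and the $L^1$-bound on the kernel of $S_{i-1}$), together with $\|\Delta_i g\|_{L^\infty}\le 2^{-i\beta}\|g\|_{\CC^\beta}$. Summing over the $O(1)$ relevant indices gives $\|\Delta_j(f\prec g)\|_{L^\infty}\lesssim 2^{-j\beta}\|f\|_{L^\infty}\|g\|_{\CC^\beta}$, which is \eqref{eq:para-1}. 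For \eqref{eq:para-2}, I replace the trivial $L^\infty$ bound on $S_{i-1}f$ by
\[
\|S_{i-1}f\|_{L^\infty}\le\sum_{k\le i-2}\|\Delta_k f\|_{L^\infty}\lesssim\|f\|_{\CC^\alpha}\sum_{k\le i-2}2^{-k\alpha},
\]
and since $\alpha<0$ the geometric sum is controlled by its largest term $\sim 2^{-i\alpha}$. Combined with $\|\Delta_i g\|_{L^\infty}\lesssim 2^{-i\beta}\|g\|_{\CC^\beta}$ and $|i-j|\le N$ this yields $\|\Delta_j(f\prec g)\|_{L^\infty}\lesssim 2^{-j(\alpha+\beta)}\|f\|_{\CC^\alpha}\|g\|_{\CC^\beta}$.

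For the resonant estimate \eqref{eq:para-3}, I write $\Delta_k(f\circ g)=\sum_{i\ge k-N}\sum_{|i-j|\le 1}\Delta_k(\Delta_i f\,\Delta_j g)$, use $\|\Delta_i f\,\Delta_j g\|_{L^\infty}\le\|\Delta_i f\|_{L^\infty}\|\Delta_j g\|_{L^\infty}\lesssim 2^{-i\alpha}2^{-j\beta}\|f\|_{\CC^\alpha}\|g\|_{\CC^\beta}\lesssim 2^{-i(\alpha+\beta)}\|f\|_{\CC^\alpha}\|g\|_{\CC^\beta}$ (absorbing the factor coming from $|i-j|\le1$), and then sum the geometric series $\sum_{i\ge k-N}2^{-i(\alpha+\beta)}\lesssim 2^{-k(\alpha+\beta)}$, which is finite precisely because $\alpha+\beta>0$. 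This is the only place where the hypothesis $\alpha+\beta>0$ enters: it is what guarantees convergence of the sum over high-frequency pairs whose product can land in any low-frequency block $\Delta_k$.

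There is no real obstacle here; the mild point to be careful about is the role of the $L^1$ bound on the kernels $K_{q,\cdot}$ of the $\Delta_q$ and $S_q$ operators (uniform in $q$) so that boundedness of $\Delta_j$ and $S_{i-1}$ on $L^\infty$ with constants independent of $j,i$ is legitimate; this is a standard consequence of the scaling $K_{q,x}(y)=2^{3q}K_{0,0}(2^q(y-x))$ together with the Schwartz decay of $\CF^{-1}\rho$ and $\CF^{-1}\chi$. Once these uniform bounds are in hand, the three estimates follow from the frequency-support diagonal structure described above, with no further analytic input.
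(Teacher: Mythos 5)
Your argument is correct and is the standard Littlewood--Paley proof of Bony's estimates: frequency localisation of $S_{i-1}f\,\Delta_i g$ in annuli of size $2^i$ for the paraproduct bounds \eqref{eq:para-1}--\eqref{eq:para-2} (with the geometric sum over $k\le i-2$ controlled by its top term precisely because $\alpha<0$), and localisation of $\Delta_i f\,\Delta_j g$, $|i-j|\le 1$, in balls of radius $\sim 2^i$ for the resonant bound \eqref{eq:para-3}, where $\alpha+\beta>0$ makes the tail sum converge. The paper itself gives no proof of this lemma -- it is recalled as a classical result of Bony and Meyer with a pointer to the monograph of Bahouri, Chemin and Danchin -- and your write-up reproduces exactly the argument found in those references, including the correct justification of the uniform $L^\infty$-boundedness of $S_{i-1}$ and $\Delta_j$ via the $L^1$ kernel bounds.
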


A natural corollary is that the product $fg$ of two elements $f \in
\CC^{\alpha}$ and $g \in \CC^{\beta}$ is well defined as soon as $\alpha +
\beta > 0$, and that it belongs to $\CC^{\gamma}$, where $\gamma = \min
\{\alpha, \beta, \alpha + \beta\}$. We will also need a commutator estimation:

\begin{lemma}
  \label{lem:bony commutator}Let $\alpha > 0$, $\beta \in \mathbbm{R}$, and
  let $f, g \in \CC^{\alpha}$, and $h \in \CC^{\beta}$. Then
  \[ \| f \prec (g \prec h) - (f g) \prec h \|_{\CC^{\alpha + \beta}} \lesssim
     \| f \|_{\CC^{\alpha}} \| g \|_{\CC^{\alpha}} \| h \|_{\CC^{\beta}} . \]
\end{lemma}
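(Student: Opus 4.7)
My plan is to dyadically decompose the commutator $T(f,g,h) := f \prec (g \prec h) - (fg) \prec h$ and isolate two sources of error: an algebraic mismatch between $S_{j-2}(fg)$ and $S_{j-2}f \cdot S_{j-2} g$, plus a Littlewood--Paley commutator between $\Delta_j$ and the H\"older multiplier $S_{k-2}g$ for $k\sim j$. Writing $T(f,g,h) = \sum_j u_j$ with
\[ u_j := S_{j-2}f \cdot \Delta_j(g \prec h) - S_{j-2}(fg) \cdot \Delta_j h, \]
and inserting/subtracting $S_{j-2}f \cdot S_{j-2}g \cdot \Delta_j h$, I would split $u_j = A_j \cdot \Delta_j h + S_{j-2}f \cdot B_j$, with $A_j := S_{j-2}f \cdot S_{j-2}g - S_{j-2}(fg)$ and $B_j := \Delta_j(g \prec h) - S_{j-2}g \cdot \Delta_j h$. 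A short Fourier support check shows that each $u_j$ is localized in an annulus of size $\sim 2^j$ (since $A_j$ and $S_{j-2}f$ are supported in balls of radius $\lesssim 2^{j-1}$ while $\Delta_j h$ and $B_j$ live in annuli of size $\sim 2^j$), so the Littlewood--Paley characterization of $\CC^{\alpha+\beta}$ reduces the lemma to the uniform bound
\[ \|u_j\|_{L^\infty} \lesssim 2^{-(\alpha+\beta)j} \|f\|_{\CC^\alpha} \|g\|_{\CC^\alpha} \|h\|_{\CC^\beta}. \]

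For the algebraic factor $A_j$, I would use the identity $S_{j-2} = \mathrm{Id} - R_j$ with $R_j := \sum_{k \geq j-2}\Delta_k$ to expand
\[ A_j = R_j(fg) - f\,R_j g - g\,R_j f + R_j f \cdot R_j g. \]
Each summand is the product of a bounded function and a high-frequency tail of a $\CC^\alpha$ object, contributing $O_{L^\infty}(2^{-\alpha j})$. The one non-obvious term $R_j(fg)$ is handled by the product rule $\|fg\|_{\CC^\alpha}\lesssim \|f\|_{\CC^\alpha}\|g\|_{\CC^\alpha}$, which is valid because $\alpha>0$ makes the resonant term $f\circ g \in \CC^{2\alpha}$ via Lemma~\ref{thm:paraproduct}. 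Multiplying by $\|\Delta_j h\|_{L^\infty}\lesssim 2^{-\beta j}\|h\|_{\CC^\beta}$ then closes the bound on $A_j \cdot \Delta_j h$.

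For the commutator factor $B_j$, I would exploit the spectral orthogonality $\Delta_j \Delta_k = 0$ for $|k-j|$ beyond a fixed $N_0$ to replace $\Delta_j h$ by $\sum_{|k-j|\leq N_0}\Delta_j\Delta_k h$, which yields
\[ B_j = \sum_{|k-j|\leq N_0}\Bigl([\Delta_j, S_{k-2}g]\Delta_k h + (S_{k-2}g - S_{j-2}g)\,\Delta_j\Delta_k h\Bigr). \]
The kernel representation $\Delta_j u(x)=\int K_j(x-y)u(y)\,\mathrm{d}y$ together with the vanishing moment $\int K_j = 0$ and the H\"older continuity of $S_{k-2}g$ gives the standard commutator estimate $\|[\Delta_j, S_{k-2}g]\Delta_k h\|_{L^\infty}\lesssim 2^{-\alpha j}\|g\|_{\CC^\alpha}\|\Delta_k h\|_{L^\infty}$, whereas $S_{k-2}g - S_{j-2}g$ is a bounded sum of blocks $\Delta_\ell g$ with $\ell \sim j$ and is itself $O_{L^\infty}(2^{-\alpha j}\|g\|_{\CC^\alpha})$. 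Combining with $\|\Delta_k h\|_{L^\infty},\|\Delta_j\Delta_k h\|_{L^\infty}\lesssim 2^{-\beta j}\|h\|_{\CC^\beta}$ and $\|S_{j-2}f\|_{L^\infty}\lesssim \|f\|_{\CC^\alpha}$ produces the matching bound on $S_{j-2}f\cdot B_j$.

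The main technical ingredient, and what I expect to be the most delicate step to execute, is the classical Littlewood--Paley commutator estimate $\|[\Delta_j,u]v\|_{L^\infty}\lesssim 2^{-\alpha j}\|u\|_{\CC^\alpha}\|v\|_{L^\infty}$ for $\alpha \in (0,1)$, which follows from a first-order Taylor expansion in the kernel integral against the vanishing moment of $K_j$ and extends to $\alpha \geq 1$ by higher-order expansion. Once this is in hand, summing $u_j$ over the finite range of indices contributing to $\Delta_q T$ yields the desired $\CC^{\alpha+\beta}$ bound.
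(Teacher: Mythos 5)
Your decomposition $u_j = A_j\,\Delta_j h + S_{j-2}f\,B_j$ with $A_j = S_{j-2}f\,S_{j-2}g - S_{j-2}(fg)$ and $B_j = \Delta_j(g\prec h) - S_{j-2}g\,\Delta_j h$, the Fourier-support check placing each $u_j$ in an annulus of size $2^j$, the bound on $A_j$ through the tails $R_j$ and the algebra property of $\CC^{\alpha}$, and the splitting of $B_j$ into $[\Delta_j,S_{k-2}g]\Delta_k h$ plus $(S_{k-2}g-S_{j-2}g)\Delta_j\Delta_k h$ is exactly the standard argument for this commutator (the paper does not prove the lemma, it only recalls it from the Bony/Meyer/paracontrolled literature, so there is no internal proof to compare with). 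For $\alpha\in(0,1)$ every step you describe closes: the $A_j$ and $(S_{k-2}g-S_{j-2}g)$ pieces need only $\alpha>0$, and the kernel estimate $\int|K_j(z)|\,|z|^{\alpha}\mathd z\lesssim 2^{-\alpha j}$ together with the H\"older seminorm of $S_{k-2}g$ gives the block-commutator bound (no moment condition on $K_j$ is even needed beyond the identity $[\Delta_j,u]v(x)=\int K_j(x-y)(u(y)-u(x))v(y)\mathd y$).

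The gap is the final sentence: the estimate $\|[\Delta_j,u]v\|_{L^{\infty}}\lesssim 2^{-\alpha j}\|u\|_{\CC^{\alpha}}\|v\|_{L^{\infty}}$ does \emph{not} extend to $\alpha\geqslant 1$ by a higher-order Taylor expansion, while the statement you must prove allows any $\alpha>0$. Take $u(x)=e^{i a\cdot x}$ with a fixed nonzero low frequency $a$ and $v(x)=e^{i\eta\cdot x}$ with $|\eta|\sim 2^{j}$ chosen so that $\nabla\rho(2^{-j}\eta)\neq 0$; then $[\Delta_j,u]v=\bigl(\rho(2^{-j}(\eta+a))-\rho(2^{-j}\eta)\bigr)e^{i(\eta+a)\cdot x}$ has $L^{\infty}$ norm of exact order $2^{-j}$, although $\|u\|_{\CC^{\alpha}}\|v\|_{L^{\infty}}\lesssim 1$ for every $\alpha$: the gain of a single block commutator saturates at one power of $2^{-j}$, no matter how smooth $u$ is. Higher-order Taylor expansion does not help because beyond the first order the kernel moments are tested against $v$ rather than against constants, and $\int K_j(x-y)(y-x)^{\gamma}v(y)\mathd y$ does not vanish in general. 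Hence your block-by-block bound can only yield $2^{-(1+\beta)j}$ when $\alpha\geqslant 1$, short of the required $2^{-(\alpha+\beta)j}$. The lemma itself is true for all $\alpha>0$ (it is the classical statement that $T_fT_g-T_{fg}$ is $\alpha$-smoothing for $f,g\in\CC^{\alpha}$), but for $\alpha\geqslant 1$ the proof must exploit a cancellation \emph{across} neighbouring blocks rather than inside one block: replace $S_{j-2}f$ by $f$ up to an $O(2^{-\alpha j})$ error and use that $\sum_j\partial_{\xi}\bigl(\rho(2^{-j}\xi)\bigr)=0$ away from the origin (equivalently, Bony's symbolic calculus, in which the relevant quantity is $S_jf-f$ and not the modulus of continuity of $S_jf$), so that the dangerous first-order contributions telescope. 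Alternatively, your proof is complete as written if the statement is restricted to $\alpha\in(0,1)$, which covers all regularities at which commutators of this kind are invoked in the present paper.
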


We collect in the following lemma various estimates for the modified
paraproduct $f \precprec g$, proofs are again in~{\cite{gubinelli_kpz_2017}}.

\begin{lemma}
  {\tmdummy}
  
  \begin{enumeratealpha}
    \item For any $\beta \in \mathbb{R}$ and $\gamma \in [0, 1)$ we have
    \begin{equation}
      \label{eq:mod para-1} t^{\gamma} \|f \precprec g (t) \|_{\CC^{\beta}}
      \lesssim \|f\|_{\mathcal{M}^{\gamma}_t L^{\infty}} \|g (t)
      \|_{\CC^{\beta}},
    \end{equation}
    for all $t > 0$, and for $\alpha < 0$ furthermore
    \begin{equation}
      \label{eq:mod para-2} t^{\gamma} \|f \precprec g (t) \|_{\CC^{\alpha +
      \beta}} \lesssim \|f\|_{\mathcal{M}^{\gamma}_t \CC^{\alpha}} \|g (t)
      \|_{\CC^{\beta}} .
    \end{equation}
    \item Let $\alpha, \delta \in (0, 2)$, $\gamma \in [0, 1)$, $T > 0$, and
    let $f \in \LL_T^{\gamma, \delta}$, $g \in C_T \CC^{\alpha}$, and $\LL g
    \in C_T \CC^{\alpha - 2}$. Then
    \begin{equation}
      \| f \precprec g \|_{\LL^{\gamma, \alpha}_T} \lesssim \| f
      \|_{\LL^{\gamma, \delta}_T} \left( \| g \|_{C_T \CC^{\alpha}} + \left\|
      \LL g \right\|_{C_T \CC^{\alpha - 2}} \right) .
      \label{eq:mod-para-parabolic-est}
    \end{equation}
  \end{enumeratealpha}
\end{lemma}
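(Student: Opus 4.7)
The lemma adapts the classical Bony paraproduct estimates to the time-smoothed paraproduct $\precprec$. The guiding principle is that the smoothing operator $Q_i$ acts only in time, averaging over a window of width $\sim 2^{-2i}$, so that the spatial Fourier support of $Q_i S_{i-1}f(t)\,\Delta_i g(t)$ is the same as that of $S_{i-1}f(t)\,\Delta_i g(t)$; thus the usual frequency-localisation arguments behind Bony's calculus go through essentially unchanged, and the time-averaging is matched precisely to parabolic scaling.

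The starting point, proved by direct computation, is the uniform-in-$i$ bound
\[
t^\gamma \|Q_i S_{i-1} f(t)\|_{L^\infty} \lesssim \|f\|_{\mathcal{M}^\gamma_t L^\infty},
\qquad
t^\gamma \|Q_i S_{i-1} f(t)\|_{L^\infty} \lesssim 2^{-i\alpha}\|f\|_{\mathcal{M}^\gamma_t \CC^\alpha}\quad(\alpha<0).
\]
Because $\operatorname{supp}\varphi \subset \mathbb{R}_+$, the convolution is supported in $s\in[\max(0,t-c 2^{-2i}),t]$, and one treats separately the regimes $t\gtrsim 2^{-2i}$ (where $s\sim t$ allows the weight $s^{-\gamma}$ to be replaced by $t^{-\gamma}$) and $t\lesssim 2^{-2i}$ (where $\int_0^t s^{-\gamma}\mathd s\lesssim t^{1-\gamma}$ combines with the mass $2^{2i}$ of the kernel to give a bound of order $1$). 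Given this, estimates \eqref{eq:mod para-1} and \eqref{eq:mod para-2} follow by the classical Bony scheme: $\Delta_j(f\precprec g)$ only picks up the blocks with $i\sim j$, so a geometric summation in $i$ yields the claim.

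For part (b), the $\mathcal{M}^\gamma_T \CC^\alpha$ half of $\|f\precprec g\|_{\LL^{\gamma,\alpha}_T}$ is immediate from part (a) with $\beta=\alpha$. The time-H\"older half $\|t\mapsto t^\gamma(f\precprec g)(t)\|_{C^{\alpha/2}_T L^\infty}$ is the technical core: split
\[
(f\precprec g)(t)-(f\precprec g)(s)
=\sum_i\bigl[Q_i S_{i-1}f(t)-Q_i S_{i-1}f(s)\bigr]\Delta_i g(t)
+\sum_i Q_i S_{i-1}f(s)\bigl[\Delta_i g(t)-\Delta_i g(s)\bigr].
\]
The first sum is estimated via the time H\"older regularity of $f$ contained in $\|f\|_{\LL^{\gamma,\delta}_T}$, which transfers to $Q_i S_{i-1}f$ with an appropriate factor in $2^i$. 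For the second sum, the identity $\partial_t g=\Delta g+\LL g$ gives
\[
\Delta_i g(t)-\Delta_i g(s)=\int_s^t \Delta\,\Delta_i g(r)\,\mathd r+\int_s^t \Delta_i \LL g(r)\,\mathd r,
\]
so that $\|\Delta_i g(t)-\Delta_i g(s)\|_{L^\infty}\lesssim |t-s|\,2^{-i(\alpha-2)}\bigl(\|g\|_{C_T\CC^\alpha}+\|\LL g\|_{C_T\CC^{\alpha-2}}\bigr)$. One then splits the dyadic sum at the index $i_0$ with $2^{2i_0}|t-s|\sim 1$: for $i\le i_0$ use the time-derivative bound, for $i>i_0$ use $\|\Delta_i g\|_{L^\infty}\lesssim 2^{-i\alpha}$ directly, and geometric summation produces the factor $|t-s|^{\alpha/2}$.

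The main obstacle is the last step: juggling the time weight $t^\gamma$ (which blows up at $0$), the asymmetric, past-only support of $Q_i$, and the frequency-dependent time scale $2^{-2i}$, so that the bound holds uniformly down to $t=0$ and only the norms on the right-hand side of \eqref{eq:mod-para-parabolic-est} appear. The remaining steps are routine adaptations of standard paraproduct arguments.
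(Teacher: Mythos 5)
The paper does not give a proof of this lemma; it refers to~\cite{gubinelli_kpz_2017}, and your plan reconstructs that argument correctly: the frequency-localisation reduction to the kernel bound $t^{\gamma}\|Q_iS_{i-1}f(t)\|_{L^\infty}\lesssim\|f\|_{\mathcal{M}^{\gamma}_t L^{\infty}}$ (resp.\ $\lesssim 2^{-i\alpha}\|f\|_{\mathcal{M}^{\gamma}_t\CC^{\alpha}}$ when $\alpha<0$) for part (a), and for part (b) the telescoping split together with $\partial_t g=\Delta g+\LL g$ to trade time increments of $\Delta_i g$ for $2^{2i}|t-s|\bigl(\|g\|_{C_T\CC^{\alpha}}+\|\LL g\|_{C_T\CC^{\alpha-2}}\bigr)$ and a dyadic cut at $2^{2i_0}|t-s|\sim 1$. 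One small remark: the paper's displayed definition of $Q_i$ has a misprint ($2^{-2i}\varphi(2^{2i}(t-s))$ instead of the unit-mass normalisation $2^{2i}\varphi(2^{2i}(t-s))$); you implicitly and correctly use the latter.
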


We introduce various commutators which allow to control non-linear functions
of paraproducts and also the interaction of the paraproducts with the heat
kernel.

\begin{lemma}
  \label{lem:all-commutators}{\tmdummy}
  
  \begin{enumeratealpha}
    \item For $\alpha, \beta, \gamma \in \mathbbm{R}$ such that $\alpha +
    \beta + \gamma > 0$ and $\alpha \in (0, 1)$ there exists bounded trilinear
    maps
    \[ \tmop{com}_1, \overline{\tmop{com}}_1 : \CC^{\alpha} \times
       \CC^{\beta} \times \CC^{\gamma} \rightarrow \CC^{\alpha + \beta +
       \gamma}, \]
    such that for smooth $f, g, h$ they satisfy
    \begin{equation}
      \tmop{com}_1 (f, g, h) = (f \prec g) \circ h - f (g \circ h) .
      \label{e:firstcomm-nonsmooth}
    \end{equation}
    \begin{equation}
      \overline{\tmop{com}}_1 (f, g, h) = (f \precprec g) \circ h - f (g \circ
      h) . \label{e:firstcomm}
    \end{equation}
    \item Let $\alpha \in (0, 2)$, $\beta \in \mathbbm{R}$, and $\gamma \in
    [0, 1)$. Then the bilinear maps
    \begin{equation}
      \tmop{com}_2 (f, g) : = f \prec g - f \precprec g. \label{e:secondcomm}
    \end{equation}
    \begin{equation}
      \tmop{com}_3 (f, g) : = \left[ \LL, f \precprec \right] g \assign \LL (f
      \precprec g) - f \precprec \LL g. \label{e:thirdcomm}
    \end{equation}
    have the bounds
    \begin{equation}
      t^{\gamma} \| \tmop{com}_2 (f, g) (t) \|_{\alpha + \beta} \lesssim \| f
      \|_{\LL^{\gamma, \alpha}_t} \| g (t) \|_{\CC^{\beta}}, \qquad t > 0.
      \label{eq:bound-second-comm}
    \end{equation}
    as well as
    \begin{equation}
      t^{\gamma} \| \tmop{com}_3 (f, g) (t) \|_{\alpha + \beta - 2} \lesssim
      \| f \|_{\LL^{\gamma, \alpha}_t} \| g (t) \|_{\CC^{\beta}}, \qquad t >
      0. \label{eq:bound-third-comm}
    \end{equation}
  \end{enumeratealpha}
\end{lemma}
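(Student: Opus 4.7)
The plan is to prove parts (a) and (b) separately, since $\tmop{com}_1, \overline{\tmop{com}}_1$ are purely spatial/paradifferential in nature while $\tmop{com}_2, \tmop{com}_3$ measure the cost of replacing $\prec$ by the time-smoothed $\precprec$.

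For part (a), I would start from the Littlewood--Paley identity
\[
(f \prec g) \circ h = \sum_{|i-j|\le 1} \Delta_i(f\prec g)\,\Delta_j h
= \sum_{|i-j|\le 1}\sum_{k} \Delta_i(S_{k-1}f\,\Delta_k g)\,\Delta_j h,
\]
noting that only terms with $k \sim i \sim j$ contribute (because $\Delta_i(S_{k-1}f \, \Delta_k g)$ has Fourier support essentially at scale $2^k$). I would then write $\Delta_i(S_{k-1}f\,\Delta_k g) = S_{k-1}f\,\Delta_i\Delta_k g + [\Delta_i,S_{k-1}f]\Delta_k g$; summing the first piece against $\Delta_j h$ and using $S_{k-1}f \to f$ one recovers exactly $f(g\circ h)$, so $\tmop{com}_1(f,g,h)$ equals the leftover commutator/remainder. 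The key estimate is the standard paradifferential bound
\[
\bigl\|[\Delta_i,S_{k-1}f]\Delta_k g\bigr\|_{L^\infty} \lesssim 2^{-i}\|\nabla S_{k-1}f\|_{L^\infty}\|\Delta_k g\|_{L^\infty}
\lesssim 2^{-i\alpha}\|f\|_{\CC^\alpha}\,2^{-k\beta}\|g\|_{\CC^\beta},
\]
obtained by writing $[\Delta_i, S_{k-1}f] u(x) = \int K_i(x-y)(S_{k-1}f(y)-S_{k-1}f(x))u(y)\,dy$ and using $\alpha$-Hölder regularity of $f$ (this is where $\alpha\in(0,1)$ is used). Summing against $\Delta_j h$ yields a block of size $2^{-j\gamma}\|h\|_{\CC^\gamma}$, and the triple sum over $i\sim j\sim k$ converges in $\CC^{\alpha+\beta+\gamma}$ because the exponent $\alpha+\beta+\gamma>0$ ensures geometric decay after summation. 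Extension from smooth $f,g,h$ to general Besov data is then a standard density/continuity argument. The proof of $\overline{\tmop{com}}_1$ is identical, except one carries the time-smoothing $Q_i$ alongside $S_{k-1}f$; since $Q_i$ is an $L^\infty$-contraction in space at fixed time and does not alter the spatial frequency support, the same commutator bound goes through with $\|f\|_{\CC^\alpha}$ replaced by $\sup_t\|f(t)\|_{\CC^\alpha}$.

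For part (b), $\tmop{com}_2$ is the easier one. Since the two paraproducts differ only in whether we apply $Q_i$ in time,
\[
\tmop{com}_2(f,g)(t) = \sum_{i\ge -1}\bigl(S_{i-1}f(t) - Q_i S_{i-1}f(t)\bigr)\,\Delta_i g(t),
\]
and the term in parentheses is exactly a parabolic time increment of $S_{i-1}f$ at scale $2^{-2i}$: writing $S_{i-1}f(t) - Q_i S_{i-1}f(t) = \int \varphi(r)\bigl(S_{i-1}f(t) - S_{i-1}f(t-2^{-2i}r)\bigr)\,dr$ (up to boundary effects handled by extension) and using the $C^{\alpha/2}$-time regularity of $t^\gamma f(t)$ in $L^\infty$ gives
\[
t^\gamma\|S_{i-1}f(t) - Q_iS_{i-1}f(t)\|_{L^\infty} \lesssim 2^{-i\alpha}\|f\|_{\LL^{\gamma,\alpha}_t}.
\]
Multiplying by $\|\Delta_i g(t)\|_{L^\infty}\lesssim 2^{-i\beta}\|g(t)\|_{\CC^\beta}$ and using that each summand has Fourier support at scale $2^i$ delivers \eqref{eq:bound-second-comm}.

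The main obstacle is $\tmop{com}_3$, the heat-paraproduct commutator. Expanding $\LL(AB) - A\LL B = (\LL A)B - 2\nabla A\cdot \nabla B$ with $A = Q_i S_{i-1}f$, $B=\Delta_i g$, I get
\[
\tmop{com}_3(f,g)(t) = \sum_i \bigl(\LL Q_i S_{i-1}f\bigr)\Delta_i g - 2\sum_i \nabla Q_i S_{i-1}f\cdot\nabla \Delta_i g.
\]
The role of $Q_i$ here is precisely to make the time derivative $\partial_t Q_i$ comparable to $\Delta Q_i$ at the parabolic scale $2^{2i}$: using the $C^{\alpha/2}_tL^\infty$ norm of $f$ one obtains $t^\gamma\|\partial_t Q_i S_{i-1}f(t)\|_{L^\infty}\lesssim 2^{i(2-\alpha)}\|f\|_{\LL^{\gamma,\alpha}_t}$, while $\|\Delta S_{i-1}f\|_{L^\infty}$ is controlled in the same way using $C_t\CC^\alpha$ regularity, so $t^\gamma\|\LL Q_i S_{i-1}f(t)\|_{L^\infty}\lesssim 2^{i(2-\alpha)}\|f\|_{\LL^{\gamma,\alpha}_t}$. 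The gradient cross-term is bounded by $t^\gamma\|\nabla Q_iS_{i-1}f\|_{L^\infty}\|\nabla\Delta_i g\|_{L^\infty}\lesssim 2^{i(1-\alpha)}2^{i(1-\beta)}\|f\|_{\LL^{\gamma,\alpha}_t}\|g(t)\|_{\CC^\beta}$. Both contributions at block $j\sim i$ are of size $2^{j(2-\alpha-\beta)}t^{-\gamma}\|f\|_{\LL^{\gamma,\alpha}_t}\|g(t)\|_{\CC^\beta}$, which is exactly \eqref{eq:bound-third-comm}. The delicate point that needs care is that $\partial_t Q_i$ is well-defined and satisfies the claimed $2^{2i}$ scaling uniformly in $i$, which follows from the choice $Q_if(t) = \int 2^{-2i}\varphi(2^{2i}(t-s))f(s)\,ds$ with $\varphi$ smooth and compactly supported; here the support of $\varphi$ in $\mathbbm{R}_+$ also ensures the extension of $f$ past $T$ by $f(\cdot\wedge T)$ does not spoil the estimates.
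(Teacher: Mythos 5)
The paper does not actually prove this lemma (it refers to \cite{gubinelli_kpz_2017}), so your sketch is a reconstruction of the standard arguments, and the $\tmop{com}_1$ and $\tmop{com}_2$ parts are indeed the usual proofs: for $\tmop{com}_1$ the commutator $[\Delta_i,S_{k-1}f]\Delta_k g$ plus the recentering term $(f-S_{k-1}f)\Delta_i\Delta_k g$ (implicit in your ``leftover remainder'', and harmless since $\alpha>0$), and for $\tmop{com}_2$ the parabolic time increment of $S_{i-1}f$ over a window of length $2^{-2i}$. The first genuine gap is your claim that the $\overline{\tmop{com}}_1$ bound goes through with $\|f\|_{\CC^{\alpha}}$ replaced by $\sup_t\|f(t)\|_{\CC^{\alpha}}$. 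Repeating your own decomposition with $Q_kS_{k-1}f$ in place of $S_{k-1}f$ produces, besides the commutator $[\Delta_i,Q_kS_{k-1}f]\Delta_k g$ (which indeed only needs spatial norms), the recentering term $\bigl(f(t)-Q_kS_{k-1}f(t)\bigr)\Delta_i\Delta_k g\,\Delta_j h$; since $Q_k$ averages $f$ over past times at distance $\sim 2^{-2k}$, the factor $f(t)-Q_kS_{k-1}f(t)$ is $O(2^{-k\alpha})$ only through the $C^{\alpha/2}_tL^{\infty}$ component of the parabolic norm. With spatial information alone it is merely $O(1)$, the block is $O(2^{-k(\beta+\gamma)})$, and the sum cannot give $\CC^{\alpha+\beta+\gamma}$ (it diverges in the regime $\beta+\gamma<0$ where the lemma is applied). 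The honest bound for $\overline{\tmop{com}}_1$ involves the $\LL^{\gamma,\alpha}_t$-norm of $f$, and the cleanest route is the identity $\overline{\tmop{com}}_1(f,g,h)=\tmop{com}_1(f,g,h)-\tmop{com}_2(f,g)\circ h$, combining part (a), part (b) and the resonant-product estimate for $\alpha+\beta+\gamma>0$.

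The second gap is in $\tmop{com}_3$: your estimate $\|\nabla Q_iS_{i-1}f\|_{L^{\infty}}\lesssim 2^{i(1-\alpha)}\|f\|_{\LL^{\gamma,\alpha}_t}$ is valid only for $\alpha<1$; once $\alpha\geqslant 1$, $\|\nabla S_{i-1}f\|_{L^{\infty}}$ does not decay in $i$ (take $f=\sin x_1$, independent of time), the cross term $\nabla Q_iS_{i-1}f\cdot\nabla\Delta_i g$ is of size $2^{i(1-\beta)}$ and no better, so your argument covers only $\alpha\in(0,1)$ although the statement allows $\alpha\in(0,2)$. This restriction should at least be acknowledged; it is harmless for the paper, which only uses $\alpha=1/2-\kappa$, but as written your proof does not deliver the stated range. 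Finally, in both halves of part (b) the genuinely delicate point is not the algebra but the weight: for $t\lesssim 2^{-2i}$ the averaging window of $Q_i$ reaches times $s$ with $s^{-\gamma}\gg t^{-\gamma}$, and converting the $C^{\alpha/2}$-regularity of $t\mapsto t^{\gamma}f(t)$ into $t^{\gamma}\|S_{i-1}f(t)-Q_iS_{i-1}f(t)\|_{L^{\infty}}\lesssim 2^{-i\alpha}\|f\|_{\LL^{\gamma,\alpha}_t}$ requires a case distinction in $2^{2i}t$ and extra terms of the type $(t^{\gamma}-s^{\gamma})f(s)$; dismissing this as ``boundary effects handled by extension'' skips the only part of (b) that is not mechanical, and it is where the hypothesis $\gamma\in[0,1)$ plays its role.
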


Proofs can be found in~{\cite{gubinelli_kpz_2017}}.

\subsection{\label{a:ker-est}Estimation of finite-chaos diagrams}

In this section we recall a few well-known results on the estimation of finite
chaos diagrams. For additional details see Chapter~10
of~{\cite{hairer_theory_2014}} and~{\cite{mourrat_construction_2016}}. First
of all we need to characterize the local behaviour of the heat kernel $P_t
(x)$ and of the covariance $C_{\varepsilon} (t, x)$ of the Gaussian field
$Y_{\varepsilon}$.

\begin{remark}
  We sometimes use a slightly different version of the heat kernel, namely
  \[ \check{P}_t (x) = \frac{1}{(4 \mathpi t)^{3 / 2}} e^{- \frac{| x |^2}{4
     t}} e^{- t}  \mathbbm{1}_{t \geqslant 0} \]
  in order to have that $X (t, x) = \int_{- \infty}^t \int_{\mathbbm{T}^3}
  P_{t - s} (x - y) v (s, y) \mathd s \mathd y$ is the stationary solution to
  $\LL X = - X + v$. However, the bounds on the covariance $C_{\varepsilon}$
  remain trivially valid in this setting.
\end{remark}

\begin{lemma}
  \label{l:heat-est} The heat kernel $P (\zeta) \assign P (t, x) = \frac{1}{(4
  \mathpi t)^{3 / 2}} e^{- \frac{| x |^2}{4 t}} \mathbbm{I}_{t \geqslant 0}$
  has the bound
  \[ | P (\zeta) | \lesssim (| t |^{1 / 2} + | x |)^{- 3} . \]
  Let $k \in \mathbbm{N}^4$ a multi-index with $| k | = 2 k_1 + k_2 + \cdots +
  k_4$. Then for every multi-index $| k | \leqslant 2$ we have:
  \[ | D^k P_t (x) | \lesssim (| t |^{1 / 2} + | x |)^{- 3 - | k |} . \]
\end{lemma}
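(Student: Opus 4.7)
The plan is to exploit the parabolic scaling invariance of the Gaussian heat kernel. First I would observe that $P_t(x) = t^{-3/2} \Phi(x/\sqrt{t})$, where $\Phi(y) = (4\pi)^{-3/2} e^{-|y|^2/4}$ is a Schwartz function on $\mathbb{R}^3$. For the first bound, I split into two regimes. If $|x| \leq t^{1/2}$, then $|P_t(x)| \lesssim t^{-3/2}$ and $(t^{1/2}+|x|)^{-3} \gtrsim t^{-3/2}$, which gives the claim. If $|x| > t^{1/2}$, write $|P_t(x)| = t^{-3/2} \Phi(x/\sqrt{t})$ and use that $\Phi$ is Schwartz, so $\Phi(y) \lesssim |y|^{-3}$ for $|y|\geq 1$; this yields $|P_t(x)| \lesssim |x|^{-3} \lesssim (t^{1/2}+|x|)^{-3}$.

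Next I would handle the derivatives. For a purely spatial multi-index $\beta = (k_2,k_3,k_4)$ one computes $\partial_x^\beta P_t(x) = t^{-(3+|\beta|)/2} \Psi_\beta(x/\sqrt{t})$ where $\Psi_\beta(y) = \partial_y^\beta \Phi(y)$ is again Schwartz. For a time derivative, I would use the heat equation $\partial_t P_t = \Delta_x P_t$, iterating if necessary: $\partial_t^{k_1} \partial_x^\beta P_t = \Delta_x^{k_1} \partial_x^\beta P_t$, which is again of the form $t^{-(3+|k|)/2} \Psi_k(x/\sqrt{t})$ with $|k| = 2k_1 + |\beta|$ and $\Psi_k$ Schwartz (a polynomial of degree $|k|$ times the Gaussian $\Phi$). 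The same case split as above then applies verbatim: for $|x|\leq t^{1/2}$ we use $|\Psi_k| \lesssim 1$, and for $|x| > t^{1/2}$ we use the Schwartz decay $|\Psi_k(y)| \lesssim |y|^{-3-|k|}$, giving $|D^k P_t(x)| \lesssim (t^{1/2}+|x|)^{-3-|k|}$ in both regimes.

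This is essentially a routine scaling and Schwartz-decay argument; there is no real obstacle beyond bookkeeping the parabolic weights correctly and verifying that the heat-equation substitution for $\partial_t$ produces the expected power of $t^{-1}$ per time derivative (i.e.\ the weight $2$ in $|k| = 2k_1 + k_2 + k_3 + k_4$). Note the restriction $|k| \leq 2$ is not actually needed for the argument, which gives the bound for all parabolic orders; presumably this restricted version is all that is used elsewhere in the paper.
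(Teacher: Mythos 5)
Your proof is correct and rests on the same core observation as the paper's: parabolic scaling of $P_t$ together with the superpolynomial (Gaussian) decay of the rescaled profile $\Phi$. The paper compresses the same argument by directly bounding $|P_t(x)|(|t|^{1/2}+|x|)^3$ by a polynomial-times-Gaussian in $|x|/|t|^{1/2}$ rather than splitting into regimes, and for derivatives it computes each one explicitly as polynomial-times-Gaussian instead of invoking the heat equation — but these are presentational differences, not different ideas.
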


\begin{proof}
  
  \[ | P_t (x) | (| t |^{1 / 2} + | x |)^3 \lesssim [1 + (| x | | t |^{- 1 /
     2})^3] e^{- \frac{| x |^2}{4 | t |}} = (1 + | \alpha |^3) e^{- \frac{|
     \alpha |}{4}} < + \infty \]
  In the same way we prove that $| \partial_t P_t (x) | \lesssim (| t |^{1 /
  2} + | x |)^5$, $| \partial_{x_i} P_t (x) | \lesssim (| t |^{1 / 2} + | x
  |)^4$ and $| \partial_{x_i} \partial_{x_j} P_t (x) | \lesssim (| t |^{1 / 2}
  + | x |)^5$.
\end{proof}

We recall a special case of Lemma~10.14 of {\cite{hairer_theory_2014}}, which
is enough for our purpose. We use the notation $\interleave \zeta \interleave
\assign (| t |^{1 / 2} + | x |)$ for $\zeta = (t, x) \in \mathbbm{R} \times
\mathbbm{T}^3$.

\begin{lemma}
  \label{l:hom-conv}Let $f, g : \mathbbm{R} \times \mathbbm{T}^3 \setminus
  \{0\} \rightarrow \mathbbm{R}$ smooth, integrable at infinity and such that
  $|f (\zeta) | \lesssim \interleave \zeta \interleave^{\alpha}$ and $|g
  (\zeta) | \lesssim \interleave \zeta \interleave^{\beta}$ in a ball \ $B =
  \{\zeta \in \mathbbm{R} \times \mathbbm{T}^3 : \interleave \zeta \interleave
  < 1, \zeta \neq 0\}$. Then if $\alpha, \beta \in (- 5, 0)$ and $\alpha +
  \beta + 5 < 0$ we have
  \[ |f \ast g (\zeta) | \lesssim \interleave \zeta \interleave^{\alpha +
     \beta + 5} \]
  in a ball centered in the origin.
  
  Moreover, if $\alpha, \beta \in (- 5, 0)$ and $0 < \alpha + \beta + 5 < 1$
  and for every multi-index $| k | \leqslant 2$ we have $| \mathD^k f (\zeta)
  | \lesssim \interleave \zeta \interleave^{\alpha - | k |}$ and $| \mathD^k g
  (\zeta) | \lesssim \interleave \zeta \interleave^{\beta - | k |}$, then
  \[ |f \ast g (\zeta) - f \ast g (0) | \lesssim \interleave \zeta
     \interleave^{\alpha + \beta + 5} \]
  in a ball centered in the origin.
\end{lemma}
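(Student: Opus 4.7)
The plan is to handle both estimates by splitting the convolution integral into parabolic regions around the two singular points $0$ and $\zeta$ of the integrand $\eta \mapsto f(\zeta-\eta)\,g(\eta)$, using throughout that parabolic balls $\{\interleave \eta \interleave \leq r\}$ have volume comparable to $r^{5}$ (the time coordinate scaling as $r^{2}$).

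For the first bound, I decompose $\mathbb{R}\times\mathbb{T}^3$ into $A=\{\interleave \eta\interleave \leq \interleave\zeta\interleave/2\}$, $B=\{\interleave \eta-\zeta\interleave \leq \interleave\zeta\interleave/2\}$ and the far region $C$. On $A$ we have $\interleave \zeta-\eta\interleave \simeq \interleave\zeta\interleave$, so $|f(\zeta-\eta)|\lesssim \interleave\zeta\interleave^{\alpha}$; integrating $|g(\eta)|\lesssim \interleave \eta\interleave^{\beta}$ over $A$ and using $\beta>-5$ (so the radial integral $\int_{0}^{\interleave\zeta\interleave}r^{\beta+4}dr$ converges at $0$) gives the bound $\interleave\zeta\interleave^{\alpha+\beta+5}$. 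Region $B$ is symmetric after $\eta\leftrightarrow\zeta-\eta$. On $C$, both $\interleave \eta\interleave$ and $\interleave\zeta-\eta\interleave$ are $\gtrsim \interleave\zeta\interleave$; the parabolic rescaling $\eta=(s,y)\mapsto(\interleave\zeta\interleave^{2}\tilde s,\interleave\zeta\interleave \tilde y)$ produces a Jacobian $\interleave\zeta\interleave^{5}$ and reduces the integral to $\interleave\zeta\interleave^{\alpha+\beta+5}$ times a fixed integrand, which is finite because of integrability at infinity combined with $\alpha+\beta+5<0$ (giving convergence at the outer scale).

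For the difference bound, I write
\[
f\ast g(\zeta)-f\ast g(0)=\int[f(\zeta-\eta)-f(-\eta)]\,g(\eta)\,d\eta
\]
and split on whether $\interleave \eta\interleave\leq 2\interleave\zeta\interleave$ or not. In the near region I bound $|f(\zeta-\eta)|$ and $|f(-\eta)|$ separately by their singular power estimates and integrate $|g(\eta)|\lesssim \interleave\eta\interleave^{\beta}$; since now $\alpha+\beta+5>0$, the radial integral $\int_{0}^{2\interleave\zeta\interleave}r^{\alpha+\beta+4}dr$ is finite and equals $\interleave\zeta\interleave^{\alpha+\beta+5}$ up to constants. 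In the far region, where $\interleave -\eta+\theta\zeta\interleave \simeq \interleave \eta\interleave$ uniformly in $\theta\in[0,1]$, I apply a parabolic Taylor expansion, splitting $\zeta=(t,x)$ into spatial and temporal parts and using the hypothesized bounds on $D^k f$ for $|k|\leq 2$:
\[
|f(\zeta-\eta)-f(-\eta)|\ \lesssim\ |x|\,\interleave \eta\interleave^{\alpha-1}+|t|\,\interleave \eta\interleave^{\alpha-2}\ \lesssim\ \interleave\zeta\interleave\,\interleave\eta\interleave^{\alpha-1}+\interleave\zeta\interleave^{2}\,\interleave\eta\interleave^{\alpha-2}.
\]
Integrating each term against $\interleave \eta\interleave^{\beta}$ over $\{\interleave \eta\interleave\geq 2\interleave\zeta\interleave\}$ gives integrals of the form $\int_{r\geq 2\interleave\zeta\interleave}r^{\alpha+\beta+\gamma}\,dr$ with $\gamma=3$ or $2$, both convergent at infinity since $\alpha+\beta+5<1$ forces $\alpha+\beta+4<0$; after multiplying back the $\interleave\zeta\interleave$-prefactors from the Taylor expansion one obtains $\interleave\zeta\interleave^{\alpha+\beta+5}$ in both cases.

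The main obstacle is the parabolic Taylor argument in the far region: because time scales as the square of space, a naive first-order expansion does not produce the right exponent, and one has to separate the spatial and temporal shifts, exploiting the second-order derivative $\partial_{t} f$ (which has parabolic order $2$) to match the time increment $|t|$ against the weight $\interleave\zeta\interleave^{2}$. Once this is set up, the remaining work is careful bookkeeping of exponents in the parabolic polar integrals.
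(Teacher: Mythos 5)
Your proposal is correct in substance, but note that the paper does not actually prove this lemma: it is recalled as a special case of Lemma~10.14 of~{\cite{hairer_theory_2014}}, so what you have written is a self-contained proof of the cited fact, and it follows the standard mechanism behind that lemma. For the first bound, the decomposition into the two parabolic balls around the singularities $\eta=0$ and $\eta=\zeta$ plus the far region, with the parabolic volume scaling $r^{5}$, is exactly right: the near pieces use $\alpha>-5$, $\beta>-5$, the intermediate piece uses $\alpha+\beta+4<-1$ so that $\int_{c\interleave\zeta\interleave}^{1}r^{\alpha+\beta+4}\,\mathd r\simeq\interleave\zeta\interleave^{\alpha+\beta+5}$, and the $O(1)$ contribution from $\interleave\eta\interleave\geqslant 1$ is absorbed because the target bound diverges as $\zeta\rightarrow 0$. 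For the second bound, your anisotropic Taylor step in the far region is the key point and is handled correctly: separating the spatial increment (first-order spatial derivative of parabolic order $\alpha-1$ against $|x|\leqslant\interleave\zeta\interleave$) from the temporal increment ($\partial_t f$ of parabolic order $\alpha-2$ against $|t|\leqslant\interleave\zeta\interleave^{2}$), with intermediate points comparable to $\interleave\eta\interleave$ on $\{\interleave\eta\interleave\geqslant 2\interleave\zeta\interleave\}$, and the exponent bookkeeping ($\alpha+\beta+4<0$) localizes both integrals at the lower limit.

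Two points should be tightened, though neither affects the structure. First, in the near region of the difference bound the term $\int_{\interleave\eta\interleave\leqslant 2\interleave\zeta\interleave}\interleave\zeta-\eta\interleave^{\alpha}\interleave\eta\interleave^{\beta}\,\mathd\eta$ has singularities at both $\eta=0$ and $\eta=\zeta$, so a single radial integral in $r=\interleave\eta\interleave$ with combined exponent $\alpha+\beta+4$ is not literally a bound; you need the same three-way split you used in part one, and there the conditions $\alpha>-5$ and $\beta>-5$ enter separately (the condition $\alpha+\beta+5>0$ alone does not control the piece near $\eta=\zeta$). Second, in the difference bound you say nothing about $\interleave\eta\interleave\geqslant 1$, where the power bounds on $f$ and its derivatives no longer apply; under the (admittedly loose) standing hypotheses this region contributes $O(\interleave\zeta\interleave)$, which is acceptable precisely because $\alpha+\beta+5<1$ gives $\interleave\zeta\interleave\lesssim\interleave\zeta\interleave^{\alpha+\beta+5}$ for $\interleave\zeta\interleave\leqslant 1$ --- the analogue of how you absorbed the $O(1)$ tail in the first part, and worth stating explicitly.
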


\begin{remark}
  \label{r:rescaling-properties-cov}The covariance $C_{\varepsilon}$ of
  $Y_{\varepsilon}$ can be written as $C_{\varepsilon} = \check{P} \ast
  \check{C}_{\varepsilon} \ast \check{P}_{}$ with $\check{C}_{\varepsilon} (t,
  x) \assign \mathbbm{E} (\eta_{\varepsilon} (t, x) \eta_{\varepsilon} (0,
  0))$. Recall from the introduction that $\check{C}_{\varepsilon} (t, x) =
  \varepsilon^{- 5} \tilde{C}^{\varepsilon} (\varepsilon^{- 2} t,
  \varepsilon^{- 1} x)$ where $\tilde{C}^{\varepsilon}$ is the covariance of
  the Gaussian process $\eta$ defined on $\mathbbm{R} \times (\mathbbm{T}/
  \varepsilon)^3$, and $\tilde{C}^{\varepsilon} (t - s, x - y) = \Sigma (t -
  s, x - y)$ if $\tmop{dist} (x, y) \leqslant 1$ and $0$ otherwise (so that
  the family of functions $\tilde{C}^{\varepsilon}$ is bounded uniformly on
  $\varepsilon$ by a $C^{\infty}_c$ function). \ Then there exists a family of
  functions $C_Y^{\varepsilon}$ defined on $\mathbbm{R} \times (\mathbbm{T}/
  \varepsilon)^3$ such that $C_{\varepsilon} (t, x) = \varepsilon^{- 1}
  C_Y^{\varepsilon} (\varepsilon^{- 2} t, \varepsilon^{- 1} x)$ and
  $C_Y^{\varepsilon} (t, x) = [\check{P} \ast \check{C}_{\varepsilon} \ast
  \check{P}_{}] (t, x)$.
\end{remark}

\begin{lemma}
  \label{l:cov-est-eps}The covariance $C_{\varepsilon}$
  \label{l:cov-est-tx}has the bound, for every multi-index $| k | \leqslant
  2$:
  \[ | D^k C_{\varepsilon} (t, x) | \lesssim (| t |^{1 / 2} + | x |)^{- 1 - |
     k |} . \]
  Moreover, we have
  \[ \varepsilon^{| k |_{} + 1} | D^k C_{\varepsilon} (t, x) | \lesssim 1 \]
\end{lemma}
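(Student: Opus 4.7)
The plan is to reduce both estimates to a single two-scale bound on the unit-scale covariance $C_Y^\varepsilon$ via the rescaling identity of Remark~\ref{r:rescaling-properties-cov},
\[ C_\varepsilon(t,x) \;=\; \varepsilon^{-1}\, C_Y^\varepsilon(\varepsilon^{-2}t,\,\varepsilon^{-1}x), \]
which upon differentiating (with the parabolic grading $|k| = 2k_0+k_1+k_2+k_3$ of Lemma~\ref{l:heat-est}) becomes
\[ \varepsilon^{|k|+1}\,|D^k C_\varepsilon(t,x)| \;=\; |D^k C_Y^\varepsilon(\varepsilon^{-2}t,\,\varepsilon^{-1}x)|. \]
Both claims will therefore follow if one proves, uniformly in $\varepsilon\in(0,1]$ and for every multi-index with $|k|\leq 2$, the two-scale estimate
\[ |D^k C_Y^\varepsilon(\eta)| \;\lesssim\; 1 \,\wedge\, \interleave \eta \interleave^{-1-|k|}. \]
The constant piece yields directly $\varepsilon^{|k|+1}|D^k C_\varepsilon|\lesssim 1$, while the homogeneous piece rescales back to $|D^k C_\varepsilon(t,x)|\lesssim\interleave(t,x)\interleave^{-1-|k|}$.

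For the uniform-in-$\eta$ half of the bound, I would use that at the unit scale $C_Y^\varepsilon$ is the covariance of a Gaussian field obtained from white noise convolved against a smooth unit-scale mollifier and unit-scale heat factors, hence is smooth with derivatives uniformly controlled in $\varepsilon$. Concretely, one can place all derivatives onto the smooth mollifier $\tilde{C}^\varepsilon$ (whose $L^\infty$ norm and all of its derivative $L^\infty$ norms are uniformly bounded by hypothesis) and then invoke Young's inequality together with integrability properties of the heat factors against a compactly supported source to conclude.

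For the local homogeneous bound $|D^k C_Y^\varepsilon(\eta)|\lesssim \interleave\eta\interleave^{-1-|k|}$ in a ball around the origin, I would instead write $C_Y^\varepsilon$ as the convolution of two heat factors with the mollifier, $C_Y^\varepsilon = \check{P}\ast \tilde{C}^\varepsilon \ast \check{P}$, and distribute the derivatives as $D^k = D^{k_1}D^{k_2}$ onto the two heat factors. Lemma~\ref{l:heat-est} gives $|D^{k_i}\check{P}(\zeta)|\lesssim\interleave\zeta\interleave^{-3-|k_i|}$, an exponent lying in $(-5,0)$ as long as $|k_i|\leq 1$. Such an admissible split exists whenever $|k|\leq 2$ contains no time derivative, and Lemma~\ref{l:hom-conv} then yields $|(D^{k_1}\check{P})\ast(D^{k_2}\check{P})(\eta)|\lesssim\interleave\eta\interleave^{-1-|k|}$ in a ball; convolution with the uniformly bounded, compactly supported $\tilde{C}^\varepsilon$ preserves this singular behaviour at the origin, while outside the ball the exponential-in-time and Gaussian-in-space decay of $\check{P}$ makes any polynomial estimate trivial.

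The delicate case is a single time derivative $k = (1,0,0,0)$, which is of parabolic order $2$ but admits no admissible split with $|k_i|\leq 1$. Here I would invoke the PDE identity $\partial_t \check{P} = \Delta \check{P} - \check{P}$ to redistribute the time derivative as two spatial ones,
\[ \partial_t(\check{P}\ast\check{P}) \;=\; \sum_{i=1}^{3}(\partial_{x_i}\check{P}) \ast (\partial_{x_i}\check{P}) \;-\; \check{P}\ast\check{P}, \]
each summand being directly covered by Lemma~\ref{l:hom-conv} with admissible exponent pairs $(-4,-4)$ and $(-3,-3)$, giving local bounds of order $\interleave\eta\interleave^{-3}$ and $\interleave\eta\interleave^{-1}$ respectively (the second dominated by the first in a fixed ball at the origin). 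This endpoint case is the main obstacle of the proof, because Lemma~\ref{l:hom-conv} excludes the exponent $-5$ that a single $\partial_t\check{P}$ would contribute; the PDE trick converts the forbidden time derivative into two admissible spatial ones. Combining the two estimates on $C_Y^\varepsilon$ with the rescaling identity above then yields both inequalities of the lemma simultaneously.
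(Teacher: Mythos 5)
Your approach is essentially the paper's: write $C_\varepsilon = \check{P}\ast\check{C}_\varepsilon\ast\check{P}$, obtain the homogeneous bound near the origin from Lemma~\ref{l:heat-est} and Lemma~\ref{l:hom-conv}, and get the $\varepsilon$-weighted bound by the parabolic rescaling of Remark~\ref{r:rescaling-properties-cov}. The unification through the two-scale estimate $|D^k C_Y^\varepsilon(\eta)|\lesssim 1\wedge\interleave\eta\interleave^{-1-|k|}$ is a tidy presentational device, and your explicit treatment of the time-derivative endpoint is a genuine addition the paper leaves implicit: $|\partial_t\check{P}(\zeta)|\lesssim\interleave\zeta\interleave^{-5}$ falls precisely outside the range $(-5,0)$ admitted by Lemma~\ref{l:hom-conv}, and trading $\partial_t$ for spatial derivatives via the PDE is the right fix.

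Two points need more care. First, the distributional identity is $(\partial_t-\Delta+1)\check{P}=\delta_{(0,0)}$, so
\[
\partial_t(\check{P}\ast\check{P}) \;=\; \sum_{i=1}^{3}\partial_{x_i}\check{P}\ast\partial_{x_i}\check{P} \;-\;\check{P}\ast\check{P}\;+\;\check{P},
\]
and you drop the last term; since $|\check{P}|\lesssim\interleave\cdot\interleave^{-3}$ anyway the conclusion is unchanged, but the term must be carried along. Second, and more substantively, a direct Young's inequality does not yield the uniform-in-$\eta$ bound after you rescale: the unit-scale heat factor becomes $\check{P}_\varepsilon(t,x)=P_t(x)\,e^{-\varepsilon^2 t}\mathbbm{1}_{t\geqslant 0}$ on $\mathbbm{R}\times(\mathbbm{T}/\varepsilon)^3$, whose $L^1$ norm equals $\varepsilon^{-2}$ and thus diverges. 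To close this step you must exploit the spatial decay of the heat kernel (for instance split into short and long time and estimate the long-time piece in $L^2$), or avoid the rescaling altogether and observe directly that convolving $D^k(\check{P}\ast\check{P})\lesssim\interleave\cdot\interleave^{-1-|k|}$ against the mass-one, scale-$\varepsilon$ approximate identity $\check{C}_\varepsilon$ can at worst produce $\lesssim\varepsilon^{-1-|k|}$ at the origin, which is exactly what the second assertion of the lemma says.
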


\begin{proof}
  The first bound is obtained directly from Lemma~\ref{l:heat-est} and
  Lemma~\ref{l:hom-conv}. Indeed, since by hypothesis
  $\tilde{C}^{\varepsilon}$ has compact support, it is easy to see that $|
  \check{C}_{\varepsilon} (t, x) | \lesssim (| t |^{1 / 2} + | x |)^{- 5}$.
  The second bound is obtained by a simple change of variables in the
  convolution defining $C_{\varepsilon}$. 
\end{proof}

\begin{lemma}
  \label{l:PC-integrals}We have $\int_{s, x} P_s (x) [C_{\varepsilon} (s,
  x)]^2 \lesssim | \log \varepsilon |$ and for every $n \geqslant 3$
  $\varepsilon^{n - 2} \int_{s, x} P_s (x) [C_{\varepsilon} (s, x)]^n \lesssim
  1$.
\end{lemma}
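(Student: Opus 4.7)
The proof amounts to a careful interpolation between the two bounds on $C_\varepsilon$ provided by Lemma~\ref{l:cov-est-eps}, namely the uniform-in-$\varepsilon$ singular estimate $|C_\varepsilon(\zeta)| \lesssim \interleave \zeta \interleave^{-1}$ (good for $\interleave \zeta \interleave \gtrsim \varepsilon$) and the uniform bound $|C_\varepsilon(\zeta)| \lesssim \varepsilon^{-1}$ (useful when $\interleave \zeta \interleave \lesssim \varepsilon$), combined with the heat-kernel pointwise bound $|P_s(x)| \lesssim \interleave \zeta \interleave^{-3}$ of Lemma~\ref{l:heat-est}, where $\interleave\zeta\interleave = |s|^{1/2}+|x|$.

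The plan is to split the region of integration at the parabolic scale $\varepsilon$: let $A_\varepsilon = \{\zeta : \interleave \zeta \interleave \leqslant \varepsilon\}$ and its complement inside the unit parabolic ball (outside which everything is bounded by a constant, since $P$ is integrable away from the origin). On $A_\varepsilon$ I use $|C_\varepsilon(\zeta)|^n \lesssim \varepsilon^{-n}$ together with $|P_s(x)| \lesssim \interleave \zeta \interleave^{-3}$, and the parabolic volume estimate $|\{\interleave \zeta \interleave \leqslant r\}| \lesssim r^5$ gives
\[
\int_{A_\varepsilon} P_s(x) |C_\varepsilon(s,x)|^n\, ds\, dx \lesssim \varepsilon^{-n} \int_0^\varepsilon r^{-3} r^4 \, dr \lesssim \varepsilon^{2-n}.
\]
On the complementary region, the bound $|C_\varepsilon(\zeta)| \lesssim \interleave\zeta\interleave^{-1}$ yields the integrand $\lesssim \interleave\zeta\interleave^{-3-n}$, so by the same parabolic-shell integration
\[
\int_{\varepsilon \leqslant \interleave\zeta\interleave \leqslant 1} P_s(x) |C_\varepsilon(s,x)|^n\, ds\, dx \lesssim \int_\varepsilon^1 r^{-3-n} r^4 \, dr = \int_\varepsilon^1 r^{1-n} \, dr.
\]
For $n=2$ the last integral evaluates to $|\log\varepsilon|$, while the inner-region contribution is $O(1)$; together this gives the first claim. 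For $n\geqslant 3$ the outer integral is $O(\varepsilon^{2-n})$ (with an extra $|\log\varepsilon|$ only in the borderline case $n=3$, which is still harmless after the factor $\varepsilon^{n-2}$), and so is the inner one, so multiplying by $\varepsilon^{n-2}$ gives the second claim.

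There is no real obstacle here; the only mild subtlety is the marginal case $n=2$, where the naive homogeneity count gives a scale-invariant integral and so the logarithm is genuinely present. One should also observe that the ambient space is $\mathbbm{R}^+ \times \mathbbm{T}^3$ rather than $\mathbbm{R}^+ \times \mathbbm{R}^3$, but since $P_s(x)$ decays exponentially and $C_\varepsilon$ is controlled away from the origin by Lemma~\ref{l:cov-est-eps}, the part of the integral with $\interleave\zeta\interleave \gtrsim 1$ contributes only an $O(1)$ term. This splitting-at-$\varepsilon$ technique is exactly what is used throughout the kernel estimates in~\cite{hairer_theory_2014} and~\cite{mourrat_construction_2016}.
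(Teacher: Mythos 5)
Your proof is correct and is essentially a direct interpolation version of the paper's rescaling argument. The paper proves this by changing variables $(s,x)\mapsto(\varepsilon^2 s,\varepsilon x)$, which by Remark~\ref{r:rescaling-properties-cov} and the heat-kernel scaling $P_{\varepsilon^2 s}(\varepsilon x)=\varepsilon^{-3}P_s(x)$ reduces the integral to $\int_{B(0,\varepsilon^{-1})}P_{s'}(x')[C_Y^\varepsilon(s',x')]^n$, where $C_Y^\varepsilon$ is bounded near the origin and behaves like $\interleave\zeta\interleave^{-1}$ at distance $\gtrsim 1$; the logarithm then comes from $\int_1^{\varepsilon^{-1}}r^{-1}\,dr$. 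Your split of the original integral at parabolic scale $\varepsilon$, using $|C_\varepsilon|\lesssim\varepsilon^{-1}$ inside and $|C_\varepsilon(\zeta)|\lesssim\interleave\zeta\interleave^{-1}$ outside, is the same two-regime computation with the change of variables undone; the log comes from $\int_\varepsilon^1 r^{-1}\,dr$. Both are valid and give the same exponents.

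Two small corrections. First, the borderline shell integral $\int_\varepsilon^1 r^{1-n}\,dr$ produces a logarithm precisely at $n=2$ (exponent $-1$), not at $n=3$ (exponent $-2$, which gives a clean $\varepsilon^{-1}$); your remark about a possible extra $|\log\varepsilon|$ for $n=3$ is therefore spurious — though, as you note, it would in any case be absorbed by the prefactor $\varepsilon^{n-2}$. Second, for the far region $\interleave\zeta\interleave\gtrsim 1$ the decisive fact is not exponential decay of $P_s(x)$ (which only holds in $x$ at fixed $s$; integrated over the compact torus, $P_s$ has mass $\lesssim 1$ and decays only polynomially like $s^{-3/2}$) but the algebraic decay $|C_\varepsilon(s,x)|\lesssim s^{-1/2}$ from Lemma~\ref{l:cov-est-eps}, which gives $\int_1^\infty s^{-(3+n)/2}\,ds<\infty$ for all $n\geq 1$. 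This is a cosmetic point since the conclusion is unchanged.
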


\begin{proof}
  From the fact that $P_{\varepsilon^2 s} (\varepsilon x) = \varepsilon^{- 3}
  P_s (x)$ together with Remark~\ref{r:rescaling-properties-cov} we obtain
  \[ \int_{\mathbbm{R} \times \mathbbm{T}^3} P_s (x) [C_{\varepsilon} (s,
     x)]^2 \mathd s \mathd x \lesssim \int_{B (0, \varepsilon^{- 1})} P_s (x)
     [C_Y^{\varepsilon} (s, x)]^2 \mathd s \mathd x \lesssim | \log
     \varepsilon | \]
  with $B (0, R) = \{\zeta \in \mathbbm{R}^4 : \interleave \zeta \interleave <
  R, \zeta \neq 0\}$ a ``parabolic'' ball centered in the origin. The second
  estimation is obtained in the same way.
\end{proof}

\begin{lemma}
  \label{l:integral-firstorder-est} For $m \in (0, 3)$, $n \in (3, 5)$, define
  for $\zeta, \zeta' \in \mathbbm{R} \times \mathbbm{T}^3$
  \[ I_m \assign \int | C_{\varepsilon} (\zeta - \zeta') |^m  | \mu_{q, \zeta}
     \mu_{q, \zeta'} |, \qquad \tilde{I}_n \assign \int | C_{\varepsilon}
     (\zeta - \zeta') |^n  | \tilde{\mu}_{q, \zeta}  \tilde{\mu}_{q, \zeta'} |
  \]
  with $\mu_{q, \zeta} \assign K_{q, \bar{x}} (y) \delta (t - s) \mathd \zeta$
  and $\tilde{\mu}_{q, \zeta} \assign \left[ \int_x K_{q, \bar{x}} (x) P_{t -
  s} (x - y) \right] d \zeta$ for $\zeta = (s, y)$. Then
  \[ I_m \lesssim 2^{mq} \qquad \text{and} \qquad \tilde{I}_n \lesssim 2^{(n -
     4) q} . \]
\end{lemma}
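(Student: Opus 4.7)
The plan is to prove both bounds by a parabolic scaling argument, handling $I_m$ as essentially an equal-time (purely spatial) computation and $\tilde I_n$ via a full space--time rescaling that converts the singular kernel $|C_\varepsilon|^n$ into a scale-invariant convolution.

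\textbf{Step 1: Reduction for $I_m$.} The measure $\mu_{q,\zeta}=K_{q,\bar x}(y)\delta(t-s)\,d\zeta$ forces $s=s'=t$ after integration, so
\[
I_m = \int_{(\mathbb T^3)^2} K_{q,\bar x}(y)K_{q,\bar x}(y')\,|C_\varepsilon(0,y-y')|^m\,dy\,dy'.
\]
Using the standard kernel estimate $|K_{q,\bar x}(y)|\lesssim 2^{3q}(1+2^q|y-\bar x|)^{-N}$ (for any $N$) together with $|C_\varepsilon(0,z)|\lesssim |z|^{-1}$ from Lemma~\ref{l:cov-est-eps}, the change of variables $u=2^q(y-\bar x)$, $v=2^q(y'-\bar x)$ yields an overall factor $2^{6q-6q}\cdot 2^{mq}=2^{mq}$ times an integral of the form $\int (1+|u|)^{-N}(1+|v|)^{-N}|u-v|^{-m}du\,dv$, which converges for $m<3$ (dimension of the spatial domain). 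This gives $I_m\lesssim 2^{mq}$.

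\textbf{Step 2: Setup for $\tilde I_n$.} Set $A(\tau,u)\assign\int_{\mathbb T^3}K_q(u-x)P_\tau(x)\,\mathbbm{1}_{\tau\ge 0}$ so that $\tilde\mu_{q,\zeta}=A(t-s,\bar x-y)\,d\zeta$. After the translation $u=\bar x-y$, $v=\bar x-y'$, $\tau=t-s$, $\tau'=t-s'$ we obtain
\[
\tilde I_n = \int_{(\mathbb R\times\mathbb T^3)^2} A(\tau,u)A(\tau',v)\,|C_\varepsilon(\tau-\tau',u-v)|^n\,d\tau\,du\,d\tau'\,dv.
\]
In Fourier space $\widehat A(\tau,k)=\rho(2^{-q}k)e^{-\tau|k|^2}$ is supported in $|k|\sim 2^q$ (for $q\ge 0$), hence $A$ inherits the full spatial regularity of $K_q$ while picking up a parabolic time-decay: for any $N$,
\[
|A(\tau,u)|\lesssim 2^{3q}(1+2^q|u|)^{-N}\,e^{-c\tau 2^{2q}}.
\]
Combined with the pointwise bound $|C_\varepsilon(\sigma,z)|\lesssim (|\sigma|^{1/2}+|z|)^{-1}$ from Lemma~\ref{l:cov-est-tx}, the integrand has the explicit parabolic form needed for rescaling. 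The case $q=-1$ is immediate from integrability of $|C_\varepsilon|^n$ in the parabolic scale (since $n<5$) and will be subsumed.

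\textbf{Step 3: Rescaling.} The change of variables $u=2^{-q}\tilde u$, $v=2^{-q}\tilde v$, $\tau=2^{-2q}\tilde\tau$, $\tau'=2^{-2q}\tilde\tau'$ (Jacobian $2^{-10q}$) transforms the kernels as $|A|\mapsto 2^{3q}\times(\text{scale-invariant profile})$ and the covariance power as $|C_\varepsilon|^n\mapsto 2^{nq}\times(\text{scale-invariant singularity})$. Collecting powers: $2^{6q}\cdot 2^{nq}\cdot 2^{-10q}=2^{(n-4)q}$, multiplied by the integral
\[
\int (1+|\tilde u|)^{-N}(1+|\tilde v|)^{-N}e^{-c(\tilde\tau+\tilde\tau')}(\sqrt{|\tilde\tau-\tilde\tau'|}+|\tilde u-\tilde v|)^{-n}\,d\tilde\tau\,d\tilde u\,d\tilde\tau'\,d\tilde v,
\]
which converges because the singularity at the origin has degree $-n$ with $n<5$ (the parabolic dimension of the difference variables), while the Gaussian-like decay of $A$ and the exponential decay in time handle the behaviour at infinity. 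This yields $\tilde I_n\lesssim 2^{(n-4)q}$.

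\textbf{Main obstacle.} The routine part is the $I_m$ bound; the delicate point is verifying that for $\tilde I_n$ the singular convolution remains integrable after rescaling, which is exactly the threshold condition $n<5$ (parabolic dimension). One has to be careful that the time-integration of the singular kernel $|C_\varepsilon|^n$ near $\tau=\tau'$ is well-defined, using the fact that the parabolic distance $\sqrt{|\tau-\tau'|}+|\tilde u-\tilde v|$ genuinely regularises the singularity in the full 5-dimensional parabolic space. The analogous estimate in the homogeneous setting is classical (cf.\ Lemma~\ref{l:hom-conv} and the techniques of~\cite{hairer_theory_2014}, Chapter~10), and no new idea is needed beyond tracking the scaling carefully.
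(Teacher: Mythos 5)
Your $I_m$ bound is the same calculation the paper performs: equal-time reduction, the bound $|C_\varepsilon(0,z)|\lesssim|z|^{-1}$ from Lemma~\ref{l:cov-est-tx}, and a rescaling of the spatial variables at scale $2^{-q}$ (convergent precisely because $m<3$). No discrepancy there.

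For $\tilde I_n$ you take a genuinely different route from the paper. The paper exploits the vanishing moment $\int_x K_{q,\bar x}(x)=0$ (valid for $q>0$) to rewrite $\tilde\mu_{q,\zeta}$ as $\int_x K_{q,\bar x}(x)\big(P_{t-s}(x-y)-P_{t-s}(\bar x-y)\big)\,\mathd\zeta$, which produces a first-order increment of the heat kernel and hence an extra parabolic power; it then invokes Lemma~\ref{l:hom-conv} (and in the background the auxiliary estimate~(\ref{e:est-KP})) to convolve the resulting homogeneous kernels. You instead bound the Littlewood--Paley block of the heat kernel directly: since $\widehat A(\tau,k)=\rho(2^{-q}k)e^{-\tau|k|^2}$ is supported on $|k|\sim 2^q$, $A$ has exponential decay $e^{-c\tau2^{2q}}$ in addition to the usual spatial localisation $2^{3q}(1+2^q|u|)^{-N}$. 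This replaces the paper's cancellation-plus-convolution argument by a pure parabolic rescaling of the full space--time integral, and the power counting $2^{6q}\cdot2^{nq}\cdot2^{-10q}=2^{(n-4)q}$ together with the finiteness of the rescaled integral (which holds because $n<5$, the parabolic dimension of the difference variable $(\tilde\tau-\tilde\tau',\tilde u-\tilde v)$) gives the claim. Both proofs are correct. Your approach has the minor advantage of being self-contained and not requiring Lemma~\ref{l:hom-conv}, but it effectively re-derives the content of that lemma in this specific instance; the paper's route is more modular given that Lemma~\ref{l:hom-conv} and~(\ref{e:est-KP}) are already needed elsewhere.

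Two small points worth noting explicitly. First, you should state the bound on $A$ carefully: the exponential factor $e^{-c\tau 2^{2q}}$ is what makes the time integral over $[0,t]$ produce the scale $2^{-2q}$, and its derivation requires observing that each $\partial_k$ falling on $e^{-\tau|k|^2}$ contributes $\tau|k|\sim\tau 2^q$, which is then absorbed by a slightly smaller exponential constant; otherwise the integration by parts producing the $(1+2^q|u|)^{-N}$ decay would seem to fail for $\tau 2^{2q}\gg1$. Second, the rescaling $u\mapsto 2^{-q}\tilde u$ does not map $\mathbb T^3$ to itself; one should either restrict to the near-diagonal region (where the singularity lives and where the rescaling is legitimate on a fundamental domain) and handle the complementary region by the trivial decay of $K_q$ and $C_\varepsilon$, or simply work with the periodised kernels throughout. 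Neither affects the conclusion, but both deserve a sentence.
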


\begin{proof}
  The estimation of $I_m$ is easily obtained by Lemma~\ref{l:cov-est-tx} and a
  change of variables. For $\tilde{I}_n$ observe that for every $q > 0$
  \[ \tilde{\mu}_{q, \zeta} = [\int_x K_{q, \bar{x}} (x) (P_{t - s} (x - y) -
     P_{t - s} (\bar{x} - y))] d \zeta \]
  and then we can apply Lemma~\ref{l:hom-conv} to obtain the result.
\end{proof}

\begin{lemma}
  \label{l:covar-difference}We have for every $\sigma \in [0, 1]$
  \[ \sup_{x \in \mathbbm{T}^3} | C_{\varepsilon} (t, x) - C_{\varepsilon} (0,
     x) | \lesssim \varepsilon^{- 1 - 2 \sigma} | t |^{\sigma} \]
\end{lemma}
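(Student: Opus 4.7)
The plan is to obtain the bound by a straightforward interpolation between two endpoint estimates, both of which are direct consequences of Lemma~\ref{l:cov-est-eps} (equivalently Lemma~\ref{l:cov-est-tx}).

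First I would treat the endpoint $\sigma = 0$. The uniform bound $\varepsilon | C_{\varepsilon}(t,x) | \lesssim 1$ from Lemma~\ref{l:cov-est-eps} (the case $|k|=0$) immediately yields
\[
\sup_{x \in \mathbbm{T}^3} | C_{\varepsilon}(t,x) - C_{\varepsilon}(0,x) | \;\leq\; 2 \sup_{s,y} | C_{\varepsilon}(s,y) | \;\lesssim\; \varepsilon^{-1},
\]
which is exactly the statement for $\sigma = 0$.

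Next I would handle the endpoint $\sigma = 1$. For the parabolic multi-index $k = (1,0,0,0)$ corresponding to $\partial_t$ we have $|k| = 2 k_1 = 2$, so the same lemma gives $\varepsilon^{3} | \partial_t C_{\varepsilon}(s,y) | \lesssim 1$ uniformly in $(s,y)$. The fundamental theorem of calculus then produces
\[
\sup_{x} | C_{\varepsilon}(t,x) - C_{\varepsilon}(0,x) | \;\leq\; |t| \sup_{s,y} | \partial_s C_{\varepsilon}(s,y) | \;\lesssim\; \varepsilon^{-3} |t|.
\]

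Finally, having both $\sup_x | C_{\varepsilon}(t,x) - C_{\varepsilon}(0,x) | \lesssim \varepsilon^{-1}$ and $\sup_x | C_{\varepsilon}(t,x) - C_{\varepsilon}(0,x) | \lesssim \varepsilon^{-3} |t|$, I simply interpolate: writing $A \leq A^{1-\sigma} A^{\sigma}$ and using the first bound for the first factor and the second bound for the second factor yields, for every $\sigma \in [0,1]$,
\[
\sup_{x} | C_{\varepsilon}(t,x) - C_{\varepsilon}(0,x) | \;\lesssim\; (\varepsilon^{-1})^{1-\sigma} (\varepsilon^{-3} |t|)^{\sigma} \;=\; \varepsilon^{-1-2\sigma} |t|^{\sigma},
\]
which is the desired estimate. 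There is no genuine obstacle here; the lemma is a quantitative interpolation whose only ingredients are the $L^\infty$ control of $C_\varepsilon$ and of $\partial_t C_\varepsilon$ already recorded in Lemma~\ref{l:cov-est-eps}.
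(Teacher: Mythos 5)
Your proof is correct and follows essentially the same route as the paper, which simply invokes interpolation from the bound $|\partial_t C_\varepsilon(t,x)| \lesssim \varepsilon^{-3}$ of Lemma~\ref{l:cov-est-eps}. You have merely spelled out the two endpoint estimates and the geometric interpolation explicitly, which the paper leaves implicit.
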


\begin{proof}
  It is easy to obtain by interpolation knowing that $| \partial_t
  C_{\varepsilon} (t, x) | \lesssim \varepsilon^{- 3}$ from
  Lemma~\ref{l:cov-est-eps}.
\end{proof}

\begin{lemma}
  \label{l:res-product-1}We have for every $\alpha < 3$
  \[ \sum_{i \sim j} \left| \int K_i (x - y) P_t (y) \mathd y \right|  \int
     \frac{| K_j (x - y) |}{(| y | + t^{1 / 2})^{\alpha}} \mathd y \lesssim
     \frac{1}{(| x | + t^{1 / 2})^{3 + \alpha}} \]
\end{lemma}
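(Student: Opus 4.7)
Set $r = |x| + t^{1/2}$; the goal is to prove the bound $\lesssim r^{-3-\alpha}$. The first factor in the sum is pointwise a Littlewood--Paley block of the heat kernel, $A_i(x,t) := |\Delta_i P_t(x)|$, while the second factor is a convolution $B_j(x,t) = (|K_j| \ast f)(x)$ where $f(y) = (|y|+t^{1/2})^{-\alpha}$. The strategy is to obtain clean scale-by-scale estimates for $A_i$ and $B_j$ in terms of $r$ and $2^{-i}, 2^{-j}$ separately, and then combine them using $i \sim j$.

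First I would prove the Fourier-side estimate
\[
  |\Delta_i P_t(x)| \lesssim \frac{2^{3i}}{(1 + 2^i r)^N}
\]
for any $N \geq 0$: since $\widehat{\Delta_i P_t}(\xi) = \rho(2^{-i}\xi)\,e^{-t|\xi|^2}$ is supported on $|\xi| \sim 2^i$, standard integration by parts in $\xi$ yields the factor $(1+2^i|x|)^{-N}$, and the Gaussian factor $e^{-ct 2^{2i}}$ absorbs $(1+2^i t^{1/2})^{-N}$. Together these give the claimed joint decay in $2^i r$.

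Next I would estimate $B_j$ by splitting $|K_j| \ast f$ into near/far regions. The claim is $B_j \lesssim \min(r^{-\alpha},\, 2^{j\alpha})$. When $2^{-j} \gtrsim r$, one splits the integral at $|y| \sim 2^{-j}$: on $|y| \lesssim 2^{-j}$, the kernel is of size $\lesssim 2^{3j}$ and the condition $\alpha < 3$ gives $\int_{|y|\lesssim 2^{-j}} (|y|+t^{1/2})^{-\alpha}\,dy \lesssim 2^{-j(3-\alpha)}$; on $|y| \gtrsim 2^{-j}$, one uses the polynomial decay of $|K_j|$ together with Lemma \ref{l:hom-conv}-type estimates for the singular weight. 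Both contributions yield $2^{j\alpha}$. When $2^{-j} \lesssim r$, one splits at $|x-y| \sim r/2$: in the near region $(|y|+t^{1/2})^{-\alpha} \lesssim r^{-\alpha}$ so $\int |K_j|$ bounds the contribution by $r^{-\alpha}$; in the far region, the polynomial decay of $|K_j|$ together with $\alpha < 3$ again yields $r^{-\alpha}$.

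Finally I would combine these bounds and split the $i$-sum at the dyadic scale $2^i \sim r^{-1}$. In the low-frequency regime $2^i \lesssim r^{-1}$ we have $A_i \lesssim 2^{3i}$ and $B_j \lesssim 2^{j\alpha}$ (using $j \sim i$), so the sum is bounded by $\sum_{2^i \lesssim r^{-1}} 2^{i(3+\alpha)} \lesssim r^{-3-\alpha}$. In the high-frequency regime $2^i \gtrsim r^{-1}$, $A_i \lesssim 2^{3i}(2^i r)^{-N}$ and $B_j \lesssim r^{-\alpha}$, giving $\sum_{2^i \gtrsim r^{-1}} 2^{i(3-N)} r^{-\alpha-N} \lesssim r^{-3-\alpha}$ provided $N > 3$. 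The main obstacle is the careful verification of the $B_j$ estimate in the regime $2^{-j} \lesssim r$, which genuinely uses the assumption $\alpha < 3$ (for integrability of the singularity of $f$ near the origin) and requires a two-region splitting to handle the interplay between the polynomial decay of $K_j$ and the inhomogeneous weight $(|y|+t^{1/2})^{-\alpha}$.
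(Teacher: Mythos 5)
Your plan is correct and the high-level bookkeeping is the same as the paper's: bound each of the two factors scale by scale and then sum over the dyadic index.  Your two-regime split ($2^{i}\lesssim r^{-1}$ vs.~$2^{i}\gtrsim r^{-1}$) is the same thing as the paper's single combined bound $\sum_i 2^{-i}(r+2^{-i})^{-4-\alpha}$, which the paper estimates by the integral $\int_0^1(r+\lambda)^{-4-\alpha}\mathd\lambda\lesssim r^{-3-\alpha}$; these are interchangeable.  Likewise your $B_j\lesssim\min(r^{-\alpha},2^{j\alpha})$ is exactly the paper's~(\ref{e:est-bicorr}) $\int|K_j(x-y)|(|y|+t^{1/2})^{-\alpha}\mathd y\lesssim(r+2^{-j})^{-\alpha}$, and the region-by-region verification you sketch mirrors the paper's.

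The one place you genuinely diverge is in proving the estimate on $\Delta_i P_t$.  The paper works entirely in physical space: it exploits $\int K_i=0$ to write $\int K_i(x-y)P_t(y)\mathd y=\int K_i(x-y)\left[P_t(y)-P_t(x)\right]\mathd y$, applies the mean value theorem to gain a factor of $2^{-i}$ against $P_t'$, and then runs a three-way case analysis on which of $|x|$, $t^{1/2}$, $2^{-i}$ dominates, producing exactly the exponent $2^{-i}(r+2^{-i})^{-4}$ (i.e.\ $N=4$).  You instead propose a Fourier-side argument: integration by parts in $\xi$ against $\rho(2^{-i}\xi)e^{-t|\xi|^2}$ yields $(1+2^i|x|)^{-N}$, and the Gaussian factor supplies $(1+2^i t^{1/2})^{-N}$, which combine (via $\min(1/(1+a)^N,1/(1+b)^N)\lesssim(1+a+b)^{-N}$) to $2^{3i}(1+2^i r)^{-N}$ for arbitrary $N$.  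Your route is somewhat cleaner, avoids the case analysis, and gives rapid decay in $N$ when only $N=4$ is actually needed; the paper's route is elementary and immediately valid on the torus without detouring through $\mathbb{R}^3$ and Poisson summation.  Both work; this is a different sub-technique rather than a different proof.

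One small point worth tightening when you flesh out $B_j$ in the regime $2^{-j}\lesssim r$: the split ``at $|x-y|\sim r/2$'' as stated is not quite enough when $|x|<r/2$, since then you cannot lower-bound $|y|$ from $|x-y|\leq r/2$.  The clean version is: if $t^{1/2}\geq r/2$ the weight $(|y|+t^{1/2})^{-\alpha}\lesssim r^{-\alpha}$ uniformly and no split is needed, while if $|x|\geq r/2$ you split at $|x-y|\sim|x|/2$ so that $|y|\gtrsim|x|\gtrsim r$ in the near region; in the far region $|x-y|\gtrsim|x|\gtrsim 2^{-j}$ you use the polynomial decay of $K_j$ and $\alpha<3$ exactly as you describe.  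This is essentially what the paper's proof of~(\ref{e:est-bicorr}) does, with that same case split.
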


\begin{proof}
  We will show that
  \begin{equation}
    \left| \int K_i (x - y) P_t (y) \mathd y \right| \lesssim 2^{- i} (| x | +
    t^{1 / 2} + 2^{- i})^{- 4}, \label{e:est-KP}
  \end{equation}
  and that
  \begin{equation}
    \int \frac{| K_i (x - y) |}{(| y | + t^{1 / 2})^{\alpha}} \mathd y
    \lesssim (| x | + t^{1 / 2} + 2^{- i})^{- \alpha}, \label{e:est-bicorr}
  \end{equation}
  from which we deduce that
  \[ \sum_{i \sim j} \left| \int K_i (x - y) P_t (y) \mathd y \right|  \int
     \frac{| K_j (x - y) |}{(| y | + t^{1 / 2})^{\alpha}} \mathd y \lesssim
     \sum_i \frac{2^{- i}}{(| x | + t^{1 / 2} + 2^{- i})^{4 + \alpha}} . \]
  Bounding the sum over $i$ with an integral, we conclude
  \[ \int_0^1 \frac{\mathd \lambda}{\lambda} \frac{\lambda}{(| x | + t^{1 / 2}
     + \lambda)^{4 + \alpha}} = \frac{1}{(| x | + t^{1 / 2})^{3 + \alpha}}
     \int_0^{1 / (| x | + t^{1 / 2})} \frac{\mathd \lambda}{(1 + \lambda)^{4 +
     \alpha}} \lesssim \frac{1}{(| x | + t^{1 / 2})^{3 + \alpha}} . \]
  Let us show~(\ref{e:est-KP}). We want to estimate
  \begin{eqnarray*}
    I & = & \int K_i (x - y) P_t (y) \mathd y = \int K_i (x - y) [P_t (y) -
    P_t (x)] \mathd y\\
    & = & \int_0^1 \mathd \tau \int K_i (x - y) [P_t' (x + \tau (y - x)) (y -
    x)] \mathd y\\
    | I | & \lesssim & \int_0^1 \mathd \tau \int | (y - x) K_i (x - y) | |
    P_t' (x + \tau (y - x)) | \mathd y \lesssim 2^{- i} \int_0^1 \mathd \tau
    \int | y K_1 (y) | | P_t' (x + \tau 2^{- i} y) | \mathd y\\
    & \lesssim & 2^{- i} \int_0^1 \mathd \tau \int | y K_1 (y) | \frac{e^{- c
    | x + \tau 2^{- i} y |^2 / t}}{t^2} \mathd y
  \end{eqnarray*}
  where
  \[ | P'_t (z) | = \left| C \frac{e^{- | z |^2 / t}}{t^{4 / 2}} \frac{z}{t^{1
     / 2}} \right| \leqslant C \frac{e^{- c | z |^2 / t}}{t^2} . \]
  When $t^{1 / 2} \geqslant 2^{- i}, | x |$ we have
  \[ | I | \lesssim 2^{- i} t^{- 2} \lesssim 2^{- i} (| x | + t^{1 / 2} + 2^{-
     i})^{- 4} . \]
  When \ $2^{- i} \geqslant t^{1 / 2}, | x |$ we estimate simply
  \[ | I | \lesssim \int | K_i (x - y) | P_t (y) \mathd y \lesssim 2^{3 i}
     \lesssim 2^{- i} (| x | + t^{1 / 2} + 2^{- i})^{- 4} . \]
  When $| x | \geqslant 2^{- i}, t^{1 / 2}$ we have instead that either $| x |
  \geqslant 2 \tau 2^{- i} | y |$ or $| x | < 2 \tau 2^{- i} | y |$. In the
  first region $| x + \tau 2^{- i} y | \geqslant c | x |$ so
  \[ | I | \lesssim 2^{- i} \int_0^1 \mathd \tau \int | y K_1 (y) | \frac{e^{-
     c' | x |^2 / t}}{t^2} \mathd y \lesssim 2^{- i} \frac{e^{- c' | x |^2 /
     t}}{t^2} \lesssim 2^{- i} | x |^{- 4} \lesssim 2^{- i} (| x | + t^{1 / 2}
     + 2^{- i})^{- 4} . \]
  while in the second region $| y | \geqslant 2^i | x | / (2 \tau)$, then $| y
  K_1 (y) | \leqslant | y K_1 (y) |^{1 / 2} f (2^i | x | / (2 \tau))$ where
  $f$ is a rapidly decreasing function and in this region
  \begin{eqnarray*}
    | I | & \lesssim & 2^{- i} \int_0^1 \mathd \tau f (2^i | x | / (2 \tau))
    \int \frac{e^{- c | x + \tau 2^{- i} y |^2 / t}}{t^2} \mathd y\\
    & \lesssim & 2^{- i} \int_0^1 \mathd \tau f (2^i | x | / (2 \tau)) \int
    \frac{e^{- c' | \tau 2^{- i} y |^2 / t}}{t^{3 / 2} | \tau 2^{- i} y |}
    \mathd y \lesssim 2^{- i} \int_0^1 \mathd \tau f (2^i | x | / (2 \tau))
    \frac{2^{3 i}}{\tau^3 | x |} \int \frac{e^{- c' | y |^2 / t}}{t^{3 / 2}}
    \mathd y\\
    & \lesssim & \frac{2^{- i}}{| x |^4} \int_0^1 \mathd \tau f (2^i | x | /
    (2 \tau)) \frac{2^{3 i} | x |^3}{\tau^3} \lesssim \frac{2^{- i}}{| x |^4}
    \lesssim 2^{- i} (| x | + t^{1 / 2} + 2^{- i})^{- 4} .
  \end{eqnarray*}
  So we conclude that~(\ref{e:est-KP}) holds. Let us turn
  to~(\ref{e:est-bicorr}). When $t^{1 / 2} \geqslant 2^{- i}, | x |$ we have
  \[ \int \frac{| K_i (x - y) |}{(| y | + t^{1 / 2})^{\alpha}} \mathd y
     \lesssim \frac{1}{t^{\alpha}} \int | K_i (x - y) | \mathd y \lesssim
     \frac{1}{t^{\alpha}} \lesssim (| x | + t^{1 / 2} + 2^{- i})^{- \alpha} .
  \]
  When \ $2^{- i} \geqslant t^{1 / 2}, | x |$ we estimate
  \[ \int \frac{| K_i (x - y) |}{(| y | + t^{1 / 2})^{\alpha}} \mathd y
     \lesssim 2^{\alpha i} \int \frac{| K_1 (y) |}{| 2^i x + y |^{\alpha}}
     \mathd y \lesssim 2^{2 i} \sup_z \int \frac{| K_1 (y) |}{| z + y
     |^{\alpha}} \mathd y \lesssim 2^{2 i} \lesssim (| x | + t^{1 / 2} + 2^{-
     i})^{- \alpha}, \]
  and finally when $| x | \geqslant 2^{- i}, t^{1 / 2}$ we have either $| x |
  \geqslant 2^{- i + 1} | y |$ or $| x | < 2^{- i + 1} | y |$. In the first
  region $| x + 2^{- i} y | \geqslant c | x |$ so
  \[ \int \frac{| K_i (x - y) |}{(| y | + t^{1 / 2})^{\alpha}} \mathd y
     \lesssim \int \frac{| K_1 (y) |}{| x + 2^{- i} y |^{\alpha}} \mathd y
     \lesssim | x |^{- \alpha} \lesssim (| x | + t^{1 / 2} + 2^{- i})^{-
     \alpha}, \]
  while in the second $| y | \geqslant 2^i | x | / 2$, then $| K_1 (y) |
  \leqslant | K_1 (y) |^{1 / 2} f (2^i | x | / 2)$ where $f$ is another
  rapidly decreasing function and in this region
  \[ \int \frac{| K_i (x - y) |}{(| y | + t^{1 / 2})^{\alpha}} \mathd y
     \lesssim f (2^i | x | / 2) \int \frac{| K_1 (y) |^{1 / 2}}{| 2^{- i} y
     |^{\alpha}} \mathd y \lesssim 2^{\alpha i} f (2^i | x | / 2) \lesssim | x
     |^{- \alpha} \lesssim (| x | + t^{1 / 2} + 2^{- i})^{- \alpha}, \]
  concluding our argument.
\end{proof}

\begin{lemma}
  \label{l:integral-est-final}For $m, n \in (0, 5)$, $k, \ell \in [0, 2)$
  define
  \[ I_{k, m, n} \assign \int_{\zeta_{1, 2}, \zeta'_{1, 2}} C_{\varepsilon}
     (\zeta_1 - \zeta_2)^k C_{\varepsilon} (\zeta_1 - \zeta_2)^{\ell}
     C_{\varepsilon} (\zeta_1 - \zeta'_1)^m C_{\varepsilon} (\zeta_2 -
     \zeta'_2)^n | \mu_{q, \zeta_1, \zeta_2} | | \mu_{q, \zeta_1', \zeta_2'}
     |, \]
  with $\mu_{q, \zeta_1, \zeta_2}$ for $\bar{\zeta} = (t, \bar{x})$, $\zeta_i
  = (s_i, x_i)$ $i = 1, 2$ defined as
  \[ \mu_{q, \zeta_1, \zeta_2} \assign [\int_{x, y} K_{q, \bar{x}} (x) \sum_{i
     \sim j} K_{i, x} (y) K_{j, x} (x_2) P_{t - s_1} (y - x_1)] \delta (t -
     s_2) \mathd \zeta_1 \mathd \zeta_2 . \]
  If $\ell = 0$, $0 < m + k - 2 < 5$, $m + k - 2 \in (- 1, 5)$ and $k + m + n
  - 4 \in (0, 5)$ we have the bound
  \[ I_{k, m, n} \lesssim 2^{(k + m + n - 4) q} . \]
  If $(k + m - 2), (\ell + m - 2) \in (0, 5)$, $k + m + \ell - 4 \in (0, 5)$
  and $k + \ell + m + n - 4 \in (0, 5)$ we have the bound
  \[ I_{k, m, n} \lesssim 2^{(k + \ell + m + n - 4) q} . \]
\end{lemma}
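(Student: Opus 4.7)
The plan is to reduce $I_{k,m,n}$ to a Feynman-type diagram with homogeneous edge weights and apply Lemma~\ref{l:hom-conv} recursively to extract the effective scaling in $q$. What is specific to the present setting is the careful choice of integration order, dictated by the asymmetric structure of the measure $\mu_{q, \zeta_1, \zeta_2}$.

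First I would apply the uniform pointwise bounds $|C_\varepsilon(\zeta)| \lesssim \interleave \zeta \interleave^{-1}$ from Lemma~\ref{l:cov-est-tx} and $|P_t(x)| \lesssim \interleave (t,x) \interleave^{-3}$ from Lemma~\ref{l:heat-est}, together with the time delta functions in $\mu_{q,\zeta_1,\zeta_2}$ (which collapse $s_2 = s'_2 = t$), to reduce $I_{k,m,n}$ to an integral of products of kernels of the form $\interleave \cdot \interleave^{-\alpha}$ against two factors of $K_{q,\bar x}$ and the resonant kernels $\sum_{i \sim j} K_{i,x} K_{j,x}$ and $\sum_{i' \sim j'} K_{i',x'} K_{j',x'}$.

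Next I would integrate the inner variables in a carefully chosen order. A natural choice is to integrate $\zeta_2, \zeta'_2$ first, since they are anchored to the external points $x, x'$ via the $K_{j,x}, K_{j',x'}$ kernels at dyadic scales; then $\zeta_1, \zeta'_1$ are integrated against the heat kernels $P_{t-s_1}, P_{t-s'_1}$ and the remaining edge $C_\varepsilon(\zeta_1 - \zeta'_1)^m$. Lemma~\ref{l:hom-conv} is applied at each stage: the convolution of two homogeneous kernels of exponents $-\alpha_1, -\alpha_2 \in (-5, 0)$ with $-\alpha_1 - \alpha_2 + 5 \in (-5, 0)$ yields a kernel of exponent $-\alpha_1 - \alpha_2 + 5$. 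Iterating this systematically eliminates all inner variables and leaves a residual kernel in $x - x'$ of total homogeneity $-(4 - k - \ell - m - n)$ (or $-(4 - k - m - n)$ when $\ell = 0$, since one edge is absent). The two outer $K_{q,\bar x}$ kernels then localize the residual at scale $2^{-q}$, producing the claimed factors $2^{(k + \ell + m + n - 4)q}$ and $2^{(k + m + n - 4)q}$ respectively by a standard rescaling argument.

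The main obstacle is bookkeeping: verifying at each intermediate step that the convolved exponents lie in $(-5, 0)$ and that their sum after the shift by $+5$ remains in the admissible range of Lemma~\ref{l:hom-conv}. The hypotheses imposed on $(k, m, n)$ in the case $\ell = 0$ and on $(k, \ell, m, n)$ in general are exactly tailored to validate every application of the lemma along this chain; in particular, the $\ell = 0$ case requires a different ordering of integrations because the edge between $\zeta'_1$ and $\zeta'_2$ is missing, which explains the slightly different hypothesis set. Finally, one needs to check that the resonant kernels $\sum_{i \sim j} K_{i,x} K_{j,x}$ are compatible with the outer $K_{q,\bar x}$ localization, which follows from the standard Littlewood--Paley support property forcing $i \sim q$ in the effective sum.
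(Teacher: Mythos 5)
Your outline correctly identifies the two structural ingredients — homogeneous-kernel convolution via Lemma~\ref{l:hom-conv} and a final rescaling against the two outer $K_{q,\bar{x}}$ factors — but it skips the step that actually makes the estimate close: the cancellation hidden in the inner resonant kernel. The paper does \emph{not} take absolute values and convolve directly; it first rewrites
\[ \mu_{q,\zeta_1,\zeta_2} = \bar\mu_{q,\zeta_1,\zeta_2} + \hat\mu_{q,\zeta_1,\zeta_2}, \]
where $\bar\mu$ carries a difference $P_{t-s_1}(x_2-x_1) - P_{t-s_1}(\bar x - x_1)$ and $\hat\mu$ carries a difference $K_{q,\bar x}(x) - K_{q,\bar x}(x_2)$. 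This is obtained by exploiting $\int_y K_{i,x}(y)=0$ together with the resolution of identity $\sum_{i,j}K_{i,x}(y)K_{j,x}(x_2)=\delta(x-y)\delta(x-x_2)$ and the vanishing $\int_x K_{i,x}(x_1')K_{j,x}(x_2)=0$ for $|i-j|>1$, which lets one replace $\sum_{i\sim j}$ by the full sum. The difference structures produce the crucial extra power of $2^{-q}$ (the same mechanism that Lemma~\ref{l:res-product-1} encodes in the bound $|\int K_i(x-y)P_t(y)\mathd y|\lesssim 2^{-i}(|x|+t^{1/2}+2^{-i})^{-4}$); only then does Lemma~\ref{l:hom-conv} (for $\bar\mu$) and a rescaling argument (for $\hat\mu$, using $K_{q,\bar x}(x)-K_{q,\bar x}(x_2)=2^{3q}(x_2-x)\int_0^1 K'(\cdots)\mathd\tau$) land on the advertised power $2^{(k+\ell+m+n-4)q}$.

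If instead you pass to pointwise bounds on $P$ and on the Littlewood--Paley kernels before doing any cancellation, as your first step proposes, you lose this gain. The heat kernel then contributes homogeneity $-3$ rather than $-4$, and the $\sum_{i\sim j}$ over dyadic scales is not summable without the extra $2^{-i}$ factor supplied by the cancellation; the resulting residual kernel in $x-x'$ has the wrong homogeneity and the estimate does not close. Relatedly, the remark that the outer $K_{q,\bar x}$ forces $i\sim q$ in the resonant sum is not correct: $\Delta_q$ applied to a resonant product only enforces $i,j\gtrsim q$ (with $i\sim j$), not $i\sim q$; summing over all $i\geqslant q$ is exactly what makes the $2^{-i}$ gain from the zero-average indispensable. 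These are the two points your proof would need to repair.
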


\begin{proof}
  Observe that
  \begin{eqnarray*}
    \mu_{q, \zeta_1, \zeta_2} & = & [\sum_{i \sim j} \int_{x, y} K_{q,
    \bar{x}} (x_2) K_{i, x} (y) K_{j, x} (x_2) (P_{t - s_1} (y - x_1) - P_{t -
    s_1} (\bar{x} - x_1))] \delta (t - s_2) \mathd \zeta_1 \mathd \zeta_2\\
    &  & + [\sum_{i \sim j} \int_{x, y} (K_{q, \bar{x}} (x) - K_{q, \bar{x}}
    (x_2)) K_{i, x} (y) K_{j, x} (x_2) P_{t - s_1} (y - x_1)] \delta (t - s_2)
    \mathd \zeta_1 \mathd \zeta_2\\
    & = & K_{q, \bar{x}} (x_2) [P_{t - s_1} (x_2 - x_1) - P_{t - s_1}
    (\bar{x} - x_1)] \delta (t - s_2) \mathd \zeta_1 \mathd \zeta_2\\
    &  & + [\sum_{i \sim j} \int_{x, y} [K_{q, \bar{x}} (x) - K_{q, \bar{x}}
    (x_2)] K_{i, x} (y) K_{j, x} (x_2) P_{t - s_1} (y - x_1)] \delta (t - s_2)
    \mathd \zeta_1 \mathd \zeta_2\\
    & = & \bar{\mu}_{q, \zeta_1, \zeta_2} + \hat{\mu}_{q, \zeta_1, \zeta_2}
  \end{eqnarray*}
  where in the first line we used $\int_y K_{i, x} (y) = 0$ and the fact that
  $\int_x K_{i, x} (x_1') K_{j, x} (x_2) = 0$ if $| i - j | > 1$ and $\sum_{i,
  j} K_{i, x} (y) K_{j, x} (x_2) = \delta (x_2 - y) \delta (x_2 - x)$. Now the
  estimation of the term
  \[ \bar{I}_{k, m, n} \assign \int_{\zeta_{1, 2}, \zeta'_{1, 2}}
     C_{\varepsilon} (\zeta_1 - \zeta_2)^k C_{\varepsilon} (\zeta_1' -
     \zeta'_2)^{\ell} C_{\varepsilon} (\zeta_1 - \zeta'_1)^m C_{\varepsilon}
     (\zeta_1 - \zeta_2)^n | \bar{\mu}_{q, \zeta_1, \zeta_2} | | \bar{\mu}_{q,
     \zeta_1', \zeta_2'} |, \]
  with $\bar{\mu}_{q, \zeta_1, \zeta_2} = K_{q, \bar{x}} (x_2) [P_{t - s_1}
  (x_2 - x_1) - P_{t - s_1} (\bar{x} - x_1)] \delta (t - s_2) \mathd \zeta_1
  \mathd \zeta_2$ can be done with Lemma~\ref{l:hom-conv} and gives the
  expected result. The integral
  \[ \hat{I}_{k, m, n} \assign \int_{\zeta_{1, 2}, \zeta'_{1, 2}}
     C_{\varepsilon} (\zeta_1 - \zeta_2)^k C_{\varepsilon} (\zeta_1' -
     \zeta'_2)^{\ell} C_{\varepsilon} (\zeta_1 - \zeta'_1)^m C_{\varepsilon}
     (\zeta_1 - \zeta_2)^n | \hat{\mu}_{q, \zeta_1, \zeta_2} | | \hat{\mu}_{q,
     \zeta_1', \zeta_2'} |, \]
  with $\hat{\mu}_{q, \zeta_1, \zeta_2} = [\sum_{i \sim j} \int_{x, y} [K_{q,
  \bar{x}} (x) - K_{q, \bar{x}} (x_2)] K_{i, x} (y) K_{j, x} (x_2) P_{t - s_1}
  (y - x_1)] \delta (t - s_2) \mathd \zeta_1 \mathd \zeta_2$ can be estimated
  by multiple changes of variables. We have $K_{q, \bar{x}} (x) - K_{q,
  \bar{x}} (x_2) = 2^{3 q} (x_2 - x) \int_0^1 K' (2^q (x_2 - x) \tau - 2^q
  (\bar{x} - x_2)) \mathd \tau$, and by the scaling properties of
  $C_{\varepsilon}$ and $P_{t, y}$, namely $C_{\varepsilon} (2^{- 2 i} s, 2^{-
  i} x) \lesssim 2^i C_{\varepsilon} (s, x)$ and $P_{2^{- 2 i} s} (2^{- i} x)
  \lesssim 2^{3 i} P_s (x)$ given by Lemma~\ref{l:heat-est} and
  Lemma~\ref{l:cov-est-tx}, we obtain easily the bound on $\hat{I}_{k, m, n}$
  by rescaling the integral.
\end{proof}

\section{Some Malliavin calculus\label{a:malliavin}}

We recall here some tools from Malliavin calculus that are widely used in the
rest of the paper. An introduction to Malliavin calculus and the proofs of
some results of this Appendix can be found
in~{\cite{nualart_2006,nourdin_peccati_2012,shigekawa_stochastic_2004}}.
Lemma~\ref{l:productformula} was inspired by the calculations
of~{\cite{nourdin_nualart_2007}}.

\subsection{Notation }\label{a:malliavin-notation}

Let $\{ W (h) \}_{h \in H}$ be an isonormal Gaussian process indexed by a real
separable Hilbert space $H$. Let $(\Omega, \mathcal{F}, \mathbbm{P})$ a
probability space with $\mathcal{F}$ generated by the isonormal Gaussian
process $W$, we note $L^2 (\Omega) = \bigoplus_{n \geqslant 0} \mathcal{H}_n$
the well-known Wiener chaos decomposition of $L^2 (\Omega)$. For any real
separable Hilbert space $V$ and $k \in \mathbbm{N}$, let $\mathD^k : L^2
(\Omega ; V) \rightarrow L^2 (\Omega ; H^{\odot k} \otimes V)$ be the
Malliavin derivative and $\delta^k : L^2 (\Omega ; H^{\otimes k} \otimes V)
\rightarrow L^2 (\Omega ; V)$ the divergence operator (also called Skorohod
integral) defined as the adjoint of $\mathD^k$. For $p \geqslant 1$ we will
write $\mathbbm{D}^{k, p} (V) \subset L^p (\Omega ; V)$ for the closure of
smooth random variables with respect to the norm
\[ \| \Psi \|_{\mathbbm{D}^{k, p} (V)} = [\mathbbm{E} (\| \Psi \|_V^p) +
   \sum_{j = 1}^k \mathbbm{E} (\| \mathD^j \Psi \|^p_{H^{\otimes j} \otimes
   V})]^{1 / p} \]
with the notation $\mathbbm{D}^{k, p} \assign \mathbbm{D}^{k, p}
(\mathbbm{R})$. Let $\{ P_t \}_{t \in \mathbbm{R}^+}$ the Ornstein-Uhlenbeck
semigroup and $L : L^2 (\Omega) \rightarrow L^2 (\Omega)$ its generator (i.e.
$e^{tL} = P_t$). Following~{\cite{shigekawa_stochastic_2004}} we introduce the
Green operator
\[ G_j = (j - L)^{- 1} \]
with the notation
\begin{equation}
  G_{[j]}^{[m]} \assign \prod_{k = j}^m G_k  \quad \text{for} \quad 1
  \leqslant j \leqslant m \label{e:green-op}
\end{equation}
so that $G_{[j]}^{[j]} = G_j$. To avoid confusion, it is worth stressing that
$G_{[j]}^{[m]}$ is \tmtextit{not} the $m$-th power of the operator $G_j$ but
just a shortcut for $\prod_{k = j}^m G_k $.

\subsection{Partial chaos expansion}

Let $\Psi \in L^2 (\Omega)$ which has the Wiener chaos decomposition $\Psi =
\sum_{n \geqslant 0} J_n \Psi$. Then by Proposition~1.2.2 of
{\cite{nualart_2006}} $\mathD J_n \Psi = J_{n - 1} \mathD \Psi$, and knowing
that $LJ_n \Psi = - nJ_n \Psi$ we obtain the commutation property
\begin{equation}
  \mathD (j - L)^{- \alpha} \Psi = \mathD \sum_{n \geqslant 0} \frac{1}{(j +
  n)^{\alpha}} J_n \Psi = \sum_{n \geqslant 1} \frac{1}{(j + n)^{\alpha}} J_{n
  - 1} \mathD \Psi = (j + 1 - L)^{- \alpha} \mathD \Psi \label{e:LDcomm}
\end{equation}
for every $\alpha > 0$, $j > 0$. The above formula holds also for $j = 0$ if
$\mathbbm{E} (\Psi) = 0$.

The results we have recalled so far let us write an $n$th-order Wiener chaos
expansion for a random variable in $\mathbbm{D}^{n, 2}$:

\begin{lemma}
  \label{l:Fexpansion}Let $\Psi \in \mathbbm{D}^{n, 2}$ and $G_{[j]}^{[m]}$ as
  in~(\ref{e:green-op}). Then for every $n \in \mathbbm{N}\backslash \{ 0 \}$
  $G_{[1]}^{[n]} \mathD^n \Psi \in \tmop{Dom} \delta^n$, $J_0 \mathD^k \Psi
  \in \tmop{Dom} \delta^k$ $\forall 0 \leqslant k < n$ and
  \begin{equation}
    \delta^n G_{[1]}^{[n]} \mathD^n \Psi = (\tmop{id} - J_0 - \ldots - J_{n -
    1}) \Psi = \Psi - \sum_{k = 0}^{n - 1} \frac{1}{k!} \delta^k J_0 \mathD^k
    \Psi . \label{e:Fexpansion}
  \end{equation}
  \[ \  \]
\end{lemma}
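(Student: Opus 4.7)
The plan is to verify both equalities by decomposing $\Psi$ along its Wiener chaos $\Psi = \sum_{m \geq 0} J_m \Psi$ and checking the identity componentwise, using the spectral action of $L$ on each $\mathcal{H}_m$. On $\mathcal{H}_m$ the operator $L$ acts as $-m$, so $G_j = (j-L)^{-1}$ acts by the scalar $(j+m)^{-1}$, and consequently $G_{[1]}^{[n]}$ applied to a chaos-$m$ random variable multiplies it by $\prod_{k=1}^n (k+m)^{-1} = \frac{m!}{(m+n)!}$. Together with the standard relations $\mathD J_m = J_{m-1} \mathD$ (which in particular gives $\mathD^n J_m \Psi = 0$ for $m < n$ and $\mathD^n J_m \Psi \in \mathcal{H}_{m-n}$ for $m \geq n$), and the Meyer identity $\delta^n \mathD^n \Phi = \frac{m!}{(m-n)!}\Phi$ for $\Phi \in \mathcal{H}_m$ with $m \geq n$, the computation on a single chaos component yields
\[
\delta^n G_{[1]}^{[n]} \mathD^n J_m \Psi = \frac{(m-n)!}{m!} \cdot \frac{m!}{(m-n)!} J_m \Psi = J_m \Psi, \qquad m \geq n,
\]
and vanishes for $m<n$. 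Summing over $m$ then formally gives $\delta^n G_{[1]}^{[n]} \mathD^n \Psi = \sum_{m\geq n} J_m \Psi = \Psi - \sum_{m=0}^{n-1} J_m \Psi$, which is the first equality.

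To make this rigorous, I would first check the domain claims. For the series defining $G_{[1]}^{[n]} \mathD^n \Psi$ in $L^2(\Omega; H^{\otimes n})$, I bound $\|G_{[1]}^{[n]} \mathD^n J_m \Psi\|^2$ by $\bigl(\tfrac{(m-n)!}{m!}\bigr)^2 \|\mathD^n J_m \Psi\|^2$, whose sum is controlled by $\|\mathD^n \Psi\|^2_{L^2(\Omega; H^{\otimes n})} < \infty$ since $\Psi \in \mathbb{D}^{n,2}$. To check $G_{[1]}^{[n]} \mathD^n \Psi \in \tmop{Dom}\,\delta^n$, I use the dual characterization: for any smooth test variable $F$ with chaos decomposition $F = \sum_k J_k F$, the pairing $\mathbb{E}\langle G_{[1]}^{[n]} \mathD^n \Psi, \mathD^n F\rangle$ reduces (by orthogonality between chaoses) to $\sum_{m \geq n} \mathbb{E}(J_m \Psi \cdot J_m F)$, which is bounded by $\|\Psi\|_{L^2}\|F\|_{L^2}$. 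This confirms that $G_{[1]}^{[n]} \mathD^n \Psi$ is in $\tmop{Dom}\,\delta^n$ and identifies $\delta^n G_{[1]}^{[n]} \mathD^n \Psi$ as the $L^2$ projection onto $\bigoplus_{m \geq n} \mathcal{H}_m$.

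For the second equality, I need the identification $J_m \Psi = \tfrac{1}{m!}\delta^m J_0 \mathD^m \Psi$ for $0 \leq m < n$. The idea is that $J_0$ is the projection onto constants (i.e.\ the expectation), so $J_0 \mathD^m \Psi$ collects exactly the chaos-$0$ component of $\mathD^m \Psi$, namely $\mathbb{E}[\mathD^m \Psi] = \mathbb{E}[\mathD^m J_m \Psi] = \mathD^m J_m \Psi$ (the last equality because $\mathD^m$ sends $\mathcal{H}_m$ into $\mathcal{H}_0 = \mathbb{R}$ and is deterministic). Applying $\delta^m$ and using Meyer's identity on $\mathcal{H}_m$ gives $\delta^m \mathD^m J_m \Psi = m! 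J_m \Psi$, hence the claim. The membership $J_0\mathD^k\Psi \in \tmop{Dom}\,\delta^k$ is automatic since these are deterministic elements of $H^{\otimes k}$ (symmetric tensors), which always lie in the domain of the iterated divergence for isonormal Gaussian processes.

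The main obstacle, though modest, is checking the $L^2$-convergence of the chaos series under only the assumption $\Psi \in \mathbb{D}^{n,2}$ (rather than $\Psi \in \mathbb{D}^{\infty}$): one must verify that the partial sums $\sum_{m \leq N} \delta^n G_{[1]}^{[n]} \mathD^n J_m \Psi$ form a Cauchy sequence in $L^2(\Omega)$, which follows from the $L^2$ isometry $\|\delta^n (h_m)\|_{L^2}^2 = n!\|h_m\|_{H^{\otimes n}}^2$ for $h_m \in \mathcal{H}_m^{\otimes n}$ together with the estimate $\|G_{[1]}^{[n]}\mathD^n J_m \Psi\|^2 \lesssim m^{-2n}\|\mathD^n J_m \Psi\|^2$, which is summable. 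Once this is established, the componentwise identity extends by density/continuity to all of $\mathbb{D}^{n,2}$.
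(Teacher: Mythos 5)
Your proof is correct and takes a genuinely different route from the paper's. The paper starts from the algebraic identity $L = -\delta\mathD$ (Nualart, Prop.~1.4.3) together with the commutation $\mathD\,(j-L)^{-1} = (j+1-L)^{-1}\mathD$ to derive the first-order expansion $\Psi = \mathbbm{E}(\Psi) + \delta\, G_1\mathD\Psi$, and then iterates this at the operator level $n$ times to produce the formula; the identification $\tfrac{1}{k!}\delta^k J_0\mathD^k\Psi = J_k\Psi$ is obtained afterwards by a short duality pairing showing $\delta^k J_0\mathD^k\Psi \in \mathcal{H}_k$ and matching chaos components. You instead verify everything directly chaos by chaos: the spectral action of $L$ on $\mathcal{H}_m$ shows that $G_{[1]}^{[n]}$ is a scalar multiplier on each chaos, and the Meyer identity $\delta^n\mathD^n = \tfrac{m!}{(m-n)!}\,\tmop{id}$ on $\mathcal{H}_m$ then makes the Green operator's role as a normalizing factor transparent. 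The paper's argument is shorter but leaves both the iteration step and the domain/convergence bookkeeping implicit; yours is more elementary and, in particular, your duality check that $G_{[1]}^{[n]}\mathD^n\Psi \in \tmop{Dom}\,\delta^n$ is explicit and clean where the paper only says it is ``obvious by construction.'' One small imprecision on your side: the $L^2$ isometry $\|\delta^n(h_m)\|^2 = n!\,\|h_m\|^2$ that you invoke in the final convergence discussion holds for deterministic symmetric tensors $h\in H^{\odot n}$, not for arbitrary $\mathcal{H}_m$-valued elements of $H^{\otimes n}$; but this is harmless, since your duality pairing already identifies $\delta^n G_{[1]}^{[n]}\mathD^n\Psi$ with $\Psi - \sum_{m<n}J_m\Psi$ as $L^2$ elements, and no separate Cauchy-sequence argument for the partial sums is actually needed.
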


\begin{proof}
  We have for any $\Psi \in L^2 (\Omega)$, since $L = - \delta \mathD$
  ({\cite{nualart_2006}}, Proposition~1.4.3):
  \[ \Psi -\mathbbm{E} (\Psi) = L L^{- 1} (\Psi - J_0 \Psi) = - \delta \mathD
     L^{- 1} (\Psi - J_0 \Psi) = \delta (1 - L)^{- 1} \mathD \Psi \]
  where we used (\ref{e:LDcomm}), and the fact that $(1 - L)^{- 1} \mathD \Psi
  \in \tmop{Dom} \delta$ is obvious by construction. This yields the first
  order expansion $\Psi =\mathbbm{E} (\Psi) + \delta (1 - L)^{- 1} \mathD
  \Psi$. Iterating the expansion up to order $n$ we obtain
  (\ref{e:Fexpansion}). It is clear that $J_0 \mathD^k \Psi \in \tmop{Dom}
  \delta^k$ since $J_0 \mathD^k \Psi$ is constant with values in $H^{\otimes
  k}$. The second equality comes from the fact that $\delta^k J_0 \mathD^k
  \Psi \in \mathcal{H}_k$ $\forall k \in \mathbbm{N}$, indeed $\forall \Psi'
  \in \mathbbm{D}^{k, 2}$:
  \[ \mathbbm{E} (\delta^k (J_0 \mathD^k \Psi) J_k \Psi') = \langle J_0
     \mathD^k \Psi, J_0 \mathD^k \Psi' \rangle_{L^2 (\Omega ; H^{\otimes k})}
     =\mathbbm{E} (\delta^k (J_0 \mathD^k \Psi) \Psi') \]
\end{proof}

In order to obtain $L^p$ estimations of the remainder term $\delta^n
G_{[1]}^{[n]} \mathD^n \Psi$ generated by expansion (\ref{e:Fexpansion}), we
used the following lemmas:

\begin{lemma}[{\cite{nualart_2006}}, Prop.~1.5.7]
  \label{l:delta-norm}Let $V$ be a real separable Hilbert space. For every $p
  > 1$ and every $q \in \mathbbm{N}, k \geqslant q$ and every $\Psi \in
  \mathbbm{D}^{k, p} (H^q \otimes V)$ we have
  \[ \| \delta^q (\Psi) \|_{\mathbbm{D}^{k - q, p} (V)} \lesssim_{k, p} \|
     \Psi \|_{\mathbbm{D}^{k, p} (H^q \otimes V)} \]
\end{lemma}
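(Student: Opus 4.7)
The plan is to reduce the statement to the classical first–order Meyer inequality for the divergence, then iterate by commuting $\mathD$ past $\delta$. The first ingredient is the base case: for every $p>1$, the divergence
$$
\delta : \mathbbm{D}^{1,p}(H\otimes V) \longrightarrow L^p(\Omega;V),
\qquad \|\delta u\|_{L^p(\Omega;V)} \lesssim_p \|u\|_{\mathbbm{D}^{1,p}(H\otimes V)},
$$
which is Meyer's inequality. I would take this as a known input (it is the $V$–valued version of the usual scalar statement, proved by Khintchine together with the $L^p$–boundedness of the Riesz transform $\mathD(I-L)^{-1/2}$ on Wiener space; see Nualart, Prop.~1.5.4).

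The second ingredient is the commutation relation between $\mathD$ and $\delta$. For smooth $u\in H\otimes V$,
$$
\mathD\,\delta(u) \;=\; u \;+\; (\delta \otimes \mathrm{id}_{H})\bigl(\mathD u\bigr)
$$
(viewed in $H\otimes V$), and by induction on $q$ one obtains, for $u\in H^{\otimes q}\otimes V$ sufficiently regular,
\begin{equation*}
\mathD\,\delta^{q}(u) \;=\; q\,\delta^{q-1}(\tilde u) \;+\; \delta^{q}(\mathD u),
\end{equation*}
where $\tilde u$ is the appropriate contraction of $u$ with one ``trace'' index. Iterating this identity $j$ times produces a finite linear combination of terms of the form $\delta^{q-a}(\mathD^{j-a} v)$ with $0\le a\le j$, where $v$ is obtained from $u$ by contractions of $a$ indices (which are continuous operations $H^{\otimes q}\otimes V \to H^{\otimes q-a}\otimes V$ once combined with the appropriate number of Malliavin derivatives).

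With these two ingredients in place, I would argue by a double induction on $q$ and on $k-q$. Fix $p>1$. For $q=1$, $k=1$ the statement is exactly Meyer's inequality. Assume the statement holds for $(q-1,k-1)$ and for $(q,k-1)$; to bound $\|\delta^q(\Psi)\|_{\mathbbm{D}^{k-q,p}(V)}$, one writes
\begin{equation*}
\mathD^{k-q}\delta^{q}(\Psi) \;=\; \sum_{a=0}^{k-q} c_{a,q}\, \delta^{q-a}\!\bigl(\mathcal{C}_a\, \mathD^{k-q-a}\Psi\bigr)
\end{equation*}
via the iterated commutation identity, where $\mathcal{C}_a$ is an explicit bounded contraction operator. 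Each summand is then controlled in $L^p(\Omega;H^{\otimes k-q}\otimes V)$: apply the inductive hypothesis at order $(q-a,\,k-q-a+q-a)$ to the inner Skorohod integral, which precisely requires bounding $\mathD^{k-q-a}\Psi$ in $\mathbbm{D}^{q-a,p}$; the total number of Malliavin derivatives of $\Psi$ needed is $(k-q-a)+(q-a)\le k$, so the full right–hand side is dominated by $\|\Psi\|_{\mathbbm{D}^{k,p}(H^{\otimes q}\otimes V)}$. Summing over $a$ and combining with the trivial $L^p$ bound on $\delta^q(\Psi)$ itself (the $a=0$, no–derivative term) yields the claim.

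The main obstacle is purely bookkeeping: verifying that the iterated commutation yields only the terms of the above form with contractions $\mathcal{C}_a$ that are bounded (uniformly in $a$) on $L^p(\Omega; H^{\otimes \cdot}\otimes V)$, and checking that the induction closes, i.e.\ that at each stage the inductive hypothesis is applied only to indices $(q',k')$ with $q'\le q$, $k'-q'\le k-q$, and one of the two inequalities strict. No genuine analytic difficulty beyond Meyer's inequality itself enters; density of smooth cylindrical random variables in $\mathbbm{D}^{k,p}$ allows one to justify the commutation identities in the smooth setting and then extend to the full Sobolev space by continuity.
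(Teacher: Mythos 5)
You should note at the outset that the paper does not prove this lemma at all: it is quoted verbatim from Nualart's book (Prop.~1.5.7), so the only meaningful comparison is with the standard textbook argument. Your strategy is exactly that argument: the first-order Meyer inequality for $\delta$, the commutation of $\mathD$ past $\delta$ (the paper records the iterated form of this identity in Appendix~\ref{a:malliavin}, eq.~(\ref{e:Ddelta-comm})), and an induction. So the approach is the right one, and most of your bookkeeping is sound.

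One step as written would fail, though. You dispose of the zero-derivative part of the $\mathbbm{D}^{k-q,p}$ norm by invoking ``the trivial $L^p$ bound on $\delta^q(\Psi)$ itself''. There is no such trivial bound: the estimate $\| \delta^q (\Psi) \|_{L^p} \lesssim \| \Psi \|_{\mathbbm{D}^{q,p}(H^{\otimes q} \otimes V)}$ is precisely the diagonal case $k=q$ of the lemma (the iterated Meyer inequality) and is the heart of the matter. It is also not reachable by the induction as you organized it, since for $(q,k)=(q,q)$ the hypothesis ``$(q,k-1)$'' is the inadmissible pair $(q,q-1)$. The standard fix is to treat the diagonal case by peeling one divergence: write $\delta^q(\Psi)=\delta\bigl(\delta^{q-1}(\Psi)\bigr)$ with the inner $\delta^{q-1}$ acting on $q-1$ of the $H$-indices so that $\delta^{q-1}(\Psi)$ is $H\otimes V$-valued, apply the first-order Meyer inequality, and then invoke the case $(q-1,q)$; in other words, induct on $q$, establishing for each $q$ the full statement for all $k\geqslant q$ simultaneously. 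With that reorganization the rest of your argument closes. A minor point of description: in $\mathD\,\delta^{q}(u)=q\,\delta^{q-1}(u)+\delta^{q}(\mathD u)$ no contraction of $u$ is taken --- one $H$-factor of $u$ is simply moved into the value slot, so your operators $\mathcal{C}_a$ are mere permutations of tensor factors, which is indeed harmless for the $L^p$ bounds.
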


\begin{remark}
  \label{r:delta-deterministic}Let $V$ be a real separable Hilbert space. For
  every $v \in V$ and every $\Psi \in \mathbbm{D}^{q, 2} (H^{\otimes q})$ with
  $q \in \mathbbm{N}$ we have $\Psi \otimes v \in \tmop{Dom} \delta^q$ and
  \[ \delta^q (\Psi) v = \delta^q (\Psi \otimes v) . \]
  Indeed, notice that for every smooth $\Psi' \in \mathbbm{D}^{q, 2} (V)$ and
  every smooth $\Psi \in \mathbbm{D}^{q, 2} (H^{\otimes q})$ we have
  \[ \mathbbm{E} (\langle \delta^q (\Psi \otimes v), \Psi' \rangle_V)
     =\mathbbm{E} (\langle \Psi \otimes v, \mathD^q \Psi' \rangle_{H^{\otimes
     q} \otimes V}) =\mathbbm{E} (\langle \delta^q (\Psi) v, \Psi' \rangle_V)
     . \]
  Now since $\mathD^q (\Psi \otimes v) = \mathD^q \Psi \otimes v$ and $\Psi
  \in \mathbbm{D}^{q, 2} (H^{\otimes q})$, we have $\Psi \otimes v \in
  \mathbbm{D}^{q, 2} (H^{\otimes q} \otimes V)$. Lemma~\ref{l:delta-norm}
  yields the bound $\| \delta^q (\Psi \otimes v) \|_{L^2 (V)} \lesssim \| \Psi
  \otimes v \|_{\mathbbm{D}^{q, 2} (H^{\otimes q} \otimes V)}$ which allows to
  pass to the limit for $\Psi'$ and $\delta^q (\Psi \otimes v)$ in $L^2 (V)$.
\end{remark}

\begin{lemma}[{\cite{shigekawa_stochastic_2004}}, Prop.~4.3]
  For every $j > 0$ the operator $(j - L)^{- 1 / 2}$ is bounded in $L^p$ for
  every $1 \leqslant p < \infty$. 
\end{lemma}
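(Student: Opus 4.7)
The plan is to reduce the statement to the well-known fact that the Ornstein--Uhlenbeck semigroup $\{P_t\}_{t\ge 0}$ is a contraction on $L^p(\Omega)$ for every $p\in[1,\infty]$, using the subordination identity
\[
(j-L)^{-1/2} \;=\; \frac{1}{\Gamma(1/2)}\int_0^\infty t^{-1/2}\, e^{-jt}\, P_t\,\mathd t,
\]
valid for $j>0$. First I would verify this representation on each Wiener chaos: since $L$ acts as multiplication by $-n$ on $\mathcal{H}_n$ and $P_t$ as multiplication by $e^{-nt}$, the right-hand side acts as multiplication by
\[
\frac{1}{\Gamma(1/2)}\int_0^\infty t^{-1/2} e^{-(j+n)t}\,\mathd t \;=\; (j+n)^{-1/2},
\]
which coincides with $(j-L)^{-1/2}$ on $\mathcal{H}_n$. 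Since the chaoses are dense in $L^2(\Omega)$, both sides agree as bounded operators on $L^2$, and the identity extends to the natural domain used below.

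Next I would invoke the classical contractivity $\|P_t \Psi\|_{L^p(\Omega)} \le \|\Psi\|_{L^p(\Omega)}$ for all $t\ge 0$, $p\in[1,\infty]$ (this is Nelson's contraction, and in fact $P_t$ is hypercontractive; mere contractivity on $L^p$ is enough here). Combining it with Minkowski's integral inequality gives, for $\Psi \in L^p(\Omega)$,
\[
\|(j-L)^{-1/2}\Psi\|_{L^p(\Omega)}
\;\le\; \frac{1}{\Gamma(1/2)}\int_0^\infty t^{-1/2} e^{-jt}\, \|P_t \Psi\|_{L^p(\Omega)}\,\mathd t
\;\le\; \frac{\|\Psi\|_{L^p(\Omega)}}{\Gamma(1/2)}\int_0^\infty t^{-1/2} e^{-jt}\,\mathd t .
\]
The remaining scalar integral equals $\Gamma(1/2)\, j^{-1/2}$, which yields the explicit bound $\|(j-L)^{-1/2}\Psi\|_{L^p} \le j^{-1/2}\|\Psi\|_{L^p}$, proving the claim with an operator norm that is finite for every $j>0$ and every $p\in[1,\infty)$.

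The only point that is not purely mechanical is the justification of the subordination formula as an operator identity on $L^p$ rather than on $L^2$; I would handle this by defining $(j-L)^{-1/2}$ directly through the right-hand integral on a dense subspace (e.g.\ finite chaos expansions, or smooth cylindrical functionals), then extending by the $L^p$ bound just derived. No Meyer-type inequality is needed, since we do not apply any Malliavin derivative; this is the key simplification compared with Corollary~\ref{l:bounded-q}, which additionally requires boundedness of $\mathD(j-L)^{-1}$ and genuinely relies on Meyer's inequalities.
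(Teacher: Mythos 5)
Your argument is correct: the subordination identity $(j-L)^{-1/2}=\Gamma(1/2)^{-1}\int_0^\infty t^{-1/2}e^{-jt}P_t\,\mathrm{d}t$, verified chaos by chaos, combined with the $L^p$-contractivity of the Ornstein--Uhlenbeck semigroup and Minkowski's integral inequality, gives the clean bound $\|(j-L)^{-1/2}\Psi\|_{L^p}\leqslant j^{-1/2}\|\Psi\|_{L^p}$, and your density argument for defining the operator on $L^p$ is the right way to settle the only delicate point. The paper does not prove this lemma at all but cites it (Shigekawa, Prop.~4.3), and your proof is essentially the standard argument behind that reference, so there is nothing to object to; you are also right that no Meyer-type inequality is needed here, in contrast to the statement involving $\mathrm{D}(j-L)^{-1/2}$.
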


\begin{lemma}
  \label{l:bounded-oper}Let $j \in \mathbbm{N}\backslash \{ 0 \}$ and $V$ a
  real separable Hilbert space. There exists a finite constant $c_p$ such that
  for every $\Psi \in L^p (\Omega, V)$:
  \[ \| \mathD (j - L)^{- 1 / 2} \Psi \|_{L^p (\Omega, H \otimes V)} \leqslant
     c_p \| \Psi \|_{L^p (\Omega, V)} \]
  (where the operator $\mathD (j - L)^{- 1 / 2}$ is defined on every $\Psi$
  which is polynomial in $W (h_1), \ldots, W (h_n)$ and can be extended by
  density on $L^p$).
\end{lemma}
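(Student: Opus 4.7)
The assertion is the $L^p$ boundedness of a (shifted) Riesz transform on Wiener space, and my plan is to factor it into two pieces already available in the Shigekawa monograph cited above for Lemma~B.2.

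First I would invoke Meyer's inequality in the form that for every $1 < p < \infty$ there is a constant $C_p$ with $\| \mathD \Phi \|_{L^p(\Omega, H \otimes V)} \leq C_p \|(1-L)^{1/2} \Phi\|_{L^p(\Omega, V)}$ for every polynomial $\Phi \in L^p(\Omega, V)$. Applying this to $\Phi := (j-L)^{-1/2} \Psi$---which is still polynomial since $(j-L)^{-1/2}$ preserves each Wiener chaos $\mathcal{H}_n$ and acts there as the scalar $(j+n)^{-1/2}$---yields
\[
\| \mathD (j-L)^{-1/2} \Psi \|_{L^p(\Omega, H \otimes V)} \leq C_p \| M_j \Psi \|_{L^p(\Omega, V)}, \qquad M_j := (1-L)^{1/2}(j-L)^{-1/2}.
\]

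Second, I would show $M_j$ is bounded on $L^p(\Omega, V)$ via Meyer's multiplier theorem. On $\mathcal{H}_n$ the operator $M_j$ acts as multiplication by $\phi(n) := \sqrt{(1+n)/(j+n)}$, which is bounded by $1 \vee j^{-1/2}$ uniformly in $n \in \mathbbm{N}$. The function $\phi(z) = \sqrt{(1+z)/(j+z)}$ extends to a bounded holomorphic function on $\mathbbm{C} \setminus (-\infty, -1]$; this is precisely the hypothesis of the Meyer multiplier theorem (Theorem~4.6 of the Shigekawa monograph cited above), which then gives $\|M_j \Psi\|_{L^p(\Omega, V)} \lesssim_p \|\Psi\|_{L^p(\Omega, V)}$.

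Combining the two estimates yields the required inequality for polynomial $\Psi$; extension to all of $L^p(\Omega, V)$ is the density argument mentioned in the statement of the lemma itself, and the $V$-valued case follows from the scalar case by tensorization, since $L$, $\mathD$ and the resolvents commute with the identity on $V$. The main obstacle is the invocation of Meyer's multiplier theorem, which is a nontrivial piece of Gaussian harmonic analysis; however, it is essentially the same black box already relied upon in Lemma~B.2 to control $(j-L)^{-1/2}$ itself, so no new analytical input is required beyond what has been cited, and all remaining steps are algebraic consequences of the Wiener chaos decomposition.
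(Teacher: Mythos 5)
Your proof is correct and follows essentially the same strategy as the paper's: factor the operator and then invoke (i) boundedness of a Riesz-type transform (Meyer's inequality) and (ii) a Meyer multiplier theorem. The paper factors through $C = -\sqrt{-L}$, writing $\mathD (j-L)^{-1/2} = [\mathD (-C)^{-1}]\,[(-C)(j-L)^{-1/2}]$ and controlling the first piece via Prop.~1.5.2 of Nualart and the second via the multiplier theorem (Thm.~1.4.2 of Nualart, with symbol $\sqrt{n/(j+n)}$); since $(-L)^{-1/2}$ is only defined off the zeroth chaos, the paper must first reduce to $\mathbbm{E}(\Psi)=0$. You instead factor through $(1-L)^{-1/2}$, which is globally invertible, so the zero-chaos reduction is unnecessary, and the multiplier symbol becomes $\sqrt{(1+n)/(j+n)}$; everything else is the same two black boxes. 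Your variant is marginally cleaner because it avoids the mean-zero reduction, but the content and the cited harmonic-analysis inputs are identical.
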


\begin{proof}
  First notice that we can suppose w.l.o.g. $\mathbbm{E} (\Psi) = 0$ thanks
  to~(\ref{e:LDcomm}). Therefore we can write $\mathD (j - L)^{- \frac{1}{2}}$
  as
  \[ \mathD (j - L)^{- 1 / 2} = \mathD (- C)^{- 1} (- C)  (j - L)^{- 1 / 2} \]
  with $C = - \sqrt{- L}$. We decompose the second part as $- C (j - L)^{- 1
  / 2} \Psi = \sum_{n = 1}^{\infty} \left( \frac{n}{j + n} \right)^{1 / 2} J_n
  \Psi = T_{\phi} \Psi$, with $T_{\phi} \Psi \assign \sum_{n = 0}^{\infty}
  \phi (n) J_n \Psi$. We apply Theorem~1.4.2 of {\cite{nualart_2006}} to show
  that $T_{\phi}$ is bounded in $L^p$, indeed $\phi (n) = h (1 / n)$ and $h
  (x) = (jx + 1)^{- 1 / 2}$ which is analytic in a neighbourhood of 0.
  Finally, we can apply Proposition~1.5.2 of {\cite{nualart_2006}} to show
  that $\mathD C^{- 1}$ is bounded in $L^p$, thus concluding the proof.
\end{proof}

The two lemmas above give the following immediate corollary:

\begin{corollary}
  \label{l:bounded-q}For every $1 \leqslant m \leqslant n$ the operator
  $G_{[m]}^{[n]} \assign \prod_{j = m}^n (j - L)^{- 1}$ is bounded in $L^p$
  for every $1 \leqslant p < \infty$.
  
  Moreover, Let $j \in \mathbbm{N}\backslash \{ 0 \}$ and $V$ a real separable
  Hilbert space. Then for every $\Psi \in L^p (\Omega, V)$ we have:
  \[ \| \mathD (j - L)^{- 1} \Psi \|_{L^p (\Omega, H \otimes V)} \lesssim \|
     \Psi \|_{L^p (\Omega, V)} . \]
  Moreover, for every $0 \leqslant k \leqslant 2 m$, $i \geqslant 0$ we have
  \[ \| \mathD^k G_{[i + 1]}^{[i + m]} \Psi \|_{L^p (\Omega, H^{\otimes k}
     \otimes V)} \lesssim \| \Psi \|_{L^p (\Omega, V)} . \]
\end{corollary}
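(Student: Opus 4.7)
The plan is to derive all three bounds from two ingredients already established in the preceding material: (i) the Shigekawa--type estimate stating that $(j-L)^{-1/2}$ is bounded on $L^p(\Omega, V)$ uniformly in $j \geq 1$; and (ii) Lemma~\ref{l:bounded-oper}, which asserts $\|\mathD(j-L)^{-1/2}\Psi\|_{L^p(\Omega, H\otimes V)} \lesssim \|\Psi\|_{L^p(\Omega, V)}$ with constant independent of $j \geq 1$. The organising observation is that each factor $(j-L)^{-1}$ splits as $(j-L)^{-1/2}\circ(j-L)^{-1/2}$, so that $G_{[m]}^{[n]}$ and $G_{[i+1]}^{[i+m]}$ are compositions of, respectively, $2(n-m+1)$ and $2m$ half-power operators, each of which is capable of absorbing one Malliavin derivative via~(ii).

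The first bound is then immediate, since each $(j-L)^{-1}$ is $L^p$-bounded by composition of two half-powers. For the second bound, I would write $\mathD(j-L)^{-1}\Psi = [\mathD(j-L)^{-1/2}]\,(j-L)^{-1/2}\Psi$ and apply (i) to the inner factor and Lemma~\ref{l:bounded-oper} to the outer one.

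The third bound is the main content. I would prove by induction on $\ell \geq 0$ the auxiliary claim that for every $j \geq 1$ the operator $(j-L)^{-1/2}$ maps $\mathbbm{D}^{\ell, p}(V)$ continuously into $\mathbbm{D}^{\ell+1, p}(V)$ with constant independent of $j$. The base case $\ell = 0$ combines (i) and Lemma~\ref{l:bounded-oper} directly. For the step $\ell \mapsto \ell+1$, given $\Phi \in \mathbbm{D}^{\ell+1, p}(V)$, I would bound $\mathD^{k}(j-L)^{-1/2}\Phi$ for $k \leq \ell+2$ using the commutation identity $\mathD(j-L)^{-1/2} = (j+1-L)^{-1/2}\mathD$ from~(\ref{e:LDcomm}): iterated commutation for $k \leq \ell+1$ gives $\mathD^{k}(j-L)^{-1/2}\Phi = (j+k-L)^{-1/2}\mathD^{k}\Phi$, bounded by (i); for $k = \ell+2$ a single commutation produces $\mathD^{\ell+1}[(j+1-L)^{-1/2}\mathD\Phi]$, to which the inductive hypothesis applies since $\mathD\Phi \in \mathbbm{D}^{\ell, p}(H\otimes V)$. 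Composing the auxiliary claim $2m$ times and starting from $\Psi \in L^{p}(V) = \mathbbm{D}^{0, p}(V)$ then yields $G_{[i+1]}^{[i+m]}\Psi \in \mathbbm{D}^{2m, p}(V)$, which entails the desired bound for all $k \leq 2m$.

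The main technical point requiring care is the index bookkeeping around commutations: each commutation shifts $j$ upward by one, but this is benign because the constants in (i) and Lemma~\ref{l:bounded-oper} are uniform over $j \geq 1$, so the accumulated constants depend only on $k$, $p$, $m$, and not on $i$ or $\Psi$. The restriction $k \leq 2m$ is sharp for this strategy: with fewer than $k$ half-power factors available, the last Malliavin derivative could not be absorbed by a single Meyer-type estimate.
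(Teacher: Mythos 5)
Your proof is correct and fills in exactly the details the paper treats as ``immediate'' from Lemma~\ref{l:bounded-oper} and the Shigekawa boundedness of $(j-L)^{-1/2}$: factor each $(j-L)^{-1}$ into two half-powers, commute derivatives using~(\ref{e:LDcomm}), and absorb the last derivative with Lemma~\ref{l:bounded-oper}. The induction in your auxiliary claim is avoidable — for $\Phi\in\mathbbm{D}^{\ell,p}(V)$ and $k=\ell+1$ you can commute $\ell$ derivatives to get $\mathD\big[(j+\ell-L)^{-1/2}\mathD^{\ell}\Phi\big]$ and apply Lemma~\ref{l:bounded-oper} directly, which gives the mapping $\mathbbm{D}^{\ell,p}(V)\to\mathbbm{D}^{\ell+1,p}(V)$ without recursion — but this is a stylistic simplification, not a gap.
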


The next lemma is one of the most useful tools of this paper. It allows us to
write products of decompositions of the type (\ref{e:Fexpansion}) as sums of
iterated Skorohod integrals. From now on we will note $\langle \cdummy,
\cdummy \rangle_{H^{\otimes r}}$ the $r$-th contraction, which to avoid
inconsistency has to be taken between symmetric tensors. We also note
$h^{\odot n}_{v_1, \ldots, v_n} \assign h_{v_1} \odot \ldots \odot h_{v_n}$
for $h_{v_1}, \ldots, h_{v_n} \in H$.

\begin{lemma}
  \label{l:productformula}
  
  Let $u = f (W (h_u)) h^{\otimes m}_u$ and $v = Fh^{\odot n}_{v_1, \ldots,
  v_n}$ with $f \in C^{m + n} (\mathbbm{R})$ and $F \in \mathbbm{D}^{m + n,
  2}$. Then
  \[ \delta^m (u) \delta^n (v) = \]
  \begin{equation}
    = \sum_{(q, r, i) \in I_{m, n}} C_{m, n, q, r, i} \delta^{m + n - q - r}
    [f^{(r - i)} (W (h_u)) \langle h_u^{\otimes m - i}, \mathD^{q - i} F
    \rangle_{H^{\otimes q - i}} \langle h^{\otimes r}_u, h^{\odot n}_{v_1,
    \ldots, v_n} \rangle_{H^{\otimes r}}] \label{e:productformula-variant}
  \end{equation}
  with $C_{m, n, q, r, i} \assign \binom{m}{q} \binom{n}{r} \binom{q}{i}
  \binom{r}{i} i!$ and $I_{m, n} \assign \{ (q, r, i) \in \mathbbm{N}^3 : 0
  \leqslant q \leqslant m, 0 \leqslant r \leqslant n, 0 \leqslant i \leqslant
  q \wedge r \}$.
  
  A trivial change of variables gives also:
  \[ \delta^m (u) \delta^n (v) = \]
  \begin{equation}
    = \sum_{(i, q, r) \in I'_{m, n}} C_{m, n, q + i, r + i, i} \delta^{m + n -
    q - r - 2 i} [f^{(r)} (W (h_u)) \langle h_u^{\otimes m - i}, \mathD^q F
    \rangle_{H^{\otimes q}} \langle h^{\otimes r + i}_u, h^{\odot n}_{v_1,
    \ldots, v_n} \rangle_{H^{\otimes r + i}}]
  \end{equation}
  with $I'_{m, n} \assign \{ (i, q, r) \in \mathbbm{N}^3 : 0 \leqslant i
  \leqslant m \wedge n, 0 \leqslant q \leqslant m - i, 0 \leqslant r \leqslant
  n - i \}$.
\end{lemma}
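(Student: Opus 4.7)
The plan is to prove the multiplication formula by combining two elementary identities applied iteratively: the commutation of a random variable with an iterated Skorohod integral, and the commutation of the Malliavin derivative with $\delta$. The smoothness hypotheses $f \in C^{m+n}(\mathbbm{R})$ and $F \in \mathbbm{D}^{m+n,2}$, together with Lemma~\ref{l:delta-norm} and Remark~\ref{r:delta-deterministic}, guarantee that every intermediate object lies in the domain of the relevant operator.

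First I would establish the scalar commutation identity
\[
G\,\delta^n(w) \;=\; \sum_{r=0}^{n}\binom{n}{r}\,\delta^{n-r}\!\bigl(\langle \mathD^r G,\,w\rangle_{H^{\otimes r}}\bigr),
\]
by induction on $n$, starting from the basic duality rule $G\,\delta(w) = \delta(Gw) + \langle \mathD G, w\rangle_H$ and interpreting the right-hand contractions as partial ones in $H^{\otimes r}$, so that the argument of $\delta^{n-r}$ lies in $H^{\otimes n-r}$. Applied with $G = \delta^m(u)$ and $w = v$, this reduces the product $\delta^m(u)\,\delta^n(v)$ to an $(n{+}1)$-term sum whose only nontrivial ingredient is $\langle \mathD^r\delta^m(u),\,v\rangle_{H^{\otimes r}}$ for $0\le r\le n$.

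Second, I would compute $\mathD^r\delta^m(u)$ for $u = f(W(h_u))\,h_u^{\otimes m}$ by induction on $r$, using the standard commutation relating $\mathD\,\delta^k$ and $\delta^k\,\mathD$, which at each step either sends $\mathD$ inside the Skorohod integral or extracts one tensor slot to the outside. Because $\mathD^s f(W(h_u)) = f^{(s)}(W(h_u))\,h_u^{\otimes s}$ and the tensor $h_u^{\otimes m}$ is deterministic, the iteration produces a sum of summands of the form $\delta^{m-i}\!\bigl(f^{(r-i)}(W(h_u))\,h_u^{\otimes m-i}\bigr)\,\cdot\,h_u^{\otimes r}$, with combinatorial coefficient $\binom{m}{i}\binom{r}{i}\,i!$: here $i$ counts the number of $h_u$-slots peeled off the Skorohod integral during the iteration, while the free factor $h_u^{\otimes r}$ collects these together with the slots produced by the derivatives of $f(W(h_u))$.

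Third, substituting into the commutation identity of the first step and partially contracting the free factor $h_u^{\otimes r}$ against $v = F\,h^{\odot n}_{v_1,\ldots,v_n}$ yields $\langle h_u^{\otimes r}, h^{\odot n}_{v_1,\ldots,v_n}\rangle_{H^{\otimes r}}$; one more application of the scalar commutation lemma pushes $F$ through the residual $\delta^{m-i}$, introducing a second summation indexed by the number $q-i$ of Malliavin derivatives eventually acting on $F$. The resulting triple sum is reindexed by $(q,r,i)\in I_{m,n}$ (with $q$ counting the total derivatives of $F$ consumed, $r$ the $h_u$-to-$h_{v_j}$ pairings, and $i$ the internal contractions), and a short Vandermonde-style rearrangement of the binomial prefactors yields exactly the coefficient $\binom{m}{q}\binom{n}{r}\binom{q}{i}\binom{r}{i}\,i!$ of the statement. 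The second form in the lemma is then an immediate consequence of the bijective change of variables $(q,r,i)\mapsto (q+i,r+i,i)$ from $I_{m,n}$ onto $I'_{m,n}$. The main obstacle is the second step: pinning down $\mathD^r\delta^m(u)$ in a form that cleanly separates the three tensor contributions (slots remaining inside the residual Skorohod integral, those carrying derivatives of $f$, and those free to be contracted against $v$); once that closed form is established, the remaining combinatorics is a short binomial rearrangement and the domain memberships follow from Lemma~\ref{l:delta-norm}.
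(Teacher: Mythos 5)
Your proposal matches the paper's proof step for step: the scalar commutation identity you state in step 1 is exactly Lemma~\ref{l:lemma-nourdin} (applied with $\Psi=\delta^m(u)$, $q=n$, $u=v$); your step 2, computing $\mathD^r\delta^m(u)$ with coefficient $\binom{m}{i}\binom{r}{i}\,i!$, is the commutation formula~(\ref{e:Ddelta-comm}); your step 3 is the second application of Lemma~\ref{l:lemma-nourdin} to push $F$ through $\delta^{m-i}$; and the ``Vandermonde-style rearrangement'' is the substitution $q=\ell+i$ with $\binom{m}{i}\binom{m-i}{q-i}=\binom{q}{i}\binom{m}{q}$. This is the same decomposition, in the same order, using the same two key identities, so the proof is correct and essentially identical to the paper's.
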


\begin{remark}
  \label{r:producformula-easier}In the special case $v = g (W (h_v))
  h^{\otimes n}_v$ eq.~(\ref{e:productformula-variant}) takes the form
  \begin{equation}
    \delta^m (u) \delta^n (v) = \sum_{(q, r, i) \in I_{m, n}} C_{m, n, q, r,
    i} \delta^{m + n - q - r} (\langle \mathD^{r - i} u, \mathD^{q - i} v
    \rangle_{H^{\otimes q + r - i}}) \label{e:productformula-easy}
  \end{equation}
  which is just a generalization to Skorohod integrals of the multiplication
  formula for multiple Wiener integrals~({\cite{shigekawa_derivatives_1980}},
  {\cite{ustunel_sophisticated_2014}}). We can write the above formula more
  explicitly as
  \[ \delta^m (u) \delta^n (v) = \sum_{(q, r, i) \in I_{m, n}} C_{m, n, q, r,
     i} \delta^{m + n - q - r} [f^{(r - i)} (W (h_u)) g^{(q - i)} (W (h_v))
     h_u^{\otimes m - q} \otimes h_v^{\otimes n - r}] \langle h_u, h_v
     \rangle^{q + r - i} . \]
\end{remark}

\begin{remark}
  Note that one can assume w.l.o.g. the argument of $\delta^{m + n - q - r}$
  in (\ref{e:productformula-variant}) to be symmetric, and this would allow to
  iterate Lemma~\ref{l:productformula}.
\end{remark}

\begin{remark}
  We can give the following intuition for the second formula in
  Lemma~\ref{l:productformula}. The random variables $u$ and $v$ have an
  infinite chaos decomposition, and following the tree-like notation
  of~{\cite{gubinelli_paracontrolled_2012}} or~{\cite{hairer_theory_2014}}
  they could be thought of as having an infinite number of leaves which need
  to be contracted with each other.
  
  It is apparent that the index $i$ in the second equation denotes
  contractions between the already existing leaves of the trees $u, v$. The
  indexes $r$ and $q$ count new leaves in each vertex that are created by the
  Malliavin derivatives, which are then contracted with other leaves from the
  other tree. There are then $m + n - r - q - 2 i$ overall unmatched leaves
  which are arguments to the iterated Skorokhod integral.
  
  The more intuitive interpretation of the second equation in
  Lemma~\ref{l:productformula} is the reason why we gave two distinct
  expression for the same quantity. Nevertheless, the
  formula~(\ref{e:productformula-variant}) is more practical in the
  calculations and is more widely used throughout the paper.
\end{remark}

\begin{proof}[Lemma~\ref{l:productformula}]
  Using Cauchy-Schwarz inequality and Lemma~\ref{l:delta-norm} we can show
  that $\langle D^r \delta^n (v), \delta^j (u) \rangle_{H^{\otimes r}} \in L^2
  (\Omega, H^{\otimes m - j - r})$ for every $0 \leqslant r + j \leqslant m$.
  Then we apply Lemma~\ref{l:lemma-nourdin} to get:
  \[ \delta^m (u) \delta^n (v) = \sum_{r = 0}^n \binom{n}{r} \delta^{n - r}
     (\langle \mathD^r \delta^m (u), v \rangle_{H^{\otimes r}}) . \]
  Using the commutation formula (\ref{e:Ddelta-comm}) we rewrite the r.h.s. as
  \[ \delta^m (u) \delta^n (v) = \sum_{r = 0}^n \binom{n}{r} \sum_{i = 0}^{r
     \wedge m} \binom{r}{i} \binom{m}{i} i! \delta^{n - r} (\langle \delta^{m
     - i} (\mathD^{r - i} u), v \rangle_{H^{\otimes r}}) . \]
  We obtain
  \[ \langle \delta^{m - i} (\mathD^{r - i} u), v \rangle_{H^{\otimes r}} =
     \delta^{m - i} (f^{(r - i)} (W (h_u)) h_u^{\otimes m - i}) F \langle
     h^{\otimes r}_u, h^{\odot n}_{v_1, \ldots, v_n} \rangle_{H^{\otimes r}}
  \]
  and using again Lemma~\ref{l:lemma-nourdin} we obtain
  \[ \langle \delta^{m - i} (\mathD^{r - i} u), v \rangle_{H^{\otimes r}} =
     \sum_{\ell = 0}^{m - i} \binom{m - i}{\ell} \delta^{m - i - \ell} (f^{(r
     - i)} (W (h_u)) \langle h_u^{\otimes m - i}, \mathD^{\ell} F
     \rangle_{H^{\otimes \ell}} \langle h^{\otimes r}_u, h_{v_1} \odot \ldots
     \odot h_{v_n} \rangle_{H^{\otimes r}}) \]
  where we used $\delta^k (\Psi) h^{\otimes n - r} = \delta^k (\Psi \otimes
  h^{\otimes n - r})$ for $\Psi \in \tmop{Dom} \delta^k$, as seen in
  Remark~\ref{r:delta-deterministic}. Substituting this expression into
  $\delta^m (u) \delta^n (v)$ we get
  \[ \delta^m (u) \delta^n (v) = \sum_{(r, i, \ell) \in J_{m, n}} A_{m, n, r,
     i, \ell} \delta^{m + n - r - i - \ell} [f^{(r - i)} (W (h_u)) \langle
     h_u^{\otimes m - i}, \mathD^{\ell} F \rangle_{H^{\otimes \ell}} \langle
     h^{\otimes r}_u, h_{v_1} \odot \ldots \odot h_{v_n} \rangle_{H^{\otimes
     r}}] \]
  where we set $A_{m, n, r, i, \ell} \assign \binom{m}{i} \binom{n}{r}
  \binom{m - i}{\ell} \binom{r}{i} i!$ and $J_{m, n} \assign \{ (r, i, \ell)
  \in \mathbbm{N}^3 : 0 \leqslant r \leqslant n, 0 \leqslant i \leqslant r
  \wedge n, 0 \leqslant \ell \leqslant m - i \}$.
  
  In order to complete the proof we just have to perform some basic changes
  of indexes. Taking $q = \ell + i$ and noting that $\binom{m}{i} \binom{m -
  i}{q - i} = \binom{q}{i} \binom{m}{q}$ we have $A_{m, n, r, i, \ell} =
  \binom{m}{q} \binom{q}{i} \binom{n}{r} \binom{r}{i} i!$ and this yields
  (\ref{e:productformula-variant}).
  
  Finally, we perform the change of variables $q - i \rightarrow q$, $r - i
  \rightarrow r$ to get the second formula.
\end{proof}

We give below the results we used to prove Lemma~\ref{l:productformula}.

\begin{lemma}[{\cite{nourdin_nualart_2007}}, Lemma~2.1]
  \label{l:lemma-nourdin}Let $q \in \mathbbm{N}\backslash \{ 0 \}$, $\Psi \in
  \mathbbm{D}^{q, 2}$, $u \in \tmop{Dom} \delta^q$ and symmetric. Assume also
  that $\forall 0 \leqslant r + j \leqslant q$ $\langle D^r \Psi, \delta^j (u)
  \rangle_{H^{\otimes r}} \in L^2 (\Omega, H^{\otimes q - r - j})$. Then
  $\forall 0 \leqslant r < q$ $\langle D^r \Psi, u \rangle_r \in \tmop{Dom}
  \delta^{q - r}$ and
  \[ \Psi \delta^q (u) = \sum_{r = 0}^q \binom{q}{r} \delta^{q - r} (\langle
     \mathD^r \Psi, u \rangle_{H^{\otimes r}}) . \]
\end{lemma}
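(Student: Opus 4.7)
This identity is a high-order integration by parts formula, and the plan is to establish it by duality against a dense family of Malliavin-smooth test variables, combined with the iterated Leibniz rule for $\mathD$. Fix any bounded $F \in \mathbbm{D}^{q,2}$ with bounded derivatives (polynomials in Wiener integrals form a suitable dense class); it suffices to prove
\[ \mathbbm{E}[F\,\Psi\,\delta^q(u)] \;=\; \sum_{r=0}^q \binom{q}{r}\,\mathbbm{E}\!\left[F\,\delta^{q-r}\!\left(\langle \mathD^r\Psi, u\rangle_{H^{\otimes r}}\right)\right]. \]

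First I would apply the adjoint relation $\mathbbm{E}[F\Psi\,\delta^q(u)] = \mathbbm{E}[\langle \mathD^q(F\Psi), u\rangle_{H^{\otimes q}}]$, which is valid because $F\Psi \in \mathbbm{D}^{q,2}$. Iterating the Leibniz rule $\mathD(FG) = F\mathD G + G\mathD F$ gives the binomial expansion
\[ \mathD^q(F\Psi) \;=\; \sum_{r=0}^q \binom{q}{r}\, \mathD^{q-r}F\,\tilde{\otimes}\,\mathD^r\Psi, \]
the combinatorial weight recording how many of the $q$ derivatives land on $\Psi$; since $\mathD$ maps scalars into the symmetric tensor space $H^{\odot q}$, the outer symmetrization $\tilde{\otimes}$ is automatic. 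Because $u$ is symmetric, this symmetrization is inert when pairing with $u$, and each summand contracts as
\[ \langle \mathD^{q-r}F \otimes \mathD^r\Psi, u\rangle_{H^{\otimes q}} \;=\; \langle \mathD^{q-r}F,\, \langle \mathD^r\Psi, u\rangle_{H^{\otimes r}}\rangle_{H^{\otimes q-r}}. \]
Applying the duality for $\delta^{q-r}$ in reverse identifies each term with $\mathbbm{E}[F\,\delta^{q-r}(\langle \mathD^r\Psi, u\rangle_{H^{\otimes r}})]$, which yields the desired identity once summed over $r$.

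The main obstacle is justifying every application of the adjoint relation, which requires showing that $\langle \mathD^r\Psi, u\rangle_{H^{\otimes r}} \in \tmop{Dom}\,\delta^{q-r}$ together with appropriate $L^2$ control of all intermediate divergences. This is precisely the role of the hypothesis $\langle \mathD^r\Psi, \delta^j(u)\rangle_{H^{\otimes r}} \in L^2(\Omega, H^{\otimes q-r-j})$ for every $0 \leqslant r+j \leqslant q$: it lets one construct the Skorohod integrals $\delta^k(\langle \mathD^r\Psi, u\rangle_{H^{\otimes r}})$ stage by stage for $k = 0, 1, \ldots, q-r$, at each step combining the commutation relation $\mathD\delta = \delta\mathD + \tmop{id}$ with Lemma~\ref{l:delta-norm} to reduce the $L^2$ estimate of the partial divergence back to one of the assumed norms. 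An equivalent, perhaps more transparent route is induction on $q$: the base case $q=1$ is the classical identity $\Psi\delta(u) = \delta(\Psi u) + \langle \mathD\Psi, u\rangle_H$, and the inductive step applies this single-step formula to $\delta^q(u) = \delta(\delta^{q-1}(\cdot))$, invokes the induction hypothesis on the lower-order remainder $\langle \mathD\Psi, u\rangle_H$, and collects binomial coefficients via Pascal's identity $\binom{q-1}{r-1} + \binom{q-1}{r} = \binom{q}{r}$; once again the domain assumption is exactly what keeps each intermediate contraction inside the appropriate $L^2$ space.
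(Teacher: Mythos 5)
You should first be aware that the paper contains no proof of this statement: it is quoted as Lemma~2.1 of the cited reference~\cite{nourdin_nualart_2007}, so there is nothing internal to compare against and your argument has to stand on its own. In outline it does: duality against smooth test variables plus the iterated Leibniz rule, or induction on $q$ starting from the classical one-step identity $\delta(\Psi u)=\Psi\,\delta(u)-\langle\mathD\Psi,u\rangle_H$ (Proposition~1.3.3 of~\cite{nualart_2006}) and recombining coefficients by Pascal's rule, is exactly how this kind of result is established, and your second route is essentially the argument of the cited reference.

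The genuine weak point is your justification of the domain claim $\langle\mathD^r\Psi,u\rangle_{H^{\otimes r}}\in\tmop{Dom}\,\delta^{q-r}$, which is itself part of the conclusion. The duality computation only identifies the expectation of the \emph{sum} against test variables; to split it into individual Skorohod integrals you must already know that each contraction lies in the corresponding domain, and the tools you invoke for this do not apply under the stated hypotheses: the commutation relation $\mathD\delta=\delta\mathD+\tmop{id}$ requires the integrand to be Malliavin differentiable, and Lemma~\ref{l:delta-norm} requires membership in $\mathbbm{D}^{k,p}$, whereas here $u$ is only assumed to lie in $\tmop{Dom}\,\delta^q$ and need not belong to any Sobolev space, and likewise for the contractions $\langle\mathD^r\Psi,u\rangle_{H^{\otimes r}}$. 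The hypotheses $\langle\mathD^r\Psi,\delta^j(u)\rangle_{H^{\otimes r}}\in L^2$ must be used differently: either through the duality characterization of $\tmop{Dom}\,\delta^{q-r}$, exhibiting an $L^2$-bounded functional $F\mapsto\mathbbm{E}[\langle\mathD^{q-r}F,\langle\mathD^r\Psi,u\rangle_{H^{\otimes r}}\rangle_{H^{\otimes q-r}}]$, or, more simply, through the induction you sketch, where the $q=1$ case already contains the domain assertion and is applied with the $H$-valued integrand $\delta^{q-1}(u)$; the one-step formula then yields $\Psi\,\delta^q(u)=\delta(\Psi\,\delta^{q-1}(u))+\langle\mathD\Psi,\delta^{q-1}(u)\rangle_H$ (note the remainder involves $\delta^{q-1}(u)$, not $u$ as you wrote), and the induction hypothesis must be applied in a vector-valued form to both terms before Pascal's identity recombines the binomials $\binom{q-1}{r}+\binom{q-1}{r-1}=\binom{q}{r}$. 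So carry out the induction route carefully; as written, the duality route is circular at the domain-membership step.
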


\begin{remark}
  \label{r:delta-tensor}Note that
  \[ \delta^n (h^{\otimes n}) = \llbracket W^n (h) \rrbracket \]
  where $\llbracket \cdummy \rrbracket$ stands for the Wick product. Indeed
  $\forall \Psi \in \mathbbm{D}^{1, 2}$ we know that $\mathbbm{E} [\delta
  (h^{\otimes n}) \Psi] =\mathbbm{E} [W (h) h^{\otimes n - 1} \Psi]$ using the
  definition of $\delta$, and then $\delta^n (h^{\otimes n}) = \delta^{n - 1}
  (W (h) h^{\otimes n - 1})$. From Lemma~\ref{l:lemma-nourdin} we have, since
  $\mathD W (h) = h$:
  \[ \delta^{n - 1} (W (h) h^{\otimes n - 1}) = \delta^{n - 1} (h^{\otimes n -
     1}) W (h) - (n - 1) \langle h, h \rangle \delta^{n - 2} (h^{\otimes n -
     2}) \]
  which gives by induction $\delta^n (h^{\otimes n}) = \llbracket W^n (h)
  \rrbracket$.
\end{remark}

\begin{lemma}
  Let $j, k \in \mathbbm{N}$, $u \in \mathbbm{D}^{j + k, 2} (H^{\otimes j})$
  symmetric and such that all its derivatives are symmetric. We have
  \begin{equation}
    \mathD^k \delta^j (u) = \sum_{i = 0}^{k \wedge j} \binom{k}{i}
    \binom{j}{i} i! \delta^{j - i} (D^{k - i} u)  \label{e:Ddelta-comm}
  \end{equation}
\end{lemma}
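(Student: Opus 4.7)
My plan is to prove the identity by induction on $k$, reducing everything to the single fundamental commutation relation
\[
\mathD \delta(v) = v + \delta(\mathD v), \qquad v \in \mathbbm{D}^{1,2}(H),
\]
which is the standard $L^2$ duality between $\mathD$ and $\delta$ (e.g.\ Nualart, Prop.~1.3.2). The symmetry hypothesis on $u$ and its derivatives will let me freely interchange the order of applying $\delta$'s and contracting tensor legs, so no combinatorial subtlety arises from the tensor structure itself; all the combinatorics sit in the coefficients.

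The base case $k=0$ is trivial. The first nontrivial step is the $k=1$ identity
\begin{equation*}
\mathD \delta^j(u) \;=\; j\,\delta^{j-1}(u) + \delta^j(\mathD u),
\end{equation*}
for symmetric $u \in \mathbbm{D}^{j+1,2}(H^{\otimes j})$. I will prove this by a secondary induction on $j$. For $j=1$ it is exactly the basic commutation above. Assuming it for $j-1$, write $\delta^j(u) = \delta(\delta^{j-1}(u))$ (which makes sense by symmetry of $u$, selecting one tensor slot as the ``outer'' divergence variable) and apply the basic commutation once to get
\[
\mathD \delta^j(u) \;=\; \delta^{j-1}(u) + \delta\!\bigl(\mathD \delta^{j-1}(u)\bigr).
\]
Then use the inductive hypothesis on $\mathD \delta^{j-1}(u)$, again exploiting symmetry to identify $\delta(\delta^{j-2}(u))$ with $\delta^{j-1}(u)$ and $\delta(\delta^{j-1}(\mathD u))$ with $\delta^j(\mathD u)$. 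This yields $\delta^{j-1}(u) + (j-1)\delta^{j-1}(u) + \delta^j(\mathD u) = j\delta^{j-1}(u) + \delta^j(\mathD u)$, as required.

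With the $k=1$ case in hand, assume the formula at step $k$ and apply $\mathD$ to both sides. Using the $k=1$ identity on each summand,
\[
\mathD\, \delta^{j-i}(\mathD^{k-i}u) \;=\; (j-i)\,\delta^{j-i-1}(\mathD^{k-i}u) + \delta^{j-i}(\mathD^{k+1-i}u),
\]
one obtains two sums. Reindexing the first sum via $i \mapsto i-1$, the coefficient in front of $\delta^{j-i}(\mathD^{k+1-i}u)$ becomes
\[
\binom{k}{i}\binom{j}{i}i! \;+\; \binom{k}{i-1}(i-1)!\binom{j}{i-1}(j-i+1) \;=\; \Bigl[\binom{k}{i}+\binom{k}{i-1}\Bigr]\binom{j}{i}i! \;=\; \binom{k+1}{i}\binom{j}{i}i!,
\]
by Pascal's rule after the elementary simplification $\binom{j}{i-1}(j-i+1)/i = \binom{j}{i}$. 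This closes the induction and gives~(\ref{e:Ddelta-comm}).

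The only real obstacle is keeping the tensor bookkeeping honest: one must verify that the ``iterated Skorohod integral'' $\delta^j(u)$ of a symmetric tensor-valued process genuinely coincides with $\delta(\delta^{j-1}(u))$ for any choice of slot being distinguished, so that the basic commutation can be peeled off one step at a time. This is exactly where the hypothesis that $u$ and all its derivatives are symmetric is used; with it, the rearrangement is justified by Lemma~\ref{l:delta-norm} (closability of $\delta^j$ on $\mathbbm{D}^{j+k,2}(H^{\otimes j})$), so one first proves the identity for smooth cylindrical $u$ and then extends by density in $\mathbbm{D}^{j+k,2}(H^{\otimes j})$.
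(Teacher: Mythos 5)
Your proof is correct and follows the same route as the paper: establish the basic commutation $\mathD\delta(u)=u+\delta(\mathD u)$ via Nualart's Prop.~1.3.2 and symmetry, then induct via $\mathD\delta^{j}=\delta^{j}\mathD+j\delta^{j-1}$. The paper states that final induction as "easy" and leaves it there; you usefully supply the missing details (the secondary induction on $j$ and the Pascal's-rule computation), and both your combinatorics and the boundary behavior of the sum (the $(j-i)$ factor vanishing when $i=j$) check out.
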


\begin{proof}
  If $j = 0$, $k = 1$ or $k = 0, j = 1$ eq.~(\ref{e:Ddelta-comm}) is trivial.
  Let $j = k = 1$ and $u \in \mathbbm{D}^{2, 2} (H) \subset \mathbbm{D}^{1, 2}
  (H)$. We can apply Proposition~1.3.2 of {\cite{nualart_2006}} to obtain
  $\langle \mathD \delta (u), h \rangle = \langle u, h \rangle + \delta
  (\langle \mathD u, h \rangle)$ $\forall h \in H$. Since by hypothesis
  $\mathD u$ is symmetric we have $\delta (\langle \mathD u, h \rangle) =
  \langle \delta \mathD u, h \rangle$, and then $\mathD \delta (u) = u +
  \delta \mathD u$. The proof by induction is easy noticing that $\mathD
  \delta^j = \delta^j \mathD + j \delta^{j - 1}$.
\end{proof}

\end{appendices}

\

\

\end{document}